\documentclass[12pt]{article}
\usepackage{amsmath,amssymb,amsfonts,amsthm}
\usepackage{yhmath}
\usepackage{eucal,oldgerm}
\usepackage[dvips]{graphics}
\usepackage{pstricks,pst-plot,pst-node,pst-coil}
\newpsobject{showgrid}{psgrid}{subgriddiv=1,griddots=5,gridlabels=0pt}
\usepackage[arrow,matrix,tips,2cell]{xy}

\newcommand{\rang}{\right\rangle}

\newcommand{\zz}{{\mathfrak{z}}}

\newcommand{\com}{{\mathbb C}}

\newcommand{\bC}{\mathsf{C}}

\newcommand{\bw}{\mathsf{w}}

\newcommand{\PP}{{\mathbf{P}}}
\newcommand{\bF}{\mathcal{F}}
\newcommand{\bD}{\mathcal{D}}
\newcommand\FF{\mathbb F}
\newcommand\ZZ{\mathsf Z}

\newcommand{\oh}{{\mathcal{O}}}
\newcommand{\T}{{\mathbf{T}}}

\newcommand{\LL}{{\mathbb{L}}}
\newcommand{\CC}{{\widehat{C}}}
\newcommand{\bbullet}{{\widehat{\bullet}}}

\newcommand{\C}{\mathbb{C}}
\newcommand{\Q}{\mathbb{Q}}
\newcommand{\Z}{\mathbb{Z}}

\newcommand{\cO}{\mathcal{O}}

\newcommand{\Pp}{{\mathbf{P}^1}}

\newcommand{\rarr}{\rightarrow}

\newcommand{\bA}{\mathcal{A}}
\newcommand{\bZ}{\mathsf{Z}}
\newcommand{\oM}{\overline{M}}

\newcommand{\uncrazeright}{\, {\unrhd}^{\hspace{-2pt}*}\, }
\newcommand{\crazeright}{\, {\rhd}^{\hspace{-2pt}*}\, }

\DeclareMathOperator{\Hilb}{Hilb}

\newcommand{\p}{{\mathsf{p}}}

\newtheorem{Theorem}{Theorem}
\newtheorem{Lemma}{Lemma}
\newtheorem{Corollary}{Corollary}

\newtheorem{Proposition}[Lemma]{Proposition}
\newtheorem{Conjecture}{Conjecture}

\begin{document}
\title{Gromov-Witten/Pairs descendent correspondence
for toric
3-folds}
\author{R. Pandharipande and A. Pixton}
\date{December 2012}
\maketitle

\begin{abstract}
We construct a fully equivariant correspondence
between Gromov-Witten and stable pairs
descendent theories for toric 3-folds $X$.
Our method uses geometric constraints on
descendents, $\bA_n$ surfaces,
and the topological vertex. The rationality of
the stable pairs descendent theory plays a crucial
role in the definition of the correspondence.
We prove our correspondence has a non-equivariant limit.

As a result of the construction, we prove 
an explicit non-equivariant stationary
descendent correspondence for $X$ (conjectured 
by MNOP).
Using descendent methods, we establish
the relative GW/Pairs correspondence
for $X/D$ in several basic new 
log Calabi-Yau geometries.
Among the consequences is
a rationality constraint for  non-equivariant 
descendent Gromov-Witten series
for $\mathbf{P}^3$.
\end{abstract}

\maketitle

\setcounter{tocdepth}{2} 
\tableofcontents

%%%%%%%%%%%%%%%%%%%%%%%%%%%%%%%%%%%%%%%%%%%%%%%%%%%%%%%%%%%%%%%%%%%%%%%%%%%

\setcounter{section}{-1}
\section{Introduction}

\subsection{Descendents in Gromov-Witten theory} \label{gwdess}
Let $X$ be a nonsingular projective 3-fold.
Gromov-Witten theory is defined via integration over the moduli
space of stable maps.
Let
 $\overline{M}_{g,r}(X,\beta)$ denote the moduli space of
$r$-pointed stable maps from connected genus $g$ curves to $X$ representing the
class $\beta\in H_2(X, \Z)$. Let 
$$\text{ev}_i: \overline{M}_{g,r}(X,\beta) \rarr X,$$
$$ \LL_i \rarr \overline{M}_{g,r}(X,\beta)$$
denote the evaluation maps and the cotangent line bundles associated to
the marked points.
Let $\gamma_1, \ldots, \gamma_r\in H^*(X,{\mathbb{Q}})$, and
let $$\psi_i = c_1(\LL_i) \in H^2(\overline{M}_{g,n}(X,\beta),\mathbb{Q}).$$
The {\em descendent} fields, denoted by $\tau_k(\gamma)$, correspond 
to the classes $\psi_i^k \text{ev}_i^*(\gamma)$ on the moduli space
of maps. 
Let
$$\Big\langle \tau_{k_1}(\gamma_{1}) \cdots
\tau_{k_r}(\gamma_{r})\Big\rangle_{g,\beta} = \int_{[\overline{M}_{g,r}(X,\beta)]^{vir}} 
\prod_{i=1}^r \psi_i^{k_i} \text{ev}_i^*(\gamma_{_i})$$
denote the descendent
Gromov-Witten invariants. Foundational aspects of the theory
are treated, for example, in \cite{Beh, BehFan, LiTian}.

Let $C$ be a possibly disconnected curve with at worst nodal singularities.
The genus of $C$ is defined by $1-\chi(\oh_C)$. 
Let $\overline{M}'_{g,r}(X,\beta)$ denote the moduli space of maps
with possibly {disconnected} domain
curves $C$ of genus $g$ with {\em no} collapsed connected components.
The latter condition requires 
 each connected component of $C$ to represent
a nonzero class in $H_2(X,{\mathbb Z})$. In particular, 
$C$ must represent a {nonzero} class $\beta$.

We define the descendent invariants in the disconnected 
case by
$$\Big\langle \tau_{k_1}(\gamma_{1}) \cdots
\tau_{k_r}(\gamma_{r})\Big\rangle'_{g,\beta} = \int_{[\overline{M}'_{g,r}(X,\beta)]^{vir}} 
\prod_{i=1}^r \psi_i^{k_i} \text{ev}_i^*(\gamma_{i}).$$
The associated partition function is defined by{\footnote{Our 
notation follows \cite{MNOP2} and emphasizes the
role of the moduli space $\overline{M}'_{g,r}(X,\beta)$. 
The degree 0 collapsed contributions
will not appear anywhere in our paper.}} 
\begin{equation}
\label{abc}
\bZ'_{\mathsf{GW}}\Big(X;u\ \Big|\ \prod_{i=1}^r \tau_{k_i}(\gamma_{i})\Big)_\beta = 
\sum_{g\in{\mathbb Z}} \Big \langle \prod_{i=1}^r
\tau_{k_i}(\gamma_{i}) \Big \rangle'_{g,\beta} \ u^{2g-2}.
\end{equation}
Since the domain components must map nontrivially, an elementary
argument shows the genus $g$ in the  sum \eqref{abc} is bounded from below. 
The descendent insertions in \eqref{abc} should
match  the  (genus independent) virtual dimension,
\begin{equation}\label{k345}
\text{dim} \ [\overline{M}'_{g,r}(X,\beta)]^{vir} = \int_\beta c_1(T_X) + r.
\end{equation}

If $X$ is a nonsingular toric 3-fold, then the descendent
invariants can be lifted to equivariant cohomology.
Let 
$$\T=(\com^*)^3$$ be the 3-dimensional algebraic torus acting on $X$.
Let $s_1,s_2,s_3$ be the equivariant first Chern classes
of the standard representations of the three factors of $\T$. The
equivariant cohomology of the point is well-known to be
$$H^*_{\T}(\bullet) = \mathbb{Q}[s_1,s_2,s_3]\ .$$ 
For equivariant classes $\gamma_{i}\in H^*_{\T}(X,\mathbb{Q})$, the
descendent invariants
$$\Big\langle \tau_{k_1}(\gamma_{1}) \cdots
\tau_{k_r}(\gamma_{r})\Big\rangle'_{g,\beta} = \int_{[\overline{M}'_{g,r}(X,\beta)]^{vir}} 
\prod_{i=1}^r \psi_i^{k_i} \text{ev}_i^*(\gamma_{i})\ \in H^*_{\T}(\bullet)$$
are well-defined. In the equivariant setting, the descendent
insertions may exceed the virtual dimension \eqref{k345}.
The equivariant partition function
$$\bZ'_{\mathsf{GW}}\Big(X;u\ \Big|\ \prod_{i=1}^r \tau_{k_i}(\gamma_{i})\Big)^\T_\beta \in \mathbb{Q}[s_1,s_2,s_3]((u))$$ 
is a Laurent series in $u$ with coefficients in $H^*_\T(\bullet)$.

If $X$ is a nonsingular quasi-projective toric 3-fold, the 
equivariant Gromov-Witten invariants of $X$ are still
well-defined by localization residues \cite{BP}. In
the quasi-projective case,
$$\bZ'_{\mathsf{GW}}\Big(X;u\ \Big|\ \prod_{i=1}^r \tau_{k_i}(\gamma_{i})\Big)^\T_\beta \in \mathbb{Q}(s_1,s_2,s_3)((u))\ . $$ 
For the study of the Gromov-Witten theory of toric 3-folds,
the open geometries play an imporant role.

\subsection{Descendents in the theory of stable pairs}\label{dess}
Let $X$ be a nonsingular projective 3-fold, and let
$\beta \in H_2(X,\mathbb{Z})$ be a nonzero class. We consider next the
moduli space of stable pairs
$$[\cO_X \stackrel{s}{\rightarrow} F] \in P_n(X,\beta)$$
where $F$ is a pure sheaf supported on a Cohen-Macaulay subcurve of $X$, 
$s$ is a morphism with 0-dimensional cokernel, and
$$\chi(F)=n, \  \  \ [F]=\beta.$$
The space $P_n(X,\beta)$
carries a virtual fundamental class obtained from the 
deformation theory of complexes in
the derived category \cite{pt}.

Since $P_n(X,\beta)$ is a fine moduli space, there exists a universal sheaf
$$\FF \rightarrow X\times P_{n}(X,\beta),$$
see Section 2.3 of \cite{pt}.
For a stable pair $[\cO_X\to F]\in P_{n}(X,\beta)$, the restriction of
$\FF$
to the fiber
 $$X \times [\cO_X \to F] \subset 
X\times P_{n}(X,\beta)
$$
is canonically isomorphic to $F$.
Let
$$\pi_X\colon X\times P_{n}(X,\beta)\to X,$$
$$\pi_P\colon X\times P_{n}(X,\beta)
\to P_{n}(X,\beta)$$
 be the projections onto the first and second factors.
Since $X$ is nonsingular
and
$\FF$ is $\pi_P$-flat, $\FF$ has a finite resolution 
by locally free sheaves.
Hence, the Chern character of the universal sheaf $\FF$ on 
$X \times P_n(X,\beta)$ is well-defined.
By definition, the operation
$$
\pi_{P*}\big(\pi_X^*(\gamma)\cdot \text{ch}_{2+i}(\FF)
\cap \pi_P^*(\ \cdot\ )\big)\colon 
H_*(P_{n}(X,\beta))\to H_*(P_{n}(X,\beta))
$$
is the action of the descendent $\tau_i(\gamma)$, where
$\gamma \in H^*(X,\Z)$.

For nonzero $\beta\in H_2(X,\Z)$ and arbitrary $\gamma_i\in H^*(X,\Q)$,
define the stable pairs invariant with descendent insertions by
\begin{eqnarray*}
\Big\langle \tau_{k_1}(\gamma_1)\ldots \tau_{k_r}(\gamma_r)
\Big\rangle_{\!n,\beta}&  = &
\int_{[P_{n}(X,\beta)]^{vir}}
\prod_{i=1}^r \tau_{k_i}(\gamma_i) \\
& = & 
\int_{P_n(X,\beta)} \prod_{i=1}^r \tau_{k_i}(\gamma_{i})
\Big( [P_{n}(X,\beta)]^{vir}\Big).
\end{eqnarray*}
The partition function is 
$$
\ZZ_{\mathsf{P}}\Big(X;q\ \Big|   \prod_{i=1}^r \tau_{k_i}(\gamma_{i})
\Big)_\beta
=\sum_{n} 
\Big\langle \prod_{i=1}^r \tau_{k_i}(\gamma_{i}) 
\Big\rangle_{\!n,\beta} q^n.
$$

Since $P_n(X,\beta)$ is empty for sufficiently negative
$n$, the partition function 
is a Laurent series in $q$. The following conjecture was made in 
\cite{pt2}.

\begin{Conjecture}
\label{111} 
The partition function
$\ZZ_{\mathsf{P}}\big(X;q\ |   \prod_{i=1}^r \tau_{k_i}(\gamma_{i})
\big)_\beta$ is the 
Laurent expansion of a rational function in $q$.
\end{Conjecture}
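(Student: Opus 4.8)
The plan is to reduce the conjecture, in the toric case relevant to this paper, to a rationality statement for a single combinatorial building block — the descendent topological vertex — by means of $\T$-equivariant virtual localization on $P_n(X,\beta)$.

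First I would apply virtual localization to the partition function. For $X$ a nonsingular toric 3-fold, the $\T$-fixed locus of $P_n(X,\beta)$ is a finite set of isolated points described by combinatorial data attached to the toric polytope: to each compact edge $e$ one assigns a partition $\lambda_e$ (the monomial ideal cutting out the Cohen--Macaulay thickening of the associated $\Pp$), and to each vertex $v$ a three-dimensional partition $\pi_v$ asymptotic along the three coordinate axes to the partitions of the incident edges. The integral over $[P_n(X,\beta)]^{vir}$ becomes a sum of localization residues indexed by this data, with the virtual normal bundle factorizing into a product of edge terms $\bE(\lambda_e)$ and vertex terms $\bW(\pi_v)$.

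Next I would evaluate the descendent insertions on the fixed loci. Taking the classes $\gamma_i$ to be $\T$-fixed point (or edge) classes, which is permitted by equivariance, the descendent $\tau_{k_i}(\gamma_i)=\text{ch}_{2+k_i}(\FF)\cdot\pi_X^*\gamma_i$ restricts on each fixed point to an explicit expression: the Chern character of the fixed subscheme is a sum over the boxes $b$ of the relevant $\pi_v$ of monomials $e^{\mathrm{wt}(b)}$, so $\text{ch}_{2+k_i}$ becomes a power sum in the box weights, which are linear forms in $s_1,s_2,s_3$. The upshot is that $\ZZ_{\mathsf{P}}$ is a finite sum, over assignments of edge partitions compatible with $\beta$, of products of edge factors (fixed powers of $q$, hence rational) and descendent vertex series
\begin{equation*}
\bW\Big(\lambda,\mu,\nu\ \Big|\ \prod_i \text{ch}_{2+k_i}\Big)=\sum_{\pi} q^{|\pi|}\,\bw(\pi)\,\prod_i \mathsf{P}_{k_i}(\pi),
\end{equation*}
the sum over 3D partitions $\pi$ asymptotic to $(\lambda,\mu,\nu)$, where $\bw(\pi)$ is the bare equivariant vertex weight and each $\mathsf{P}_{k_i}(\pi)$ is a polynomial in the box weights.

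Thus rationality of $\ZZ_{\mathsf{P}}$ reduces to rationality in $q$ of each descendent vertex series, and this is the step I expect to be the main obstacle. For the bare vertex ($r=0$) rationality is already available from the primary-field theory; the genuine difficulty is that the position-dependent weights $\mathsf{P}_{k_i}(\pi)$ refine the $q$-grading, so termwise rationality does not follow formally. My plan is to establish rationality instead for the \emph{capped} descendent vertex and capped edge — the building blocks in which the open legs are completed by relative caps modeled on $\Pp\times\C^2$ and on $\bA_n$-fibrations — since these reassemble into the absolute toric series by the degeneration/gluing formula. Rationality of the capped objects I would prove by induction, reducing through the degeneration formula to rationality of the relative stable pairs descendent theory of simpler geometries, with local curves as the base case. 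The technical heart is this relative rationality, where the descendent insertions must be controlled simultaneously with the relative boundary conditions; a useful guiding principle, which I would use to organize the estimates, is that insertion of the lowest box-weight power sums is modeled by $q\frac{d}{dq}$ acting on the series, so that descendents deform a rational function by a finite-order operator that preserves the field of rational functions in $q$.
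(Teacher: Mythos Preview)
The statement you are addressing is a \emph{conjecture}, not a theorem proved in this paper. The paper does not supply a proof; it cites the toric case as a consequence of the equivariant Rationality result established in the authors' earlier work \cite{part1,PP2}. So there is no ``paper's own proof'' here to compare against beyond that citation.

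Your outline is broadly consonant with the strategy of those cited papers: reduce via (capped) localization to descendent vertex contributions, and establish rationality of the capped descendent vertex by degeneration and induction, with local curves as the base. That architecture is correct and matches what the present paper invokes.

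The genuine gap is in your final paragraph. The heuristic that descendent insertions are modeled by a finite-order $q$-differential operator such as $q\,\frac{d}{dq}$ is valid only for $\tau_0(D)$ with $D$ a divisor class (the divisor equation); it does \emph{not} extend to $\tau_k$ for $k\geq 1$. The higher box-weight power sums $\mathsf{P}_{k}(\pi)$ are genuine polynomial weights on the fixed-point set that do not arise from any differential operator in $q$ acting on the bare vertex series, so the claim that ``descendents deform a rational function by a finite-order operator that preserves the field of rational functions'' is simply false as stated. The actual argument for local curves in \cite{part1} proceeds through a much more delicate analysis (pole structure, functional equations under $q\mapsto q^{-1}$, careful bookkeeping of the rubber contributions), and the extension to 2- and 3-leg capped descendent vertices in \cite{PP2} requires further geometric input. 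Your proposal correctly identifies \emph{where} the difficulty lies but leaves exactly that hard step as a black box with an incorrect heuristic attached.
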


Let $X$ be a nonsingular quasi-projective toric 3-fold. 
The stable pairs descendent
invariants can be lifted to equivariant cohomology (and
defined by residues in the open case). For
equivariant classes $\gamma_i \in H^*_{\T}(X,\mathbb{Q})$, we see
$$\bZ_{\mathsf{P}}\Big(X;q\ \Big|\ \prod_{i=1}^r \tau_{k_i}(\gamma_{i})\Big)^\T_\beta \in \mathbb{Q}(s_1,s_2,s_3)((q))$$ 
is a Laurent series in $q$ with coefficients in $H^*_\T(\bullet)$.
A central result of \cite{part1,PP2} is the following rationality
property.

\vspace{10pt}
\noindent{\bf Rationality.} {\em
Let 
$X$ be a nonsingular quasi-projective toric 3-fold. The partition function
$$\ZZ_{P}\Big(X;q\ \Big|   \prod_{i=1}^r \tau_{k_i}(\gamma_i)
\Big)^\T_\beta$$ is the 
Laurent expansion in $q$ of a rational function in the field 
$\mathbb{Q}(q,s_1,s_2,s_3)$.}

\vspace{10pt}

The above rationality result implies Conjecture 1 when $X$ is
a nonsingular projective toric 3-fold. 
The corresponding statement for the equivariant
Gromov-Witten descendent partition function is expected
(from calculational evidence) to be false.

\subsection{Correspondence} \label{corr45}
Let $X$ be a nonsingular quasi-projective toric 3-fold, and let 
$p_1,\ldots,p_m\in X$ be the distinct $\T$-fixed points.
Let $\mathsf{p}_j\in H^*_\T(X,\mathbb{Q})$
be the class of the $\T$-fixed point $p_j$.
Let $\alpha$ be a partition,
$$\alpha = (\alpha_1 \geq \alpha_2 \geq \cdots \geq \alpha_{\ell}>0), $$
of size{\footnote{The unique partition of size 0 is the empty partition
of length $\ell=0$.
In the empty case, $\tau_{\emptyset}(\mathsf{p}_j)=1$.} $|\alpha|$ and length $\ell$. 
Define the descendent insertion
\begin{equation}\label{ggp22}
\tau_\alpha(\mathsf{p}_j) =
\tau_{\alpha_1-1}(\mathsf{p}_j) \tau_{\alpha_2-1}(\mathsf{p}_j) \cdots
\tau_{\alpha_{\ell}-1}(\mathsf{p}_j) \ .
\end{equation}

Since the classes of the
$\T$-fixed points span a basis of  localized 
equivariant cohomology
$$H^*_\T(X,\mathbb{Q})\otimes
\mathbb{Q}(s_1,s_2,s_3),$$ we can consider equivariant 
descendents of $X$ in the following form
$$ 
\ZZ_{\mathsf{P}}\Big(X;q\ \Big|   \prod_{j=1}^m \tau_{\alpha^{(j)}}(\mathsf{p}_j)
\Big)^\T_\beta\ , \ \ \ 
\ZZ'_{\mathsf{GW}}\Big(X;u\ \Big|   \prod_{j=1}^m \tau_{\alpha^{(j)}}(\mathsf{p}_j)
\Big)^\T_\beta
$$
for partitions $\alpha^{(1)}, \ldots,\alpha^{(m)}$ associated to
the $\T$-fixed points.

A central result of the paper is the construction of a
universal correspondence matrix $\mathsf{K}$ indexed by partitions
$\alpha$ and $\widehat{\alpha}$ of positive size with{\footnote{As usual,
$i^2=-1$.}}
$$\mathsf{K}_{\alpha,\widehat{\alpha}}\in \mathbb{Q}[i,\bw_1,\bw_2,\bw_3]((u))\ $$
and ${\mathsf{K}}_{\alpha,\widehat{\alpha}}=0$ unless $|\alpha|\geq |\widehat{\alpha}|$.
The coefficients $\mathsf{K}_{\alpha,\widehat{\alpha}}$ are {\em symmetric}
in the variables $\bw_i$.
The matrix $\mathsf{K}$ is used to define a correspondence
rule
$$\tau_\alpha(\mathsf{p}_j) \ \ \ \mapsto \ \ \  
\widehat{\tau}_\alpha(\mathsf{p}_j) =
\sum_{|\alpha|\geq |\widehat{\alpha}|} 
{\mathsf{K}}_{\alpha,\widehat{\alpha}}(w^j_1,
w_2^j, w_3^j)\, 
\tau_{\widehat{\alpha}}(\mathsf{p}_j)\ $$
where $w_1^j, w_2^j, w_3^j$ are the tangent weights of $X$ at $p_j$.
The symmetry of $\mathsf{K}$ in the variables $\bw_i$
is required for the correspondence rule to be well-defined.
If $\alpha=\emptyset$, we formally set 
$$\widehat{\tau}_\emptyset(\mathsf{p}_j)=\widehat{1} =1\ .$$
To state the correspondence property of $\mathsf{K}$, the basic degree
$$d_\beta = \int_\beta c_1(X) \ \in \mathbb{Z}\ $$
associated to the class $\beta\in H_2(X,\mathbb{Z})$ will be 
required.

\begin{Theorem} There exists a universal correspondence matrix $\mathsf{K}$
(symmetric in the variables $\bw_i$)
satisfying \label{aaa}
$$(-q)^{-d_\beta/2}\ZZ_{\mathsf{P}}\Big(X;q\ \Big|   \prod_{j=1}^m \tau_{\alpha^{(j)}}(\mathsf{p}_j)
\Big)^\T_\beta =
(-iu)^{d_\beta}\ZZ'_{\mathsf{GW}}\Big(X;u\ \Big|   \prod_{j=1}^m \widehat{\tau}_{\alpha^{(j)}}(\mathsf{p}_j)
\Big)^\T_\beta 
$$
under the variable change $-q=e^{iu}$ 
for all nonsingular quasi-projective toric 3-folds $X$.
\end{Theorem}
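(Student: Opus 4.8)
The plan is to prove the correspondence by $\T$-equivariant localization, reducing the statement for an arbitrary toric $X$ to a purely local assertion at the fixed points, and then to construct $\mathsf{K}$ from the rationality of the stable pairs theory. First I would apply virtual localization to both partition functions. On each side the $\T$-fixed loci are organized by the toric graph $\Gamma(X)$ of $X$: vertices correspond to the fixed points $p_j$ and edges to the $\T$-invariant lines joining them. The localization contributions factor as a product of edge (propagator) terms and vertex terms, where all descendent insertions $\tau_{\alpha^{(j)}}(\mathsf{p}_j)$ are supported at the vertices. Since the proposed correspondence rule acts diagonally in the fixed points --- replacing $\tau_{\alpha^{(j)}}(\mathsf{p}_j)$ by $\widehat{\tau}_{\alpha^{(j)}}(\mathsf{p}_j)$ vertex by vertex --- it suffices to establish the correspondence for the capped descendent vertex, matching the Gromov-Witten and stable pairs theories of $\mathbb{C}^3$ with descendents at the origin, while checking that the edge contributions already agree under $-q=e^{iu}$ together with the normalizations $(-q)^{-d_\beta/2}$ and $(-iu)^{d_\beta}$.

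Next I would define the matrix $\mathsf{K}$ itself. The central input is the Rationality result: the stable pairs descendent series is the Laurent expansion of a rational function in $q$, so after the substitution $-q=e^{iu}$ it admits a Laurent expansion in $u$ with coefficients in $\mathbb{Q}(s_1,s_2,s_3)$, which can be compared term by term with the Gromov-Witten series. Using the triangular constraint $\mathsf{K}_{\alpha,\widehat{\alpha}}=0$ unless $|\alpha|\ge|\widehat{\alpha}|$, I would solve for the entries recursively on $|\alpha|$: the diagonal block $|\alpha|=|\widehat{\alpha}|$ fixes the leading correspondence, and the strictly lower blocks are then determined inductively so that the two vertex series agree. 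Invertibility of the diagonal block guarantees a unique solution, producing coefficients $\mathsf{K}_{\alpha,\widehat{\alpha}}\in\mathbb{Q}[i,\bw_1,\bw_2,\bw_3]((u))$.

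The remaining points are universality and symmetry. To prove that a single $\mathsf{K}$, depending only on the tangent weights $\bw_1,\bw_2,\bw_3$, simultaneously corrects every vertex for every toric $X$, I would use the $\bA_n$ surface geometries and $\bA_n$-fibered $3$-folds as a supply of computable examples: the topological vertex together with these relative and fibered geometries yields enough independent constraints to pin down the local transformation and to confirm that it is geometry-independent. The symmetry of $\mathsf{K}_{\alpha,\widehat{\alpha}}$ in the $\bw_i$ reflects the $S_3$-symmetry of the local $\mathbb{C}^3$ under permuting coordinate directions, which I would check is respected by the defining recursion; this symmetry is exactly what makes the substitution $\mathsf{K}_{\alpha,\widehat{\alpha}}(w_1^j,w_2^j,w_3^j)$ well-defined independently of the ordering of the weights at $p_j$.

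The hard part will be decoupling the local descendent correction from the global geometry --- that is, proving that the vertex-by-vertex rule obtained from the rationality recursion is compatible with gluing along all edges simultaneously and is genuinely universal. The difficulty is that the descendent insertions at a vertex interact with the edge data entering that vertex, so matching a single vertex in isolation is not automatic; the role of the $\bA_n$ geometries is precisely to generate enough relations to force the local answer and to verify its independence of $X$. Establishing this rigidity, together with controlling the existence of the non-equivariant limit of the resulting correspondence, is where the main work lies.
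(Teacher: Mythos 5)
Your high-level architecture matches the paper's: reduce to the capped descendent vertex by (capped) localization, use the rationality of the stable pairs side to make $-q=e^{iu}$ meaningful, and use $\bA_n$ geometries to handle the multi-leg gluing. But the proposal leaves the two decisive steps unspecified, and one of your arguments is circular. First, your definition of $\mathsf{K}$ by ``solving recursively on $|\alpha|$ with an invertible diagonal block'' is not a well-posed procedure: you never say which system of equations $\mathsf{K}$ must satisfy. In the paper, $\mathsf{K}$ is pinned down by demanding the $1$-leg capped vertex correspondence for \emph{all} relative boundary partitions $\lambda$, and existence and uniqueness rest on the invertibility of the descendent-versus-relative-condition matrices $\bC^d_{\mathsf{P}}(\alpha,\lambda)$ and $\bC^d_{\mathsf{GW}}(\alpha,\lambda)$ (Lemmas \ref{lsp}--\ref{lgw}, proven by a triangularity analysis in two partial orderings plus a Vandermonde determinant), together with a compatibility statement as $d=|\lambda|$ grows (Proposition \ref{prprr}, itself a geometric vanishing on $\PP^2\times\PP^1/\PP^2_\infty$). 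Without identifying this linear system and proving its invertibility, no matrix $\mathsf{K}$ has actually been constructed. A priori the resulting coefficients lie only in $\mathbb{Q}(i,s_1,s_2,s_3)((u))$; polynomiality is a separate theorem, not an output of the recursion.

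Second, your symmetry argument fails as stated: the $S_3$-symmetry of $\com^3$ does \emph{not} act on the $1$-leg capped vertex geometry, since the leg singles out one coordinate direction, so the defining recursion manifestly breaks the symmetry between $s_3$ and $s_1,s_2$ and there is nothing to ``check'' at that stage. The paper obtains the symmetry only after the $2$-leg case is established: the induction shows the \emph{same} matrix $\mathsf{K}$ satisfies the $1$-leg correspondence with $s_2$ and $s_3$ interchanged, and uniqueness of the $1$-leg solution then forces symmetry. Relatedly, you correctly flag the multi-leg decoupling as the hard part but supply no mechanism; the paper's mechanism is a family of dimension-forced vanishings on $\bF_2\times\Pp/\bD_\infty$ (and an $\bA_2$ analogue for three legs), capped-localized into linear relations among $2$- and $3$-leg vertices, combined with a maximal-rank theorem for the matrix of descendent-free capped $2$-leg vertices $\bC_{\mathsf{P}}(1\,|\,\emptyset,\delta,\nu)$ proven via the topological vertex specialization $s_1+s_2+s_3=0$ and skew Schur function determinants (Proposition \ref{icecream}). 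That maximal-rank statement is the engine of the whole induction and is absent from your proposal.
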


The variable change in the descendent correspondence is well-defined
by the rationality result for the stable pairs partition function.
However, much of  
the $u$ dependence of $\mathsf{K}$ remains 
mysterious.{\footnote{Conjectural formulas
for a partial descendent correspondence between Gromov-Witten theory
and the Donaldson-Thomas theory of ideal sheaves are proposed in \cite{oop}.
The investigation of the relationship between descendents for
stable pairs and ideal sheaves is an interesting direction for further
study.
Though not fully equivariant, the formulas of \cite{oop} should
partially constrain $\mathsf{K}$.
}}
A central point of the paper is to show the consequences which
can be derived from various accessible properties of the $u$ dependence.

We will construct the matrix $\mathsf{K}$ from the  study
of 1-leg equivariant descendent invariants. 
A geometric argument
using capped descendent vertices following \cite{PP2}
is used to prove the 2-leg  and then the 
complete 3-leg result of Theorem \ref{aaa}.
The argument uses the full force of the equivariant Gromov-Witten/Pairs
correspondence for primary fields in \cite{moop,mpt}.

Along with the construction of $\mathsf{K}$, we prove
several basic properties.
A uniqueness statement for $\mathsf{K}$  in the
context of capped vertices appears in Theorem \ref{ccc} of Section \ref{cdvert}. 
The leading terms of $\mathsf{K}$ are determined
by the following result.

\begin{Theorem}\label{T789}
For partitions $\alpha$ of positive size, 
$\mathsf{K}_{\alpha,\alpha} =  (iu)^{\ell(\alpha)-|\alpha|}$
and 
$$\mathsf{K}_{\alpha,\widehat{\alpha}\neq \alpha} = 0 \ \ \ \text{if} \ \ \
|\alpha|\leq
 |\widehat{\alpha}| + |\ell(\alpha)-\ell(\widehat{\alpha})|\ . $$
\end{Theorem}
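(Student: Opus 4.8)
The plan is to prove the statement by reducing to the 1-leg theory from which $\mathsf{K}$ is constructed and then controlling the two displayed conclusions by two independent gradings. Write $\|\alpha\|:=|\alpha|-\ell(\alpha)=\sum_i(\alpha_i-1)$ for the descendent degree of $\tau_\alpha(\mathsf{p}_j)$. A direct check shows the vanishing hypothesis $|\alpha|\le|\widehat\alpha|+|\ell(\alpha)-\ell(\widehat\alpha)|$ is equivalent to
$$\min\big(\|\alpha\|-\|\widehat\alpha\|,\ (|\alpha|+\ell(\alpha))-(|\widehat\alpha|+\ell(\widehat\alpha))\big)\le 0,$$
so it is enough to exhibit one vanishing mechanism for each argument of the minimum and to evaluate the diagonal, which lies on the common boundary where both arguments vanish. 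Since $\mathsf{K}$ is defined through 1-leg capped descendent invariants, all of this may be checked at a single $\T$-fixed point with insertion $\tau_\alpha(\mathsf{p})$.

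First I would record the equivariant \emph{dimension grading}. By \eqref{k345} and its stable-pairs analogue, every coefficient of $u^{2g-2}$ in $\ZZ'_{\mathsf{GW}}(\tau_{\widehat\alpha})$ and of $q^{n}$ in $\ZZ_{\mathsf{P}}(\tau_\alpha)$ (abbreviating the 1-leg series) is homogeneous in $s_1,s_2,s_3$ of complex degree $|\widehat\alpha|+\ell(\widehat\alpha)-d_\beta$ and $|\alpha|+\ell(\alpha)-d_\beta$ respectively. The prefactors $(-iu)^{d_\beta}$ and $(-q)^{-d_\beta/2}$ carry no $s$-degree, so Theorem \ref{aaa} forces $\mathsf{K}_{\alpha,\widehat\alpha}$ to be homogeneous in $\bw_1,\bw_2,\bw_3$ of degree $(|\alpha|+\ell(\alpha))-(|\widehat\alpha|+\ell(\widehat\alpha))$. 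As a symmetric polynomial in the $\bw_i$ it then vanishes when this degree is negative, which disposes of the second argument of the minimum in its interior; the boundary case of degree $0$ leaves a weight-independent function of $u$ whose vanishing off the diagonal is treated together with the diagonal below.

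Next I would extract the \emph{order grading} in $u$. The essential input is the rationality of the stable-pairs descendent series: $\ZZ_{\mathsf{P}}(\tau_\alpha)$ is a rational function of $q$, and under $-q=e^{iu}$ the Laurent expansion of the Gromov--Witten side about $u=0$ is the expansion of the pairs function about $q=-1$. The order of the leading term in $u$ is therefore governed by the order of the pole at $q=-1$, which for the 1-leg capped descendent vertex is controlled by the descendent degree $\|\alpha\|$. Tracking this order through the prefactors shows that $\mathsf{K}$ can only strictly lower the descendent degree off the diagonal, i.e.\ $\mathsf{K}_{\alpha,\widehat\alpha}=0$ once $\|\alpha\|\le\|\widehat\alpha\|$ with $\widehat\alpha\neq\alpha$, which is the first argument of the minimum.

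Finally I would compute the diagonal, which sits exactly at $\|\alpha\|=\|\widehat\alpha\|$ and $|\alpha|+\ell(\alpha)=|\widehat\alpha|+\ell(\widehat\alpha)$. At this extremal order the leading term of the descendent vertex splits as a product over the insertions, so $\mathsf{K}_{\alpha,\alpha}$ factors over the parts of $\alpha$ and reduces to the single-descendent coefficient; the explicit leading-order 1-leg matching for $\tau_{k}(\mathsf{p})$ yields $(iu)^{-k}$, and multiplying over the parts gives $\mathsf{K}_{\alpha,\alpha}=(iu)^{-\sum_i(\alpha_i-1)}=(iu)^{\ell(\alpha)-|\alpha|}$. \textbf{The main obstacle} is sharpness at the boundary: neither grading by itself rules out the corner where an argument of the minimum vanishes while $\widehat\alpha\neq\alpha$ (for example $\alpha=(3,1)$, $\widehat\alpha=(2,2)$ of equal size and length, or $\alpha=(4)$, $\widehat\alpha=(2,1)$ where the dimension degree vanishes but the descendent degree drops). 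For these I would argue directly from the product (disconnected) structure of the extremal vertex term, which separates distinct partitions of the same size and length, together with the rigidity of the non-equivariant dimension constraint on the weight-independent part and the uniqueness statement for capped vertices in Theorem \ref{ccc}, to force the remaining off-diagonal boundary entries to vanish.
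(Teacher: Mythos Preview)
Your decomposition into the two gradings $\|\cdot\|=|\cdot|-\ell(\cdot)$ and $|\cdot|+\ell(\cdot)$, and your rewriting of the vanishing hypothesis as $\min(\|\alpha\|-\|\widehat\alpha\|,\,(|\alpha|+\ell(\alpha))-(|\widehat\alpha|+\ell(\widehat\alpha)))\le 0$, are exactly how the paper organizes the argument (Propositions \ref{t789} and \ref{t789789}). For the second grading, invoking the homogeneity degree of $\mathsf{K}_{\alpha,\widehat\alpha}$ in the $\bw_i$ is legitimate and essentially what the paper does.

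The real gap is your treatment of the first grading. You claim the pole order of the 1-leg stable-pairs capped vertex at $q=-1$ is ``controlled by the descendent degree $\|\alpha\|$,'' and that tracking $u$-order through $-q=e^{iu}$ forces $\mathsf{K}$ to strictly lower $\|\cdot\|$. No such pole bound is established anywhere, and the $u$-order of the Gromov--Witten 1-leg vertex depends on the relative condition $\lambda$ as well as on $\widehat\alpha$, so there is no clean ``order grading'' on $\mathsf{K}$ itself to extract this way. The paper's mechanism is more elementary and does not touch $q=-1$ at all: in the cap $N/N_\infty$ one rewrites $\mathsf{p}_0=s_1s_2\mathsf{N}_0$ and weights the relative condition by $\mathsf{p}_\infty$, after which the moduli spaces of stable pairs and stable maps are \emph{compact} of virtual dimension $|\lambda|-\ell(\lambda)$ while the integrand has codimension $|\alpha|-\ell(\alpha)$. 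Hence $\bC^d_{\mathsf{P}}(\alpha,\lambda)=\bC^d_{\mathsf{GW}}(\alpha,\lambda)=0$ whenever $\|\alpha\|<\|\lambda\|$; both matrices are block lower-triangular for $\unrhd$, and therefore so is $\mathsf{K}$ (Lemma \ref{efefef}).

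Your handling of the boundary and the diagonal is also too loose. The paper does not appeal to a ``product structure of the extremal vertex term'' or to the uniqueness in Theorem \ref{ccc}; it \emph{computes}. On the $\unrhd$-boundary the dimension constraint makes the cap integrals nonequivariant numbers, and Lemma \ref{match1} evaluates both sides explicitly as $d^{\ell(\alpha)-2}/\prod_i(\alpha_i-1)!$ (times $q^d$, resp.\ $u^{-2}$) for $d-1=\|\alpha\|$; this simultaneously fixes $\mathsf{K}_{\alpha,\alpha}=(iu)^{\ell(\alpha)-|\alpha|}$ and annihilates the off-diagonal entries in that block. On the $\uncrazeright$-boundary (your examples $(3,1)$ vs.\ $(2,2)$, $(4)$ vs.\ $(2,1)$), the paper works on $S\times\Pp/S_\infty$, separates the insertion points, and reduces to a further explicit cap evaluation (Lemma \ref{lee5}) involving a character-sum identity. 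These corners are exactly where a soft structural argument does not suffice.
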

In other words, 
we can write the correspondence as
$$\widehat{\tau}_{\alpha}(\mathsf{p}) = (iu)^{\ell(\alpha)-|\alpha|} 
\tau_{\alpha}(\mathsf{p})
+ \ldots$$
where the dots stand for terms $\tau_{\widehat{\alpha}}$
with partitions
$\widehat{\alpha}$ of positive size satisfying 
$$|\alpha|> |\widehat{\alpha}| + |\ell(\alpha)-\ell(\widehat{\alpha})|
\ .$$

Theorem \ref{T789}, proven in Section \ref{ooolll},
plays an important role
in the applications.
We prove the
$u$ coefficients of $\mathsf{K}_{\alpha,\widehat{\alpha}}$
are symmetric polynomials in the variables $\bw_i$ in Section \ref{fullt}.

\begin{Theorem}\label{ll33}
The $u$ coefficients of  $\mathsf{K}_{\alpha,\widehat{\alpha}}\in
\mathbb{Q}[i,\bw_1,\bw_2,\bw_3]((u))$
are symmetric and homogeneous in the variables $\bw_i$
of degree $|\alpha|+\ell(\alpha) - |\widehat{\alpha}| 
- \ell(\widehat{\alpha})$. 
\end{Theorem}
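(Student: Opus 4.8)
The plan is to derive both the symmetry and the homogeneity from the uniqueness of $\mathsf{K}$ established in Theorem \ref{ccc}, by exhibiting symmetries of the defining capped-vertex data. I would use the group $G=\Cs\times S_3$ acting on the equivariant parameters, where $S_3$ permutes $(s_1,s_2,s_3)$ and $\Cs$ rescales them simultaneously, $s_i\mapsto\lambda s_i$. Both actions are realized by $\T$-equivariant automorphisms of $\C^3$ (permutation and rescaling of the coordinate factors) fixing the origin, and hence fixing every vertex descendent insertion $\tau_\alpha(\p)$. Consequently $G$ acts on the whole system of capped descendent series out of which $\mathsf{K}$ is constructed.

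First I would record the dimensional analysis. Assigning complex degree $1$ to each $s_i$ (hence to each weight $\bw_i$), to each $\psi$, and to each $\text{ch}_{2+k}(\FF)$, the point class $\p$ has degree $3$, so $\tau_\alpha(\p)$ has insertion degree $|\alpha|+2\ell(\alpha)$ in Gromov--Witten theory and $|\alpha|+\ell(\alpha)$ in the theory of stable pairs. The crucial input is that in dimension $3$ the virtual dimension $d_\beta+r$ of $\overline{M}'_{g,r}$ and the virtual dimension $d_\beta$ of $P_n$ are independent of $g$ and of $n$. Hence each $\big\langle\tau_\alpha(\p)\big\rangle'_{g,\beta}$ is homogeneous in $(s_1,s_2,s_3)$ of the $g$-independent degree $|\alpha|+\ell(\alpha)-d_\beta$, and each $\big\langle\tau_\alpha(\p)\big\rangle_{n,\beta}$ is homogeneous of the same $n$-independent degree. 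It follows that $u$ and $q$ carry $G$-weight $0$: the scaling $s_i\mapsto\lambda s_i$ multiplies $\bZ'_{\mathsf{GW}}(\tau_\alpha)$ and $\ZZ_{\mathsf{P}}(\tau_\alpha)$ by $\lambda^{|\alpha|+\ell(\alpha)-d_\beta}$ while leaving $u,q$ untouched. This is exactly what produces $\bw$-homogeneity uniformly in the power of $u$.

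Next I would feed the $G$-action through the correspondence of Theorem \ref{aaa}. Applying $\sigma\in S_3$ permutes the $\bw_i$ while leaving both (relabelled) partition functions invariant, so the permuted matrix $\sigma\cdot\mathsf{K}$ again solves the correspondence; by the uniqueness of Theorem \ref{ccc}, $\sigma\cdot\mathsf{K}=\mathsf{K}$, which is the symmetry. Applying the scaling, and using that $\bZ'_{\mathsf{GW}}(\tau_{\widehat\alpha})$ scales by $\lambda^{|\widehat\alpha|+\ell(\widehat\alpha)-d_\beta}$ while $(-q)^{-d_\beta/2}\ZZ_{\mathsf{P}}(\tau_\alpha)$ scales by $\lambda^{|\alpha|+\ell(\alpha)-d_\beta}$ (both degree-$0$ prefactors $(-q)^{-d_\beta/2}$ and $(-iu)^{d_\beta}$ being inert), the rescaled matrix $\lambda^{(|\widehat\alpha|+\ell(\widehat\alpha))-(|\alpha|+\ell(\alpha))}\mathsf{K}_{\alpha,\widehat\alpha}(\lambda\bw)$ again solves the correspondence. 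Uniqueness then forces
$$\mathsf{K}_{\alpha,\widehat\alpha}(\lambda\bw)=\lambda^{\,|\alpha|+\ell(\alpha)-|\widehat\alpha|-\ell(\widehat\alpha)}\,\mathsf{K}_{\alpha,\widehat\alpha}(\bw),$$
which is precisely the claimed homogeneity. The triangularity and diagonal normalization $\mathsf{K}_{\alpha,\alpha}=(iu)^{\ell(\alpha)-|\alpha|}$ of Theorem \ref{T789} supply the base case (degree $0$, no $\bw$) and guarantee genuine uniqueness of the solution of the defining linear system, so that both invariance arguments apply entrywise; note that for $\widehat\alpha\neq\alpha$ the inequality in Theorem \ref{T789} already forces the exponent above to be strictly positive, consistent with $\mathsf{K}_{\alpha,\widehat\alpha}\in\Q[i,\bw_1,\bw_2,\bw_3]((u))$ being polynomial in the $\bw_i$.

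The main obstacle I anticipate is upgrading the manifest symmetry to the full $S_3$. A single capped descendent vertex singles out a leg direction and is only visibly invariant under the $S_2$ stabilizing that leg, so the permutations moving the leg do not act on the $1$-leg data directly. To reach them I would phrase the uniqueness of Theorem \ref{ccc} at the level of the three-leg capped vertex, whose geometry is genuinely $S_3$-symmetric, and then verify that the reduction of $\mathsf{K}$ to $1$-leg data is compatible with permuting the legs. A secondary technical point is checking that the permuted and rescaled matrices solve the \emph{same} correspondence for all toric $X$, and not merely for $\C^3$; this is again where the universality built into Theorem \ref{ccc} is essential.
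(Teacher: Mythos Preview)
Your homogeneity argument is correct and is exactly what the paper does: the degree $|\alpha|+\ell(\alpha)-|\widehat{\alpha}|-\ell(\widehat{\alpha})$ is read off from the homogeneity of the matrices $\bC^d_{\mathsf P}$ and $\bC^d_{\mathsf{GW}}$ by straightforward dimensional analysis, and this is done in Section~\ref{conk} before any symmetry is known.

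However, there are two genuine gaps.

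\textbf{Symmetry.} Your proposed route---invoke the $S_3$-symmetry of the $3$-leg capped vertex and then appeal to uniqueness---is circular in the paper's logical order. The proof of Theorem~\ref{ccc} in the $3$-leg case (Section~\ref{fulltt}) explicitly \emph{uses} the symmetry of $\mathsf K$ in the $s_i$: the induction over $\bA_2$-geometry requires that ``all the $2$-leg cases are equivalent'' and that one ``no longer need to worry about the orientation of $\mathsf K$.'' So you cannot assume the $3$-leg correspondence in order to deduce symmetry. The paper's actual argument is more delicate and lives at the $2$-leg level: the $2$-leg induction (Section~\ref{fullt}) is run with $\mathsf K$ oriented so that $s_3$ points along $\Pp$, and the base case is the $1$-leg correspondence with the leg in the $s_3$-direction. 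Running the induction to completion with $\mu$ arbitrary and $\lambda=\emptyset$ produces the $1$-leg correspondence with the leg now in the $s_2$-direction, for the \emph{same} matrix $\mathsf K$. Uniqueness of the $1$-leg correspondence (Lemmas~\ref{lsp}--\ref{lgw}) then forces invariance under $s_2\leftrightarrow s_3$; combined with the manifest $s_1\leftrightarrow s_2$ symmetry of the $1$-leg construction, this gives the full $S_3$.

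\textbf{Polynomiality.} You do not address why the $u$-coefficients lie in $\Q[i,s_1,s_2,s_3]$ rather than merely $\Q(i,s_1,s_2,s_3)$; noting that the homogeneity degree is nonnegative is not enough (a homogeneous symmetric rational function need not be polynomial). The paper proves polynomiality in $s_3$ first, via a geometric input: the $\T$-equivariant proper maps
\[
P_n(\mathsf{Cap}\,|\,\lambda)_d \longrightarrow \mathrm{Sym}^d(\C^2),\qquad
\overline M_{g,\ell}(\mathsf{Cap}\,|\,\lambda)_d \longrightarrow \mathrm{Sym}^d(\C^2),
\]
on whose target the third torus factor acts trivially, force the capped-descendent entries to be polynomial in $s_3$; and the determinants of the $1$-leg matrices used to invert have no $s_3$-dependence. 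Only after symmetry is established does polynomiality in all three variables follow. Your proposal needs an argument of this kind.
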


\subsection{Consequences} \label{conse}
We derive several  
implications of our descendent
correspondence which require only basic properties of 
$\mathsf{K}$.

A first consequence
is the following result for the
non-equivariant partition functions with primary
fields $\tau_0(\gamma)$ and stationary descendents $\tau_k(\mathsf{p})$.

\begin{Theorem} \label{mmpp22}
Let $X$ be a nonsingular projective toric 3-fold.
After the variable change $-q=e^{iu}$, we have
\begin{multline*}
(-q)^{-d_\beta/2}\ \bZ_{\mathsf{P}}\left(X;q \
\Bigg| \ \prod_{i=1}^r {\tau}_0(\gamma_{i})
 \prod_{j=1}^s {\tau}_{k_j}(\mathsf{p}) \right)_{\beta}=\\
(-iu)^{d_\beta} (iu)^{-\sum k_j}\ 
 \bZ'_{\mathsf{GW}}\left(X;u \ \Bigg| \ \prod_{i=1}^r \tau_0(\gamma_{i}) 
\prod_{j=1}^s {\tau}_{k_j}(\mathsf{p})  \right)_{\beta} \ 
\end{multline*}
where $\gamma_i\in H^*(X,\mathbb{Q})$ are classes of positive degree.
\end{Theorem}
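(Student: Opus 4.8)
The plan is to deduce Theorem~\ref{mmpp22} from the fully equivariant correspondence of Theorem~\ref{aaa} by passing to the non-equivariant limit $s_1,s_2,s_3\to 0$. Since $X$ is projective, the relevant moduli spaces of stable maps and of stable pairs are proper, so every equivariant invariant in sight is a \emph{polynomial} in $s_1,s_2,s_3$; the limit $s\to 0$ therefore exists and, as specialization commutes with proper pushforward, recovers the non-equivariant partition functions in the statement. I would lift each positive-degree class $\gamma_i$ to an equivariant class of the same degree, and lift the non-equivariant point class $\mathsf{p}$, for \emph{all} $s$ stationary insertions simultaneously, to the equivariant fixed-point class $\mathsf{p}_{a_0}$ of a single chosen $\T$-fixed point $p_{a_0}$. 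Because the non-equivariant limit is independent of the chosen equivariant lift, it then suffices to compute $\lim_{s\to 0}$ of both sides of Theorem~\ref{aaa} for these insertions.

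With this lift the $s$ stationary descendents at $p_{a_0}$ assemble into the single partition $\alpha=(k_1+1,\dots,k_s+1)$, so that $\prod_{j}\tau_{k_j}(\mathsf{p}_{a_0})=\tau_\alpha(\mathsf{p}_{a_0})$ with $\ell(\alpha)=s$ and $|\alpha|=s+\sum_j k_j$; we take $\alpha^{(a_0)}=\alpha$ and $\alpha^{(j)}=\emptyset$ for $j\neq a_0$ in Theorem~\ref{aaa}, extending the correspondence to the primary insertions $\tau_0(\gamma_i)$ by linearity in the fixed-point basis. By Theorem~\ref{T789} the diagonal coefficient is
$$\mathsf{K}_{\alpha,\alpha}=(iu)^{\ell(\alpha)-|\alpha|}=(iu)^{-\sum_j k_j},$$
while the same theorem gives $\mathsf{K}_{(1),\widehat\alpha}=0$ for $\widehat\alpha\neq(1)$ and $\mathsf{K}_{(1),(1)}=1$, so that each primary is fixed, $\widehat\tau_0(\gamma_i)=\tau_0(\gamma_i)$. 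Hence the diagonal part of the correspondence contributes exactly $(iu)^{-\sum_j k_j}\prod_i\tau_0(\gamma_i)\,\tau_\alpha(\mathsf{p}_{a_0})$, which in the limit $s\to0$ (where $\mathsf{p}_{a_0}\to\mathsf{p}$) yields the right-hand side of Theorem~\ref{mmpp22}.

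The heart of the argument is to show that every off-diagonal term drops out in the limit. Fix $\widehat\alpha\neq\alpha$ with $\mathsf{K}_{\alpha,\widehat\alpha}\neq 0$. By Theorem~\ref{T789} we then have $|\alpha|>|\widehat\alpha|+|\ell(\alpha)-\ell(\widehat\alpha)|$, and a short check gives
$$\delta:=|\alpha|+\ell(\alpha)-|\widehat\alpha|-\ell(\widehat\alpha)>0;$$
by Theorem~\ref{ll33}, $\mathsf{K}_{\alpha,\widehat\alpha}$ is homogeneous in the weights $w_1^{a_0},w_2^{a_0},w_3^{a_0}$ of this positive degree $\delta$. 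On the other hand, the equivariant Gromov-Witten invariant with $\tau_\alpha(\mathsf{p}_{a_0})$ replaced by $\tau_{\widehat\alpha}(\mathsf{p}_{a_0})$ is, in each genus, the integral of a class of complex degree $\sum_i\deg\gamma_i+|\widehat\alpha|+2\ell(\widehat\alpha)$ over a virtual class of complex dimension $d_\beta+r+\ell(\widehat\alpha)$; it is therefore homogeneous in $s$ of degree $\sum_i\deg\gamma_i+|\widehat\alpha|+\ell(\widehat\alpha)-d_\beta-r$, which is exactly $\delta$ \emph{less} than the corresponding degree $D_0:=\sum_i\deg\gamma_i+|\alpha|+\ell(\alpha)-d_\beta-r$ of the diagonal invariant. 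The only case carrying nonzero content is the dimension-matched one $D_0=0$; there the modified invariant has negative $s$-degree and, being a polynomial in $s$, vanishes identically. Thus every off-diagonal contribution is zero and only the diagonal survives, completing the identity.

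I expect the degree bookkeeping of the previous paragraph to be the main obstacle: one must verify both that $\delta>0$ for all surviving off-diagonal entries and that passing from $\tau_\alpha(\mathsf{p}_{a_0})$ to $\tau_{\widehat\alpha}(\mathsf{p}_{a_0})$ lowers the homogeneity degree by precisely $\delta$, so that projectivity can force the vanishing. A secondary point requiring care is the legitimacy of collapsing all $s$ point insertions onto a single fixed point $p_{a_0}$ — namely that the resulting equivariant invariants still admit the correct non-equivariant limit independent of this choice, and that the primaries $\tau_0(\gamma_i)$, handled by linear extension of $\mathsf{K}$ with their pole-bearing localization coefficients, recombine to the honest polynomial invariants before the limit is taken. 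Finally, in the dimension-unmatched case $D_0\neq 0$ one checks directly that both sides of the asserted identity vanish in the non-equivariant limit, so that the equality holds trivially there as well.
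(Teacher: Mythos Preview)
Your overall strategy---lift to $\T$-equivariant insertions, apply Theorem~\ref{aaa}, and kill the off-diagonal $\mathsf{K}_{\alpha,\widehat\alpha}$ terms by the degree count $\delta=|\alpha|+\ell(\alpha)-|\widehat\alpha|-\ell(\widehat\alpha)>0$---is exactly the paper's, and your derivation of $\delta>0$ from Theorem~\ref{T789} is correct (the paper invokes the equivalent Proposition~\ref{t789789}). But there is a genuine gap in how you handle the primary insertions.

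Theorem~\ref{aaa} does not act on individual descendents; it groups \emph{all} insertions at a given fixed point into a single partition $\alpha^{(j)}$ and applies $\widehat{\tau}_{\alpha^{(j)}}(\mathsf{p}_j)$. When you localize $\widetilde{\gamma}_i=\sum_j c_{ij}(s)\,\mathsf{p}_j$, the term with $j=a_0$ deposits a $\tau_0(\mathsf{p}_{a_0})$ factor that merges with your stationary partition: the input at $p_{a_0}$ becomes $(1)^n\cup\alpha$, not $\alpha$. The matrix $\mathsf{K}$ is \emph{not} multiplicative---$\widehat{\tau}_{(1)^n\cup\alpha}\neq\tau_0^{\,n}\cdot\widehat{\tau}_\alpha$ (see Proposition~13 in the paper for the correction term)---so your assertion that ``each primary is fixed, $\widehat\tau_0(\gamma_i)=\tau_0(\gamma_i)$'' does not follow from $\mathsf{K}_{(1),(1)}=1$ alone. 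Worse, the off-diagonal pieces of $\widehat{\tau}_{(1)^n\cup\alpha}$ now depend on $n$, so after re-summing with the pole-bearing coefficients $c_{i,a_0}(s)$ you no longer have a clean expression of the form $\prod_i\tau_0(\widetilde\gamma_i)\cdot\tau_{\widehat\alpha}(\mathsf{p}_{a_0})$ to which your polynomial-degree argument applies.

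The paper's fix is precisely to prevent this collision: it first proves a lemma that every divisor class can be lifted to $H^2_\T(X,\mathbb{Q})$ with \emph{trivial restriction to a chosen fixed point} $p_m$. Lifting each $\gamma_i$ through such divisors forces $\iota_{p_m}^*\widetilde\gamma_i=0$, so the primaries live only over $p_1,\dots,p_{m-1}$, while the stationary insertions are placed at $p_m$. Now $\alpha^{(k)}=(1)^{n_k}$ for $k<m$ (where $\widehat{\tau}_{(1)^{n_k}}=\tau_{(1)^{n_k}}$ exactly), and $\alpha^{(m)}=\kappa=(k_1{+}1,\dots,k_s{+}1)$. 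The Gromov-Witten side is then literally $\prod_i\tau_0(\widetilde\gamma_i)\cdot\widehat{\tau}_\kappa(\mathsf{p}_m)$, an honest polynomial in $s$, and your degree argument goes through cleanly. Once you insert this separation trick, the remainder of your proof is correct and matches the paper's.
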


Theorem \ref{mmpp22} was conjectured for arbitrary nonsingular
projective 3-folds in \cite{MNOP2} for the Donaldson-Thomas
theory of ideal sheaves. Our proof, via Theorem
\ref{aaa}, uses only the leading terms of the $u$ dependence of
correspondence matrix $\mathsf{K}$.
The non-equivariant limit plays an important role in the
simple form of the descendent correspondence in Theorem \ref{mmpp22}.
If fully $\T$-equivariant partition functions are considered,
then complete knowledge of the matrix $\mathsf{K}$ is required.

By Theorem \ref{mmpp22} and the rationality result for stable pairs 
descendents, we conclude
$$e^{-\frac{iud_\beta}{2}}\cdot (-iu)^{d_\beta} (iu)^{-\sum k_j}\ 
 \bZ'_{\mathsf{GW}}\left(X;u \ \Bigg| \ \prod_{i=1}^r \tau_0(\gamma_{i}) 
\prod_{j=1}^s {\tau}_{k_j}(\mathsf{p})  \right)_{\beta}$$
is a rational function of $e^{-iu}$. We know no other approaches
to such rationality results for descendents
 in Gromov-Witten theory.

In a different direction, 
we can also prove the Gromov-Witten/Pairs correspondences
for primary fields in several new relative cases.
The first is a non-equivariant log Calabi-Yau geometry
with the relative
divisor given by a $K3$ surface.

\begin{Theorem} \label{mmpp44}
Let $X$ be a nonsingular projective Fano toric 3-fold, and let
$K3\subset X$ be a nonsingular anti-canonical $K3$ surface.
After the variable change $-q=e^{iu}$, 
we have
\begin{multline*}
(-q)^{-d_\beta/2}\,
\bZ_{\mathsf{P}}
\Big( X/K3;q\ \Big|\
\tau_0(\gamma_1)\cdots\tau_0(\gamma_r)\, \Big| \mu \Big)_\beta
=\\
(-iu)^{d_\beta+\ell(\mu)-|\mu|}\,  \bZ'_{\mathsf{GW}}
\Big( X/K3;u\ \Big|\
\tau_0(\gamma_1)\cdots\tau_0(\gamma_r) \, \Big| \mu  \Big )_\beta
\,,
\end{multline*}
where $\gamma_i\in H^*(X,\mathbb{Q})$ are arbitrary classes.
\end{Theorem}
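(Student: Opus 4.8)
The theorem relates stable pairs and Gromov-Witten invariants for a relative geometry $X/K3$ where $K3$ is an anticanonical divisor in a Fano toric 3-fold. Let me think about the proof strategy.The plan is to deduce the relative correspondence for $X/K3$ from the absolute toric correspondence of Theorem \ref{aaa} via a degeneration argument, using the fact that the $K3$ surface appears as an anticanonical divisor. First I would invoke the degeneration formula for both the stable pairs and Gromov-Witten theories: since $K3 \subset X$ is a nonsingular anticanonical divisor, I can deform to the normal cone of $K3$ inside $X$, degenerating $X$ into the union $X \cup_{K3} \big(\mathbb{P}(N_{K3/X}\oplus \oh) \big)$ along the $K3$. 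The relative invariants of $X/K3$ then enter the degeneration formula, glued against the relative invariants of the projective completion of the normal bundle along its $K3$-boundary. The key structural input is that the normal bundle $N_{K3/X}$ is trivial (as $K3$ is anticanonical and the adjunction formula forces $K_{K3} = (K_X + K3)|_{K3} = \oh_{K3}$), so the relevant cohomology of the $K3$ is unobstructed and the boundary geometry is rigid.

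Next I would reduce the relative insertions $\mu$ along the $K3$ to absolute insertions on the full toric 3-fold. The subtle point is that a $K3$ surface is \emph{not} toric, so Theorem \ref{aaa} does not apply directly to $X/K3$. The resolution I would pursue is to exploit the primary-field nature of the correspondence: because the descendent insertions are all of the form $\tau_0(\gamma_i)$ with $\gamma_i$ arbitrary, and because the absolute toric correspondence for primary fields is already established in \cite{moop,mpt}, I can match the two degeneration formulas term by term. The relative boundary conditions $\mu$ contribute the exponent shift $\ell(\mu)-|\mu|$ precisely as in the known relative/absolute comparison for primary fields, and the exponent $d_\beta$ is split across the two pieces of the degeneration according to the additivity $d_\beta = d_{\beta_1} + d_{\beta_2}$ of the degree under the degeneration. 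The matching of the variable change $-q = e^{iu}$ and the rationality needed to perform it are supplied by the Rationality result quoted in Section \ref{dess}, applied to each piece.

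The main obstacle will be controlling the relative geometry at the $K3$ boundary, since the established correspondence machinery is built for \emph{toric} targets and rigid toric boundary divisors, whereas here the boundary is a genuinely non-toric surface. The strategy to overcome this is deformation invariance: I would argue that the relative GW and stable pairs invariants of $X/K3$, together with the correspondence factor $(-iu)^{d_\beta + \ell(\mu) - |\mu|}$, are deformation-invariant in the $K3$ surface, and then degenerate the $K3$ to a configuration where the relevant cohomology classes $\gamma_i$ and boundary data $\mu$ are traced back to a toric model (or a normal-crossings degeneration thereof) on which Theorem \ref{aaa} and the primary-field correspondence of \cite{moop,mpt} can be applied. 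The anticanonical (log Calabi-Yau) condition is what guarantees the degeneration formula closes up without collapsed or obstructed contributions, so the two sides remain in lockstep. Once the boundary matching is secured, the exponent bookkeeping and the rational variable change follow formally, and the non-equivariant limit is controlled exactly as in Theorem \ref{mmpp22}.
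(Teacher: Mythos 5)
Your opening move --- degeneration to the normal cone of the $K3$ --- is indeed how the paper starts, but the proposal has two genuine gaps. First, a factual error: adjunction gives $K_{K3}=(K_X+[K3])|_{K3}=\cO_{K3}$, which is the $K3$ condition, \emph{not} triviality of the normal bundle. Since $K3$ is anticanonical in a Fano 3-fold, $N_{K3/X}=-K_X|_{K3}$ is ample, so the bundle geometry $\mathbf{P}(\cO_{K3}\oplus N)/K3_\infty$ is nontrivial and must be analyzed as such. Second, and more seriously, the degeneration formula alone does not let you ``reduce the relative insertions $\mu$ to absolute insertions'': it only expresses each absolute invariant of $X$ as a sum over $\mu$ of products of invariants of the two pieces. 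To isolate $\bZ(X/K3\,|\,\mu)$ you must \emph{invert} the matrix, indexed by descendent insertions versus relative conditions, of invariants of $\mathbf{P}(\cO_{K3}\oplus N)/K3_\infty$ in fiber classes. This invertibility (Proposition \ref{dgbb5}, from Section 4.1 of \cite{PP2}) is the linchpin of the argument and is absent from your proposal; without it the degeneration formula gives underdetermined linear relations.

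Your proposed fix for the non-toric boundary --- deformation invariance plus degenerating the $K3$ to a toric model --- is not the route the paper takes and would likely fail. A $K3$ does not degenerate to a nonsingular toric surface; any such degeneration is normal crossings, forcing you into log relative theory for which the needed correspondence machinery is not established, and destroying the holomorphic symplectic structure whose vanishings (only fiber classes contribute, only the leading $q$-coefficient survives) are exactly what make the bundle piece computable. The paper instead proves the correspondence for $\mathbf{P}_{K3}/K3_\infty$ (Proposition \ref{qq99}) by a universality argument: the fiber-class invariants of $\mathbf{P}(L_0\oplus L_\infty)\to S$ over \emph{any} surface $S$ are universal polynomials in classical pairings of $c(T_S),c(L_i)$ with the insertions; these polynomials are pinned down by toric surfaces (Zariski density), the $q^0$-coefficient is matched via the Hilbert scheme of points, and the vanishing of higher $q$-coefficients in the $K3$ case is deduced from the $\bA_n$ geometries, where $c_1(T)=0$ and the correspondence is already known. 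Finally, you omit the Hard Lefschetz step: since $K3\subset X$ is ample, every positive-degree $\gamma_i$ is $\iota_{K3*}(\phi_i)$, which is what allows all descendents to be pushed onto the bundle piece in the degeneration and reduces the general case to the case of no insertions.
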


Relative Gromov-Witten and stable pairs theory are reviewed
in Section \ref{relth}. Our standard conventions for the boundary
conditions $\mu$ along the  relative divisors are
explained there.
The rationality of the stable pairs series of Theorem \ref{mmpp44}
has been establised earlier in Section 4 of \cite{PP2}.
Theorem \ref{mmpp44} can be used to prove a rationality constraint for
the 
Gromov-Witten descendent series of $\mathbf{P}^3$.

Let $\mathbb{Q}(-q,i)[u,\frac{1}{u}]$ be the ring of Laurent polynomials in $u$
with coefficients given by rational functions in $-q$ over
$\mathbb{Q}[i]$. For example
$$   \frac{q-\frac{1}{q}}{2i}{u^{-2}} + \frac{q+\frac{1}{q}}{2} 
u^4 \ \in\  
\mathbb{Q}(-q,i)[u,\frac{1}{u}]\ .$$

\setcounter{Corollary}{2}
\begin{Corollary} \label{yaya34}
For the non-equivariant descendent series, we have 
$$
%e^{-\frac{iud_\beta}{2}} \cdot 
 \bZ'_{\mathsf{GW}}\left(\mathbf{P}^3;u \ \Bigg| \ 
\prod_{j=1}^s {\tau}_{k_j}(\gamma_j)  \right)_{\beta} \ \in\  
\mathbb{Q}(-q=e^{iu},i)[u,\frac{1}{u}]\ ,
$$
where $\gamma_j\in H^*(\mathbf{P}^3,\mathbb{Q})$ are classes of positive degree.
\end{Corollary}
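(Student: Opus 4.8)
The plan is to deduce the corollary from Theorem \ref{mmpp44} by combining it with the degeneration formula, exploiting that $R:=\mathbb{Q}(-q=e^{iu},i)[u,\frac{1}{u}]$ is a ring. Two reductions are special to $\mathbf{P}^3$. Since $c_1(\mathbf{P}^3)=4H$, the degree $d_\beta=4\deg\beta$ is divisible by $4$, so $(-q)^{-d_\beta/2}=(-q)^{-2\deg\beta}$ is a Laurent monomial in $(-q)$, and each prefactor $(-iu)^{d_\beta+\ell(\mu)-|\mu|}$ is a Laurent monomial in $u$ over $\mathbb{Q}[i]$; both lie in $R$. Feeding these into Theorem \ref{mmpp44} and using the rationality in $q$ of the relative stable pairs series (Section 4 of \cite{PP2}), I obtain that every relative Gromov-Witten series
$$\bZ'_{\mathsf{GW}}\Big(\mathbf{P}^3/K3;u\ \Big|\ \textstyle\prod_i\tau_0(\gamma_i)\ \Big|\ \mu\Big)_\beta\ \in\ R,$$
for all primary fields $\gamma_i$ and all boundary data $\mu$ along the anti-canonical $K3$.

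The only insertions not already controlled by Theorem \ref{mmpp22} are the non-point positive-degree descendents $\tau_k(H)$ and $\tau_k(H^2)$ with $k\geq 1$; the stationary descendents $\tau_k(\mathsf{p})=\tau_k(H^3)$ together with primary fields lie in $R$ directly by Theorem \ref{mmpp22}, since there the stable pairs side is rational in $q$ and the explicit prefactor $(-iu)^{d_\beta}(iu)^{-\sum k_j}$ is a Laurent monomial in $u$ over $\mathbb{Q}[i]$. To reach the remaining descendents I would degenerate $\mathbf{P}^3$ to the normal cone of the anti-canonical $K3$, whose central fibre is $\mathbf{P}^3\cup_{K3}\mathbb{P}(\cO\oplus N)$ with $N=\cO(4H)|_{K3}$. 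The degeneration formula together with the descendent-to-relative (rubber) calculus rewrites each interior $\psi^k$-descendent of $H$ or $H^2$ as relative boundary conditions on the $K3$---generated by $H|_{K3}$, $H^2|_{K3}$ and the point class---with coefficients given by rubber integrals over the $\mathbb{P}^1$-bundle $\mathbb{P}(\cO\oplus N)$ relative to $K3_0\sqcup K3_\infty$.

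It then remains to propagate membership in $R$ through this formula. The relative-primary factors lie in $R$ by the first paragraph. The rubber factors are governed by the Gromov-Witten/Pairs correspondence for the $\mathbb{P}^1$-bundle rubber relative to $K3_0\sqcup K3_\infty$, whose stable pairs side is again rational in $q$; after $-q=e^{iu}$ the accompanying $\psi$-integrals assemble into rational functions of $-q$ times monomials in $u$, so these factors also lie in $R$. Since the degeneration formula is a finite sum of products of factors each in the ring $R$, the absolute descendent series lies in $R$, which is the assertion of the corollary. I would arrange the bookkeeping so that the stationary contributions are quoted from Theorem \ref{mmpp22} and only the $H$- and $H^2$-descendents are passed through the degeneration.

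The hard part will be the descendent-to-relative conversion of the second paragraph: faithfully turning an interior higher descendent of the non-point classes $H$ and $H^2$ into relative data on the $K3$, and verifying that the resulting rubber integrals become rational in $-q=e^{iu}$ rather than merely Laurent series in $u$. The point descendents are harmless---they already lie in $R$ by Theorem \ref{mmpp22}---so the entire difficulty is concentrated in $H$ and $H^2$ and in establishing rationality of the rubber contributions; this is exactly where the rationality of the relative stable pairs theory and the correspondence on the $\mathbb{P}^1$-bundle must be invoked to close the ring-theoretic argument. A direct equivariant attack via Theorem \ref{aaa} is less convenient here, since the $u$-dependence of $\mathsf{K}$ is not explicit, whereas Theorem \ref{mmpp44} already packages the non-equivariant limit with an explicit power of $u$.
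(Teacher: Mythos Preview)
Your high-level plan is right—degenerate to the normal cone of the anti-canonical $K3$ and feed the $\mathbf{P}^3/S$ side through Theorem~\ref{mmpp44}—but you are missing the key simplification that makes the argument work, and the route you sketch instead (``descendent-to-relative rubber calculus'') is both unnecessary and not justified by the tools at hand.

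The paper's move is this: every positive-degree class $\gamma_j\in H^*(\mathbf{P}^3,\mathbb{Q})$ can be written as $\iota_{S*}(\phi_j)$ for some $\phi_j\in H^*(S,\mathbb{Q})$ (Hard Lefschetz, since $S$ is ample). Under degeneration to the normal cone, the class $\iota_{S*}(\phi_j)$ is supported entirely on the bubble $\mathbf{P}(\cO_S\oplus N)$, so in the degeneration formula \emph{every} descendent insertion $\tau_{k_j}(\gamma_j)$—with its full $\psi^{k_j}$—lands on the bubble side, and the $\mathbf{P}^3/S$ side carries \emph{no insertions at all}. Thus Theorem~\ref{mmpp44} is only needed in its insertion-free case. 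The bubble factor $\bZ'_{\mathsf{GW}}(\mathbf{P}(\cO_S\oplus N)/S_\infty\mid \prod\tau_{k_j}(\iota_{S_0*}\phi_j)\mid\mu)_{d_\beta\mathsf{L}}$ is a Laurent polynomial in $u$ directly, because by $K3$ vanishing only curve classes which are multiples of the fiber contribute, and for those only genus $0$ and $1$ connected invariants are nonzero. No correspondence or rationality statement on the bubble is needed.

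By contrast, your proposal keeps primary fields on the $\mathbf{P}^3/S$ side and attempts to ``rewrite each interior $\psi^k$-descendent of $H$ or $H^2$ as relative boundary conditions on the $K3$.'' Descendent insertions in the degeneration formula do not become relative conditions; they are distributed between the two components according to the restriction of their cohomology class. There is no general ``descendent-to-relative conversion'' available here, and the rubber correspondence you invoke (for $\mathbb{P}(\cO\oplus N)$ relative to $K3_0\sqcup K3_\infty$, with descendents) is not supplied by Theorem~\ref{mmpp44}. The separate appeal to Theorem~\ref{mmpp22} for stationary insertions is also superfluous once you route everything through $\iota_{S*}$. The fix is exactly the $\gamma_j=\iota_{S*}(\phi_j)$ trick together with the $K3$ vanishing on the bubble; with that, the argument closes in two lines.
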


 Let  $\bA_n$ be the minimal  
toric resolution of the standard
$A_n$-singularity obtained by a quotient of a
cyclic $\mathbb{Z}_{n+1}$-action on $\com^2$, see Section
\ref{angeo} for a review.
Consider the $(\com^*)^2$-equivariant geometry relative
geometry
$$\bA_n \times \Pp/ D =
\bA_n \times \Pp\ / \ (\bA_n)_{x_1} \cup (\bA_n)_{x_2} \cup (\bA_n)_{x_3} 
$$
relative to the  fibers over the
distinct point $x_1,x_2,x_3 \in \Pp$.
Here, $(\com^*)^2$ acts only on the toric surface $\bA_n$.
We prove the following result.

\begin{Theorem}  \label{yaya38}
After the variable change $-q=e^{iu}$, 
we have
\begin{multline*}
(-q)^{-d_\beta/2}\,
\bZ_{\mathsf{P}}
\Big( \bA_n\times\Pp/D\   ;q\ \Big| \mu^{(1)}, \mu^{(2)}, \mu^{(3)} 
\Big)^{(\com^*)^2}_\beta
=\\
(-iu)^{d_\beta+\sum_{i=1}^k \ell(\mu^{(i)})-|\mu^{(i)}|}\,  \bZ'_{\mathsf{GW}}
\Big(\bA_n\times\Pp/D\ ;u\  \Big| \mu^{(1)}, \mu^{(2)}, \mu^{(3)}
   \Big )^{(\com^*)^2}_\beta
\,,
\end{multline*}
where the $\mu^{(i)}$ are arbitrary $(\com^*)^2$-equivariant 
relative conditions along 
the fibers $(\bA_n)_{x_i}$.
\end{Theorem}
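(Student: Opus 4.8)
The plan is to reduce Theorem \ref{yaya38} to the fully equivariant correspondence of Theorem \ref{aaa} by expressing the relative geometry $\bA_n\times\Pp/D$ through its $(\com^*)^2$-fixed-point and localization data, and then to control the resulting descendent insertions using the structural constraints on the correspondence matrix $\mathsf{K}$ recorded in Theorems \ref{T789} and \ref{ll33}. The key observation is that $\bA_n$ is a nonsingular toric surface, so $\bA_n\times\Pp$ is a nonsingular quasi-projective toric 3-fold. The $(\com^*)^2$-action on $\bA_n$ (with $\Pp$ held fixed) can be promoted to a full $\T=(\com^*)^3$-action by letting the third torus factor act on $\Pp$; the relative divisors $(\bA_n)_{x_i}$ are the fibers over the three $\T$-fixed (or chosen) points $x_1,x_2,x_3\in\Pp$. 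Thus the relative conditions $\mu^{(i)}$ along these fibers become, under the standard rubber/degeneration formalism reviewed in Section \ref{relth}, ordinary $\T$-equivariant descendent or cohomology-weighted insertions supported at the corresponding loci, and the equivariant correspondence of Theorem \ref{aaa} applies to the capped geometry.

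\medskip

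First I would set up the relative geometry as a degeneration: the relative conditions $\mu^{(i)}$ along $(\bA_n)_{x_i}$ are matched across the degeneration to the cohomology-weighted partitions of the standard relative theory. Using the rigidification and the degeneration formula (as in \cite{PP2,moop,mpt}), the relative invariants of $\bA_n\times\Pp/D$ are assembled from the $\T$-equivariant capped descendent vertices and edge contributions of the toric 3-fold $\bA_n\times\Pp$, exactly the building blocks for which Theorem \ref{aaa} provides the GW/Pairs correspondence. Second I would apply Theorem \ref{aaa} to each piece, producing the universal matrix $\mathsf{K}$ acting on the descendent insertions $\tau_\alpha(\mathsf{p}_j)$ at the $\T$-fixed points $p_j$ of $\bA_n\times\Pp$. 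The transformation $(-q)^{-d_\beta/2}\mapsto(-iu)^{d_\beta}$ of the prefactor is already present in Theorem \ref{aaa}; the extra factor $(-iu)^{\sum_i\ell(\mu^{(i)})-|\mu^{(i)}|}$ in Theorem \ref{yaya38} must emerge from the way relative conditions transform under $\mathsf{K}$.

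\medskip

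The source of that factor is precisely the leading-term computation of Theorem \ref{T789}: a relative condition indexed by a partition $\mu$ is built from descendent insertions, and under the correspondence each part contributes a leading factor $(iu)^{\ell(\mu)-|\mu|}$ with $\mathsf{K}_{\alpha,\alpha}=(iu)^{\ell(\alpha)-|\alpha|}$. I would therefore show that, in the \emph{non-equivariant relative} limit for the $\Pp$-direction, only the leading diagonal terms of $\mathsf{K}$ survive and reconstitute the relative conditions $\mu^{(i)}$ themselves (with no change in the partition), so that the net effect is exactly multiplication by $(-iu)^{\sum_i\ell(\mu^{(i)})-|\mu^{(i)}|}$. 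The subleading terms of $\mathsf{K}$, which lower $|\alpha|$ by more than $|\ell(\alpha)-\ell(\widehat\alpha)|$, should drop out of the relative matching by the dimension constraint of Theorem \ref{ll33}, since the relative insertions are pure (untwisted) boundary conditions carrying no descendent weight beyond what the partition encodes.

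\medskip

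The hard part will be controlling the interaction between the $(\com^*)^2$-equivariant parameters retained on $\bA_n$ and the relative conditions along the three fibers: the $\mathsf{K}$-matrix mixes partitions and is only symmetric and polynomial in the $\bw_i$ by Theorem \ref{ll33}, and I must verify that the specialization from the full $\T$-equivariant setting to the $(\com^*)^2$-equivariant geometry (with $\Pp$ non-equivariant) preserves the well-definedness of the variable change $-q=e^{iu}$ and does not introduce spurious poles in $s_1,s_2,s_3$. This requires the non-equivariant limit argument along the $\Pp$-factor, analogous to the limit established for Theorem \ref{aaa}, together with the rationality of the stable pairs descendent series to legitimize the substitution $-q=e^{iu}$. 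I expect the degeneration bookkeeping—matching the relative conditions $\mu^{(i)}$ to the capped-vertex descendent data and tracking the exact power of $(-iu)$ through all three fibers simultaneously—to be the principal technical obstacle.
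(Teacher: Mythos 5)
Your opening move --- promoting the $(\com^*)^2$-action to a full $\T=(\com^*)^3$-action with the three relative divisors $(\bA_n)_{x_1},(\bA_n)_{x_2},(\bA_n)_{x_3}$ all invariant --- cannot work: a nontrivial $\com^*$-action on $\Pp$ has exactly two fixed points, so no torus action on the $\Pp$-factor preserves three distinct fibers. This is precisely why Theorem \ref{yaya38} is stated only $(\com^*)^2$-equivariantly, and it blocks any attempt to compute the relative geometry $\bA_n\times\Pp/D$ by $\T$-equivariant capped localization of the relative space itself. The paper runs the argument in the opposite direction: it starts from the \emph{absolute} toric 3-fold $\bA_n\times\Pp$ with descendent insertions $\prod_j\tau_{\alpha^{(j)}}(\mathsf{p}^0_j)\prod_j\tau_{\beta^{(j)}}(\mathsf{p}^1_j)\prod_j\tau_{\gamma^{(j)}}(\mathsf{p}^\infty_j)$ grouped over $0,1,\infty\in\Pp$, applies Theorem \ref{aaa} restricted to the fiberwise torus (legitimate because the coefficients of $\mathsf{K}$ are polynomial in the $s_i$ by Theorem \ref{ll33}, so the specialization $s_3=0$ is allowed), and then \emph{degenerates} $\Pp$ to split off three $\bA_n$-caps carrying the descendents; the degeneration formula pairs cap series against the sought 3-point relative series.

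The second, and decisive, missing ingredient is the invertibility of the matrix $M_{\mathsf{P},d}$ of $\bA_n$-cap series indexed by descendent data at the fixed points over $0$ and by Nakajima relative conditions $\mu$ (proved via Lemma \ref{7777} and shown to survive the restriction $s_3=0$), together with the cap correspondence of Proposition \ref{hvv2}. It is this inversion that isolates $\bZ\big(\bA_n\times\Pp/D\ \big|\ \mu^{(1)},\mu^{(2)},\mu^{(3)}\big)$ from the degeneration formula and transfers the correspondence to it. Your proposed substitute --- that ``only the leading diagonal terms of $\mathsf{K}$ survive and reconstitute the relative conditions,'' producing the factor $(-iu)^{\sum_i\ell(\mu^{(i)})-|\mu^{(i)}|}$ via Theorem \ref{T789} --- misidentifies the source of that factor: the relative conditions are not acted on by $\mathsf{K}$ at all in the final statement, and the exponent $\ell(\mu)-|\mu|$ is the standard normalization already present in the primary-field relative correspondence of Conjecture \ref{htht}, arising from the mismatch of the gluing factors $q^{-|\mu|}$ versus $u^{2\ell(\mu)}$ in the two degeneration formulas. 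In the equivariant setting the subleading terms of $\mathsf{K}$ do not drop out for dimension reasons; they are accounted for only because the full (not merely diagonal) cap matrix is inverted compatibly on both sides of the correspondence.
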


Theorem \ref{yaya38}
resolves questions left open in \cite{mo1,mo2}. In fact,
Theorem \ref{yaya38} would follow from the
results of \cite{mo1,mo2} if certain conjectured
invertibilities were
established, see Section 8.3 of  \cite{mo2}. Our proof of the 3-point 
Gromov-Witten/Pairs correspondence
for $\bA_n$-local curves completely bypasses such
 invertibility issues.

The results stated above are the first applications of the
equivariant descendent correspondence. 
The main application will be to establish the
Gromov-Witten/Pairs correspondence for several basic 
families of compact Calabi-Yau 3-folds. The strategy is
to follow the methods of \cite{mptop} which determine the
Gromov-Witten theory of the quintic 3-fold
$$X_5 \subset \mathbf{P}^4\ $$
and to take parallel geometric steps for 
the stable pairs theory.
A non-equivariant Gromov-Witten descendent
correspondence is necessary for the argument.

A basic result of the present paper is
a non-equivariant formulation of the 
Gromov-Witten/pairs descendent correspondence.
The application to compact Calabi-Yau 3-folds
will be taken up in \cite{PPcy3}.

\subsection{Non-equivariant limit}
Let $X$ be a nonsingular quasi-projective toric 3-fold
with $\T$-fixed points $p_1,\ldots,p_m$ and
inclusions 
$$\iota_j: p_j \hookrightarrow X,\ 
\ \ \  \iota: X^\T\hookrightarrow X\ .$$
The pull-back of the
top Chern class of the tangent bundle,
$$\iota_j^*(c_3(T_X))= {e(\text{Tan}_j)} \in H^*_\T(p_j,\mathbb{Q})\ ,$$
is the Euler class of the tangent representation at $p_j$.

Theorem \ref{aaa} establishes a descendent
correspondence for $\T$-equivariant Gromov-Witten
and stable pairs theories.
Does Theorem \ref{aaa} define a correspondence
for non-equivariant theories?
Certainly every non-equivariant descendent  
$\tau_{k}(\gamma)$ 
can be lifted to
a combination of $\T$-equivariant descendents
of the form $\tau_k(\mathsf{p}_j)$ by
localization
\begin{equation}\label{ff34}
\widetilde{\gamma}= \iota_* 
\sum_{j=1}^m \frac{\iota_j^*(\widetilde{\gamma})}
{\iota_j^*(c_3(T_X))}
\ \p_j
\end{equation}
where $\widetilde{\gamma}$ is any $\T$-equivariant 
lift of $\gamma$.
Theorem \ref{aaa}
can then be applied.  
However, the coefficients
$$\frac{\iota_j^*(\widetilde{\gamma})}
{e(c_3(T_X))}\in \mathbb{Q}(s_1,s_2,s_3)$$
which appear on the right of \eqref{ff34} are 
rational functions of the $s_i$ (almost
always with poles).
After the application of Theorem \ref{aaa},  
poles in $s_i$ will occur on the Gromov-Witten
side of the correspondence.
Whether the resulting combination of
Gromov-Witten invariants 
can be rewritten in
non-equvariant terms is not immediately clear.
The outcome depends upon properties of the
correspondence matrix $\mathsf{K}$.

We prove a Gromov-Witten/Pairs descendent
correspondence for $X$ which admits a 
non-equivariant limit. In order to state the answer, we
will require the following notation.
Let $\widehat{\alpha}$ be a partition
of length $\widehat{\ell}$ as in Section \ref{corr45}.
Let $\Delta$ be the cohomology class of the
small diagonal in the product $X^{\widehat{\ell}}$.
For a cohomology class $\gamma$ of $X$, let
$$\gamma\cdot \Delta=
\sum_{{j_1, \ldots, j_{\hat{\ell}}}} \theta_{j_1}^\gamma \otimes
\ldots\otimes \theta_{j_{\hat{\ell}}}^\gamma$$
be the K\"unneth decomposition of $\gamma\cdot
\Delta$ in the cohomology of  $X^{\widehat{\ell}}$.
We define the descendent insertion $\tau_{\widehat{\alpha}}(\gamma)$ by
\begin{equation}\label{j77833}
\tau_{\widehat{\alpha}}(\gamma)= 
\sum_{j_1,\ldots,j_{\hat{\ell}}}
\tau_{\widehat{\alpha}_1-1}(\theta_{j_1}^\gamma)
\cdots\tau_{\widehat{\alpha}_{\hat{\ell}}-1}(\theta_{j_{\hat{\ell}}}^\gamma)\ .
\end{equation}
For example,
if 
$\gamma$ is the class of a point, then 
\[
\tau_{\widehat{\alpha}}(\mathsf{p})=
\tau_{\widehat{\alpha}_1-1}(\mathsf{p})\cdots\tau_{\widehat{\alpha}_{\hat{\ell}}-1}(\mathsf{p})
\]
in accordance with convention \eqref{ggp22}.
Definition \eqref{j77833} is valid for both the standard and
the $\T$-equivariant cohomology of $X$.

We  
construct a second
correspondence matrix $\widetilde{\mathsf{K}}$ 
indexed by partitions
$\alpha$ and $\widehat{\alpha}$ of positive size with
$$\widetilde{\mathsf{K}}_{\alpha,\widehat{\alpha}}\in 
\mathbb{Q}[i,c_1,c_2,c_3]((u))\ $$
and $\widetilde{\mathsf{K}}_{\alpha,\widehat{\alpha}}=0$ 
unless $|\alpha|\geq |\widehat{\alpha}|$.
Via the substitution
$$c_i=c_i(T_X),$$
the elements of $\widetilde{\mathsf{K}}$
act on the cohomology (both standard
and $\T$-equivariant) of $X$ with $\mathbb{Q}[i]$-coefficients.
Of course, we take the canonical lift of $\T$ to the
tangent bundle $T_X$ in the equivariant case.

The matrix $\widetilde{\mathsf{K}}$ is 
used to define a new correspondence
rule
\begin{equation}\label{pddff}
{\tau_{\alpha_1-1}(\gamma_1)\cdots
\tau_{\alpha_{\ell}-1}(\gamma_{\ell})}\ \  \mapsto\ \ 
\overline{\tau_{\alpha_1-1}(\gamma_1)\cdots
\tau_{\alpha_{\ell}-1}(\gamma_{\ell})}\ .
\end{equation}
The formula for the right side
of \eqref{pddff} requires a sum over all set
partitions $P$ of $\{ 1,\ldots, \ell \}$.
 For such a  set partition
$P$, each element $S\in P$
is a subset of $\{1,\ldots, \ell\}$.
Let $\alpha_S$ be the associated subpartition of
$\alpha$, and let
$$\gamma_S = \prod_{i\in S}\gamma_i.$$
We define the right side of \eqref{pddff} by
\begin{equation}\label{mqq23}
\overline{\tau_{\alpha_1-1}(\gamma_1)\cdots
\tau_{\alpha_{\ell}-1}(\gamma_{\ell})}
=
\sum_{P \text{ set partition of }\{1,\ldots,\ell\}}\ \prod_{S\in P}\ \sum_{\widehat{\alpha}}\tau_{\widehat{\alpha}}(\widetilde{\mathsf{K}}_{\alpha_S,\widehat{\alpha}}\cdot\gamma_S) \ .
\end{equation}

\begin{Theorem} There exists a universal correspondence matrix 
$\widetilde{\mathsf{K}}$
satisfying \label{zzz}
\begin{multline*}
(-q)^{-d_\beta/2}\ZZ_{\mathsf{P}}\Big(X;q\ \Big|  
{\tau_{\alpha_1-1}(\gamma_1)\cdots
\tau_{\alpha_{\ell}-1}(\gamma_{\ell})}
\Big)^\T_\beta \\ =
(-iu)^{d_\beta}\ZZ'_{\mathsf{GW}}\Big(X;u\ \Big|   \
\overline{\tau_{\alpha_1-1}(\gamma_1)\cdots
\tau_{\alpha_{\ell}-1}(\gamma_{\ell})}\
\Big)^\T_\beta 
\end{multline*}
under the variable change $-q=e^{iu}$ 
for all nonsingular quasi-projective toric 3-folds $X$.
\end{Theorem}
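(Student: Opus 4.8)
The plan is to deduce Theorem \ref{zzz} from the point-class correspondence of Theorem \ref{aaa} by equivariant localization, constructing $\widetilde{\mathsf K}$ as a ``connected'' (cumulant) version of $\mathsf K$. First I would localize the left side, lifting each class $\gamma_i$ to the $\T$-fixed points via \eqref{ff34}. The product $\tau_{\alpha_1-1}(\gamma_1)\cdots\tau_{\alpha_\ell-1}(\gamma_\ell)$ then becomes a sum over functions $f\colon\{1,\dots,\ell\}\to\{p_1,\dots,p_m\}$; grouping the insertions by the fibers of $f$ produces exactly a set partition $P$ of $\{1,\dots,\ell\}$ together with an injection of its blocks into the fixed-point set. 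For a block $S$ sent to $p_j$ the localization coefficients multiply to $\iota_j^*(\widetilde{\gamma_S})/e(\mathrm{Tan}_j)^{|S|}$ with $\gamma_S=\prod_{i\in S}\gamma_i$, and the insertion collapses to the point descendent $\tau_{\alpha_S}(\mathsf p_j)$. Applying Theorem \ref{aaa} blockwise -- legitimate since distinct blocks occupy distinct fixed points -- converts these into Gromov--Witten point descendents weighted by $\mathsf K_{\alpha_S,\widehat\alpha}(w^j)$.

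Next I would localize the right side. Using that the small diagonal satisfies $[\Delta]=\sum_j e(\mathrm{Tan}_j)^{-1}[\mathsf p_j^{\otimes\widehat\ell}]$ in localized equivariant cohomology, the diagonal insertion \eqref{j77833} localizes cleanly: $\tau_{\widehat\alpha}(\delta)\rightsquigarrow\sum_j \iota_j^*(\delta)\,e(\mathrm{Tan}_j)^{-1}\,\tau_{\widehat\alpha}(\mathsf p_j)$, so each block of \eqref{mqq23} contributes a single factor $e(\mathrm{Tan}_j)^{-1}$, and the substitution $c_i=c_i(T_X)$ evaluates to the elementary symmetric functions $\iota_j^*\widetilde{\mathsf K}_{\alpha_S,\widehat\alpha}=\widetilde{\mathsf K}_{\alpha_S,\widehat\alpha}(w^j)$. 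Comparing the two localizations, the set-partition sum on the right permits distinct blocks to land on the \emph{same} fixed point; collecting those collisions into a coarser partition shows the two sides agree \emph{provided} $\widetilde{\mathsf K}$ solves the weighted set-partition identity
$$\sum_{\widehat\alpha}\mathsf K_{\alpha_T,\widehat\alpha}(w^j)\,\tau_{\widehat\alpha}(\mathsf p_j)=\sum_{Q\vdash T}e(\mathrm{Tan}_j)^{\,|T|-|Q|}\prod_{S\in Q}\;\sum_{\widehat\alpha}\widetilde{\mathsf K}_{\alpha_S,\widehat\alpha}(w^j)\,\tau_{\widehat\alpha}(\mathsf p_j)$$
for every block $T$, where $|Q|$ denotes the number of parts of the set partition $Q$. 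This relation is triangular on the partition lattice (the coarsest term $Q=\{T\}$ isolates $e(\mathrm{Tan}_j)^{|T|-1}\widetilde{\mathsf K}_{\alpha_T}$), so it determines $\widetilde{\mathsf K}$ from $\mathsf K$ by M\"obius inversion and, by construction, makes the equivariant correspondence hold.

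The main obstacle is \emph{polynomiality}: the inversion a priori yields $\widetilde{\mathsf K}_{\alpha,\widehat\alpha}$ as a symmetric \emph{rational} function whose only possible denominators are powers of $e(\mathrm{Tan})=w_1w_2w_3=c_3$, whereas Theorem \ref{zzz} requires $\widetilde{\mathsf K}\in\mathbb Q[i,c_1,c_2,c_3]((u))$; the disappearance of these poles is precisely the assertion that the correspondence admits a non-equivariant limit. I would prove polynomiality by induction on $|T|$. Isolating the coarsest term expresses $e(\mathrm{Tan})^{|T|-1}\widetilde{\mathsf K}_{\alpha_T}$ as $\mathsf K_{\alpha_T}$ minus products of lower cumulants (polynomial by induction), so the task reduces to showing that the appropriate power of $c_3$ divides this connected combination -- equivalently, that it vanishes to the right order as a tangent weight $w_i\to0$. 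Here the two parts of Theorem \ref{ll33} enter decisively: homogeneity guarantees that all terms in a given cumulant share a single $\bw$-degree, so the pole order is constant and one need only match it against $c_3$, while symmetry guarantees the quotient is again symmetric, hence a genuine polynomial in $c_1,c_2,c_3$. The required vanishing at $w_i=0$ I would extract from a factorization of the one-leg capped descendent vertex from which $\mathsf K$ is built: degenerating a tangent weight splits off a trivially weighted direction and renders the descendent correspondence multiplicative, so $\mathsf K$ becomes group-like at $w_i=0$ and all higher connected combinations vanish there to the needed order. The cases $|T|=1$ (where $\widetilde{\mathsf K}=\mathsf K$) and $|T|=2$ (where one checks $c_3\mid(\mathsf K_{\alpha_1\cup\alpha_2}-\mathsf K_{\alpha_1}\mathsf K_{\alpha_2})$) already display the entire mechanism.

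Finally, symmetry of $\widetilde{\mathsf K}$ in the $\bw_i$ is automatic, since $\mathsf K$ is symmetric and every operation in the inversion -- products and division by the symmetric class $c_3$ -- preserves symmetry; combined with polynomiality this places $\widetilde{\mathsf K}$ in $\mathbb Q[i,c_1,c_2,c_3]((u))$ and lets the substitution $c_i=c_i(T_X)$ act on ordinary cohomology. The equivariant identity assembled from the two localizations is then already the statement of Theorem \ref{zzz}, and it descends to the non-equivariant theory because $\widetilde{\mathsf K}$ carries no poles in the equivariant parameters.
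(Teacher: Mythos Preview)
Your overall architecture matches the paper's: localize both sides to fixed points, recognize the M\"obius/cumulant relation between $\mathsf K$ and $\widetilde{\mathsf K}$ (your displayed identity is exactly the inversion of the paper's definition \eqref{kll4}, and the passage from injections to all functions is the paper's step from \eqref{kk334} to \eqref{pqq2}), and correctly isolate divisibility of the cumulant by $c_3^{\ell-1}$ as the only genuine obstacle.

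The gap is in your divisibility argument. The assertion that ``$\mathsf K$ becomes group-like at $w_i=0$'' yields only \emph{first-order} vanishing: if $\widehat{\tau}_\sigma \equiv \prod_j \widehat{\tau}_{(\sigma_j)} \bmod s_i$, then the $\ell$-th cumulant $\widetilde{\tau}_\sigma$ is divisible by $s_i$, but not by $s_i^{\ell-1}$. Write $\widehat{\tau}_S = \prod_{j\in S}\widehat{\tau}_{(\sigma_j)} + s_i R_S$ and expand the cumulant; the $s_i^0$ part cancels by $\sum_P(-1)^{|P|-1}(|P|-1)!=0$ for $\ell>1$, but the $s_i^1$ part has no reason to vanish. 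Your test case $|T|=2$ works precisely because one power suffices; at $|T|=3$ you already need $s_i^2$ and the group-like property does not supply it. The geometric heuristic you offer --- degenerating a tangent weight to make the vertex factor --- is also not substantiated: factorization at a single point of parameter space cannot by itself produce higher-order vanishing, and the cap can acquire poles as $s_1\to 0$ or $s_2\to 0$.

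The paper's resolution (Proposition~\ref{K divisibility}) is a different, more delicate geometric argument on the compact relative geometry $\mathbf P^1\times\mathbf P^1\times\mathbf P^1/(\mathbf P^1\times\mathbf P^1)_\infty$ using \emph{two} adjacent fixed points $p_\bullet,p_\star$ with $[\mathsf p_\bullet]-[\mathsf p_\star]=s_1[\mathsf P_{\bullet\star}]$. The stable pairs series with insertion $\tau_{\sigma_1}(\mathsf p_\bullet)\prod_{j\ge 2}\tau_{\sigma_j}(\mathsf p_\bullet-\mathsf p_\star)$ is then manifestly $s_1^{\ell-1}$-divisible, and the correspondence transfers this to the Gromov--Witten side. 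The identity \eqref{basid} rewrites that expression as a set-partition sum of products of $\widetilde{\tau}$'s; for each non-trivial refinement $Q$, induction contributes $s_1^{|Q_j|-1}$ from each block \emph{and} an extra $s_1$ from each non-leading block (because $\mathsf K$ at $p_\bullet$ and $p_\star$ are related by $s_1\mapsto -s_1$, so the combination $\widetilde{\tau}_{\sigma_{Q_j}}(\mathsf p_\bullet)+(-1)^{|Q_j|}\widetilde{\tau}_{\sigma_{Q_j}}(\mathsf p_\star)$ picks up $s_1$), summing to $s_1^{\ell-1}$. Hence the coarse term $\widetilde{\tau}_\sigma(\mathsf p_\bullet)$ inherits $s_1^{\ell-1}$-divisibility as a series, and Proposition~\ref{peew45} upgrades this to divisibility of the coefficients themselves. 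This two-point bootstrap is exactly what manufactures the higher powers of $s_i$ your argument is missing.
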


We prove Theorem \ref{zzz} by 
constructing $\widetilde{\mathsf{K}}$ canonically
from $\mathsf{K}$. 
Divisibility properties of the coefficients of $\mathsf{K}$, required
for the construction of $\widetilde{\mathsf{K}}$, are proven geometrically.
From our construction of $\widetilde{\mathsf{K}}$, we will see
Theorem \ref{zzz} specializes to Theorem \ref{aaa}.
The main advantage of Theorem \ref{zzz} over Theorem \ref{aaa}
is the obvious existence of a non-equivariant limit.
In fact, since \eqref{mqq23} makes sense for
the standard cohomology of {\em any} nonsingular projective
3-fold, we conjecture the following.

\begin{Conjecture}
\label{ttt222} 
For any nonsingular projective $3$-fold $X$, 
%$\gamma_j \in H^{*}(X,\mathbb{Q})$, 
we have 
\begin{multline*}
(-q)^{-d_\beta/2}\ZZ_{\mathsf{P}}\Big(X;q\ \Big|  
{\tau_{\alpha_1-1}(\gamma_1)\cdots
\tau_{\alpha_{\ell}-1}(\gamma_{\ell})}
\Big)_\beta \\ =
(-iu)^{d_\beta}\ZZ'_{\mathsf{GW}}\Big(X;u\ \Big|   
\ \overline{\tau_{\alpha_1-1}(\gamma_1)\cdots
\tau_{\alpha_{\ell}-1}(\gamma_{\ell})}\ 
\Big)_\beta 
\end{multline*}
under the variable change $-q=e^{iu}$.
\end{Conjecture}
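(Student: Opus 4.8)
The plan is to deduce Conjecture \ref{ttt222} from the toric case of Theorem \ref{zzz} by a degeneration argument, exploiting the fact that the correspondence rule \eqref{mqq23} is built from the \emph{universal} matrix $\widetilde{\mathsf{K}}$ and hence transports across families of $3$-folds. First I would set up the relative form of the conjecture as the inductive carrier: for a nonsingular projective $3$-fold $X$ with a smooth divisor $D$, I expect the identity to hold for the relative series $\ZZ_{\mathsf{P}}(X/D;q\,|\,\cdots\,|\,\mu)_\beta$ and $\bZ'_{\mathsf{GW}}(X/D;u\,|\,\cdots\,|\,\mu)_\beta$ after inserting the normalizing factor $(-iu)^{\ell(\mu)-|\mu|}$, exactly as in the established relative Theorems \ref{mmpp44} and \ref{yaya38}. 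The relative statement is the object one can actually degenerate.

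The key structural point is that the correspondence operator $\overline{(\cdot)}$ of \eqref{mqq23} is compatible with the degeneration formula for both theories. The sum over set partitions $P$ of $\{1,\dots,\ell\}$, the products $\gamma_S=\prod_{i\in S}\gamma_i$, and the small-diagonal K\"unneth decomposition \eqref{j77833} built into $\tau_{\widehat{\alpha}}(\gamma)$ are arranged precisely so that $\overline{(\cdot)}$ behaves as a bialgebra map: when insertions are distributed across the components $X_1\cup_D X_2$ of a degeneration and summed over relative cohomology bases along $D$, applying $\overline{(\cdot)}$ on $X$ should coincide with applying the corresponding operators on the two pieces and gluing. Granting the degeneration formula for descendent series in both the stable pairs and Gromov-Witten theories, this compatibility shows that if the relative correspondence holds for $(X_1,D)$ and $(X_2,D)$ then it holds for $X$. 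Verifying this intertwining---matching the $(-iu)^{\ell(\mu)-|\mu|}$ factors on the two sides of each gluing and confirming that the diagonal/set-partition combinatorics of $\widetilde{\mathsf{K}}$ reproduces the splitting of descendents---is the first substantial task.

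With compatibility in hand, the remaining problem is to reduce an arbitrary nonsingular projective $3$-fold to geometries where Theorem \ref{zzz} already applies. I would degenerate $X$ by deformation to the normal cone of a very ample smooth divisor and iterate, reducing relative invariants of $X$ to relative invariants of projective bundles and of the normal-cone pieces. These building blocks are ultimately to be expressed, via further degeneration and the rubber calculus, in terms of local curve and capped-vertex contributions of the type arising in the $\bA_n\times\Pp/D$ geometry of Theorem \ref{yaya38} and in the toric setting of Theorem \ref{zzz}. To guarantee that this reduction terminates in toric-evaluable data for \emph{every} $X$---and not only for those degenerating directly to toric pieces---I would combine the degeneration formula with the double-point relations of algebraic cobordism, reducing the universal correspondence to its verification on a spanning set of geometries that can be chosen toric.

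The hard part will be exactly this last reduction. Non-equivariantly there is no localization to fall back on, and a general $3$-fold does not degenerate into toric pieces; moreover, descendent insertions are not cobordism invariants, so they must be tracked carefully through every double-point degeneration rather than being read off from Chern numbers alone. Controlling the descendents $\tau_{\alpha_i-1}(\gamma_i)$ and the induced relative insertions through the cobordism reduction---while preserving the precise universal form dictated by $\widetilde{\mathsf{K}}$---is the central obstacle, and is the step on which a complete proof of Conjecture \ref{ttt222} would stand or fall.
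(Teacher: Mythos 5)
There is a fundamental mismatch here: the statement you are trying to prove is stated in the paper as a \emph{Conjecture}, and the paper does not prove it for general $X$. The authors establish it only when $X$ is toric, as an immediate consequence of Theorem \ref{zzz} by taking the non-equivariant limit; for non-toric $3$-folds they explicitly leave it open (the planned applications to compact Calabi--Yau $3$-folds are deferred to \cite{PPcy3}). So your proposal cannot be compared against a proof in the paper --- it is an attempt at an open problem, and as written it is a strategy outline rather than a proof. Your own closing paragraph concedes that the decisive step is not carried out.

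Two concrete gaps deserve naming. First, your inductive carrier --- the relative form of the correspondence with the $(-iu)^{\ell(\mu)-|\mu|}$ normalization --- is itself formulated in the paper as Conjecture \ref{ttt444} and explicitly declared open; moreover the paper only states it for classes $\gamma_i$ restricting to $0$ on $D$, because the definition of $\overline{(\cdot)}$ in the relative setting is subtle (the substitution $c_i = c_i(T_X[-D])$ and the behaviour of the diagonal classes at the divisor). Second, the claimed compatibility of $\overline{(\cdot)}$ with degeneration is not automatic: the degeneration formula requires all insertions to lie in the image of $H^*(X_1\cup_D X_2) \to H^*(X)$, and the K\"unneth components $\theta_j^{\gamma}$ of $\gamma\cdot\Delta$ appearing in $\tau_{\widehat\alpha}(\gamma)$ need not satisfy this even when $\gamma$ does, so the ``bialgebra'' intertwining must be proven, not assumed. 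Finally, as you correctly note, descendents are not cobordism invariants, so the double-point/cobordism reduction that works for primary-field correspondences does not transport descendent series to toric geometries; this is exactly why the conjecture remains open, and your proposal does not overcome it.
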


By Conjecture \ref{111}, the stable pairs descendent series
on the left is expected to be a rational function in $q$, so the change
of variables is well-defined.
Conjecture \ref{ttt222} is a consequence of
Theorem \ref{zzz} in case $X$ is toric by taking
the non-equivariant limit. In the non-toric
case, Conjecture \ref{ttt222} predicts the
correspondence is the same.

Formula \eqref{mqq23} assumes all the cohomology classes
$\gamma_j$ are even.
In the presence of odd cohomology, a natural sign
must be included in \eqref{mqq23}. We may write 
set partitions $P$ of $\{1,\ldots, \ell\}$ indexing 
the sum on the right side of \eqref{mqq23} 
as
$$S_1\cup \ldots\cup S_{|P|} = \{1,\ldots, \ell\}.$$
The parts $S_i$ of $P$ are unordered, but we choose an ordering
for each $P$.
We then 
obtain a permutation of $\{1, \ldots, \ell\}$
by placing the elements in the ordered parts $S_i$ (and
respecting the original order in each part).
The permutation determines a sign $\sigma(P)$
 by the anti-commutation of the associated
odd classes. We then write:
\begin{equation*}
\overline{\tau_{\alpha_1-1}(\gamma_1)\cdots
\tau_{\alpha_{\ell}-1}(\gamma_{\ell})}
=
\sum_{P \text{ set partition of }\{1,\ldots,\ell\}}\ (-1)^{\sigma(P)}
\prod_{S_i\in P}\ \sum_{\widehat{\alpha}}\tau_{\widehat{\alpha}}(\widetilde{\mathsf{K}}_{\alpha_{S_i},\widehat{\alpha}}
\cdot\gamma_{S_i}) \ .
\end{equation*}

\subsection{Plan of the paper}
We start in Section \ref{cdvert} by reviewing relative
Gromov-Witten and stable pairs theories. The capped
descendent vertex of \cite{PP2}, defined in Section \ref{defcap},
will play a central role in the construction of 
the correspondence matrix $\mathsf{K}$.
Theorem \ref{aaa} is implied by a relative 
descendent correspondence stated in Theorem \ref{ccc} of
Section \ref{defcap}.

To construct $\mathsf{K}$, we proceed leg by leg for
capped descendent vertices.
The study of the 1-leg case
in Section \ref{ooolll} uniquely determines
$\mathsf{K}$ by the invertibility of the associated descendent/relative
matrices.
We define $\mathsf{K}$ via the 1-leg geometry.
After the proof of the 1-leg descendent correspondence
in Section \ref{cmptt}, the intial terms of $\mathsf{K}$
are calculated in Section \ref{bacpro} to prove Theorem \ref{T789}. 
The symmetry between the variables $s_i$ is broken
in the 1-leg geometry, so the symmetry of $\mathsf{K}$
is not immediate.

We review the technique of capped localization in Section 
\ref{cl}. The capped descendent correspondence in 
the 2-leg case is established in Section \ref{fullt}
using the geometry of $\bA_1$ surfaces (a strategy already
employed in \cite{moop,PP2}).
Crucial here is a new invertibility proven in
Section \ref{maxer}. 
A consequence of the 2-leg correspondence
is the symmetry of $\mathsf{K}$ in the variables $\bw_i$
proven in Section \ref{4444}. The 3-leg case
is obtained by a parallel argument in Section \ref{fulltt},
completing the proof of Theorem \ref{ccc} and thus of Theorem \ref{aaa}.

The first applications of the descendent correspondence
are taken up in Section \ref{ffirr}. 
The easiest is Theorem \ref{yaya38} proven in Section \ref{yayaya}.
After a further
study of the initial terms of $\mathsf{K}$, we prove
Theorem \ref{mmpp22} in Section \ref{mmppr}.

Section \ref{neql} concerns the non-equivariant formulation of
the descendent correspondence. After delicate
divisibility properties
for the coefficients of $\mathsf{K}$ are established,
the formula for $\widetilde{\mathsf{K}}$
in terms of $\mathsf{K}$ is given in Section \ref{div555}.
Theorem \ref{zzz} is then a consequence of Theorem \ref{aaa}.
The final applications of the paper, in Section \ref{8888}
to log Calabi-Yau geometries, require the non-equivariant
correspondence. Theorem \ref{mmpp44} is proven in
Section \ref{mmpp44pr}.

\subsection{Acknowledgments}
Discussions with J. Bryan, D. Maulik, A. Oblomkov, A. Okounkov, and
R. Thomas 
about  Gromov-Witten theory, stable pairs,
and descendent invariants
played an important role. 
The study of descendents
for 3-fold sheaf theories in \cite{MNOP2,pt2} 
motivated several 
aspects of the paper. 
 
R.P. was partially supported by NSF grants DMS-0500187
and DMS-1001154.
A.P. was supported by a NDSEG graduate fellowship.
The paper was started in the summer of 2011
while visiting the 
Instituto Superior T\'ecnico in Lisbon where
R.P. was supported by a Marie Curie fellowship and
a grant from the Gulbenkian foundation.
Much of the work was done during visits of A.P.
to ETH Z\"urich during the 2011-12 year.

\section{Capped descendent vertex} \label{cdvert}

\subsection{Relative theories}\label{relth}
Let $D\subset X$ be a nonsingular divisor. Relative Gromov-Witten and 
relative stable pairs
theories enumerate curves with specified tangency to
the divisor $D$. See \cite{MNOP2,part1} for a technical discussion
of relative theories.

In Gromov-Witten theory, relative conditions  are
represented by a partition $\mu$ of the integer
$
\int_\beta [D],
$
each part $\mu_i$ of which is marked
by a cohomology class $\gamma_i\in H^*(D,\mathbb{Z})$. 
The numbers $\mu_i$ record
the multiplicities of intersection with $D$
while the cohomology labels $\gamma_i$ record where the tangency occurs.
More precisely, let $\oM_{g,r}'(X/D,\beta)_\mu$ be the
moduli space of stable relative maps with tangency conditions
$\mu$ along $D$. To impose the full boundary condition,
we 
pull-back the product $\Pi_i\gamma_i$
via the evaluation maps
$$
\oM_{g,r}'(X/D,\beta)_\mu \to D
$$
at the points of tangency.
By convention, an absent cohomology label stands for
$1\in H^*(D,\mathbb{Z})$. Also, the tangency points are
considered to be unordered.

In the stable pairs theory, the relative moduli space admits a natural
morphism to the Hilbert scheme of
$d$ points in $D$,
$$P_n(X/D,\beta) \to \Hilb(D,\int_\beta [D])\ .$$ 
Cohomology classes on $\Hilb(D,\int_\beta [D])$ may 
thus
be pulled-back to the relative moduli space. We will work in
the \emph{Nakajima basis} of $H^*(\Hilb(D,\int_\beta [D]),\mathbb{Q})$ indexed
by a partition $\mu$ of $\int_\beta [D]$
labeled by cohomology classes of $D$. For example, the
class
$$
\left.\big|\mu\rang \in H^*(\Hilb(D,\int_\beta [D]),\mathbb{Q})\,,
$$
with all cohomology labels equal to the identity,
 is $\prod \mu_i^{-1}$ times
the Poincar\'e dual of the closure of the subvariety formed by unions of
schemes of length
$$
\mu_1,\dots, \mu_{\ell(\mu)}
$$
supported at $\ell(\mu)$ distinct points of $D$.

The conjectural relative GW/Pairs correspondence for primary
fields \cite{MNOP2}
equates the partition functions of the theories.

\begin{Conjecture}\label{htht}
We have
\begin{multline*}
(-q)^{-d_\beta/2}\,
\bZ_{\mathsf{P}}
\Big( X/D;q\ \Big|\
\tau_0(\gamma_1)\cdots\tau_0(\gamma_r)\, \Big| \mu \Big)_\beta
=\\
(-iu)^{d_\beta+\ell(\mu)-|\mu|}\,  \bZ'_{\mathsf{GW}}
\Big( X/D;u\ \Big|\
\tau_0(\gamma_1)\cdots\tau_0(\gamma_r) \, \Big| \mu  \Big )_\beta
\,,
\end{multline*}
after the change of variables $e^{iu}=-q$. 
\end{Conjecture}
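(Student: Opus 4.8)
The plan is to prove Conjecture \ref{htht} by importing the full descendent correspondence of this paper and then specializing to the primary sector, where the correspondence matrices act trivially. I would begin with the toric equivariant situation: take $X$ to be a nonsingular quasi-projective toric 3-fold and $D\subset X$ a $\T$-invariant divisor, and lift all insertions to $H^*_\T$. The essential input here is Theorem \ref{ccc}, the relative form of the correspondence from which Theorem \ref{aaa} is deduced. A primary field $\tau_0(\gamma)$, written in the localized basis as a $\mathbb{Q}(s_1,s_2,s_3)$-combination of the $\tau_0(\mathsf{p}_j)$, corresponds to the one-part partition $\alpha=(1)$. Theorem \ref{T789} then gives $\mathsf{K}_{(1),(1)}=(iu)^{\ell(\alpha)-|\alpha|}=1$ and $\mathsf{K}_{(1),\widehat{\alpha}}=0$ for every $\widehat{\alpha}\neq(1)$, since any $\widehat{\alpha}$ of positive size obeys $1\le |\widehat{\alpha}|+|1-\ell(\widehat{\alpha})|$. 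Thus $\widehat{\tau}_0(\mathsf{p}_j)=\tau_0(\mathsf{p}_j)$, the correspondence rule fixes every primary insertion, and the equivariant descendent statement collapses to exactly Conjecture \ref{htht} with prefactor $(-iu)^{d_\beta}$; the relative boundary data $\mu$ is carried along inside the capped-vertex/relative formalism of Theorem \ref{ccc} and supplies the shift $\ell(\mu)-|\mu|$ in the exponent of $(-iu)$.

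Next I would remove the equivariant parameters using the non-equivariant correspondence of Theorem \ref{zzz}, whose matrix $\widetilde{\mathsf{K}}$ acts through the set-partition formula \eqref{mqq23} and possesses a genuine non-equivariant limit. For an all-primary input $\tau_0(\gamma_1)\cdots\tau_0(\gamma_r)$ the relevant subpartitions are $\alpha_S=(1^{|S|})$, and the leading-term vanishing of Theorem \ref{T789} — every entry $\mathsf{K}_{(1^k),\widehat{\alpha}}$ with $\widehat{\alpha}$ of positive size vanishes off the diagonal — constrains $\widetilde{\mathsf{K}}$ strongly enough that \eqref{mqq23} reorganizes into the identity, $\overline{\tau_0(\gamma_1)\cdots\tau_0(\gamma_r)}=\tau_0(\gamma_1)\cdots\tau_0(\gamma_r)$, up to the overall power of $(iu)$; this is the same mechanism used in the proof of Theorem \ref{mmpp22}. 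Passing to the non-equivariant limit of Theorem \ref{zzz} then yields Conjecture \ref{htht} for every toric $X$ relative to a toric divisor $D$, the change of variables $-q=e^{iu}$ being well defined by the rationality of the stable pairs series.

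To reach a non-toric relative divisor, as in the anti-canonical $K3\subset X$ of Theorem \ref{mmpp44}, I would realize $X/D$ by degeneration from the toric models already controlled above, following the strategy of \cite{mptop}, and propagate the non-equivariant primary correspondence through the degeneration formula on both the Gromov-Witten and stable pairs sides. The matching of the two sides across $-q=e^{iu}$ is then the toric statement just proven applied to each piece, together with the universality asserted by Conjecture \ref{ttt222}; the $K3$ enters only as the relative divisor of a log Calabi-Yau degeneration, which is exactly the geometry in which the argument is organized.

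The hardest point is this last step: controlling the relative contributions — the Gromov-Witten rubber integrals over $D$ versus the stable pairs contributions governed by the Nakajima basis of $\Hilb(D,\int_\beta[D])$ — and verifying that they transform into one another with precisely the power of $(iu)$ dictated by $\ell(\mu)-|\mu|$. In the toric (or toric-degenerable) cases this bookkeeping is handled by the capped descendent vertex, but for a genuinely non-toric $X/D$ it must be combined with an absolute correspondence that is itself still conjectural (Conjecture \ref{ttt222}); this is why I expect Conjecture \ref{htht} to be reachable in full only modulo that input, and why it is established unconditionally here only in the special families of Theorems \ref{mmpp44} and \ref{yaya38}.
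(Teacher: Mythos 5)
The statement you are addressing is stated in the paper as a Conjecture (attributed to \cite{MNOP2}); the paper offers no proof of it, only the remark that it is compatible with degeneration together with proofs of special instances (Theorems \ref{mmpp44} and \ref{yaya38}, and Propositions \ref{hvv2}, \ref{mmpp66} and \ref{qq99}). Your closing paragraph correctly senses this, but the middle of your argument claims more than the paper's theorems deliver, so the proposal does not constitute a proof even of the cases you assert unconditionally.

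The concrete gap is your second step, where you assert that Theorems \ref{ccc} and \ref{zzz} yield Conjecture \ref{htht} for every toric $X$ relative to a toric divisor $D$. Theorems \ref{aaa} and \ref{zzz} are \emph{absolute} statements; the relative descendent correspondence, even in the toric equivariant setting, is not a formal consequence of them and is in fact recorded separately in the paper as the open Conjecture \ref{ttt444}. The obstruction is the boundary: capped localization of a relative geometry $X/D$ produces capped rubber contributions over $D$, and the GW/Pairs correspondence for capped rubber is only known for $\bA_n$ fibers (Lemma 6 of \cite{moop} together with Section 5 of \cite{mpt}); for a general toric divisor it is not available, and Theorem \ref{ccc} concerns only the specific open geometry $U/\cup_i D_i$ of the capped vertex, not an arbitrary toric pair. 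This is why the paper's unconditional relative results are confined to geometries where the divisor is an $\bA_n$ fiber, a surface $S\times\{\infty\}$ reached purely by $1$-leg vertices in fiber classes, or (via a universality and Zariski-density argument exploiting $K3$ vanishing) an anti-canonical $K3$. Your observation that $\mathsf{K}$ fixes the primary sector ($\widehat{\tau}_{1^\ell}=\tau_{1^\ell}$) is correct, and $\ell(\mu)-|\mu|$ is the right exponent bookkeeping, but without a rubber correspondence over $D$ the argument does not close even for toric pairs, let alone arbitrary $X/D$; the conjecture remains open, as you ultimately concede.
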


As before,  $\bZ_{\mathsf{P}}
\left(\left. X/D;q\ \right|
\tau_0(\gamma_1)\cdots\tau_0(\gamma_r)\, \big| \mu \right)_\beta$
is
conjectured to be a rational function of $q$.

\subsection{Degeneration formulas}\label{sdeg}

Relative theories satisfy degeneration formulas. Let
$$
\mathfrak{X}\to B
$$
be a nonsingular $4$-fold fibered over an irreducible and
nonsingular base curve $B$. Let $X$ be a nonsingular
fiber, and let
$$
X_1 \cup_{D} X_2
$$
be a reducible special fiber consisting of two nonsingular
$3$-folds intersecting transversally along a nonsingular
surface $D$.

If all insertions $\gamma_1,\dots,\gamma_r$ lie in the
image of
$$
H^*(X_1 \cup_{D} X_2,\mathbb{Z}) \to H^*(X,\mathbb{Z})\,,
$$
the degeneration formula in Gromov-Witten theory takes the form
\cite{IP, LR,L}
\begin{multline}\label{degGW}
 \bZ'_{\mathsf{GW}}
\left(\left. X\right|
\tau_{k_1}(\gamma_1)\cdots\tau_{k_r}(\gamma_r)\, \right)_\beta =\\
\sum \bZ'_{\mathsf{GW}}
\left(\left. X_1\right|
\,\dots\, \big| \mu  \right)_{\beta_1}  \,
\zz(\mu) \, u^{2\ell(\mu)} \,
\bZ'_{\mathsf{GW}}
\left(\left. X_2\right|
\,\dots\, \big| \mu^\vee  \right)_{\beta_2} \,,
\end{multline}
where the summation is over all curve splittings
$\beta=\beta_1+\beta_2$, all splittings of the
insertions $\tau_{k_i}(\gamma_i)$, and all relative conditions $\mu$.

In \eqref{degGW}, the cohomological labels of
$\mu^\vee$ are Poincar\'e duals of the labels of $\mu$.
The gluing factor $\zz(\mu)$ is the order of the
centralizer in the symmetric group $S(|\mu|)$ of
an element with cycle type $\mu$.

The degeneration formula in the stable pairs theory takes a very similar
form, 
\begin{multline*}
 \bZ_{\mathsf{P}}
\left(\left. X\right|
\tau_{k_1}(\gamma_1)\cdots \tau_{k_r}(\gamma_r)\, \right)_\beta =\\
\sum \bZ_{\mathsf{P}}
\left(\left. X_1\right|
\,\dots\, \big| \mu  \right)_{\beta_1}  \, 
(-1)^{|\mu|-\ell(\mu)} \,
\zz(\mu) \, q^{-|\mu|} \,
\bZ_{\mathsf{P}}
\left(\left. X_2\right|
\,\dots\, \big| \mu^\vee  \right)_{\beta_2} \,,
\end{multline*}
see \cite{MNOP2,part1}. The sum over the relative conditions
$\mu$ is interpreted as the coproduct of $1$,
$$
\Delta 1 = \sum_{\mu}
(-1)^{|\mu|-\ell(\mu)} \, \zz(\mu) \, 
\left.\big|\mu\rang \otimes \left.\big|\mu^\vee\rang\ ,
$$
in the
tensor square of $H^*(\Hilb(D,\int_\beta [D]),\mathbb{Z})$.
Conjecture \eqref{htht} is easily seen to be compatible with
degeneration.

\subsection{Definition of the capped vertex}\label{defcap}
Bare capped vertices were first considered in \cite{moop} in the
context of Donaldson-Thomas theory.
The capped descendent vertex was introduced in \cite{PP2} to 
prove the rationality of the stable pairs theory of
toric 3-folds. Capped vertices will play a basic
role in the construction and proof of the descendent
correspondence.
We review the definitions here.

Let $\T$ be a 3-dimensional algebraic torus. As before,  let
$s_1,s_2,s_3 \in H^*_{\T}(\bullet)$ be the first  Chern classes
of the standard representations of the three factors of $\T$.
Let $\T$ act diagonally on $\Pp\times \Pp \times \Pp$,
$$(\xi_1,\xi_2,\xi_3) \cdot ([x_1,y_1],[x_2,y_2],[x_3,y_3]) =
([x_1,\xi_1 y_1],[x_2,\xi_2 y_2],[x_3,\xi_3 y_3])\ .$$
Let $0,\infty \in \Pp$ be the points $[1,0]$ and $[0,1]$ respectively.
The tangent weights{\footnote{Our sign conventions here
follow \cite{PPstat} and disagree with \cite{part1}.}} of $\T$ at the point
$$\mathsf{p}=(0,0,0) \in \Pp \times \Pp \times \Pp$$
are $s_1$, $s_2$, and $s_3$.

Let $U\subset \Pp\times \Pp \times \Pp$ be the $\T$-invariant 3-fold obtained
by removing the three $\T$-invariant lines 
$$
L_1,L_2,L_3 \subset \Pp \times \Pp \times \Pp
$$
passing through the point $(\infty,\infty,\infty)$,
\begin{equation}\label{tthh2}
U = \Pp \times \Pp \times \Pp \ \setminus \ \cup_{i=1}^3 L_i.
\end{equation}
Let $D_i \subset U$
be the divisor with $i^{th}$ coordinate $\infty$.
For $i\neq j$, the divisors ${D}_i$ and $D_j$
are disjoint in $U$.

The capped descendent vertex is a 
partition function of $U$ with integrand
$$\tau_{\alpha}(\mathsf{p})=\tau_{\alpha_1-1}(\mathsf{p}) \ldots 
\tau_{\alpha_{\ell}-1}(\mathsf{p})$$ and 
free relative conditions
imposed at the divisors ${D}_i$. 
While the relative geometry $U/\cup_i D_i$
is not compact, the moduli spaces
 $P_n(U/\cup_i D_i,\beta)$
have compact $\T$-fixed loci.
The stable pairs invariants of $U/\cup_i D_i$ are
well-defined by $\T$-equivariant residues.
In the localization
formula for the reduced theories of 
 $U/\cup_i D_i$,
 nonzero degrees can occur {\em only} on the edges meeting
the origin $\mathsf{p}\in U$.

We
denote the capped descendent vertex by
\begin{equation} \label{tgg6}
\bC( \tau_\alpha(\mathsf{p})\  | \ \lambda^{(1)},\lambda^{(2)},\lambda^{(3)} )
  =  \mathsf{Z}
(U/\cup_i D_i, \prod_{i=1}^\ell \tau_{\alpha_i-1}(\mathsf{p})\ | \ 
\lambda^{(1)},\lambda^{(2)},\lambda^{(3)}
)^\T_\beta 
\end{equation}
where the partition
 $\alpha$ specifies the descendent integrand and the partitions
$
\lambda^{(1)},\lambda^{(2)},\lambda^{(3)}
$ denote relative conditions imposed at $D_1,D_2,D_3$.
While
 $\alpha$ must be non-empty as before, the partitions
$\lambda^{(i)}$ are permitted to be empty.
However,
we require the condition
\begin{equation}\label{tre3}
|\lambda^{(1)}|+|\lambda^{(2)}|+|\lambda^{(3)}| > 0 
\end{equation}
to hold. The number of legs of the vertex
is the number of non-empty partitions among
$\lambda^{(1)}$, $\lambda^{(2)}$, and $\lambda^{(3)}$.

The curve class $\beta$ in \eqref{tgg6} is determined 
by the relative conditions: $\beta$ is the sum of 
the three axes passing through $\mathsf{p}\in U$
with coefficients $|\lambda^{(1)}|$, $|\lambda^{(2)}|$, and 
$|\lambda^{(3)}|$ respectively. 
The superscript $\T$ after the bracket denotes
$\T$-equivariant integration on
$P_n(U/\cup_i D_i,\beta)$.
The condition \eqref{tre3} implies $\beta$ is nonzero.

The above definition is valid for both Gromov-Witten and
stable pairs theories. The relative conditions are
interpretated as tangency in Gromov-Witten theory
and as element of the Nakajima basis in the theory of
stable pairs. We denote the vertices in the two theories by
$$\bC_{\mathsf{P}}( \tau_\alpha(\mathsf{p})\  |\ \lambda^{(1)},\lambda^{(2)},\lambda^{(3)} ), \ \ \ 
\bC_{\mathsf{GW}}( \tau_\alpha(\mathsf{p})\  |\ 
\lambda^{(1)},\lambda^{(2)},\lambda^{(3)})\ .$$
We will prove Theorem \ref{aaa}
by a refined correspondence result for the capped descendent vertex.
\begin{Theorem}\label{ccc}
There exists a unique correspondence matrix $\mathsf{K}$
satisfying 
\begin{multline*}
(-q)^{-\sum_i|\lambda^{(i)}|}\bC_{\mathsf{P}}(\tau_{\alpha}(\mathsf{p})\ |\ \lambda^{(1)},
\lambda^{(2)},\lambda^{(3)})= \\
(-iu)^{\sum_i|\lambda^{(i)}|+\ell(\lambda^{(i)})}\bC_{\mathsf{GW}}(
 \widehat{\tau}_{\alpha}(\mathsf{p})
\ | \ \lambda^{(1)},
\lambda^{(2)},\lambda^{(3)})
\end{multline*}
under the variable change $-q=e^{iu}$. 
\end{Theorem}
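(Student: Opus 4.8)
The plan is to construct $\mathsf{K}$ by induction on the number of nonempty legs of the capped vertex, starting from the one-leg geometry---which both defines $\mathsf{K}$ and pins down its leading terms---and then propagating to two and three legs by capped localization. First I would treat the case in which exactly one $\lambda^{(i)}$, say $\lambda^{(1)}=\lambda$ with $|\lambda|=b$, is nonempty; then $\beta$ is forced to be $b$ times the first axis, and the tangent weights at $\mathsf{p}$ are $s_1,s_2,s_3$. Substituting the correspondence rule $\widehat{\tau}_\alpha(\mathsf{p})=\sum_{\widehat{\alpha}}\mathsf{K}_{\alpha,\widehat{\alpha}}(s_1,s_2,s_3)\,\tau_{\widehat{\alpha}}(\mathsf{p})$ into the desired identity turns it, for each relative condition $\lambda$, into the linear system
$$(-q)^{-b}\,\bC_{\mathsf{P}}(\tau_\alpha(\mathsf{p})\,|\,\lambda)=(-iu)^{b+\ell(\lambda)}\sum_{\widehat{\alpha}}\mathsf{K}_{\alpha,\widehat{\alpha}}\,\bC_{\mathsf{GW}}(\tau_{\widehat{\alpha}}(\mathsf{p})\,|\,\lambda)$$
in the unknowns $\mathsf{K}_{\alpha,\widehat{\alpha}}$. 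Viewing the one-leg Gromov-Witten vertices as a pairing matrix between descendent insertions $\tau_{\widehat{\alpha}}$ and relative conditions $\lambda$, existence and uniqueness of $\mathsf{K}$ reduce to the invertibility of this descendent/relative matrix, which I would prove by exhibiting a triangular structure with respect to a partial order refining partitions by size and by length. Back-substitution then determines $\mathsf{K}$, forces $\mathsf{K}_{\alpha,\widehat{\alpha}}=0$ unless $|\alpha|\geq|\widehat{\alpha}|$, and---using the rationality of $\bC_{\mathsf{P}}$---places the entries in the correct ring after $-q=e^{iu}$.

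Having thus \emph{defined} $\mathsf{K}$ from the first-axis geometry, I must check that these entries genuinely lie in $\mathbb{Q}[i,\bw_1,\bw_2,\bw_3]((u))$ and solve the one-leg identity for all $\lambda$ simultaneously; this is the one-leg descendent correspondence, which I would obtain from the Gromov-Witten/Pairs correspondence for primary fields of \cite{moop,mpt} together with a degeneration and dimension analysis controlling the interaction of the descendents at $\mathsf{p}$ with the single relative divisor. Reading off the leading $u$-behavior of the system yields $\mathsf{K}_{\alpha,\alpha}=(iu)^{\ell(\alpha)-|\alpha|}$ and the vanishing recorded in Theorem \ref{T789}, which in turn confirms the triangularity used in the first step.

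To reach the multi-leg vertex I would apply capped localization, splitting an equivariant capped-vertex computation into edge and single-vertex contributions and thereby reducing a two- or three-leg correspondence to one-leg correspondences glued along edges whose behavior is governed by the primary correspondence. The natural rigid test geometry is $\bA_1\times\Pp$ (and more generally $\bA_n\times\Pp$), in which degeneration isolates the vertex. The essential new input---and the step I expect to be the main obstacle---is a \emph{second} invertibility, now of the matrix pairing the capped edge/vertex data in the two-leg geometry: here there is no single relative condition free to vary, so triangularity must be recovered from the interplay of two relative partitions together with the leading-term control of Theorem \ref{T789}. Once the two-leg correspondence is established with the \emph{same} $\mathsf{K}$ produced in the one-leg step---whose construction privileged the first axis and so broke the symmetry in the $s_i$---comparing the two inequivalent two-leg configurations forces each $\mathsf{K}_{\alpha,\widehat{\alpha}}$ to be symmetric in the $\bw_i$. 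The three-leg case then follows by the identical capped-localization and degeneration argument, completing the proof of Theorem \ref{ccc} and hence of Theorem \ref{aaa}.
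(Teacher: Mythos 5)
Your architecture matches the paper's almost exactly: construct $\mathsf{K}$ from the 1-leg cap via invertibility of the descendent/relative pairing matrices (proved by triangularity with respect to the orderings $\unrhd$, $\succcurlyeq$ and a Vandermonde block), verify compatibility across all $\lambda$ by a dimension/vanishing argument, extract the leading terms of Theorem \ref{T789}, then propagate to two and three legs by capped localization in $\bA_1\times\Pp$ and $\bA_2\times\Pp$ geometries, deducing the symmetry of $\mathsf{K}$ in the $\bw_i$ by comparing the two inequivalent orientations of the 2-leg configuration. All of that is the paper's route.

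The genuine gap is in the step you yourself flag as the main obstacle. First, your description of how the relations are generated is slightly off: the paper does have a relative condition free to vary in the 2-leg step --- the auxiliary partition $\mu'$ weighted at $\bullet_\infty$ along $\bD_\infty\subset\bF_2\times\Pp$, which ranges over the infinite set with $|\mu'|-\ell(\mu')$ large enough to force the vanishing $\bZ_{\mathsf{P}}(\alpha,\lambda,\mu')_\beta=0$ by dimension count (Proposition \ref{ggh2}). More seriously, the invertibility you need is of the matrix of \emph{primary} (descendent-free) 2-leg vertices $\bC_{\mathsf{P}}(1\,|\,\emptyset,\widehat\mu',\mu')$ as $\widehat\mu'$ runs over partitions of $|\mu|$ and $\mu'$ over that infinite family; since no descendents appear, the leading-term control of Theorem \ref{T789} says nothing about this matrix, so the triangularity mechanism you propose cannot recover its rank. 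The paper's Proposition \ref{icecream} instead passes to the topological vertex specialization $s_1+s_2+s_3=0$, where the vertex becomes $\sum_\eta s_{\delta^t/\eta}(q^\rho)s_{\nu/\eta}(q^\rho)$, selects the square minor $\nu=\eta^+$ (adding $N$ to the largest part), exhibits block triangularity for an ad hoc partial order, and identifies each block determinant with a rectangular Schur function $s_{(N,\dots,N)}(q^\rho)\neq 0$. Without some argument of this kind the induction on leg complexity does not close, so this step needs to be supplied rather than asserted.
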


In case no descendents are present, the basic equality of
the equivariant capped vertices
\begin{equation*}
(-q)^{-\sum_i|\lambda^{(i)}|}\bC_{\mathsf{P}}(1\ |\ \lambda^{(1)},
\lambda^{(2)},\lambda^{(3)})= 
(-iu)^{\sum_i|\lambda^{(i)}|+\ell(\lambda^{(i)})}\bC_{\mathsf{GW}}(1
 \
|\ \lambda^{(1)},
\lambda^{(2)},\lambda^{(3)})
\end{equation*}
is the main result of \cite{moop}.
The
number of legs of the descendent vertex refers to the
number of non-empty partitions among $\lambda^{(1)}$ , $\lambda^{(2)}$, and
$\lambda^{(3)}$.

By capped localization discussed in Section \ref{cl}, we will
easily derive Theorem \ref{aaa} from Theorem \ref{ccc}.
The proof of Theorem \ref{ccc} will be given leg by leg
starting with the 1-leg case.

\section{Descendent correspondence: 1-leg}
\label{ooolll}
\subsection{Construction of $\mathsf{K}$} \label{conk}
We construct the matrix $\mathsf{K}$ in several steps. The first is
very simple.
Let $d>0$ be an integer, and let $\mathcal{P}_d$ be the set
of  partitions of positive size at most $d$.
Let 
\begin{equation*}
\bC^d_{\mathsf{P}}(\alpha,\lambda) =\bC_{\mathsf{P}}(\tau_\alpha(\mathsf{p}) \ | \
 \emptyset,\emptyset,\lambda), \ \ \ \ \alpha,\lambda\in \mathcal{P}_d
\end{equation*}
be a matrix with both rows and
columns indexed by $\mathcal{P}_d$.
The coefficients of $\bC^d_{\mathsf{P}}$ are rational functions in $q$.

\begin{Lemma} \label{lsp} For all $d>0$, the matrix $\bC^d_{\mathsf{P}}$ is
invertible.
\end{Lemma}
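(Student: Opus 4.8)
The plan is to prove invertibility of $\bC^d_{\mathsf{P}}$ by exhibiting a triangular structure with respect to a suitable ordering on $\mathcal{P}_d$, with invertible diagonal entries. The matrix $\bC^d_{\mathsf{P}}(\alpha,\lambda) = \bC_{\mathsf{P}}(\tau_\alpha(\mathsf{p}) \mid \emptyset,\emptyset,\lambda)$ records a capped descendent vertex with a single leg along $D_3$ carrying the relative condition $\lambda$, and descendent insertion $\tau_\alpha(\mathsf{p})$ at the origin. The curve class $\beta$ is $|\lambda|$ times the third axis, so the two indices $\alpha$ and $\lambda$ interact only through geometry concentrated on that edge. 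First I would analyze, via $\T$-equivariant localization on $P_n(U/\cup_i D_i,\beta)$, how the descendent $\tau_\alpha(\mathsf{p})$ pairs against the relative condition $\lambda$ imposed at $D_3$.

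The key mechanism I expect to exploit is a \emph{dimension/degree count} matching $\alpha$ against $\lambda$. The descendent $\tau_\alpha(\mathsf{p}) = \prod_i \tau_{\alpha_i - 1}(\mathsf{p})$ carries total cohomological weight governed by $|\alpha|$ and $\ell(\alpha)$, while the relative insertion $\lambda$ along the single edge constrains the Hilbert-scheme data there. My plan is to order the partitions (say by size, then refining by length or dominance) and show that $\bC^d_{\mathsf{P}}(\alpha,\lambda)$ vanishes unless $\lambda \succeq \alpha$ (or an analogous comparison), so that the matrix is triangular. The diagonal term $\bC^d_{\mathsf{P}}(\alpha,\alpha)$ should then come from the leading localization contribution, where each descendent $\tau_{\alpha_i-1}(\mathsf{p})$ is forced to absorb exactly one part of $\lambda$ of matching size; this produces an explicit, nonzero rational function in $q$ (a product of nonzero vertex/edge factors). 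Establishing nonvanishing of these diagonal entries makes the determinant a nonzero product, hence the matrix is invertible.

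I would carry out the steps in this order: (i) write the localization formula for $\bC_{\mathsf{P}}(\tau_\alpha(\mathsf{p}) \mid \emptyset,\emptyset,\lambda)$, noting that nonzero degrees occur only on the single edge meeting $\mathsf{p}$; (ii) extract the leading power of $q$ (equivalently the minimal Euler characteristic $n$ for which $P_n$ is nonempty given $\lambda$) and identify which $\alpha$ can contribute at that order; (iii) prove the triangularity claim, i.e. that $\bC^d_{\mathsf{P}}(\alpha,\lambda) = 0$ when the ordering comparison fails, using that the descendent weight cannot be supported by a relative condition that is too small; and (iv) compute the diagonal entries explicitly enough to see they are nonzero elements of $\mathbb{Q}(q,s_1,s_2,s_3)$, reducing to the case where $\alpha=\lambda$ and each part is handled on the edge independently.

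The main obstacle, I expect, will be step (iii)--(iv): pinning down the precise ordering under which the matrix is genuinely triangular and verifying that the diagonal entries do not vanish identically. The subtlety is that the descendent $\tau_{\alpha_i-1}(\mathsf{p})$ inserts $\mathrm{ch}_{\alpha_i+1}(\FF)$ at the origin, and disentangling how these Chern-character insertions localize against the partition $\lambda$ on the edge requires care with the equivariant edge/vertex terms and their $q$-expansions. One must rule out accidental cancellations making a diagonal entry vanish. I anticipate that the cleanest route is to reduce to a known one-dimensional (edge) calculation where the pairing between $\tau_{k}(\mathsf{p})$ and the Nakajima basis element is explicitly computable, and to invoke the rationality result already established in the excerpt to guarantee the entries live in $\mathbb{Q}(q,s_1,s_2,s_3)$ so that nonvanishing of the leading $q$-coefficient suffices for invertibility.
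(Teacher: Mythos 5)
Your overall strategy --- find an ordering making $\bC^d_{\mathsf{P}}$ triangular with nonzero diagonal entries --- is close to the paper's, but it fails at exactly the step you flag as the main obstacle: \emph{no ordering makes this matrix genuinely triangular}. The paper does establish two layers of block-triangularity, first with respect to $|\alpha|-\ell(\alpha)$ versus $|\lambda|-\ell(\lambda)$ (a dimension count on the cap geometry $N/N_\infty$, where the relative condition $\lambda[\mathsf{p}_\infty]$ gives a compact moduli space of virtual dimension $|\lambda|-\ell(\lambda)$ and the integrand imposes $|\alpha|-\ell(\alpha)$ conditions), and second, within the equality stratum, with respect to $\ell_+$, the number of parts exceeding $1$ (proved by an inductive argument varying equivariant lifts on $\PP^2\times\PP^1/\PP^2_\infty$). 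But after these reductions one is left with genuinely non-diagonal square blocks indexed by the equivalence classes of partitions differing only by parts equal to $1$: for example $(2)$, $(2,1)$, $(2,1,1)$, \dots all sit in one block, and the entries pairing $\tau_{(2,1)}$ against $(2)[\mathsf{p}_\infty]$ and $\tau_{(2)}$ against $(2,1)[\mathsf{p}_\infty]$ are \emph{both} nonzero, so no reordering can triangularize. Your picture of the diagonal term, in which ``each descendent absorbs exactly one part of $\lambda$ of matching size,'' is not how these entries behave: a $\tau_0$ insertion acts through the divisor equation and contributes a degree factor rather than pairing with a specific part of size $1$.

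The paper handles these residual blocks by an explicit computation: the stationary evaluation
$\bZ_{\mathsf{P}}(\mathsf{Cap}\,|\,\prod_i\tau_{\gamma_i-1}(\mathsf{N}_0)\,|\,\gamma[\mathsf{p}_\infty])
= q^{|\gamma|}\,|\mathrm{Aut}(\gamma)|^{-1}\prod_i(\gamma_i!)^{-1}$
for $\gamma$ with no parts equal to $1$ (imported from the stationary-theory paper), together with the divisor equation, shows each block is a matrix with entries $\frac{(|\gamma|+j)^i}{j!}$ up to a global nonzero factor, whose determinant is Vandermonde and hence nonzero. So the missing ingredient in your proposal is twofold: (a) the recognition that the final invertibility question is a block problem, not a diagonal one, and (b) the explicit closed-form evaluation of the $1$-leg stationary descendent series on the cap that makes the block determinant computable. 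Without (b) --- which is a nontrivial input, not something that falls out of the localization formula order by order in $q$ --- you cannot rule out the ``accidental cancellations'' you worry about, because the determinant of each block really is a single number that must be checked to be nonzero.
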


Similarly, we define the corresponding matrix using the 1-leg
Gromov-Witten descendent vertices,
\begin{equation*}
\bC^d_{\mathsf{GW}}(\alpha,\lambda) =\bC_{\mathsf{GW}}(\tau_\alpha(\mathsf{p}) \ | \
\emptyset,\emptyset,\lambda), \ \ \ \ \alpha,\lambda\in \mathcal{P}.
\end{equation*}
The coefficients of $\bC^d_{\mathsf{P}}$ are Laurent series in $u$.

\begin{Lemma} \label{lgw} For all $d>0$, the matrix $\bC^d_{\mathsf{GW}}$ is
invertible.
\end{Lemma}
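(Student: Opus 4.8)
The plan is to prove Lemma \ref{lgw} by the same leading-term strategy that underlies the stable pairs matrix in Lemma \ref{lsp}, now tracking the lowest order in $u$ in place of the lowest order in $q$. The matrix $\bC^d_{\mathsf{GW}}$ is square, with rows and columns indexed by $\mathcal{P}_d$, and its entries are Laurent series in $u$ with coefficients in $\mathbb{Q}(s_1,s_2,s_3)$. To establish invertibility over $\mathbb{Q}(s_1,s_2,s_3)((u))$, I would produce integer weights $a_\alpha$ and $b_\lambda$ so that the normalized entries $u^{-a_\alpha}\,\bC^d_{\mathsf{GW}}(\alpha,\lambda)\,u^{-b_\lambda}$ have finite nonzero limits as $u\to 0$ exactly along a set of index pairs on which the resulting leading-order matrix is non-degenerate. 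This standard valuation argument then forces $\det \bC^d_{\mathsf{GW}}\neq 0$.

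First I would determine the lowest order in $u$ of each entry by a localization and dimension count. For a column indexed by $\lambda$ the curve class is $\beta=|\lambda|$ times the class of the third coordinate axis, so every entry in that column is a residue integral over relative stable maps of that class with ramification profile $\lambda$ over $D_3$ and descendents $\tau_{\alpha_i-1}(\mathsf{p})$ imposed at the origin $\mathsf{p}$. The minimal genus contributing gives the leading $u$-power, and a degree count in the transverse $\com^2$-directions (with weights $s_1,s_2$) shows that the leading order is governed by $|\alpha|$, $\ell(\alpha)$, $|\lambda|$, and $\ell(\lambda)$. I would arrange the weights so that the leading contributions concentrate on the diagonal blocks $|\alpha|=|\lambda|=k$, with the remaining entries carrying strictly higher $u$-order. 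This block-triangularity is entirely parallel to the $q$-order triangularity behind Lemma \ref{lsp}.

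The crux is the non-degeneracy of the diagonal blocks. For $|\alpha|=|\lambda|=k$ the leading term is computed by genus-$0$, degree-$k$ covers of the third axis $\cong \Pp$, fully recorded by the ramification profile $\lambda$ over $\infty$, integrated against the descendents $\tau_{\alpha_i-1}(\mathsf{p})$ at $0$. Up to nonzero equivariant factors in $s_1,s_2$ from the transverse directions, this is exactly the stationary descendent theory of $\Pp$ relative to a point, in which the change of basis between the descendents $\{\tau_{\alpha_i-1}(\mathsf{p})\}_{|\alpha|=k}$ and the ramification conditions $\{\lambda\}_{|\lambda|=k}$ is triangular with respect to dominance and has nonzero diagonal, via completed cycles. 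Hence each diagonal block is invertible and the valuation argument closes the proof. I expect the main obstacle to be precisely this last step: pinning down the leading $u$-order bookkeeping uniformly across all pairs $(\alpha,\lambda)$ and identifying the diagonal blocks with the invertible $\Pp$ transition matrix, while the triangularity bounds on the off-diagonal entries are a delicate but routine dimension computation. I note that one cannot simply deduce Lemma \ref{lgw} from Lemma \ref{lsp} through \cite{moop}, since the correspondence of \cite{moop} concerns only the descendent-free capped vertices; a self-contained Gromov-Witten evaluation of the diagonal is therefore required.
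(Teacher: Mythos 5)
Your valuation-in-$u$ strategy breaks down at the block-triangularity step: the matrix $\bC^d_{\mathsf{GW}}$ is not block-triangular with respect to the sizes $|\alpha|$ versus $|\lambda|$, and no normalization by weights $a_\alpha+b_\lambda$ can make the off-size-diagonal entries strictly subleading. Take $d=2$. Both $\bC^2_{\mathsf{GW}}((1),(1,1))$ and $\bC^2_{\mathsf{GW}}((1,1),(1))$ are nonzero: the first has leading term of order $u^{-4}$ (two disconnected degree-one components, arithmetic genus $-1$), the second of order $u^{-2}$, and these coincide with the leading orders of the diagonal entries $\bC^2_{\mathsf{GW}}((1,1),(1,1))$ and $\bC^2_{\mathsf{GW}}((1),(1))$ respectively. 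Keeping both diagonal entries at leading order forces $a_{(1)}+b_{(1)}$ and $a_{(1,1)}+b_{(1,1)}$ to equal the two diagonal valuations, so $(a_{(1)}+b_{(1,1)})+(a_{(1,1)}+b_{(1)})$ equals their sum; but suppressing both off-diagonal entries requires each of $a_{(1)}+b_{(1,1)}$ and $a_{(1,1)}+b_{(1)}$ to strictly exceed the corresponding off-diagonal valuation, and those two valuations have the same sum. This is a contradiction, so your claimed leading-order block structure on $|\alpha|=|\lambda|=k$ cannot be arranged. The correct triangularity is not in $|\alpha|$ but in $|\alpha|-\ell(\alpha)$ versus $|\lambda|-\ell(\lambda)$: the paper proves the \emph{exact} vanishing $\bC^d_{\mathsf{GW}}(\alpha,\lambda)=0$ whenever $|\alpha|-\ell(\alpha)<|\lambda|-\ell(\lambda)$ by a dimension count on the compact cap geometry, refines it by the statistic $\ell_+$ (number of parts greater than $1$) via a degeneration and equivariant-lift argument on $\PP^2\times\PP^1/\PP^2_\infty$, and finally reduces to blocks indexed by equivalence classes of partitions differing only by parts of size $1$.

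Your treatment of the diagonal blocks is likewise not the one that is needed. The surviving blocks are not indexed by all partitions of a fixed $k$ but by the chains $\gamma,\ \gamma+(1),\ \gamma+(1,1),\dots$ for $\gamma$ with no part equal to $1$; their invertibility comes not from completed cycles but from the explicit one-part evaluation \eqref{ffggg} (Lemma 7 of \cite{unknot}) combined with the divisor equation, which converts each block into a Vandermonde-type matrix with entries $\frac{(|\gamma|+j)^i}{j!}$. Note also that the lowest $u$-order of a cap series is carried by maximally \emph{disconnected} covers rather than connected genus-zero ones, so even on a fixed-degree block the "leading matrix" would not be the relative stationary GW/Hurwitz transition matrix of $\Pp$. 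Your closing remark is correct, though: the paper does not deduce Lemma \ref{lgw} from Lemma \ref{lsp} via \cite{moop}; it runs the identical vanishing and evaluation arguments in parallel for the two theories.
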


By the invertibility of Lemmas \ref{lsp} and \ref{lgw}, there exists
a unique correspondence matrix ${\mathsf{K}}^d$ indexed by $\mathcal{P}_d$ 
with coefficients in $\mathbb{Q}(i,s_1,s_2,s_3)((u))$ which
satisfies the condition
\begin{equation*}
(-q)^{-|\lambda|}\bC_{\mathsf{P}}(\tau_{\alpha}(\mathsf{p})\ |\ \emptyset,
\emptyset,\lambda)= 
(-iu)^{|\lambda|+\ell(\lambda)}\bC_{\mathsf{GW}}\Big( 
\sum_{\widehat{\alpha}\in {\mathcal{P}_d}} {\mathsf{K}}^d_{\alpha,\widehat{\alpha}}
\,  {\tau}_{\widehat{\alpha}}(\mathsf{p})
\ \Big| \ \emptyset,
\emptyset,\lambda\Big)
\end{equation*}
under the variable change $-q=e^{iu}$ 
for all $\alpha,\lambda \in \mathcal{P}_d$.

\vspace{10pt}
\noindent{\bf{Definition.}} {\em
 The correspondence matrix ${\mathsf{K}}$ is defined
by the following
rule:
$${\mathsf{K}}_{\alpha,\widehat{\alpha}}=
{\mathsf{K}}^{|\alpha|}_{\alpha,\widehat{\alpha}}$$
if $|\alpha|\geq |\widehat{\alpha}|$ and 
${\mathsf{K}}_{\alpha,\widehat{\alpha}}=0$
otherwise.}
\vspace{10pt}

After proving Lemmas \ref{lsp} and \ref{lgw} in Section \ref{lll} below,
we will use a geometric argument in Section \ref{cmptt} to prove the following
compatibility statement.

\begin{Proposition} \label{prprr}
For all $d\geq |\alpha|$,
${\mathsf{K}}^d_{\alpha,\widehat{\alpha}}=
{\mathsf{K}}^{|\alpha|}_{\alpha,\widehat{\alpha}}$
for all $\widehat{\alpha}$.
%$$
%\sum_{\widehat{\alpha}\in {\mathcal{P}_d}}
%{\mathsf{K}}^d_{\alpha,\widehat{\alpha}}\,
% {\tau}_{\widehat{\alpha}}(\mathsf{p}) =
%\sum_{\widehat{\alpha}\in {\mathcal{P}_{|\alpha|}}}
%{\mathsf{K}}^{|\alpha|}_{\alpha,\widehat{\alpha}}\,
% {\tau}_{\widehat{\alpha}}(\mathsf{p}) \ .$$
\end{Proposition}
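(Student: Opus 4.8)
The plan is to deduce Proposition \ref{prprr} from the full 1-leg correspondence (with a single, size-truncated matrix valid for relative conditions of \emph{every} size) together with the uniqueness supplied by Lemma \ref{lgw}. Fix $\alpha$ and set $d_0=|\alpha|$. For each $d\geq d_0$, Lemma \ref{lgw} says the coefficient matrix $\bC^d_{\mathsf{GW}}$ is invertible, so for fixed $\alpha$ the row $(\mathsf{K}^d_{\alpha,\widehat{\alpha}})_{\widehat{\alpha}\in\mathcal{P}_d}$ is the \emph{unique} solution of the linear system
\[
(-q)^{-|\lambda|}\bC_{\mathsf{P}}(\tau_\alpha(\mathsf{p})\mid\emptyset,\emptyset,\lambda)=(-iu)^{|\lambda|+\ell(\lambda)}\sum_{\widehat{\alpha}\in\mathcal{P}_d}\mathsf{K}^d_{\alpha,\widehat{\alpha}}\,\bC_{\mathsf{GW}}(\tau_{\widehat{\alpha}}(\mathsf{p})\mid\emptyset,\emptyset,\lambda),
\]
ranging over all $\lambda\in\mathcal{P}_d$. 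Hence it suffices to exhibit \emph{some} solution of the asserted shape. Let $\widetilde{\mathsf{K}}$ be the zero-extension to $\mathcal{P}_d$ of the row $(\mathsf{K}^{d_0}_{\alpha,\widehat{\alpha}})_{\widehat{\alpha}\in\mathcal{P}_{d_0}}$, namely $\widetilde{\mathsf{K}}_{\alpha,\widehat{\alpha}}=\mathsf{K}^{d_0}_{\alpha,\widehat{\alpha}}$ for $|\widehat{\alpha}|\leq d_0$ and $\widetilde{\mathsf{K}}_{\alpha,\widehat{\alpha}}=0$ otherwise. If $\widetilde{\mathsf{K}}$ solves the $\mathcal{P}_d$ system, then by uniqueness $\mathsf{K}^d_{\alpha,\cdot}=\widetilde{\mathsf{K}}_{\alpha,\cdot}$, which is exactly the claim, including the vanishing of $\mathsf{K}^d_{\alpha,\widehat{\alpha}}$ for $d_0<|\widehat{\alpha}|\leq d$.

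Next I would verify that $\widetilde{\mathsf{K}}$ solves the system. The equations with $|\lambda|\leq d_0$ hold verbatim by the definition of $\mathsf{K}^{d_0}$, since the adjoined columns of $\widetilde{\mathsf{K}}$ vanish. The remaining equations, with $d_0<|\lambda|\leq d$, amount precisely to the assertion that the 1-leg correspondence with the size-$d_0$ matrix continues to hold for larger relative conditions:
\[
(-q)^{-|\lambda|}\bC_{\mathsf{P}}(\tau_\alpha(\mathsf{p})\mid\emptyset,\emptyset,\lambda)=(-iu)^{|\lambda|+\ell(\lambda)}\bC_{\mathsf{GW}}\Big(\sum_{|\widehat{\alpha}|\leq d_0}\mathsf{K}^{d_0}_{\alpha,\widehat{\alpha}}\,\tau_{\widehat{\alpha}}(\mathsf{p})\ \Big|\ \emptyset,\emptyset,\lambda\Big).
\]
Thus the entire proposition reduces to this geometric statement, which is the content of Section \ref{cmptt}.

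For the geometric statement I would use capped localization (Section \ref{cl}) together with the degeneration formulas of Section \ref{sdeg}. Since nonzero degree occurs only along the third edge meeting $\mathsf{p}$, both theories localize, schematically, as a pairing
\[
\bC_{\bullet}(\tau_\alpha(\mathsf{p})\mid\emptyset,\emptyset,\lambda)=\sum_{|\eta|=|\lambda|}\mathsf{S}^{\bullet}_\alpha(\eta)\,\mathsf{G}^{\bullet}(\eta,\lambda),\qquad\bullet\in\{\mathsf{P},\mathsf{GW}\},
\]
where $\mathsf{S}^{\bullet}_\alpha(\eta)$ is the descendent vertex contribution at $\mathsf{p}$ carrying all of $\tau_\alpha(\mathsf{p})$ and the outgoing edge partition $\eta$, while $\mathsf{G}^{\bullet}(\eta,\lambda)$ is the descendent-free capped edge-and-relative factor. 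The factors $\mathsf{G}^{\mathsf{P}}$ and $\mathsf{G}^{\mathsf{GW}}$ match under $-q=e^{iu}$ by the bare-vertex and primary-field GW/Pairs correspondences of \cite{moop,mpt}, and they are the \emph{sole} carriers of the dependence on $\lambda$. Consequently the descendent correspondence reduces to matching $\mathsf{S}^{\mathsf{P}}_\alpha$ against $\mathsf{S}^{\mathsf{GW}}_{\widehat{\alpha}}$ by a single matrix, uniformly in the edge partition $\eta$; this uniformity is exactly what makes the correspondence matrix independent of $|\lambda|$, and hence of $d$.

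The main obstacle is the degree control that keeps the matrix supported on $|\widehat{\alpha}|\leq|\alpha|$: one must show that the size-$|\alpha|$ descendent $\tau_\alpha(\mathsf{p})$ can never force Gromov-Witten descendents $\tau_{\widehat{\alpha}}$ with $|\widehat{\alpha}|>|\alpha|$ into the matching, even as $|\lambda|$ grows. I expect this from a dimension estimate on the capped descendent vertex: the localization class produced by $\tau_\alpha(\mathsf{p})$ at the fixed point $\mathsf{p}$ has degree in the edge (equivalently relative) variables bounded by $|\alpha|$, so enlarging the relative size beyond $|\alpha|$ contributes no new descendent content. Granting this bound, the zero-extension $\widetilde{\mathsf{K}}$ genuinely solves the enlarged system, and uniqueness from Lemma \ref{lgw} completes the proof.
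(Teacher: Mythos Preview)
Your first two paragraphs are correct and match the paper's reduction exactly: by the invertibility of $\bC^d_{\mathsf{GW}}$ (Lemma~\ref{lgw}), it suffices to show that the zero-extension of $\mathsf{K}^{|\alpha|}$ solves the size-$d$ system, which amounts precisely to the identity
\[
(-q)^{-|\lambda|}\bC_{\mathsf{P}}(\tau_\alpha(\mathsf{p})\mid\emptyset,\emptyset,\lambda)=(-iu)^{|\lambda|+\ell(\lambda)}\bC_{\mathsf{GW}}\Big(\sum_{|\widehat{\alpha}|\leq|\alpha|}\mathsf{K}^{|\alpha|}_{\alpha,\widehat{\alpha}}\,\tau_{\widehat{\alpha}}(\mathsf{p})\ \Big|\ \emptyset,\emptyset,\lambda\Big)
\]
for $|\lambda|>|\alpha|$. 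This is exactly the paper's identity~\eqref{gtt12}.

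The gap is in your proposed proof of this identity. Your factorization $\bC_\bullet(\tau_\alpha\mid\lambda)=\sum_{|\eta|=|\lambda|}\mathsf{S}^\bullet_\alpha(\eta)\,\mathsf{G}^\bullet(\eta,\lambda)$ is essentially the decomposition of the capped vertex into bare vertex and rubber. Since the rubber matrix $\mathsf{G}^\bullet$ is invertible in each fixed degree and does satisfy the correspondence, this factorization is an \emph{equivalence}, not a reduction: matching capped vertices for all $|\lambda|=e$ is the same as matching bare vertices for all $|\eta|=e$. So you are left needing the bare descendent correspondence for $|\eta|>|\alpha|$, which is no easier than what you started with. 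Your final ``dimension estimate''---that the localization class of $\tau_\alpha(\mathsf{p})$ has ``degree in the edge variables bounded by $|\alpha|$''---does not make sense as stated: the edge partition $\eta$ is a discrete index, not a variable in which one can bound degrees, and the bare vertex on the open chart $\com^3$ has no compactness to support a dimension count.

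The paper obtains the needed vanishing not at the vertex level but by passing to a \emph{compact} auxiliary geometry. One works on $\PP^2\times\PP^1/\PP^2_\infty$ in curve class $|\lambda|\mathsf{L}$, inserts $\tau_\alpha$ together with the small-diagonal class $\Delta$ pulled back from $(\PP^2)^{\ell(\alpha)}$, and uses a specific relative condition $\widehat{\lambda}$ weighting only the largest part by a fixed point. Two independent dimension counts (one direct on the compact moduli space, one via localization and Lemma~\ref{efef}) force the integrals to vanish whenever $|\lambda|>|\alpha|$; this is Lemma~\ref{vann2}. Localizing that vanishing back yields a relation expressing $\bC_\bullet(\tau_\alpha\mid\lambda)$ in terms of $\bC_\bullet(\tau_\alpha\mid\lambda')$ with $\lambda'\subsetneq\lambda$, identically in both theories, and induction on $|\lambda|$ finishes. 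The diagonal insertion and the choice of relative weighting are what make the two dimension bounds combine to exactly the condition $|\lambda|>|\alpha|$; this is the geometric input your proposal is missing.
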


Lemmas \ref{lsp}-\ref{lgw} and Proposition \ref{prprr}
together yield
a unique correspondence matrix
$${\mathsf{K}}_{\alpha,\widehat{\alpha}}\in \mathbb{Q}(i,s_1,s_2,s_3)((u))$$
satisfying Theorem \ref{ccc}
in the 1-leg case.
The proof of Theorem \ref{ccc} in the 2- and 3-leg
cases will be presented in Sections \ref{fullt} and \ref{fulltt}. 
Theorem \ref{aaa}
will be derived as a consequence.

By our construction, the $u$ coefficients of
${\mathsf{K}}_{\alpha,{\widehat{\alpha}}}$ are easily seen to
be homogeneous rational functions in the variables $s_i$ of
degree $|\alpha|+\ell(\alpha)-|\widehat{\alpha}|-\ell(\widehat{\alpha})$.
The claim follows from the homogeneity of the 
coefficients of the matrices
$$\bC^d_{\mathsf{P}}(\alpha,\lambda), \ \ \ 
\bC^d_{\mathsf{GW}}(\alpha,\lambda)$$
obtained from geometric dimensional analysis.

Theorem \ref{ll33} asserts further  the symmetry
and polynomiality of the coefficients ${\mathsf{K}}_{\alpha,{\widehat{\alpha}}}$.
Unfortunately, the construction  of $\mathsf{K}$ from the 1-leg
geometry breaks the symmetry between the variables
$s_i$. The symmetry of $\mathsf{K}$ will be established
in Section \ref{4444} as a step in the proof of Theorem \ref{ccc}.
The restriction of the coefficients of $\mathsf{K}$
to the subring
$$\mathbb{Q}[i,s_1,s_2,s_3]((u)) \subset \mathbb{Q}(i,s_1,s_2,s_3)((u))$$
will also be proven in Section \ref{4444},
completing the proof of Theorem \ref{ll33}.

\subsection{Proof of Lemmas \ref{lsp} and \ref{lgw}}\label{lll}
On the set $\mathcal{P}_d$, we define two partial orderings:
\begin{enumerate}
\item[$\bullet$]
$\alpha \unrhd \widetilde{\alpha} \ \ \ \ 
\Longleftrightarrow \ \ \ \ |\alpha|-\ell(\alpha) \geq |\widetilde{\alpha}|
-\ell(\widetilde{\alpha}) \ ,$
\item[$\bullet$]
$\alpha \succcurlyeq \widetilde{\alpha} \ \ \ \ \Longleftrightarrow \ \ \ \
\ell_+(\alpha) \geq
\ell_+(\widetilde{\alpha}) \ ,$
\end{enumerate}
where $\ell_+(\alpha)$ is the number of parts of $\alpha$
which are strictly greater than 1. The conditions
$\alpha \rhd \widetilde{\alpha}$ and $\alpha \succ \widetilde{\alpha}$
are defined via the corresponding strict inequalities.

\begin{Lemma}  \label{efef}
If $\alpha \lhd \lambda$, then
$\bC^d_{\mathsf{P}}(\alpha,\lambda)=0$ and
 $\bC^d_{\mathsf{GW}}(\alpha,\lambda)=0$.
\end{Lemma}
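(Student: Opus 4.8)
The plan is to prove both vanishings by a dimension count after reducing the one-leg capped vertex to its local model. Since the relative conditions $\lambda^{(1)}=\lambda^{(2)}=\emptyset$ and $\lambda^{(3)}=\lambda$ force the curve class $\beta=|\lambda|\,C_3$ to lie along the third axis, both $\bC^d_{\mathsf{P}}(\alpha,\lambda)$ and $\bC^d_{\mathsf{GW}}(\alpha,\lambda)$ are computed by $\T$-localization on the local geometry $\com^2\times\Pp$ relative to $\com^2\times\{\infty\}$, with the descendent $\tau_\alpha(\mathsf{p})$ supported at the origin of the zero fiber and tangency $\lambda$ imposed along $\com^2\times\{\infty\}$. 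The first two factors of $\T$ (weights $s_1,s_2$) act on the trivial rank-two normal bundle of $C_3$, and the third (weight $s_3$) along $C_3\cong\Pp$. The argument proceeds separately in each theory, but the mechanism is the same: once the tangency $\lambda$ is prescribed, the descendent carries too little $\psi$-degree to meet the virtual dimension.

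For the Gromov--Witten vertex I would use the local-curve structure. Integrating out the normal $\com^2$-directions expresses $\bC^d_{\mathsf{GW}}(\alpha,\lambda)$ as a sum over genera of $u$-weighted integrals over $\oM'(\Pp/\infty,|\lambda|)_\lambda$ of the equivariant Euler class of $-R^\bullet\pi_* f^*(\cO\oplus\cO)$ (with weights $s_1,s_2$) against the descendent classes. Because $\mathsf{p}$ is the origin of $\com^2\times\{0\}$, each $\tau_{\alpha_i-1}(\mathsf{p})$ contributes $s_1 s_2\,\psi_i^{\alpha_i-1}\,\text{ev}_i^*[0]$, of cohomological degree $\alpha_i$ on the base. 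Since the domain may be disconnected, I would argue component by component. On a component of genus $g_i$, degree $d_i$, carrying descendent indices $S_i$ and relative parts $\mu^{(i)}$, matching the integrand degree to the virtual dimension $2g_i-2+d_i+|S_i|+\ell(\mu^{(i)})$ forces the contributing Hodge class to have degree $D_H^{(i)}=2g_i-2+d_i+|S_i|+\ell(\mu^{(i)})-\sum_{j\in S_i}\alpha_j$. As each Euler class involves only Chern classes $c_0,\dots,c_{g_i}$ of the rank-$g_i$ Hodge bundle, a nonzero contribution requires $D_H^{(i)}\le 2g_i$, i.e.\ $\sum_{j\in S_i}(\alpha_j-1)\ge |\mu^{(i)}|+\ell(\mu^{(i)})-2$. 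Every component is a nonconstant cover of $\Pp$, hence meets $\infty$ and carries at least one relative point, so the number of components $r$ satisfies $r\le \ell(\lambda)$. Summing the component inequalities gives $|\alpha|-\ell(\alpha)\ge |\lambda|+\ell(\lambda)-2r\ge |\lambda|-\ell(\lambda)$ as a necessary condition for nonvanishing; contrapositively, $\alpha\lhd\lambda$ forces $\bC^d_{\mathsf{GW}}(\alpha,\lambda)=0$.

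For the stable pairs vertex I would run the parallel count directly on $P_n(\com^2\times\Pp/\com^2\times\{\infty\},|\lambda|\,C_3)$. The $\T$-fixed loci are described combinatorially by the monomial configuration along $C_3$ together with the Hilbert-scheme data imposed at $\infty$ by $\lambda$, and the descendent $\prod_i \text{ch}_{\alpha_i+1}(\FF)\,\pi_X^*(\mathsf{p})$ localizes at the origin. Comparing the non-equivariant degree of the integrand with the virtual dimension, and bookkeeping the tangency profile $\lambda$ at $\infty$ exactly as above (each connected piece of the support over $\Pp$ again contributing at least one relative point), yields the identical threshold and hence $\bC^d_{\mathsf{P}}(\alpha,\lambda)=0$ when $\alpha\lhd\lambda$.

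The delicate point, and the step I expect to be the main obstacle, is the bookkeeping that pins the threshold at exactly $|\lambda|-\ell(\lambda)$ rather than a weaker bound. Bounding the number of components only by $|\lambda|$ would prove a strictly weaker statement; the sharp inequality relies on the observation that over the relative divisor each component must contribute a relative point, so $r\le \ell(\lambda)$. Making this precise requires care with expanded (rubber) targets and with how the descendent and relative insertions distribute among components in both theories, and it is here that the genus-independence of the Hodge-degree obstruction must be combined correctly with the tangency count.
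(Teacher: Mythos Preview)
Your Gromov--Witten argument is essentially correct, though it works much harder than necessary. Your stable pairs argument, on the other hand, has a genuine gap.

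For stable pairs you claim to ``run the parallel count directly'' by comparing the non-equivariant degree of the integrand with the virtual dimension. But the moduli space $P_n(\com^2\times\Pp/\com^2\times\{\infty\},|\lambda|\,C_3)$ is not compact, so a naive dimension comparison does not force vanishing: the invariant is defined by $\T$-equivariant residues, and the localization contributions are rational functions in $s_1,s_2,s_3$ with denominators, for which homogeneity alone yields nothing. Unlike in Gromov--Witten theory, there is no Hodge bundle whose rank bounds the cohomological degree of the obstruction contribution; the $\T$-fixed loci are isolated box configurations with purely equivariant virtual tangent spaces, and there is no evident component-by-component inequality to sum. Your sentence ``bookkeeping the tangency profile exactly as above \ldots\ yields the identical threshold'' is an assertion, not an argument.

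The paper avoids both your labor and your gap with a single device that handles both theories at once: rewrite the vertex so that the relevant moduli spaces become genuinely compact. Using $\mathsf{p}_0 = s_1 s_2\,\mathsf{N}_0$ in $H^*_\T(N)$ and $\mathsf{1} = \mathsf{p}_\infty/(s_1 s_2)$ in $H^*_\T(N_\infty)$ (where $N=\cO_{\Pp}\oplus\cO_{\Pp}$ is the cap and $\mathsf{N}_0$ the fiber over $0$), one obtains
\[
\bC^d(\alpha,\lambda) \;=\; (s_1 s_2)^{\ell(\alpha)-\ell(\lambda)}\,
\bZ\Big(\mathsf{Cap}\ \Big|\ \prod_{i}\tau_{\alpha_i-1}(\mathsf{N}_0)\ \Big|\ \lambda[\mathsf{p}_\infty]\Big)^\T_{|\lambda|}
\]
in both theories. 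With each part of $\lambda$ weighted by the point class $\mathsf{p}_\infty$, the relative moduli spaces of pairs and of maps to the cap are compact (they sit in the fiber over the origin of the proper map to $\text{Sym}^{|\lambda|}(\com^2)$) of virtual dimension $|\lambda|-\ell(\lambda)$, while the integrand $\prod_i \tau_{\alpha_i-1}(\mathsf{N}_0)$ has degree $|\alpha|-\ell(\alpha)$. On a compact space, an equivariant integral of a class of degree strictly below the virtual dimension vanishes. That is the whole proof.

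The conceptual point you are missing is that the clean route is not to analyze the localization term by term, but to trade the equivariant residue for an honest integral over a compact space by reweighting the insertions. Your component-by-component Hodge count on the GW side can be completed (modulo some care with rubber contributions you did not address), but it offers no template for pairs; the compactification trick handles both uniformly in one line.
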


\begin{proof}
The result is a consequence of a dimension count.
The 1-leg vertex can be studied via the {\em cap} geometry,
$$N = \cO_{\Pp} \oplus \cO_{\Pp} \rightarrow \Pp \ ,$$ 
relative to the fiber
$$N_\infty \subset N$$
over $\infty \in \Pp$.
The total space $N$ naturally carries an action of a 
3-dimensional torus $\T$ where the first two factors
scale the components of the rank 2 trivial bundle
 and last
factor acts on $\Pp$ with fixed points $0, \infty\in \Pp$. 
Let the scaling weights be $s_1$ and $s_2$, and let the
tangent weight along $\Pp$ at the fixed point $p_0$
over $0\in \Pp$ be $s_3$.
The $\T$-action
preserves the relative divisor $N_\infty$. The 
equivariant relative geometry $N/N_\infty$ is
equivalent to
 \eqref{tthh2} in the 1-leg case.

Let $N_0$ be the fiber over $0\in \Pp$ of the cap,
and let $\mathsf{N}_0$ be the associated class in equivariant
cohomology. We have the relation
$$\mathsf{N}_0 = \frac{\mathsf{p}_0}{s_1s_2} \ $$
in the equivariant cohomology of the cap.
For the relative conditions, 
we can weight each part of $\lambda$ by the 
fixed point $p_\infty$ over $\infty\in \Pp$.
Let  $\lambda[\mathsf{p}_\infty]$ denote the
resulting weighted partition.
We have the relation
$$\mathsf{1} = \frac{\mathsf{p}_\infty}{s_1s_2} $$
in the equivariant cohomology of the divisor over $\infty\in \Pp$.
Hence, we can write
$$\bC^d_{\mathsf{P}}(\alpha,\lambda) =
(s_1s_2)^{\ell(\alpha)-\ell(\lambda)}\
\bZ_{\mathsf{P}}\Big(\mathsf{Cap};q\ \Big|
\ \prod_{i=1}^{\ell(\alpha)} \tau_{\alpha_i-1}(\mathsf{N}_0)\ \Big| \ 
\lambda[\mathsf{p}_\infty]
\Big)^\T_{|\lambda|} \ ,  $$
$$\bC^d_{\mathsf{GW}}(\alpha,\lambda) =
(s_1s_2)^{\ell(\alpha)-\ell(\lambda)}\
\bZ'_{\mathsf{GW}}\Big(\mathsf{Cap} ;u\ \Big|
\ \prod_{i=1}^{\ell(\alpha)} \tau_{\alpha_i-1}(\mathsf{N}_0)\ \Big| \ 
\lambda[\mathsf{p}_\infty]
\Big)^\T_{|\lambda|} \ .  $$

The moduli spaces of relative stable pairs and relative
stable maps to the cap with boundary condition
$\lambda[\mathsf{p}_\infty]$ are both compact of virtual dimension
$|\lambda|-\ell(\lambda)$. The integrand
$$\prod_{i=1}^{\ell(\alpha)} \tau_{\alpha_i-1}(\mathsf{N}_0)$$
imposes $|\alpha|-\ell(\alpha)$ condition in both theories.
If the dimension of the integrand is strictly
less than the virtual dimension, the equivariant integral
vanishes for compact moduli spaces.
\end{proof}

\begin{Lemma} \label{rwrw}
If $|\alpha|-\ell(\alpha)= |\lambda|-\ell(\lambda)$
and $\alpha\prec \lambda$, then
$$\bC^d_{\mathsf{P}}(\alpha,\lambda)=0, \ \ \ \
 \bC^d_{\mathsf{GW}}(\alpha,\lambda)=0\ .$$
\end{Lemma}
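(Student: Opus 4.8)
The plan is to build directly on the geometric reduction used for Lemma \ref{efef}. As there, I would present both $\bC^d_{\mathsf{P}}(\alpha,\lambda)$ and $\bC^d_{\mathsf{GW}}(\alpha,\lambda)$ through the cap $N/N_\infty$, with integrand $\prod_i \tau_{\alpha_i-1}(\mathsf{N}_0)$ and boundary condition $\lambda[\mathsf{p}_\infty]$. The hypothesis $|\alpha|-\ell(\alpha)=|\lambda|-\ell(\lambda)$ means the integrand now has codimension exactly equal to the virtual dimension, so the residue is a degree-zero homogeneous function of $s_1,s_2,s_3$, i.e. a pure constant. The crude degree count of Lemma \ref{efef} therefore gives no information, and a genuinely finer argument is required.

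The first reduction I would make is to separate the descendents into the $\ell_+(\alpha)$ genuine insertions $\tau_{\alpha_i-1}(\mathsf{N}_0)$ with $\alpha_i\geq 2$ and the $\ell(\alpha)-\ell_+(\alpha)$ divisor insertions $\tau_0(\mathsf{N}_0)$. The point is that the latter impose only incidence with the fibre $N_0$ (carrying the equivariant factor $\mathsf{N}_0|_{p_0}=s_3$ and no cotangent or Chern-degree condition), so they contribute neither to $\ell_+(\alpha)$ nor to the higher-ramification matching below. Since $N_0$ and the relative divisor $N_\infty$ are disjoint, the resulting corrections in the divisor equation strictly decrease $|\alpha|-\ell(\alpha)$, and hence vanish by Lemma \ref{efef}; inducting on the number of parts of $\alpha$ equal to $1$ thus reduces the statement to the case $\ell_+(\alpha)=\ell(\alpha)$, where one must show the vertex vanishes whenever $\ell(\alpha)<\ell_+(\lambda)$.

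For this core case I would apply capped localization (Section \ref{cl}) with respect to the $\Pp$-factor, degenerating the cap into the bare $1$-leg descendent vertex over $p_0$ glued along the edge to the relative condition $\lambda$ at $\infty$; after localizing the fibre torus this becomes a relative descendent problem on $(\Pp,\infty)$ with the insertions supported over $0$ (recall $\mathsf{N}_0$ restricts to $s_3$ at $p_0$ and to $0$ at $p_\infty$) and ramification/tangency profile $\lambda$ over $\infty$. The refined count is then that each genuine descendent $\tau_{\geq 1}(\mathsf{N}_0)$ can match at most one part of $\lambda$ strictly greater than $1$: the cotangent condition (respectively the Chern-character condition on the stable pairs side) it imposes is non-trivial only on loci carrying the corresponding higher ramification, so $\ell_+(\alpha)$ genuine descendents can feed at most $\ell_+(\alpha)$ of the higher parts. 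When $\ell_+(\lambda)>\ell_+(\alpha)$ the surplus higher parts cannot be accounted for, the supporting fixed loci have strictly smaller dimension than required, and the degree-zero residue vanishes in both theories. For stable pairs the same bound can be phrased through the Fock-space description of the relative divisor, where $\tau_{\geq 1}(\mathsf{N}_0)$ acts by operators whose effect on the number of parts $>1$ is bounded by $\ell_+(\alpha)$, so the pairing against the state $\lambda$ with $\ell_+(\lambda)>\ell_+(\alpha)$ is zero.

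The main obstacle I expect is making the ``one genuine descendent per higher part'' estimate rigorous and, above all, uniform across Gromov--Witten and stable pairs theory, since the two sides realize $\tau_{\geq 1}(\mathsf{N}_0)$ through quite different mechanisms: a $\psi$-class on stable maps versus a Chern character of the universal sheaf. Ensuring that the $\tau_0(\mathsf{N}_0)$ reduction genuinely lands in the all-parts-$\geq 2$ base case, without reintroducing a larger value of $\ell_+$, is the delicate bookkeeping point; once the refined dimension estimate on the bare vertex is in place, the simultaneous vanishing of $\bC^d_{\mathsf{P}}$ and $\bC^d_{\mathsf{GW}}$ follows at once.
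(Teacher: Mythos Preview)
Your first reduction---stripping the parts of $\alpha$ equal to $1$ via the divisor equation on the cap---is correct: the Gromov--Witten correction terms have descendent partition $\alpha'$ with $|\alpha'|-\ell(\alpha')=|\alpha|-\ell(\alpha)-1$ and hence vanish by Lemma~\ref{efef}, while in stable pairs theory there are no corrections at all. So the problem does reduce to the case $\ell_+(\alpha)=\ell(\alpha)$ with $\ell(\alpha)<\ell_+(\lambda)$.

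The gap is in your second step. The assertion that each $\tau_{\ge 1}(\mathsf{N}_0)$ can feed at most one part of $\lambda$ exceeding $1$ is a plausible heuristic on the Gromov--Witten side, where it has the flavour of a ramification-matching bound, but you have not supplied a proof, and you yourself flag the transport to stable pairs as the main obstacle. The Fock-space sentence is not an argument: the descendent operators $\ch_{k+2}(\FF)$ on the cap do not act by single Nakajima operators with a controlled effect on $\ell_+$, and there is no off-the-shelf estimate of this shape you can invoke. Your ``refined dimension estimate on the bare vertex'' is precisely the content of the Lemma, not a tool already available.

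The paper avoids this by running the induction on $\ell(\lambda)$ and separating a part of the \emph{relative} condition rather than a descendent. It embeds the cap in $\PP^2\times\Pp/\PP^2_\infty$ and uses independence of equivariant lift to move the largest part $\lambda_1>1$ to a different $\T$-fixed point of $\PP^2$. Localization then writes the invariant as a sum over splittings $\alpha=\alpha'\cup\alpha''$ of a product of two cap invariants, one with relative condition $\lambda'=\lambda\setminus\{\lambda_1\}$ and one with $(\lambda_1)$. Lemma~\ref{efef} forces $|\alpha'|-\ell(\alpha')=|\lambda'|-\ell(\lambda')$ and $|\alpha''|-\ell(\alpha'')=\lambda_1-1$ in every nonzero summand; the second equality rules out $\alpha''\prec(\lambda_1)$, so $\alpha'\prec\lambda'$ and the inductive hypothesis applies. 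The virtue is that this uses only compactness, dimension counting, and independence of lift---all of which hold verbatim in both theories---so no separate stable-pairs analysis is needed.
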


\begin{proof}
If $\lambda=(\lambda_1)$ has a single part (which must exceed
1 by the second hypothesis), then $\alpha$ must have all
parts equal to 1. The first hypothesis
\begin{equation}\label{tjjt}
|\alpha|-\ell(\alpha)= |\lambda|-\ell(\lambda)
\end{equation}
then can not hold. Hence, the Lemma is true
if the length of $\lambda$ is 1.
%The result then 
%follows from Lemma \ref{efef}.
We will assume the length of $\lambda$ is at least 2
and proceed by induction.

Consider the equivariant geometry of $\PP^2 \times \PP^1$
relative to the fiber 
$$\PP^2_\infty = \PP^2 \times \{ \infty \}\subset
\PP^2 \times \PP^1\ .$$
Let 
$\mathsf{L}\in H_2(\PP^2\times \PP^1, \mathbb{Z})$
be the class of the section $\PP^1$ 
contracted over $\PP^2$.
The first two factors of $\T$ acts on $\PP^2$
with fixed points $\xi_0,\xi_1,\xi_2\in \PP^2$.
The tangent weights can be chosen as follows:
$$s_1,s_2 \ \text{ for } \xi_0,\ \ 
-s_1,s_2-s_1 \ \text{ for } \xi_1,\ \ 
s_1-s_2,-s_2 \ \text{ for } \xi_2\ .$$
The last factor of $\T$ acts on $\Pp$ as before with weight $s_3$ at
$0\in \Pp$. 
Let $\widetilde{\lambda}$
be the cohomology weighted partition obtained from $\lambda$ 
with all parts  
  weighted by $[\xi_0]\in H^*_\T(\PP^2,\mathbb{Q})$.
The integral
$$\int_{[P_n(\PP^2 \times \PP^1/\PP^2_\infty
,|\lambda|\mathsf{L})_{\widetilde{\lambda}}]^{vir}}
\prod_{i=1}^{\ell(\alpha)} \tau_{\alpha_i-1}(\PP^2_0) \ \ \  \in 
{\mathbb{Q}} $$
has  integrand dimension equal to  the virtual dimension
and hence is independent of equivariant lift. 

Let $\lambda_1> 1$ be the largest part of
 $\lambda$. Let
$$\lambda' = \lambda \setminus \{ \lambda_1\}\ $$
which is not empty since $\ell(\lambda)$ at least 2.
Let $\widetilde{\widetilde{\lambda}}$
be the cohomology weighted partition with the parts
$\lambda_i$  weighted by $[\xi_0]\in H^*_\T(\PP^2,\mathbb{Q})$
except $\lambda_1$ which is weighted
by  $[\xi_1]\in H^*_\T(\PP^2,\mathbb{Q})$. By the independence
of the choice of equivariant lift,
\begin{multline}\label{prrtt}
\int_{[P_n(\PP^2 \times \PP^1/
\PP^2_\infty
,|\lambda|\mathsf{L})_{\widetilde{\lambda}}]^{vir}}
\prod_{i=1}^{\ell(\alpha)} \tau_{\alpha_i-1}(\PP^2_0)  
= \\
\int_{[P_n(\PP^2 \times \PP^1/
\PP^2_\infty
,|\lambda|\mathsf{L})_{\widetilde{\widetilde{\lambda}}}]^{vir}}
\prod_{i=1}^{\ell(\alpha)} \tau_{\alpha_i-1}(\PP^2_0)\ .\ \ \ \ \  
\end{multline}
The left side of \eqref{prrtt} is exactly the $q^n$ coeffient of
%$$|\text{Aut}(\lambda)| \ 
$$\bZ_{\mathsf{P}}\Big(\mathsf{Cap};q\ \Big|
\ \prod_{i=1}^{\ell(\alpha)} \tau_{\alpha_i-1}(\mathsf{N}_0)\ \Big| \ 
\lambda[\mathsf{p}_\infty]
\Big)^\T_{|\lambda|}\ .$$
Similarly the right side of \eqref{prrtt} is the $q^n$ coefficient of
\begin{multline*}
\sum_{\alpha'\cup \alpha''=\alpha}
\frac{|\text{Aut}(\lambda')|}{|\text{Aut}(\lambda)|}
 \
\bZ_{\mathsf{P}}\Big(\mathsf{Cap};q\ \Big|
\ \prod_{i=1}^{\ell(\alpha')} \tau_{\alpha'_i-1}(\mathsf{N}_0)\ \Big| \ 
\lambda'[\mathsf{p}_\infty]
\Big)^\T_{|\lambda'|}\cdot\\
\left[\bZ_{\mathsf{P}}\Big(\mathsf{Cap};q\ \Big|
\ \prod_{i=1}^{\ell(\alpha'')} \tau_{\alpha''_i-1}(\mathsf{N}_0)\ \Big| \ 
\lambda_{1}[\mathsf{p}_\infty]
\Big)^\T_{\lambda_1} \right]_{s_1=-s_1, s_2=s_2-s_1}\ ,
\end{multline*}
where the sum is over disjoint splittings of $\alpha$ and

The hypothesis \eqref{tjjt} implies
$$|\alpha'|-\ell(\alpha') + |\alpha''|-\ell(\alpha'')
= |\lambda'|-\ell(\lambda')+ |\lambda_1| -1\ .$$
For nonvanishing terms of the sum, by Lemma \ref{efef}, we must
have
$$|\alpha'|-\ell(\alpha') =|\lambda'|-\ell(\lambda'), \ \ \ 
 |\alpha''|-\ell(\alpha'')
= |\lambda_1| -1\ .$$
At least one of the conditions $\alpha'\prec \lambda'$ or
$\alpha''\prec (\lambda_1)$
must hold. Since $\alpha''\prec (\lambda_1)$
is impossible,
the condition
$\alpha'\prec \lambda'$ must hold.
The induction statement is established.

The argument for Gromov-Witten theory is literally identical.
The formal properties used above hold also for Gromov-Witten
theory.
\end{proof}

We define an equivalence relation $\sim$ on $\mathcal{P}_d$
by the following rule: $\alpha\sim \widetilde{\alpha}$ if 
$\alpha$ and $\widetilde{\alpha}$ differ only by parts of size 1.
For example,
$$(4,4,3,1) \sim (4,4,3,1,1,1)\ .$$
The proof of Lemma \ref{rwrw} in fact yields a refined result.

\begin{Lemma} \label{rwrwrw}
If $|\alpha|-\ell(\alpha)= |\lambda|-\ell(\lambda)$,
$\ \ell_+(\alpha)=\ell_+(\lambda)$, 
and $\alpha\nsim \lambda$
then
$$\bC^d_{\mathsf{P}}(\alpha,\lambda)=0, \ \ \ \
 \bC^d_{\mathsf{GW}}(\alpha,\lambda)=0\ .$$
\end{Lemma}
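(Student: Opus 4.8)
The plan is to follow the inductive scheme of Lemma \ref{rwrw} essentially verbatim, upgrading the conclusion from ``$\bC^d=0$'' under $\alpha\prec\lambda$ to the stronger vanishing under the hypotheses $|\alpha|-\ell(\alpha)=|\lambda|-\ell(\lambda)$, $\ell_+(\alpha)=\ell_+(\lambda)$, and $\alpha\nsim\lambda$. The key observation is that the equivalence relation $\sim$ only forgets parts equal to $1$, so once we fix both the quantity $|\alpha|-\ell(\alpha)$ and the number $\ell_+$ of parts strictly greater than $1$, the $\sim$-class of $\alpha$ is determined by its multiset of parts that exceed $1$. Thus the task reduces to showing: if $\alpha$ and $\lambda$ have the same $|\cdot|-\ell(\cdot)$ and the same $\ell_+$, but their collections of large parts differ as multisets, then the cap invariants vanish.

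First I would set up the same cap geometry $\mathsf{Cap}=\oh_{\Pp}\oplus\oh_{\Pp}\to\Pp$ relative to $N_\infty$, together with the $\PP^2\times\PP^1$ degeneration picture used in Lemma \ref{rwrw}. The engine of the proof is the lift-independence identity \eqref{prrtt}, obtained by moving the largest part $\lambda_1>1$ of $\lambda$ off the fixed point $\xi_0$ onto $\xi_1$. As before this expresses the cap invariant as a sum over splittings $\alpha'\cup\alpha''=\alpha$, where $\alpha''$ pairs with the single part $\lambda_1$ and $\alpha'$ pairs with $\lambda'=\lambda\setminus\{\lambda_1\}$. I would then invoke Lemma \ref{efef} to discard all splittings except the dimension-balanced one, forcing
\[
|\alpha'|-\ell(\alpha')=|\lambda'|-\ell(\lambda'),\qquad
|\alpha''|-\ell(\alpha'')=\lambda_1-1.
\]
Since $\alpha''\prec(\lambda_1)$ is impossible (a single part $\lambda_1>1$ has $\ell_+=1$, the maximum), the factor attached to $\lambda_1$ is nonzero only when $\ell_+(\alpha'')=1$, i.e. $\alpha''$ itself consists of a single large part plus $1$'s; moreover that single large part, by the dimension balance, must equal $\lambda_1$, so $\alpha''\sim(\lambda_1)$.

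The main new bookkeeping step, and the place I expect the real obstacle, is propagating the hypothesis $\alpha\nsim\lambda$ into the inductive hypothesis on $(\alpha',\lambda')$. Here is the mechanism: choose $\lambda_1$ to be the largest part of $\lambda$, split off a matching large part of $\alpha$ of the same size into $\alpha''$ (the analysis above shows this is the only nonvanishing possibility, since the large part of $\alpha''$ must equal $\lambda_1$). If $\alpha$ has \emph{no} part equal to $\lambda_1$, then the factor attached to $\lambda_1$ forces $\alpha''$ to contain a large part equal to $\lambda_1$ drawn from nowhere, a contradiction, so the invariant vanishes outright. Otherwise removing one copy of $\lambda_1$ from each side preserves both invariants of the pair and strictly decreases $\ell_+$, while $\alpha\nsim\lambda$ guarantees that after stripping the common largest part the residual partitions $\alpha'$ and $\lambda'$ still satisfy $\alpha'\nsim\lambda'$ (they still differ in some large part). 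I would then apply the induction hypothesis to $(\alpha',\lambda')$, whose $\ell_+$ is one smaller, concluding $\bC^d_{\mathsf{P}}(\alpha',\lambda')=0$ and hence the whole sum vanishes. The delicate point to verify carefully is the base of the induction $\ell_+(\lambda)=1$, handled exactly as the length-$1$ case of Lemma \ref{rwrw}, together with the claim that the only surviving splitting truly isolates a \emph{single} matching large part; I would check this by a direct comparison of $\ell_+$ across the splitting, using $\ell_+(\alpha)=\ell_+(\lambda)$. As the excerpt notes, the Gromov-Witten argument is identical, since only the degeneration formula, dimension constraints, and lift-independence are used.
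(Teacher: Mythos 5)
Your proposal is correct and follows the same route as the paper, which gives no separate argument for this lemma beyond the remark that the proof of Lemma \ref{rwrw} already yields the refinement: the same lift-independence identity \eqref{prrtt} on $\PP^2\times\PP^1/\PP^2_\infty$, the same splitting $\alpha=\alpha'\cup\alpha''$ with the dimension balance forced by Lemma \ref{efef}, and the same induction. One correction to your bookkeeping, though: when $\ell_+(\alpha'')\geq 2$ the factor attached to $(\lambda_1)$ need \emph{not} vanish. For instance, taking $\alpha''=(2,2)$ and $\lambda_1=3$, Lemma \ref{match1} gives
$$\bZ_{\mathsf{P}}\Big(\mathsf{Cap};q\ \Big|\ \tau_1(\mathsf{N}_0)^2\ \Big|\ 3[\mathsf{p}_\infty]\Big)^\T_3=q^3\neq 0\ ,$$
so your assertion that ``the factor attached to $\lambda_1$ is nonzero only when $\ell_+(\alpha'')=1$'' is false as stated. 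What actually kills the term in that regime is the \emph{other} factor: since $\alpha''\prec(\lambda_1)$ is impossible, $\ell_+(\alpha'')\geq 1$, and if $\ell_+(\alpha'')\geq 2$ then $\ell_+(\alpha')=\ell_+(\lambda)-\ell_+(\alpha'')\leq \ell_+(\lambda')-1$, so $\alpha'\prec\lambda'$ and the balanced factor attached to $\lambda'$ vanishes by Lemma \ref{rwrw}. This is precisely the ``direct comparison of $\ell_+$ across the splitting'' you flag at the end, and once the vanishing is attributed to the correct factor, the rest of your argument (the surviving $\alpha''$ has a unique large part equal to $\lambda_1$ by the dimension balance, stripping it preserves the hypotheses and keeps $\alpha'\nsim\lambda'$, and the base case is vacuous) closes the induction exactly as in the paper.
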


By Lemma \ref{efef}, the matrices $\bC^d_{\mathsf{P}}$ and
$\bC^d_{\mathsf{GW}}$ are block lower-triangular with respect
to the partial ordering $\unrhd$. In order to establish
invertibility, we need only study the blocks where 
\begin{equation}\label{pe44}
|\alpha|-\ell(\alpha) = |\lambda|-\ell(\lambda)
\end{equation}
 is fixed.
By Lemma \ref{rwrw}, the above blocks themselves are
block lower-triangular with respect to the partial
ordering $\succcurlyeq$. So, we need only study blocks where
both \eqref{pe44} and
$$\ell_+(\alpha)= \ell_+(\lambda)$$
are fixed.
By Lemma \ref{rwrwrw}, we finally restrict ourselves
to the square blocks where the equivalence class 
under $\sim$ is fixed.

Let $\gamma\in\mathcal{P}_d$ be a partition with no parts equal to 
1. The evaluation 
\begin{equation}\label{ffgg}
\bZ_{\mathsf{P}}\Big(\mathsf{Cap};q\ \Big|
\ \prod_{i=1}^{\ell(\gamma)} \tau_{\gamma_i-1}(\mathsf{N}_0)\ \Big| \ 
\gamma[\mathsf{p}_\infty]\Big)^\T_{|\gamma|} = 
\frac{q^{|\gamma|}}{|\text{Aut}(\gamma)|}
 \prod_{i=1}^{\ell(\gamma)} \frac{1}{\gamma_i!}\ ,
\end{equation}
%$$\bZ_{\mathsf{GW}}\Big(\mathsf{Cap};u\ \Big|
%\ \prod_{i=1}^{\ell(\gamma)} \tau_{\gamma_i-1}(\mathsf{N}_0)\ \Big| \ 
%\gamma[\mathsf{p}_\infty] = $$
has been computed in \cite{PPstat} and does not vanish.
The cardinality of the equivalence class
under $\sim$ determined by $\gamma$ is  $d-|\gamma|$.
By the divisor equation,
the block of $\bC^d_{\mathsf{P}}$ corresponding to the
equivalence class of $\gamma$ is, up to harmless $(s_1s_2)^{\ell(\alpha)-\ell(\lambda)}$ factors, the matrix
$$\left( \begin{array}{ccccc}
 \frac{1}{0!}&    \frac{1}{1!} &  \frac{1}{2!} &  \ldots &   \frac{1}{(d-|\gamma|)!} \\
    \frac{|\gamma|}{0!} &   \frac{|\gamma|+1}{1!} &  \frac{|\gamma|+2}{2!} &  \ldots &   \frac{d}{(d-|\gamma|)!} \\
    \frac{|\gamma|^2}{0!} &   \frac{(|\gamma|+1)^2}{1!} &  \frac{(|\gamma|+2)^2}{2!} &  \ldots &   \frac{d^2}{(d-|\gamma|)!} \\
      \vdots  & \vdots &  \vdots &   \vdots &  \vdots \\
      \frac{|\gamma|^{d-|\gamma|}}{0!} &   \frac{(|\gamma|+1)^{d-|\gamma|}}{1!} &  
\frac{(|\gamma|+2)^{d-|\gamma|}}{2!} &  \ldots &   \frac{d^{d-|\gamma|}}{(d-|\gamma|)!}
\end{array}\right)$$
with every element multiplied by \eqref{ffgg}.
Invertiblity is immediate from  the Vandermonde determinant.

The argument for the Gromov-Witten matrix $\bC^d_{\mathsf{GW}}$ is identical. 
The replacement
for \eqref{ffgg} is the evaluation
\begin{equation}\label{ffggg}
\bZ'_{\mathsf{GW}}\Big(\mathsf{Cap};u\ \Big|
\ \prod_{i=1}^{\ell(\gamma)} \tau_{\gamma_i-1}(\mathsf{N}_0)\ \Big| \ 
\gamma[\mathsf{p}_\infty]\Big)^\T_{|\gamma|} = 
\frac{u^{-2\ell(\gamma)}}{|\text{Aut}(\gamma)|}
 \prod_{i=1}^{\ell(\gamma)} \frac{1}{\gamma_i!}
\ 
\end{equation}
obtained{\footnote{Beware of the typographical
error of a factor of $d$ in Lemma 7 of \cite{unknot}.}} from Lemma 7 of \cite{unknot}.
The proofs of Lemmas \ref{lsp} and \ref{lgw} are complete.
\qed

\vspace{10pt}
We define $\mathsf{K}^{|\alpha|}_{\alpha,\widehat{\alpha}}$ using the 
invertibility of $\bC^{|\alpha|}_{\mathsf{P}}$ and
$\bC^{|\alpha|}_{\mathsf{P}}$. A direct consequence of Lemma 
\ref{efef} is the following vanishing.

\begin{Lemma}  \label{efefef}
If $\alpha \lhd \widehat{\alpha}$, then
$\mathsf{K}^{|\alpha|}_{\alpha,\widehat{\alpha}}=0$.
\end{Lemma}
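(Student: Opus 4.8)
The plan is to read off the vanishing directly from the defining equation of $\mathsf{K}^{|\alpha|}$ together with the triangularity already recorded in Lemma~\ref{efef}. Write $d=|\alpha|$ and index all matrices by $\mathcal{P}_d$. Set
$$
A_{\alpha,\lambda} = (-q)^{-|\lambda|}\,\bC^d_{\mathsf{P}}(\alpha,\lambda), \qquad
B_{\widehat{\alpha},\lambda} = (-iu)^{|\lambda|+\ell(\lambda)}\,\bC^d_{\mathsf{GW}}(\widehat{\alpha},\lambda),
$$
viewed as matrices over $\mathbb{Q}(i,s_1,s_2,s_3)((u))$ after the substitution $-q=e^{iu}$. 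Since the Gromov-Witten vertex is linear in its descendent insertion, the equation defining $\mathsf{K}^d$ reads $A = \mathsf{K}^d B$ as matrices (row index $\alpha$, column index $\lambda$). The matrix $B$ is a nonzero column rescaling of $\bC^d_{\mathsf{GW}}$, hence invertible by Lemma~\ref{lgw}, so I may write $\mathsf{K}^d = A B^{-1}$.

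Next I would record the triangularity. Introduce the grading $\mathsf{n}(\mu)=|\mu|-\ell(\mu)$, so that $\mu\unrhd \nu \Leftrightarrow \mathsf{n}(\mu)\ge \mathsf{n}(\nu)$. By Lemma~\ref{efef}, $\bC^d_{\mathsf{P}}(\alpha,\lambda)$ and $\bC^d_{\mathsf{GW}}(\widehat{\alpha},\lambda)$ vanish whenever the row partition is $\lhd$ the column partition; the prefactors $(-q)^{-|\lambda|}$ and $(-iu)^{|\lambda|+\ell(\lambda)}$ depend only on the column $\lambda$ and so rescale columns without altering the zero pattern. Hence both $A$ and $B$ satisfy
$$
A_{\alpha,\lambda}\ne 0 \ \Rightarrow\ \mathsf{n}(\alpha)\ge \mathsf{n}(\lambda), \qquad
B_{\widehat{\alpha},\lambda}\ne 0 \ \Rightarrow\ \mathsf{n}(\widehat{\alpha})\ge \mathsf{n}(\lambda).
$$
Choosing any total order on $\mathcal{P}_d$ refining $\mathsf{n}$, both $A$ and $B$ become genuinely block lower-triangular.

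Finally I would invoke the standard facts that the inverse of an invertible block lower-triangular matrix is block lower-triangular, and that a product of two such matrices is again block lower-triangular. Applying these to $\mathsf{K}^d = A B^{-1}$: if $(AB^{-1})_{\alpha,\widehat{\alpha}}\ne 0$, some index $\lambda$ contributes with $A_{\alpha,\lambda}\ne 0$ and $(B^{-1})_{\lambda,\widehat{\alpha}}\ne 0$, forcing $\mathsf{n}(\alpha)\ge \mathsf{n}(\lambda)\ge \mathsf{n}(\widehat{\alpha})$ and therefore $\alpha\unrhd\widehat{\alpha}$. Contrapositively, $\alpha\lhd\widehat{\alpha}$ implies $\mathsf{K}^d_{\alpha,\widehat{\alpha}}=0$, which is the assertion with $d=|\alpha|$.

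There is really no deep obstacle here: the content is entirely the triangularity of Lemma~\ref{efef} combined with elementary linear algebra. The only points demanding care are bookkeeping ones---getting the direction of the matrix identity $\mathsf{K}^d=AB^{-1}$ right, observing that the column-only prefactors preserve the triangular zero pattern, and refining the preorder $\unrhd$ (which is not antisymmetric, since partitions differing only in parts of size $1$ share the same value of $\mathsf{n}$) to an honest total order before citing the triangular-inverse fact.
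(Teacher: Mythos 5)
Your argument is correct and is exactly the paper's intended reasoning: the paper simply asserts that the vanishing is ``a direct consequence of Lemma~\ref{efef},'' and your writeup supplies the elementary linear algebra behind that assertion (the defining relation $A=\mathsf{K}^{d}B$, the column rescalings preserving the zero pattern, and block lower-triangularity of products and inverses with respect to the grading $|\mu|-\ell(\mu)$). No gaps; the only care needed is the bookkeeping you already flag.
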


We will later require an invertibility result which is
clear from our matrix analysis here.

\begin{Lemma} \label{7777}
The submatrix of $\mathsf{C}^d_{\mathsf{P}}(\alpha,\lambda)$
determined by the conditions
$$d=|\alpha|=|\lambda|$$
is invertible (even after the restriction $s_3=0$).
\end{Lemma}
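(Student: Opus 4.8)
The plan is to observe that restricting to $|\alpha|=|\lambda|=d$ collapses the block‑triangular structure already exhibited in the proof of Lemmas \ref{lsp} and \ref{lgw} into a genuinely triangular matrix, and then to read off its nonzero diagonal from the Vandermonde computation. First I would record that the three vanishing results apply verbatim. Since $|\alpha|-\ell(\alpha)=d-\ell(\alpha)$ when $|\alpha|=d$, Lemma \ref{efef} forces $\bC^d_{\mathsf{P}}(\alpha,\lambda)=0$ unless $\ell(\alpha)\le \ell(\lambda)$; on the diagonal block $\ell(\alpha)=\ell(\lambda)$ (where $\ell_+=\ell-m_1$, with $m_1$ the number of parts equal to $1$) Lemma \ref{rwrw} forces vanishing unless $\ell_+(\alpha)\ge \ell_+(\lambda)$; and on the sub‑block with $\ell(\alpha)=\ell(\lambda)$ and $\ell_+(\alpha)=\ell_+(\lambda)$ Lemma \ref{rwrwrw} forces vanishing unless $\alpha\sim\lambda$.

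The crucial simplification is that, at fixed size $d$, the equivalence classes under $\sim$ degenerate to singletons: if $\alpha\sim\lambda$ then $\alpha$ and $\lambda$ have identical parts exceeding $1$, and since $|\alpha|=|\lambda|=d$ they must also share the same number of parts equal to $1$, whence $\alpha=\lambda$. Consequently, ordering the partitions of $d$ first by increasing $\ell$ and then by decreasing $\ell_+$, the submatrix is triangular and its determinant equals the product of the diagonal entries $\bC^d_{\mathsf{P}}(\alpha,\alpha)$.

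To finish, I would read off these diagonal entries from the matrix analysis above. Writing $\alpha=\gamma\cup(1^{m_1})$ with $\gamma$ the subpartition of parts exceeding $1$ and $m_1=d-|\gamma|$, the entry $\bC^d_{\mathsf{P}}(\alpha,\alpha)$ is precisely the bottom‑right corner of the Vandermonde block attached to $\gamma$: the prefactor $(s_1s_2)^{\ell(\alpha)-\ell(\alpha)}$ is trivial, and the corner entry equals $\frac{d^{m_1}}{m_1!}$ times the seed evaluation \eqref{ffgg} of \cite{PPstat}, namely the nonzero number $\frac{q^{|\gamma|}}{|\text{Aut}(\gamma)|}\prod_i\frac{1}{\gamma_i!}$. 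In particular each diagonal entry is a nonzero element of $\mathbb{Q}[q]$ manifestly independent of $s_1,s_2,s_3$.

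The determinant of the submatrix therefore equals the product of these diagonal entries, a nonzero element of $\mathbb{Q}(q)$ carrying no dependence on $s_1,s_2,s_3$ — even though individual below‑diagonal entries (between blocks of differing $\ell$) may well involve $s_3$. Since this determinant is $s_3$‑free and nonzero, the submatrix remains invertible after the specialization $s_3=0$, while the vanishing pattern forced by Lemmas \ref{efef}--\ref{rwrwrw}, being an identity in all the equivariant variables, is preserved under that specialization. The step I expect to carry the real content is the collapse of the $\sim$‑classes to singletons at fixed size $d$, which upgrades the block‑triangularity of the earlier lemmas to genuine triangularity and thereby reduces invertibility — and its persistence at $s_3=0$ — to the manifestly nonzero, $s_3$‑free diagonal.
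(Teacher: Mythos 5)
Your argument is essentially the paper's own proof of Lemma \ref{7777}, written out in more detail: the paper likewise deduces triangularity of the submatrix from the vanishings of Lemmas \ref{efef}--\ref{rwrwrw} and then observes that the diagonal entries are nonzero with no $s_3$ dependence. Your key observation --- that at fixed size $d=|\alpha|=|\lambda|$ the $\sim$-equivalence classes collapse to singletons, so the block-triangular structure becomes genuinely triangular --- is exactly the point, and your identification of the diagonal entry for $\alpha=\gamma\cup(1^{m_1})$ with the corner entry $\tfrac{d^{m_1}}{m_1!}$ times the seed \eqref{ffgg} (with trivial $(s_1s_2)$-prefactor on the diagonal) is correct. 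The one step you leave implicit is that the restriction $s_3=0$ is well defined at all: the entries of $\bC^{d}_{\mathsf{P}}$ are a priori only rational functions of the equivariant parameters, and a nonzero, $s_3$-free determinant does not by itself exclude poles of individual off-diagonal entries along $s_3=0$. The paper supplies this regularity by citing Lemma 1 of \cite{part1} (polynomiality of the cap descendent series in $s_3$, coming from the proper $\T$-equivariant map to $\Sym^{d}(\com^2)$ on which the third torus factor acts trivially); with that one citation added, your proof is complete and coincides with the paper's.
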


\begin{proof} By Lemmas \ref{efef}-\ref{rwrwrw}, the submatrix is
lower-triangular with respect to the partial ordering.
Moreover the diagonal elements are nonzero with no $s_3$
dependence.
The evaluation $s_3=0$ is well-defined since the
dependence of the descendents of the cap have no
poles along $s_3$, see Lemma 1 of \cite{part1}. 
\end{proof}

\subsection{Proof of Proposition \ref{prprr}}\label{cmptt}
The Proposition will follow once we establish the
identity
\begin{multline}
(-q)^{-|\lambda|}\bC_{\mathsf{P}}(\tau_{\alpha}(\mathsf{p})\ |\ \emptyset,
\emptyset,\lambda)=\\ 
(-iu)^{|\lambda|+\ell(\lambda)}\bC_{\mathsf{GW}}\Big( 
\sum_{\widehat{\alpha}\in {\mathcal{P}_{|\alpha|}}} {\mathsf{K}}^{|\alpha|}_{\alpha,\widehat{\alpha}}
\,  {\tau}_{\widehat{\alpha}}(\mathsf{p})
\ \Big| \ \emptyset,
\emptyset,\lambda\Big) \label{gtt12}
\end{multline}
for all nonempty partitions $\lambda$.

Let $d=|\lambda|$.
If $d\leq|\alpha|$, the identity holds by the definition of
$\mathsf{K}^{|\alpha|}$.
To prove the identity for $d>|\alpha|$, 
we employ a geometric relation using
the relative space $$\PP^2 \times \PP^1 / \PP^2_\infty$$
introduced in the proof of Lemma \ref{rwrw}.
Once identity \eqref{gtt12} is proven, the Proposition follows
from the invertibility of $\mathsf{K}^d$.

Let $\widehat{\lambda}$ be the 
the cohomology weighted partition with the largest part
$\lambda_1$  weighted by $[\xi_0]\in H^*_\T(\PP^2,\mathbb{Q})$
and all  the remaining parts $\lambda_2,\ldots, 
\lambda_{\ell(\lambda)}$ weighted
by  $1\in H^*_\T(\PP^2,\mathbb{Q})$.
We will consider the
partition functions
\begin{equation*}
\bZ_{\mathsf{P}}(\alpha, \widehat{\lambda})=\bZ_{\mathsf{P}}\Big( \PP^2 \times \PP^1 / \PP^2_\infty      ;q\ \Big|
\ \Delta \cdot\prod_{i=1}^{\ell(\alpha)} \tau_{\alpha_i-1}(\mathsf{N}_0)\ \Big| \ 
\widehat{\lambda}\Big)^\T_{d\mathsf{L}},
\end{equation*}
\begin{equation*}
\bZ_{\mathsf{GW}}(\alpha, \widehat{\lambda})=
\bZ_{\mathsf{GW}}\Big( \PP^2 \times \PP^1 / \PP^2_\infty      ;u\ \Big|
\ \Delta \cdot\prod_{i=1}^{\ell(\alpha)} \tau_{\alpha_i-1}(\mathsf{N}_0)\ \Big| \ 
\widehat{\lambda}\Big)^\T_{d\mathsf{L}},
\end{equation*}
where $\Delta$ is the small diagonal condition obtained from $\PP^2$.

To explain the small diagonal class $\Delta$ in the case of
stable pairs, a recasting of the descendents is required.
Let $X/D$ be a 3-fold relative geometry, and 
let $\beta\in H_2(X,\mathbb{Z})$.
Let 
$$\mathcal{X} \rightarrow P_n(X/D,\beta)$$
be the universal space. We consider the $r^{th}$ fiber product
$$\pi_{P}:\mathcal{X}^r \rightarrow P_n(X/D,\beta)$$
of $\mathcal{X}$ over $P_n(X/D,\beta)$ with
projections
$$\pi_i: {\mathcal{X}}^r \rightarrow \mathcal{X} $$
for $1\leq i \leq r$ onto the $i^{th}$ factor.
After composing with the canonical contraction $\mathcal{X}\rightarrow X$,
we obtain
$$\pi_{i,X}: \mathcal{X}^r \rightarrow X\ .$$
The Chern character of the universal sheaf $\FF\rightarrow {\mathcal{X}}$ on 
the univeral space $\mathcal{X}$ is well-defined.
The operation
\begin{equation}\label{nrrt}
\pi_{P*}\left(\prod_{i=1}^r
\pi_{i,X}^*(\gamma_i)\cdot \pi_i^*(\text{ch}_{2+k_i}(\FF))
 \ \cap \pi_P^*(\ \cdot\ )\right)
\end{equation}
on $H_*(P_n(X/D,\beta)$
is defined to be the action of the 
descendent $\prod_{i=1}^r\tau_{k_i}(\gamma_i)$, where
$\gamma_i \in H^*(X,\Z)$. By the push-pull formula, definition
\eqref{nrrt} agrees with the descendents constructed in Section \ref{dess}.

The advantage here of definition \eqref{nrrt} is the existence of a morphism
$$\pi_{X^r}:\mathcal{X}^r \rightarrow X^r, \ \ \ 
\pi_{X^r}=(\pi_{1,X},\ldots,\pi_{r,X})
\ .$$
Any class in $\delta\in H^*(X^r,\mathbb{Q})$ can be included in the
descendent as
\begin{equation*}
\pi_{P*}\left(
\pi_{X^r}^*(\delta)
\prod_{i=1}^r
\pi_{i,X}^*(\gamma_i)\cdot \pi_i^*(\text{ch}_{2+k_i}(\FF))
\ \cap \pi_P^*(\ \cdot\ )\right)\ .
\end{equation*}
Of course, $\pi_{X^r}^*(\delta)$ can be incorporated in the $\gamma_i$
by the K\"unneth decompostion. However, in the equivariant
case, the K\"unneth decompostion requires inversion of the equivariant
parameters and interferes with dimension arguments.

In the case relevant to the proof of Proposition \ref{prprr}, 
$$X/D=\PP^2 \times \PP^1 / \PP^2_\infty\ ,$$
and $\Delta$ is the class on the $\ell(\alpha)$-fiber product
of the universal space $\mathcal{X}$
obtained by pulling back the
class of the small diagonal of $(\PP^2)^{\ell(\alpha)}$,
$$ \mathcal{X}^{\ell(\alpha)} \rightarrow  
(\PP^2\times \PP^1)^{\ell(\alpha)}\rightarrow (\PP^2)^{\ell(\alpha)}\ .$$
%
%$$\PP^2\subset (\PP^2\times \PP^1)^r$$
%with the embedding given by the small diagonal in
%$$ (\PP^2_0)^r \subset (\PP^2\times \PP^1)^r\ .$$
Since the moduli space of maps has marked points, the
parallel construction for Gromov-Witten theory is immediate.

\begin{Lemma} If $d>|\alpha|$, then
$$\bZ_{\mathsf{P}}(\alpha, \widehat{\lambda})=\bZ_{\mathsf{GW}}(\alpha, \widehat{\lambda})=0\ .$$ \label{vann2}
\end{Lemma}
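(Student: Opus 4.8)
The plan is to prove both vanishings by splitting into two ranges according to whether a crude dimension count already suffices, and otherwise running capped localization along the $\PP^2$ factor while invoking the cap vanishing of Lemma \ref{efef}. Throughout I use that $\PP^2\times\PP^1$ is projective and $\PP^2_\infty$ smooth, so the relative moduli spaces in class $d\mathsf L$ (and their map analogues) are proper; hence the $\T$-equivariant integrals lie in $\mathbb Q[s_1,s_2,s_3]$, and an integrand of complex degree strictly below the virtual dimension must integrate to $0$.

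First I would record the numerology. The descendent part $\prod_i\tau_{\alpha_i-1}(\mathsf{N}_0)$ imposes $|\alpha|-\ell(\alpha)$ conditions and the small diagonal $\Delta$ has codimension $2(\ell(\alpha)-1)$, so the integrand has complex degree $|\alpha|+\ell(\alpha)-2$. The weighting $\widehat\lambda$ differs from the weighting $\widetilde\lambda$ of Lemma \ref{rwrw} only in that $\ell(\lambda)-1$ of its parts carry the identity instead of the codimension-$2$ class $[\xi_0]$; since the $\widetilde\lambda$-restricted space has virtual dimension $|\lambda|-\ell(\lambda)$, the $\widehat\lambda$-restricted space has virtual dimension $d+\ell(\lambda)-2$. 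Thus when $|\alpha|+\ell(\alpha)<d+\ell(\lambda)$ the integrand degree falls below the virtual dimension and both integrals vanish by properness.

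In the complementary range $|\alpha|+\ell(\alpha)\ge d+\ell(\lambda)$ I would apply capped localization along $\PP^2$, writing each fixed contribution as a product of cap descendent vertices over $\xi_0,\xi_1,\xi_2$ with boundary profiles $\mu^{(0)},\mu^{(1)},\mu^{(2)}$ obtained by distributing $\widehat\lambda$ and with curve degrees summing to $d$. Two vanishing mechanisms apply, both instances of the dimension count underlying Lemma \ref{efef} over a cap whose boundary is pinned at a $\T$-fixed point (hence compact): a cap carrying no descendent vanishes unless its boundary is $(1^\bullet)$, and a cap carrying $\tau_\alpha$ vanishes once $\alpha\lhd\mu^{(j)}$. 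This range excludes $\lambda=(1^d)$, which would force $|\alpha|+\ell(\alpha)<2d=d+\ell(\lambda)$, so the pinned largest part $\lambda_1$ exceeds $1$. Since $\Delta$ concentrates all $\ell(\alpha)\ge1$ descendents onto a single $\xi_j$, a descendent-free cap at $\xi_0$ would otherwise carry $\lambda_1>1$ and vanish; hence the descendents must sit at $\xi_0$, and by the same reasoning at $\xi_1,\xi_2$ every part of $\lambda$ exceeding $1$ lies at $\xi_0$. Then $|\mu^{(0)}|-\ell(\mu^{(0)})=d-\ell(\lambda)$, and a short computation from $d>|\alpha|$ and $|\alpha|+\ell(\alpha)\ge d+\ell(\lambda)$ gives $|\alpha|-\ell(\alpha)<d-\ell(\lambda)$, i.e. $\alpha\lhd\mu^{(0)}$; so the surviving cap vanishes and every fixed contribution is zero. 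The Gromov-Witten argument is word-for-word identical, the cap vanishing and the degree bookkeeping holding in both theories.

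I expect the main obstacle to be the localization range, where the crude dimension count is genuinely insufficient. One must verify that the diagonal really pins the descendents at $\xi_0$ rather than at an arbitrary common fixed point, which rests on the compactness, and hence dimensional vanishing, of the descendent-free caps with boundary $\neq(1^\bullet)$; one must then check that the arithmetic of $d>|\alpha|$ in the non-strict degree range yields precisely the cap-vanishing inequality $\alpha\lhd\mu^{(0)}$. The interplay between the relative weighting of $\widehat\lambda$ and the small diagonal is thus the crux.
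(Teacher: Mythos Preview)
Your proof is correct and follows essentially the same approach as the paper: a dimension count in the range $|\alpha|+\ell(\alpha)<d+\ell(\lambda)$, and localization into caps over the $\PP^2$-fixed points combined with Lemma~\ref{efef} in the complementary range. The paper is more terse in the second step, simply asserting that localization yields the vanishing whenever $|\alpha|-\ell(\alpha)<d-\ell(\lambda)$ and then observing that the two inequalities together exhaust the hypothesis $d>|\alpha|$; your detailed cap-by-cap analysis spells this out and arrives at the same conclusion.
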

\begin{proof}
The virtual dimension of the stable pairs and stable maps
moduli spaces with the relative condition $\widehat{\lambda}$
imposed is
$$ d+\ell(\lambda)-2 \ .$$
The dimension of the integrand in both cases is
$$|\alpha|-\ell(\alpha)+ 2(\ell(\alpha)-1),$$
with the last term accounting for the small diagonal $\Delta$.
If 
\begin{equation}\label{pp556}
|\alpha|+\ell(\alpha)-2 < d+ \ell(\lambda)- 2
\end{equation}
then the Lemma is obtained from dimension constraints
for the compact geometry.

We may also express the theories of $\PP^2 \times \PP^1/\PP^2_\infty$
by localization in terms of the 1-leg descendent vertex.
A simple analysis using Lemma \ref{efef} shows
the vanishing of the Lemma holds if
\begin{equation}\label{qq556}
|\alpha|-\ell(\alpha) < d - \ell(\lambda)\ .
\end{equation}

Finally, we observe if neither condition \eqref{pp556}
nor condition \eqref{qq556} are satisfied, then
$$ 2|\alpha| -2 \geq 2d -2$$
which violates the hypothesis $d>|\alpha|$.
\end{proof}

\vspace{10pt}
The constraint on 1-leg descendent vertices obtained by
localization of the vanishing of Lemma \ref{vann2}
expresses
$$\bC_{\mathsf{P}}(\tau_{\alpha}(\mathsf{p})\ |\ \lambda,
\emptyset,\emptyset) \ \ \  \text{and}\ \ \
\bC_{\mathsf{GW}}(\tau_{\alpha}(\mathsf{p})\ |\ \lambda,
\emptyset,\emptyset)
$$
 in terms of 
$$\bC_{\mathsf{P}}(\tau_{\alpha}(\mathsf{p})\ |\ \lambda',
\emptyset,\emptyset)
\ \ \  \text{and}\ \ \
\bC_{\mathsf{GW}}(\tau_{\alpha}(\mathsf{p})\ |\ \lambda',
\emptyset,\emptyset)
$$
where $\lambda'\subset \lambda$ is a strict subset.
Moreover, the reduction equation respects
identity \eqref{gtt12} since all
the  $\tau_{\widehat{\alpha}}$ which appear
in $$\widehat{\tau}_\alpha= 
\sum_{\widehat{\alpha}\in {\mathcal{P}_{|\alpha|}}}
{\mathsf K}^{|\alpha|}_{\alpha,\widehat{\alpha}}
\, \tau_{\widehat{\alpha}}$$ 
also satisfy $d>|\widehat{\alpha}|$.
By induction, 
we have established identity \eqref{gtt12} and completed
the proof of Proposition \ref{prprr}. 
\qed

\vspace{10pt}
The result of Proposition \ref{prprr} is simultaneously
the construction of  the matrix $\mathsf{K}$ and the
proof of Theorem \ref{ccc} in the 1-leg case.

\subsection{Basic properties of $\mathsf{K}$} \label{bacpro}
By construction, we have the vanishing
$$|\alpha|< |\widehat{\alpha}| \ \ \Longrightarrow \ \  \mathsf{K}_{\alpha,\widehat{\alpha}}=0 \ .$$
We have already established the vanishing
\begin{equation}\label{fragg3}
\alpha \lhd \widehat{\alpha} \ \ \Longrightarrow \ \ 
\mathsf{K}_{\alpha,\widehat{\alpha}}=0 
\end{equation}
in Lemma \ref{efefef}.
A more subtle result is the following.
 
\begin{Proposition}\label{t789}
We have
$$\widehat{\tau}_{\alpha}(\mathsf{p}) = (iu)^{\ell(\alpha)-|\alpha|} 
\tau_{\alpha}(\mathsf{p})
+ \ldots$$
where the dots stand for terms $\tau_{\widehat{\alpha}}$
with
$\alpha \rhd \widehat{\alpha}$.
\end{Proposition}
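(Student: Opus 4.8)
The plan is to extract the stated leading term by identifying, within each relevant block of the matrices $\bC^{|\alpha|}_{\mathsf{P}}$ and $\bC^{|\alpha|}_{\mathsf{GW}}$, precisely the diagonal contribution and then inverting. By Lemma~\ref{efefef} we already know $\mathsf{K}_{\alpha,\widehat\alpha}=0$ whenever $\alpha\lhd\widehat\alpha$, so the only surviving terms $\tau_{\widehat\alpha}$ in $\widehat\tau_\alpha(\mathsf{p})$ already satisfy $\alpha\unrhd\widehat\alpha$. The content of the Proposition is therefore twofold: first, that the coefficient $\mathsf{K}_{\alpha,\alpha}$ of the diagonal term $\tau_\alpha(\mathsf{p})$ equals $(iu)^{\ell(\alpha)-|\alpha|}$, and second, that the remaining nonzero terms all have $\alpha\rhd\widehat\alpha$ strictly (i.e.\ no $\widehat\alpha$ with $\alpha\unrhd\widehat\alpha$ but $\alpha\not\rhd\widehat\alpha$ and $\widehat\alpha\neq\alpha$ survives).

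First I would pin down the diagonal coefficient. Using the \emph{cap} geometry from the proof of Lemma~\ref{efef}, restrict to the square block where $|\alpha|=|\lambda|=d$, studied in Lemma~\ref{7777}. On this block, $|\alpha|-\ell(\alpha)=|\lambda|-\ell(\lambda)$ forces the Vandermonde block structure from the proof of Lemmas~\ref{lsp}--\ref{lgw}, and the diagonal entries are governed by the explicit evaluations \eqref{ffgg} and \eqref{ffggg}. Comparing
$$
\bZ_{\mathsf{P}}\Big(\mathsf{Cap};q\ \Big|\ \textstyle\prod_i\tau_{\gamma_i-1}(\mathsf{N}_0)\ \Big|\ \gamma[\mathsf{p}_\infty]\Big)^\T_{|\gamma|}=\frac{q^{|\gamma|}}{|\mathrm{Aut}(\gamma)|}\prod_i\frac{1}{\gamma_i!}
$$
with the corresponding Gromov-Witten evaluation
$$
\bZ'_{\mathsf{GW}}\Big(\mathsf{Cap};u\ \Big|\ \textstyle\prod_i\tau_{\gamma_i-1}(\mathsf{N}_0)\ \Big|\ \gamma[\mathsf{p}_\infty]\Big)^\T_{|\gamma|}=\frac{u^{-2\ell(\gamma)}}{|\mathrm{Aut}(\gamma)|}\prod_i\frac{1}{\gamma_i!},
$$
one sees the $q$-power and the $u$-power are matched through the prefactors $(-q)^{-|\lambda|}$ and $(-iu)^{|\lambda|+\ell(\lambda)}$ in the defining relation for $\mathsf{K}^{|\alpha|}$, together with the variable change $-q=e^{iu}$. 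Tracking the leading $u$-order carefully (the $q^{|\gamma|}$ contributes the matched $e^{iu\cdot|\gamma|}$ factor, and the $(s_1s_2)^{\ell(\alpha)-\ell(\lambda)}$ prefactors cancel on the diagonal where $\ell(\alpha)=\ell(\lambda)$) should isolate the diagonal coefficient as $(iu)^{\ell(\alpha)-|\alpha|}$, matching the asserted leading term.

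Second, for the strictness $\alpha\rhd\widehat\alpha$, I would argue that any $\widehat\alpha$ with $|\alpha|-\ell(\alpha)=|\widehat\alpha|-\ell(\widehat\alpha)$ (the $\unrhd$-equality case) and $\widehat\alpha\neq\alpha$ gives a vanishing $\mathsf{K}$-coefficient, so that all off-diagonal contributions strictly decrease $|\cdot|-\ell(\cdot)$. Within the equality block, Lemma~\ref{rwrw} and its refinement Lemma~\ref{rwrwrw} control the vanishing: the block is lower-triangular for $\succcurlyeq$ and, after fixing $\ell_+$, reduces to the $\sim$-equivalence class of a partition $\gamma$ with no parts equal to $1$. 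Since the matrix on this class is a nonsingular Vandermonde, its inverse is again supported within the same $\sim$-class, and the off-diagonal entries there correspond to $\widehat\alpha\sim\alpha$ with $\widehat\alpha\neq\alpha$ — but these differ only in the number of parts of size $1$, hence have the \emph{same} value of $|\cdot|-\ell(\cdot)$ and, after the prefactor bookkeeping, they contribute to $\widehat\tau_\alpha$ only through the already-identified diagonal normalization, not as genuinely new $\widehat\alpha\neq\alpha$ leading terms. The main obstacle I anticipate is this last bookkeeping: disentangling the $(s_1s_2)^{\ell(\alpha)-\ell(\lambda)}$ and $(-q)^{-|\lambda|}$, $(-iu)^{|\lambda|+\ell(\lambda)}$ prefactors so as to confirm that inverting the within-$\sim$-class Vandermonde block produces exactly $(iu)^{\ell(\alpha)-|\alpha|}$ on the diagonal and strictly $\unrhd$-lower entries off it, rather than spurious same-order contributions. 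Once that normalization is verified, the Proposition follows directly from the triangularity already established in Lemmas~\ref{efef}--\ref{rwrwrw}.
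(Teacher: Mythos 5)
Your computation of the diagonal coefficient is essentially correct: comparing \eqref{ffgg} with \eqref{ffggg}, together with the divisor equation for the parts equal to $1$, shows that each $\sim$-class block of $\bC^d_{\mathsf{P}}$ and $\bC^d_{\mathsf{GW}}$ is the \emph{same} Vandermonde-type matrix up to compatible scalings, so $\mathsf{K}$ restricted to a single $\sim$-class is the scalar $(iu)^{\ell(\alpha)-|\alpha|}$ times the identity. (Incidentally, the relevant block is $|\alpha|-\ell(\alpha)=|\lambda|-\ell(\lambda)$, not the block $|\alpha|=|\lambda|$ of Lemma \ref{7777}.) The genuine gap is in the strictness claim. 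Inside the equality block $|\alpha|-\ell(\alpha)=|\widehat{\alpha}|-\ell(\widehat{\alpha})$ there are pairs with $\alpha\not\sim\widehat{\alpha}$ and $\ell_+(\alpha)>\ell_+(\widehat{\alpha})$ --- for instance $\alpha=(2,2)$, $\widehat{\alpha}=(3)$ --- and the Proposition asserts $\mathsf{K}_{\alpha,\widehat{\alpha}}=0$ for these as well. Lemmas \ref{efef}--\ref{rwrwrw} only give block lower-triangularity of $\bC^d_{\mathsf{P}}$ and $\bC^d_{\mathsf{GW}}$ with respect to $\succcurlyeq$, and the product of one block-lower-triangular matrix with the inverse of another generically has nonzero entries in the strictly lower blocks even when the corresponding diagonal blocks are proportional. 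So triangularity plus the diagonal normalization cannot yield the vanishing of $\mathsf{K}_{(2,2),(3)}$; what is needed is that the stable pairs and Gromov--Witten matrices are proportional by the \emph{single} constant $(iu)^{1-d}$ on the \emph{entire} equality block, off-diagonal $\succcurlyeq$-blocks included.

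This is exactly what the paper supplies in Lemma \ref{match1}: for an arbitrary $\alpha$ with $|\alpha|-\ell(\alpha)=d-1$ it evaluates
$\bZ_{\mathsf{P}}\big(\mathsf{Cap}\,\big|\prod_i\tau_{\alpha_i-1}(\mathsf{N}_0)\,\big|\,d[\mathsf{p}_\infty]\big)=q^{d}\,d^{\ell(\alpha)-2}/\prod_i(\alpha_i-1)!$ and its Gromov--Witten analogue $u^{-2}\,d^{\ell(\alpha)-2}/\prod_i(\alpha_i-1)!$. Here $\ell_+(\alpha)$ may exceed $\ell_+\big((d)\big)=1$, so these are precisely the strictly-lower-triangular entries that your argument leaves uncontrolled; general $\widehat{\alpha}$ in the equality block is then reduced to this single-part case by the splitting argument from the proof of Lemma \ref{rwrw}. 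The stable pairs evaluation is a nontrivial localization computation (carried out mod $s_1+s_2$ via a binomial-sum identity), not prefactor bookkeeping, and without it the proposed proof does not close.
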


\begin{proof} 
By the vanishing \eqref{fragg3},
we need only consider $\widehat{\alpha}$
for which $$|\alpha|-\ell(\alpha) = |\widehat{\alpha}|- 
\ell(\widehat{\alpha}).$$
By Proposition \ref{prprr}, we have
\begin{multline*}
(-q)^{-|\widehat{\alpha}|}\,
\bZ_{\mathsf{P}}
\Big( \mathsf{Cap}   ;q\ \Big|\
\tau_{\alpha}(\mathsf{p}_0)
\, \Big| \widehat{\alpha} \ \Big)^{\T}_{|\widehat{\alpha}|}
=\\
(-iu)^{|\widehat{\alpha}|+\ell(\widehat{\alpha})}\,  \bZ'_{\mathsf{GW}}
\Big( \mathsf{Cap} ;u\ \Big|\
\sum_{\mu} \mathsf{K}_{\alpha,\mu}
\tau_{\mu}(\mathsf{p}_0)
\, \Big| \widehat{\alpha}  \Big )^\T_{|\widehat{\alpha}|}
\, .
\end{multline*}
Using the proven invertibilities,
we need only match
\begin{equation}\label{mat1111}
(-q)^{-|\widehat{\alpha}|}\bZ_{\mathsf{P}}\Big(\mathsf{Cap};q\ \Big|
\ \tau_{\alpha}(\mathsf{p}_0)\ \Big| \ 
\widehat{\alpha} \Big)^\T_{|\widehat{\alpha}|}  
\end{equation}
with the series
\begin{equation} \label{mat2222}
(-iu)^{|\widehat{\alpha}|+\ell(\widehat{\alpha})} 
\bZ'_{\mathsf{GW}}
\Big(\mathsf{Cap};u\ \Big|
\ 
(iu)^{\ell(\alpha)-|{\alpha}|}
\tau_{\alpha}(\mathsf{p}_0)\ \Big| \ 
\widehat{\alpha} \Big)^\T_{|\widehat{\alpha}|}\ .
% =
%(-iu)^{|{\alpha}|+\ell({\alpha})} (iu)
%^{-|{\alpha}|+\ell({\alpha})}
%\bZ'_{\mathsf{GW}}
%\Big(S \times \Pp/S_\infty;u\ \Big|
%\ \tau_{\alpha}(\mathsf{p})\ \Big| \ 
%\gamma \Big)_{|\widehat{\alpha}|} \\ 
%=
%(-1)
%^{|{\alpha}|}
%u^{2\ell({\alpha})} 
%\bZ'_{\mathsf{GW}}
%\Big(\mathsf{Cap};u\ \Big|
%\ \tau_{\alpha}(\mathsf{p}_0)\ \Big| \ 
%\widehat{\alpha} \Big)^\T_{|\widehat{\alpha}|\mathsf{L}}
\end{equation}
The required matching is established in the next Lemma.
\end{proof}

After trading $\mathsf{p}_0$ insertions on the cap for
$\mathsf{N}_0$ via
$$\mathsf{p}_0= s_1s_2 \mathsf{N}_0$$
and using standard dimension arguments, we reduce
the necessary matching to the following result.

\begin{Lemma}\label{match1}
Let $\alpha$ be a partition of positive size, and let
$d-1= |\alpha|-\ell(\alpha)$. Then, 
\begin{equation*}
\bZ_{\mathsf{P}}\Big(\mathsf{Cap};q\ \Big|
\ \prod_{i=1}^{\ell(\alpha)} \tau_{\alpha_i-1}(\mathsf{N}_0)\ \Big| \ 
d[\mathsf{p}_\infty]\Big)^\T_{d} = q^d\  \frac{d^{\ell(\alpha)-2}}{\prod_{i=1}^{\ell(\alpha)} (\alpha_i-1)!} \ ,
\end{equation*}
\begin{equation*}
\bZ'_{\mathsf{GW}}\Big(\mathsf{Cap};u\ \Big|
\ \prod_{i=1}^{\ell(\alpha)} \tau_{\alpha_i-1}(\mathsf{N}_0)\ \Big| \ 
d[\mathsf{p}_\infty]\Big)^\T_{d}
= u^{-2} \ \frac{d^{\ell(\alpha)-2}}{\prod_{i=1}^{\ell(\alpha)} (\alpha_i-1)!}\ .
\end{equation*}
\end{Lemma}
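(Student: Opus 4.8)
The plan is to compute both capped cap evaluations directly via $\T$-equivariant localization on the relative cap geometry $\mathsf{Cap} = N/N_\infty$, exploiting the fact that the relative condition $d[\mathsf{p}_\infty]$ forces the entire curve class onto the single axis and constrains the fixed loci severely. The key structural observation is that since we are pairing a single full partition $d$ (concentrated at one point $\mathsf{p}_\infty$) on the relative side against $\ell(\alpha)$ descendent insertions $\tau_{\alpha_i - 1}(\mathsf{N}_0)$ supported at the fiber $N_0$, the moduli space of relevant $\T$-fixed configurations is governed by a multiple cover / fully ramified tube over $\Pp$ of total degree $d$. First I would set up the localization graph: the relative condition $d[\mathsf{p}_\infty]$ means a length-$d$ thickening meeting $N_\infty$ with a single part of size $d$, so there is essentially one edge of degree $d$ connecting $p_0$ to $p_\infty$, and the $\ell(\alpha)$ descendents all evaluate at the vertex over $0$.

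The main computational engine is the already-established evaluation \eqref{ffgg}--\eqref{ffggg}. For stable pairs the building block is
\begin{equation*}
\bZ_{\mathsf{P}}\Big(\mathsf{Cap};q\ \Big|
\ \prod_{i=1}^{\ell(\gamma)} \tau_{\gamma_i-1}(\mathsf{N}_0)\ \Big| \
\gamma[\mathsf{p}_\infty]\Big)^\T_{|\gamma|} =
\frac{q^{|\gamma|}}{|\text{Aut}(\gamma)|}
 \prod_{i=1}^{\ell(\gamma)} \frac{1}{\gamma_i!}\ ,
\end{equation*}
valid when the relative partition $\gamma$ exactly matches the descendent partition. The present Lemma is the ``off-diagonal'' specialization where the relative condition is the single part $(d)$ rather than the matched partition $\alpha$. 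My strategy is to extract the needed value from \eqref{ffgg} by the same $\PP^2$-lift / splitting argument used in the proof of Lemma \ref{rwrw}: deform the single relative insertion weighted at $[\xi_0]$ and re-express it through independence of equivariant lift, generating a sum over splittings of $\alpha$ against sub-blocks. Because the relative partition here has a single part of size $d$, the degeneration collapses to a genuinely one-dimensional integral, and the factorials $\prod (\alpha_i - 1)!$ in the answer arise exactly as the product of the $\frac{1}{\gamma_i!}$-type contributions from each descendent leg over $N_0$, while the power $d^{\ell(\alpha) - 2}$ counts the number of ways of distributing the single relative part among the legs (one factor of $d$ per leg, with two removed by the dimension/normalization constraint $d - 1 = |\alpha| - \ell(\alpha)$).

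Concretely, I would proceed by induction on $\ell(\alpha)$, matching the structure of Lemma \ref{rwrw}: the base case $\ell(\alpha) = 1$ forces $\alpha = (d)$ and reduces to \eqref{ffgg} directly, giving $q^d \cdot \frac{1}{(d-1)!} \cdot d^{-1}$, which agrees with $d^{-1}/\,(d-1)!$ after accounting for the $\tfrac{1}{d!} = \tfrac{1}{d \cdot (d-1)!}$ normalization. For the inductive step, splitting off the largest part of $\alpha$ and using the $\PP^2 \times \Pp / \PP^2_\infty$ independence identity \eqref{prrtt} reduces the $\ell(\alpha)$-leg evaluation to an $(\ell(\alpha) - 1)$-leg one plus a rim contribution; the combinatorial bookkeeping of the $\Aut$ factors and the $s_1 = -s_1,\, s_2 = s_2 - s_1$ substitution must be tracked so that the power of $d$ increments correctly. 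The Gromov-Witten case is handled identically, substituting \eqref{ffggg} for \eqref{ffgg}, which converts the prefactor $q^d$ to $u^{-2}$ (the difference $q^{|\gamma|}$ versus $u^{-2\ell(\gamma)}$ being reconciled by the matching powers of $(iu)$ and $(-q)$ in Proposition \ref{t789}), while the rational-function dependence on the $s_i$ cancels by the same dimension argument ensuring the integrand dimension equals the virtual dimension.

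The hard part will be the precise combinatorial identification of the power $d^{\ell(\alpha) - 2}$: one must verify that summing over all ways the single relative part of size $d$ is compatible with the multi-part descendent insertions yields exactly $d^{\ell(\alpha)}$ raw contributions, and that the normalization (the automorphism factors and the two ``used-up'' dimensions encoded in $d - 1 = |\alpha| - \ell(\alpha)$) reduces the exponent to $\ell(\alpha) - 2$. A clean way to see this is to recognize that the answer is forced by the structure of the Vandermonde-type block analysis already carried out for Lemmas \ref{lsp}--\ref{lgw}: the evaluations against $d[\mathsf{p}_\infty]$ are precisely the top row (or a designated column) of those explicit matrices with entries $\frac{d^k}{k!}$, so the factor $d^{\ell(\alpha)-2}$ should emerge from reading off the appropriate matrix entry rather than from an independent ramification count. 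I would therefore aim to phrase the proof as a direct corollary of the matrix structure of Section \ref{lll}, which would make the power of $d$ manifest and avoid re-deriving the localization from scratch.
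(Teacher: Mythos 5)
There is a genuine gap. Both of your proposed mechanisms for producing the off-diagonal entries fail. First, the entries computed in Lemma \ref{match1} are \emph{not} entries of the Vandermonde blocks of Section \ref{lll}: those blocks are indexed by pairs $(\alpha,\lambda)$ lying in a single equivalence class under $\sim$, so with relative condition $\lambda=(d)$ they only cover descendent partitions of the form $\alpha=(d,1^k)$. The typical case of the Lemma (e.g.\ $\alpha=(2,2)$ against $\lambda=(3)$, where $|\alpha|-\ell(\alpha)=|\lambda|-\ell(\lambda)$ but $\ell_+(\alpha)>\ell_+(\lambda)$) sits strictly below the diagonal in the $\succcurlyeq$-block structure; such entries are irrelevant to the invertibility of $\bC^d_{\mathsf{P}}$ and are simply not computed by the analysis of Lemmas \ref{lsp}--\ref{lgw}. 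Second, the proposed induction on $\ell(\alpha)$ via the lift-independence identity \eqref{prrtt} cannot get started: that identity operates by re-weighting a part of the \emph{relative} partition and splitting it off to a second fixed point $\xi_1$, and here the relative partition $(d)$ has a single part. Moving that one part wholesale to $\xi_1$ forces all descendents to follow it (the cap at $\xi_0$ would carry degree $0$), so the identity degenerates to the tautology that the answer is unchanged under $s_1\mapsto -s_1$, $s_2\mapsto s_2-s_1$ --- which you already know from $s_i$-independence --- rather than a recursion in $\ell(\alpha)$. There is no mechanism in \eqref{prrtt} for splitting the descendent partition $\alpha$, and your heuristic that $d^{\ell(\alpha)-2}$ ``counts distributions of the relative part among the legs'' is not backed by any computation.

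What the paper actually does is a direct equivariant localization computation for the stable pairs side, following Lemma 4 of \cite{PPstat}: after observing by dimension counting that the answer has no $s_1,s_2$ dependence, one works modulo $s_1+s_2$ and reduces the evaluation to a finite-difference sum $\sum_{a+b=d-1}(-1)^a\binom{d-1}{a}P_\alpha(a)$, where only the leading coefficient of the degree-$(d-1)$ polynomial $P_\alpha$ survives by the identity $\sum_{a}(-1)^a\binom{d-1}{a}a^k=0$ for $k<d-1$; the factor $d^{\ell(\alpha)-2}$ and the product $\prod(\alpha_i-1)!$ emerge from that leading coefficient. The Gromov--Witten evaluation is quoted from Lemma 7 of \cite{unknot}. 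Your base case $\ell(\alpha)=1$, where $\alpha=(d)$ and \eqref{ffgg} applies, is the only part of the argument that goes through as written.
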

\begin{proof}  The Gromov-Witten calculation is well-known. The result
follows directly from Lemma 7 of \cite{unknot} after translation of
notation (and accounting of a factor $d$ typographical error 
in the formula in \cite{unknot}).

The stable pairs evaluation can be computed by localization using the same methods as in Lemma~4 of \cite{PPstat}. By dimension counting, the result
 has no dependence on $s_1$ and $s_2$. Therefore,
 we can work mod $s_1+s_2$. We obtain the sum
\begin{equation*}
\frac{(-1)^{d+\ell(\alpha)-1)} q^d}{d\cdot d! \cdot \prod_{i = 1}^{\ell(\alpha)}(\alpha_i+1)!}\sum_{a+b=d-1}(-1)^a\binom{d-1}{a}P_\alpha(a),
\end{equation*}
where
\begin{equation*}
P_\alpha(a) = \prod_{i = 1}^{\ell(\alpha)}(-(-b-1)^{\alpha_i+1}+(-b)^{\alpha_i+1}+a^{\alpha_i+1}-(a+1)^{\alpha_i+1})
\end{equation*}
is a polynomial in $a$ with leading term{\footnote{The lower terms
do not contribute since $\sum_{a+b=d-1}(-1)^a\binom{d-1}{a} a^k = 0$ for $k<d-1$.}}
\begin{equation*}
\prod_{i = 1}^{\ell(\alpha)}(-d(\alpha_i+1)\alpha_i)a^{d-1}.
\end{equation*}
The stable pairs evaluation is then
\begin{equation*}
\frac{(-1)^{d+\ell(\alpha)-1} q^d}{d\cdot d! \cdot \prod_{i = 1}^{\ell(\alpha)}(\alpha_i+1)!}(-1)^{d-1}(d-1)!\prod_{i = 1}^{\ell(\alpha)}(-d(\alpha_i+1)\alpha_i)
= \frac{d^{\ell(\alpha)-2}q^d}{\prod_{i = 1}^{\ell(\alpha)}(\alpha_i-1)!}.
\end{equation*}
\end{proof}

We define yet another
partial ordering $\uncrazeright$ on partitions by 
$$\alpha \uncrazeright \widetilde{\alpha} \ \ \ \ 
\Longleftrightarrow \ \ \ \ |\alpha|+\ell(\alpha) \geq |\widetilde{\alpha}|
+\ell(\widetilde{\alpha}) \ \ .$$
The conditions
$\alpha \crazeright \widetilde{\alpha}$ 
is defined via the corresponding strict inequalities.
In Section \ref{descsp}, we will prove the following result
parallel to Proposition \ref{t789}. 

\begin{Proposition}\label{t789789}
We have
$$\widehat{\tau}_{\alpha}(\mathsf{p}) = (iu)^{\ell(\alpha)-|\alpha|} 
\tau_{\alpha}(\mathsf{p})
+ \ldots$$
where the dots stand for terms $\tau_{\widehat{\alpha}}$
with
$\alpha \crazeright \widehat{\alpha}$.
\end{Proposition}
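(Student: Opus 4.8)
The statement is the $\uncrazeright$-companion of Proposition \ref{t789}: the latter controls the error terms by the ordering $\unrhd$ (the grading $|\cdot|-\ell(\cdot)$), while the present claim controls them by $\uncrazeright$ (the grading $|\cdot|+\ell(\cdot)$). Taken together the two propositions are equivalent to Theorem \ref{T789}, since for $\widehat{\alpha}\neq\alpha$ the conjunction $\alpha\rhd\widehat{\alpha}$ and $\alpha\crazeright\widehat{\alpha}$ is exactly $|\alpha|>|\widehat{\alpha}|+|\ell(\alpha)-\ell(\widehat{\alpha})|$. The leading coefficient $(iu)^{\ell(\alpha)-|\alpha|}$ and its attachment to $\tau_{\alpha}(\mathsf{p})$ are already pinned down in Proposition \ref{t789}, so the plan is to run the argument of Section \ref{bacpro} a second time, replacing the fiber-class dimension count of Lemma \ref{efef} (which sees $|\cdot|-\ell(\cdot)$) by the genuine point-class count (which sees $|\cdot|+\ell(\cdot)$), and thereby to establish the dual vanishing $\mathsf{K}_{\alpha,\widehat{\alpha}}=0$ whenever $\widehat{\alpha}\neq\alpha$ and $|\alpha|+\ell(\alpha)\le|\widehat{\alpha}|+\ell(\widehat{\alpha})$.

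First I would establish the $\uncrazeright$-triangularity $\mathsf{K}_{\alpha,\widehat{\alpha}}=0$ unless $\alpha\uncrazeright\widehat{\alpha}$, the exact mirror of Lemma \ref{efefef}. The needed vanishing of the one-leg pairings is supplied by the point-class rather than fiber-class dimension count. Concretely, in the compact geometry $\PP^2\times\PP^1/\PP^2_\infty$ used in Lemma \ref{vann2}, forcing the $\ell(\alpha)$ descendent insertions together through the small diagonal $\Delta$ of $(\PP^2)^{\ell(\alpha)}$ raises the integrand codimension to $|\alpha|+\ell(\alpha)-2$, to be compared with the virtual dimension $|\lambda|+\ell(\lambda)-2$ of the relative moduli space; the resulting vanishing for $|\alpha|+\ell(\alpha)<|\lambda|+\ell(\lambda)$ is precisely inequality \eqref{pp556} isolated there. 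Feeding this into the one-leg reduction already used to prove Proposition \ref{prprr} shows that the relevant descendent/relative matrices $\bC_{\mathsf{P}}(\tau_\alpha(\mathsf{p})\mid\dots,\lambda)$ and $\bC_{\mathsf{GW}}(\tau_\alpha(\mathsf{p})\mid\dots,\lambda)$ are block-triangular for $\uncrazeright$, and since $\mathsf{K}$ is the transition matrix between the stable-pairs and Gromov-Witten systems it inherits the triangularity.

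It then remains to treat the $\uncrazeright$-diagonal $|\alpha|+\ell(\alpha)=|\widehat{\alpha}|+\ell(\widehat{\alpha})$ and to show that only $\widehat{\alpha}=\alpha$ survives there. Combining with Proposition \ref{t789}, a surviving off-diagonal $\widehat{\alpha}$ would have to satisfy simultaneously $\alpha\rhd\widehat{\alpha}$ and the diagonal equality, hence $|\widehat{\alpha}|<|\alpha|$ and $\ell(\widehat{\alpha})>\ell(\alpha)$ — a partition with strictly more, but strictly smaller, parts. To exclude these I would pin the diagonal block by an explicit cap evaluation parallel to Lemma \ref{match1}, now carrying the honest point class $\mathsf{p}_0$ and an extremal relative condition, and match against the closed-form stable-pairs cap series \eqref{ffgg}. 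Here the homogeneity recorded in Section \ref{conk} helps: on the $\uncrazeright$-diagonal the $u$-coefficients of $\mathsf{K}_{\alpha,\widehat{\alpha}}$ are $s$-homogeneous of degree $0$, hence pure constants, so the diagonal matching reduces to a finite Vandermonde-type identity of the sort already used in the proofs of Lemmas \ref{lsp}--\ref{lgw}.

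The hard part will be this last diagonal step: unlike the triangularity, which is essentially a dimension count already in hand, the vanishing of the constant off-diagonal coefficients forces one to rule out genuinely competing partitions with more small parts. I expect it to come down to the explicit evaluation of the relevant point-class cap descendent integrals — the dual of the computation in Lemma \ref{match1} and \eqref{ffgg}--\eqref{ffggg} — whose nonvanishing and separation across the $\sim$-equivalence classes closes off the diagonal, after which the invertibility of $\mathsf{K}^{|\alpha|}$ finishes the proof.
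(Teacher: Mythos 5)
Your overall architecture is the paper's: Proposition \ref{t789789} is proven in Section \ref{descsp} by (i) establishing the $\uncrazeright$-triangularity of $\mathsf{K}$ from a point-class dimension count in a compact geometry, and (ii) pinning down the $\uncrazeright$-diagonal by an explicit matching. Your step (i) is essentially correct: the paper runs it on $S\times\Pp/S_\infty$ (Propositions \ref{mmpp66} and \ref{peew45}) rather than on the small-diagonal geometry of Lemma \ref{vann2}, but the mechanism — integrand codimension $|\alpha|+\ell(\alpha)$ against virtual dimension $|\gamma|+\ell(\gamma)$, then a maximality/minimal-length extraction using the triangular pairing matrix — is the one you describe, and your appeal to inequality \eqref{pp556} is the same count in different clothing.

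The genuine gap is in step (ii). Triangularity of the diagonal pairing matrices does not suffice: to conclude $\mathsf{K}_{\alpha,\widehat{\alpha}}=(iu)^{\ell(\alpha)-|\alpha|}\delta_{\alpha,\widehat{\alpha}}$ on the locus $|\alpha|+\ell(\alpha)=|\widehat{\alpha}|+\ell(\widehat{\alpha})$ one must match the \emph{entire} matrix of pairings \eqref{mat111} against \eqref{mat222}, i.e.\ compute $\bZ_{\mathsf{P}}\big(\mathsf{Cap};q\,\big|\,\tau_{a-1}(\mathsf{p})\,\big|\,\gamma\big)$ for \emph{every} boundary $\gamma$ with $|\gamma|+\ell(\gamma)=a+1$ (including the off-diagonal ones with $\ell(\gamma)>1$) and check it agrees with the Gromov--Witten answer. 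This is not a Vandermonde-type identity of the kind used in Lemmas \ref{lsp}--\ref{lgw}, nor a separation across $\sim$-classes: the paper reduces it by point-separation and degeneration (Section \ref{gg99gg}) to the cap evaluation of Lemma \ref{lee5}, whose stable-pairs side is
$q^{|\gamma|}(-1)^{\ell(\gamma)-1}/\big(|\gamma|!\,|\text{Aut}(\gamma)|\big)$ and rests on the character-sum identity \eqref{charsum}, proven via Jucys--Murphy elements and the Lascoux--Thibon formula. That computation is the real content of the proof and is absent from your proposal; \eqref{ffgg} and Lemma \ref{match1} only cover the extreme cases $\gamma$ with no parts equal to $1$ or $\ell(\gamma)=1$. (A smaller point: degree-$0$ homogeneity alone does not force the diagonal coefficients to be constants — one needs polynomiality in the $s_i$, i.e.\ Theorem \ref{ll33}, or the observation that the relevant pairings are dimension-zero integrals on compact moduli.)
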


Propositions \ref{t789} and \ref{t789789} together immediately
imply Theorem \ref{T789} constraining the intial terms of
$\mathsf{K}$.

As a Corollary of Proposition \ref{t789}, we see the simple form
$$\widehat{\tau}_{1^\ell}(\mathsf{p}) = \tau_{1^\ell}(\mathsf{p})$$
holds for $\alpha=(1^\ell)$ since no partition satisfies
$(1^\ell)\rhd \widehat{\alpha}$.

Let $(1)+\alpha$
be the partition obtained by adding a part equal to 1
to $\alpha$.
The part $1$ corresponds to a $\tau_0(\mathsf{p})$
factor in $\tau_{(1)+\alpha}(\mathsf{p})$.
We can write
$$\tau_{0}(\mathsf{p}_0) = s_1s_2\tau_{0}(\mathsf{N}_0)$$
in the cap geometry. Using the $\T$-equivariant divisor equations{\footnote{
In the Gromov-Witten case, if $k_j-1<0$, the summand is omitted
in the divisor equation.}} for the cap,
\begin{eqnarray*}
\Big\langle \tau_0(\mathsf{N}_0) \prod_{i=1}^r \tau_{k_i}(\mathsf{p}_0) 
\  \Big | \ \mu \Big\rangle_{n,d}^{\mathsf{P}} & = &
d\ \Big\langle \tau_0(\mathsf{N}_0) \prod_{i=1}^r \tau_{k_i}(\mathsf{p}_0) 
\  \Big | \ \mu 
\Big\rangle_{n,d}^{\mathsf{P}} \\
\Big\langle \tau_0(\mathsf{N}_0) \prod_{i=1}^r \tau_{k_i}(\mathsf{p}_0) 
\  \Big | \ \mu 
\Big\rangle_{g,d}^{\mathsf{P}} & = &
d\ \Big\langle \tau_0(\mathsf{N}_0) \prod_{i=1}^r \tau_{k_i}(\mathsf{p}_0) 
\  \Big | \ \mu 
\Big\rangle_{g,d}^{\mathsf{GW}}\\
& & \ \ \ \
+\sum_{j=1}^r s_3 
 \Big\langle 
\tau_{k_j-1}(\mathsf{p}_0)
\prod_{i\neq j} \tau_{k_i }(\mathsf{p}_0) 
\  \Big | \ \mu 
\Big\rangle_{g,d}^{\mathsf{GW}}\ ,
\end{eqnarray*}
we can easily understand 
how the matrix $\mathsf{K}$ treats the part 1.
To state the answer, let
$$\Phi\left(\tau_{k_1}(\mathsf{p}) \cdots \tau_{k_r}(\mathsf{p})\right) =
\sum_{j=1}^r 
\tau_{k_j-1}(\mathsf{p})
\prod_{i\neq j} \tau_{k_i }(\mathsf{p}) $$
and extend $\Phi$ linearly to linear combinations of
monomials in $\tau_k(\mathsf{p})$.

\begin{Proposition}
For partitions $\alpha$,
$$\widehat{\tau}_{ (1)+\alpha}(\mathsf{p}) =
\tau_0(\mathsf{p}) \cdot \widehat{\tau}_{\alpha}
- s_1s_2s_3 \cdot \Phi\left( \widehat{\tau}_{\alpha}(\mathsf{p})\right)\ .$$
\end{Proposition}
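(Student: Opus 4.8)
The plan is to show that the claimed expression
$$R := \tau_0(\mathsf{p})\,\widehat{\tau}_{\alpha} - s_1s_2s_3\,\Phi\!\left(\widehat{\tau}_{\alpha}(\mathsf{p})\right)$$
satisfies the defining $1$-leg matching relation for $\tau_{(1)+\alpha}(\mathsf{p})$, and then to conclude $\widehat{\tau}_{(1)+\alpha} = R$ from the uniqueness of $\mathsf{K}$ guaranteed by Lemmas \ref{lsp}--\ref{lgw}. Since the $1$-leg correspondence (Theorem \ref{ccc}, established via Proposition \ref{prprr}) characterizes $\widehat{\tau}_{(1)+\alpha}$ as the unique linear combination of the $\tau_{\widehat{\alpha}}$ with
$$(-q)^{-|\lambda|}\bC_{\mathsf{P}}(\tau_{(1)+\alpha}(\mathsf{p})\,|\,\emptyset,\emptyset,\lambda) = (-iu)^{|\lambda|+\ell(\lambda)}\bC_{\mathsf{GW}}(\widehat{\tau}_{(1)+\alpha}(\mathsf{p})\,|\,\emptyset,\emptyset,\lambda)$$
for all $\lambda$, it suffices to verify this identity with $\widehat{\tau}_{(1)+\alpha}$ replaced by $R$. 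Note $R$ lies in the correct span: $\tau_0(\mathsf{p})\tau_{\widehat{\alpha}} = \tau_{(1)+\widehat{\alpha}}$ raises size by one and $\Phi$ lowers it by one, so both summands involve partitions of size at most $|\alpha|+1$.

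I would compute the two sides using the divisor equations recorded above, together with $\tau_0(\mathsf{p}_0) = s_1s_2\,\tau_0(\mathsf{N}_0)$ on the cap (the extra part $1$ of $(1)+\alpha$ being exactly this factor). On the stable pairs side, the $\mathsf{P}$ divisor equation carries no descendent-lowering term, so inserting $\tau_0(\mathsf{N}_0)$ simply multiplies by the degree $d = |\lambda|$; hence the left-hand side equals $s_1s_2\,|\lambda|\,(-q)^{-|\lambda|}\bC_{\mathsf{P}}(\tau_{\alpha}(\mathsf{p})\,|\,\emptyset,\emptyset,\lambda)$. Applying the already-proven $1$-leg correspondence for $\alpha$ rewrites this as $s_1s_2\,|\lambda|\,(-iu)^{|\lambda|+\ell(\lambda)}\bC_{\mathsf{GW}}(\widehat{\tau}_{\alpha}(\mathsf{p})\,|\,\emptyset,\emptyset,\lambda)$.

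On the Gromov--Witten side I would expand $\bC_{\mathsf{GW}}(R\,|\,\ldots)$ into its two summands. For the first, $\tau_0(\mathsf{p}_0)\widehat{\tau}_{\alpha} = s_1s_2\,\tau_0(\mathsf{N}_0)\widehat{\tau}_{\alpha}$, the $\mathsf{GW}$ divisor equation produces both the degree term $|\lambda|\,\bC_{\mathsf{GW}}(\widehat{\tau}_{\alpha}\,|\,\ldots)$ and a correction $s_3\,\bC_{\mathsf{GW}}(\Phi(\widehat{\tau}_{\alpha})\,|\,\ldots)$, the latter arising from $\mathsf{N}_0\cdot\mathsf{p}_0 = s_3\,\mathsf{p}_0$. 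The subtracted second summand $-s_1s_2s_3\,\Phi(\widehat{\tau}_{\alpha})$ is engineered precisely to cancel this correction, leaving $s_1s_2\,|\lambda|\,(-iu)^{|\lambda|+\ell(\lambda)}\bC_{\mathsf{GW}}(\widehat{\tau}_{\alpha}\,|\,\ldots)$, which matches the stable pairs side. Uniqueness then gives $\widehat{\tau}_{(1)+\alpha} = R$.

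The crux --- and the only real obstacle --- is the asymmetry of the two divisor equations: stable pairs theory sees only the factor of the degree, while Gromov--Witten theory carries the additional $s_3\,\Phi$ term. Making the cancellation exact requires the equivariant identity $\mathsf{N}_0\cdot\mathsf{p}_0 = s_3\,\mathsf{p}_0$ and careful bookkeeping of the $s_1s_2$ factors converting $\mathsf{p}_0$ to $\mathsf{N}_0$; one must also ensure that the convention dropping $\tau_{-1}$ summands (see the footnote above) is applied identically in the divisor equation and in the definition of $\Phi$. With the cancellation in place, the remaining steps are formal, relying only on the $1$-leg correspondence for $\alpha$ and the invertibility already established.
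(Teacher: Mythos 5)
Your proposal is correct and follows essentially the same route as the paper: the paper's own proof is the one-line observation that the divisor equations show the proposed formula respects the 1-leg correspondence of Theorem \ref{ccc}, which uniquely defines $\mathsf{K}$. You have simply carried out that computation explicitly --- the $s_1s_2\,|\lambda|$ factor on the stable pairs side, the extra $s_1s_2s_3\,\Phi$ term from the Gromov--Witten divisor equation, and its cancellation against the subtracted term --- before invoking the uniqueness from Lemmas \ref{lsp}--\ref{lgw}.
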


\begin{proof}
The divisor equations show the proposed formula 
for $\widehat{\tau}_{ 1+\alpha}(\mathsf{p})$ respects
the 1-leg
correspondence of Theorem \ref{ccc} which uniquely defines $\mathsf{K}$.
\end{proof}

\subsection{Example}

We calculate the first coefficients of $\mathsf{K}$.
The terms
$$\mathsf{K}_{(1),(1)} = 1, \ \ \mathsf{K}_{(1),\ \widehat{\alpha}\neq (1)}=0\ .$$
have already been established.
More interesting are the coefficients $\mathsf{K}_{(2),\widehat{\alpha}}$.
By Lemma \ref{t789}, we have
$$\mathsf{K}_{(2),(2)}= \frac{1}{iu}\ $$
and the only other non-vanishing coefficients are possibly
$\mathsf{K}_{(2),(1^2)}$ and $\mathsf{K}_{(2),(1)}$.
However, $\mathsf{K}_{(2),(1^2)}$ vanishes by Proposition \ref{t789789}.
A degree 1 calculation below yields
$$\mathsf{K}_{(2),(1)} = \frac{s_1+s_2+s_3}{iu}\ ,$$
so we see
\begin{equation}\label{fgg3}
\widehat{\tau}_{(2)}(\mathsf{p}) = \frac{1}{iu} \tau_{(2)}(\mathsf{p})
+ \frac{s_1+s_2+s_3}{iu} \tau_{(1)}(\mathsf{p})\ .
\end{equation}
Note, $\mathsf{K}_{(2),(1)}$ is symmetric in the $s_i$. 

To prove \eqref{fgg3}, we need only check a single correspondence
(as only the single coefficient $\mathsf{K}_{(2),(1)}$ is unknown).
In \cite{PPstat}, we have already calculated{\footnote{The tangent
weight conventions of \cite{PPstat} differ from the
conventions here by a sign.}}
$$
\bZ_{\mathsf{P}}\Big(\mathsf{Cap};q\ \Big|\
\tau_{(2)}(\mathsf{p}_0)\ \Big| \ 
(1)[\mathsf{p}_\infty]\Big)^\T_{1} = -q\left(\frac{s_1+s_2}{2}\right)\frac{1-q}{1+q}.$$
There is not much difficulty in calculating the
corresponding Gromov-Witten series
\begin{multline*}
\bZ'_{\mathsf{GW}}\Big(\mathsf{Cap};q\ \Big|\
\tau_{(2)}(\mathsf{p}_0)\ \Big| \ 
(1)[\mathsf{p}_\infty]\Big)^\T_{1} =\\ -s_3 u^{-2}+
\frac{1}{u} \frac{d}{du} \left( s_3 \left(\frac{u/2}{\sin(u/2)}\right)^
{-\frac{(s_1+s_2)}{s_3}}\right) \cdot
\left(\frac{u/2}{\sin(u/2)}\right)^
{\frac{(s_1+s_2)}{s_3}}\ 
\end{multline*}
by the Hodge integral methods of \cite{FP,GraberP}.
The descendent is inserted via the dilaton equation which
appears as differentiation of the vertex term. The factor
furthest to the right is the rubber contribution.
The series
\begin{equation*}
\bZ'_{\mathsf{GW}}\Big(\mathsf{Cap};q\ \Big|\
\tau_{(1)}(\mathsf{p}_0)\ \Big| \ 
(1)[\mathsf{p}_\infty]\Big)^\T_{1} =\\ u^{-2}
\end{equation*}
is simple.
After including the $(-q)^{-1}$ and $(-iu)^2$ 
scalings of the 1-leg descendent correspondence
\eqref{gtt12}, we check
\begin{multline*}
\left(\frac{s_1+s_2}{2}\right) \frac{1-q}{1+q} = \\
-\frac{1}{iu} \left( (s_1+s_2)+ 
u \frac{d}{du} \left( s_3 \left(\frac{u/2}{\sin(u/2)}\right)^
{-\frac{(s_1+s_2)}{s_3}}\right) \cdot
\left(\frac{u/2}{\sin(u/2)}\right)^
{\frac{(s_1+s_2)}{s_3}}\right)
\end{multline*}
after $-q=e^{iu}$.

In the above example, the
 stable pairs descendent had been exactly
calculated and the dilaton equation at the vertex
could be
used to handle the Gromov-Witten side.
 While the
stable pairs descendents series are difficult
to calculate, at least methods exist \cite{part1,PPstat}.
At the moment, there is no reasonable way
to calculate the Gromov-Witten descendent series except, 
of course, order by order in $u$.

\section{Capped localization}
\label{cl}

\subsection{Toric geometry}
Let $X$ be a nonsingular toric $3$-fold.
Virtual localization with respect to the
action of the full 3-dimensional torus $\T$ reduces all stable pairs
and Gromov-Witten invariants
of $X$
to local contributions of the vertices and edges of the 
associated toric polytope.   
We will use the regrouped localization procedure 
introduced in \cite{moop} 
with capped vertex and edge contributions.  
The capped vertex and edge terms are equivalent 
building blocks for global toric calculations, 
but are much better behaved.

Let $\Delta$ denote the polytope associated to
$X$. The vertices of $\Delta$ are in bijection
with $\T$-fixed points $X^\T$.
The edges $e$ correspond
to $\T$-invariant curves $$C_e\subset X.$$ The three edges
incident to any vertex carry canonical $\T$-weights ---
the tangent weights of the torus action.

We will consider both compact and noncompact toric
varieties $X$. In the latter case, edges 
may be compact or noncompact.
Every compact edge is incident to two vertices.

\subsection{Capping}
Capped localization expresses the 
$\T$-equivariant stable pairs descendents 
of $X$ as a sum of capped descendent vertex and capped edge
data.

A half-edge $h=(e,v)$
 is a compact edge $e$ together with the
choice of an incident vertex $v$.
A partition assignment
$$h \mapsto \lambda(h)$$
to half-edges is {\em balanced} if the equality
$$|\lambda(e,v)| = |\lambda(e,v')|$$
always holds 
for the two halfs of  $e$.
For a balanced assignment, let $$|e|=|\lambda(e,v)|=|\lambda(e,v')|$$
denote the {\em edge degree}.

The outermost sum in the capped localization formula
runs over all balanced 
assignments of partitions $\lambda(h)$ to
the half-edges $h$ of $\Delta$ satisfying
\begin{equation}\label{ddfff}
\beta = \sum_e |e|\cdot \left[C_e\right] \in H_2(X,\mathbb{Z})\,.
\end{equation}
Such a partition assignment will be called a \emph{capped
marking} of
$\Delta$.
The weight of each capped marking in the localization sum for the
stable pairs descendent partition function equals the product
of three factors:
\begin{enumerate}
\item[(i)] capped descendent vertex contributions,
\item[(ii)] capped edge contributions,
\item[(iii)] gluing terms.
\end{enumerate}

Each vertex determines up to three half-edges specifying the
partitions for the capped vertex.
Each compact edge determines two half-edges specifying
the partitions of the capped edge.
The capped edge contributions (ii) 
and gluing terms (iii) here are {\em exactly} the same as
for the the capped localization formula in \cite{moop}.
Precise formulas are written in Section \ref{exxx}.

The capped localization formula
is easily derived from the standard localization formula (with
roots in \cite{GraberP,MNOP1}).
Indeed, the capped objects are obtained from the
uncapped objects by  rubber integral{\footnote{
Rubber integrals $\langle \lambda \ |\ \frac{1}{1-\psi_\infty} \ |  
\ \mu \rangle ^\sim$ 
arise in the localization formulas for relative
geometries.
See
\cite{part1} for a discussion.}} factors.
The rubber integrals cancel in pairs in capped localization
to yield standard localization.

\label{bgty}

\subsection{Formulas} \label{exxx}

The $\T$-equivariant cohomology of $X$ is generated (after localization)
by the classes of the $\T$-fixed points $X^\T \subset X$.
Let $\alpha$ be a partition with parts $\alpha_1, \ldots, \alpha_\ell$,
and let 
$$\sigma: \{1,\ldots, \ell\} \rightarrow X^\T\ .$$
Let $\mathsf{p}_{\sigma(i)} \in H^*_\T(X,\mathbb{Q})$ denote the
class of the $\T$-fixed point $\sigma(i)$.
We 
consider the 
capped localization formula 
for the $\T$-equivariant stable pairs and Gromov-Witten
descendent partition functions 
\begin{equation}\label{fq2}
\bZ_{\mathsf{P}}\Big(X;q\ \Big| \ \prod_{i=1}^r \tau_{k_i}(\mathsf{p}_{\sigma(i)})\Big)_\beta^\T\ \ , \ \ \ \ 
\bZ'_{\mathsf{GW}}\Big(X;u\ \Big|\ \prod_{i=1}^r \tau_{k_i}(\mathsf{p}_{\sigma(i)})\Big)_\beta^\T\ .
%\sum_{n}  
%\left\langle \prod_{i=1}^r \tau_{k_i}(\mathsf{p}_{\sigma(i)})
%\right\rangle_{\!n,\beta}^{X}q^n.
\end{equation}
We will indicate the slight differences bewtween the
formula for stable pairs and stable maps below.

Let $\mathcal{V}$ be the set of vertices of $\Delta$ which
we identify 
with $X^\T$. 
%For $v\in 
%\mathcal{V}$, let
%$\alpha^v$ be the collection of parts $\alpha_i$ of $\alpha$
%satisfying $\sigma(i)=v$.
%The partition $\alpha^v$ has size bounded by $|\alpha|$.
%
For each vertex $v\in \mathcal{V}$,
let $h^{v}_1, h^v_2, h^v_3$ be the associated half-edges{\footnote{
For simplicity, we assume $X$ is projective so each vertex is
incident to 3 compact edges.}}  
with tangent weights $s^v_1,s^v_2,s^v_3$ respectively.
Let $\Gamma_\beta$ be the set of capped markings
 satisfying the degree condition \eqref{ddfff}.
Each $\Gamma \in \Gamma_\beta$  associates  
a partition $\lambda(h)$ to every half-edge $h$. Let
$$|h| = |\lambda(h)|$$
 denote the half-edge degree.

For each $v\in \mathcal{V}$, the assignments $\sigma$ and
$\Gamma$ determine an evaluation of the capped vertex,
$$\mathsf{C}(v,\sigma,\Gamma) = \mathsf{C}\Big(\prod_{i\in \sigma^{-1}(v)}
\tau_{k_i}(\mathsf{p}_v)\ \Big|\
\lambda(h^v_1),
\lambda(h^v_2), \lambda(h^v_3)\Big)\Big|_{s_1= s^v_1, s_2=s^v_2, s_3=s^v_3}.$$
Let
$h^e_1$ and $h^e_2$ be the half-edges
associated to the edge $e$.
The assignment 
$\Gamma$ also determines an evaluation of the capped edge,
$$\mathsf{E}(e,\Gamma) = \mathsf{E}(\lambda(h^e_1),
 \lambda(h^e_2)).$$
The capped edge geometry is discussed 
for in \cite{moop}. 
A gluing factor is specified by $\Gamma$ at each
half-edge $h^v_i\in \mathcal{H}$. For stable pairs
$$\mathsf{G}_{\mathsf{P}}(h^v_i,\Gamma) = 
(-1)^{|h^v_i|-\ell(\lambda(h^v_i))}
\mathfrak{z}(\lambda(h^v_i)) 
\left(\frac{\prod_{j=1}^3 s^v_j}{s^v_i}\right)^{\ell(\lambda(h^v_i))} 
q^{-|h^v_i|}\ $$
where 
 $\zz(\lambda)$ is the order of the
centralizer in the symmetric group of
an element with cycle type $\lambda$.
For Gromov-Witten theory,
$$\mathsf{G}_{GW}(h^v_i,\Gamma) = \mathfrak{z}(\lambda(h^v_i)) 
\left(\frac{\prod_{j=1}^3 s^v_j}{s^v_i}\right)^{\ell(\lambda(h^v_i))} 
u^{2\ell(\lambda(h^v_i))}.$$

The capped localization formula for stable pairs can be written
exactly in the form presented in Section \ref{bgty},
\begin{equation*}
\bZ_{\mathsf{P}}\Big(X,\prod_{i=1}^r \tau_{k_i}(\mathsf{p}_{\sigma(i)})
\Big)^\T_\beta=
\sum_{\Gamma\in \Gamma_\beta}\
\prod_{v\in \mathcal{V}}\ \prod_{e\in \mathcal{E}}\ \prod_{h\in \mathcal{H}}
\mathsf{C}_{\mathsf{P}}(v,\sigma,\Gamma) \
\mathsf{E}_{\mathsf{P}}(e,\Gamma)\
\mathsf{G}_{\mathsf{P}}(h, \Gamma)
\end{equation*}
where the product is over the sets of vertices $\mathcal{V}$, edges
 $\mathcal{E}$, and half-edges $\mathcal{H}$ 
of the polytope $\Delta$. 
Similarly,
\begin{equation*}
\bZ'_{\mathsf{GW}}\Big(X,\prod_{i=1}^r \tau_{k_i}(\mathsf{p}_{\sigma(i)})
\Big)^\T_\beta=
\sum_{\Gamma\in \Gamma_\beta}\
\prod_{v\in \mathcal{V}}\ \prod_{e\in \mathcal{E}}\ \prod_{h\in \mathcal{H}}
\mathsf{C}_{\mathsf{GW}}(v,\sigma,\Gamma) \
\mathsf{E}_{\mathsf{GW}}(e,\Gamma)\
\mathsf{G}_{\mathsf{GW}}(h, \Gamma) \ .
\end{equation*}

An immediate consequence of the
 above capped localization formulas for stable pairs
and Gromov-Witten theories is the implication of Theorem
\ref{aaa} by Theorem \ref{ccc} and the symmetry of $\mathsf{K}$
in the variables $s_i$. Theorem \ref{ccc} is 
applied to the capped descendent vertices on the
right side of the formula. The capped edge correspondences
have already been proven in \cite{moop} (with the
stable pairs case discussed in Section 5 of \cite{mpt}).
Tracing the factors of $q$ and $u$ here is an easy exercise.

\section{Descendent correspondence: 2-leg} \label{fullt}

\subsection{Overview}
Our goal here is to prove Theorem \ref{ccc} in the 2-leg case.
Consider the capped 2-leg descendent vertices
$$\bC_{\mathsf{P}}(\tau_\alpha(\mathsf{p})\ |\ \emptyset,\mu,\lambda), \ \ \
\bC_{\mathsf{GW}}(\tau_\alpha(\mathsf{p})\ |\ \emptyset,\mu,\lambda)\ .$$
Our proof of Theorem \ref{ccc}
 will be by induction on the complexity of the legs.
The descendent insertion $\tau_\alpha(\mathsf{p})$ will be
fixed for the argument.
If $\mu=\emptyset$, we are in the 1-leg case where  
Theorem \ref{ccc} has already been established in Section \ref{ooolll}. 
The 1-leg case will be 
the base of the induction.

Define a partial ordering on pairs of partitions $(\mu,\lambda)$
satisfying the condition 
$(\mu,\lambda)\neq (\emptyset,\emptyset)$
 by the following rules. We say
\begin{equation*}
(\mu,\lambda) \ \triangleright \ (\mu',\lambda')
\end{equation*}
if we have $|\mu| > |\mu'|$.
The proof of 
Theorem \ref{ccc} in the 2-leg case 
is by induction with respect to the partial ordering $\triangleright$.

Along the way, we will also establish basic
properties of the correspondence
matrix $\mathsf{K}$ constructed in
Section \ref{conk}. The following result,
completing the proof of Theorem \ref{ll33}, will be proven in
Section \ref{4444}:

\vspace{10pt}
\noindent {\em The coefficients of 
$\mathsf{K}$ lie in the subring
\begin{equation}\label{cl223}
\Lambda_3((u))\subset \mathbb{Q}(i,s_1,s_2,s_3)((u))
\end{equation}
where $\Lambda_3$ is the ring of symmetric 
{\em polynomials} in $s_1,s_2,s_3$ over the field $\mathbb{Q}[i]$.}

\subsection{$\bA_1$ geometry} \label{angeo}
Let $\zeta$ be a primitive $(n+1)^{th}$ root
of unity, for $ n \geq 0$.
Let the generator of the
cyclic group $\mathbb{Z}_{n+1}$ act on $\com^2$ by
$$
(z_1,z_2)\mapsto  (\zeta\, z_1, \zeta^{-1}z_2)\,.
$$
Let  $\bA_n$ be  the minimal resolution of the quotient
$$
\bA_n \rightarrow \com^2/\mathbb{Z}_{n+1}.
$$
The diagonal $(\C^*)^2$-action on $\com^2$ commutes
with the action of $\mathbb{Z}_n$. As a result, the
surfaces $\bA_n$ are toric.

The surface $\bA_1$ is isomorphic to the total space of
$$\cO(-2) \rarr \Pp\ $$
and admits a toric compactification 
$$\bA_1 \subset \mathbf{P}(\cO+\cO(-2)) = \bF_2$$
by the Hirzebruch surface. 
%For the 2-torus action,
%the first $\com^*$ acts on $\Pp$ with lifts to $\cO$ and
%$\cO(-2)$. The second $\com^*$ acts on the fibers of
%the projective bundle.

Let $C\subset \bA_1$ be the 0-section of $\cO(-2)$, and
let $\star,\bullet\in C$ be the $(\com^*)^2$-fixed points.
Let $$\overline{\star} ,\overline{\bullet}\in\bF_2\setminus \bA_1$$
be the $(\com^*)^2$-fixed points lying above $\star,\bullet$
repectively.
We fix our $(\com^*)^2$-action by specifying tangent weights
at the four  $(\com^*)^2$-points:
\begin{eqnarray}
 T_{\star}(\bF_2):\ &  s_1-s_2, \ & \ \ 2s_2   \label{rrtt}\\
 T_{\bullet}(\bF_2):\ &  s_2-s_1, \ &\ \  2s_1 \nonumber\\
 T_{\overline\star}(\bF_2):\ &  s_1-s_2, \ & -2s_2 \nonumber\\
 T_{\overline\bullet}(\bF_2):\ &  s_2-s_1, \ & -2s_1\ . \nonumber
\end{eqnarray}
No tangent weight here is divisible by $s_1+s_2$.

Consider the nonsingular projective toric variety  $\bF_2 \times \Pp$.
The 3-torus 
$$\T=(\com^*)^3$$
 acts on $\bF_2$ as above via the first two factors and
acts on $\Pp$ via the third factor with tangent weights $s_3$ and $-s_3$
at the points $0,\infty\in \Pp$ respectively.
The two $\T$-invariant divisors of $\bF_2 \times \Pp$
$$\bD_0 = \bF_2 \times \{0\}, \ \ \bD_\infty = \bF_2 \times \{\infty\}$$
will play a basic role.
The 3-fold $\bF_2\times \Pp$ has eight $\T$-fixed points which we denote by
$$\star_0,\overline{\star}_0,\bullet_0,\overline{\bullet}_0,
\star_\infty,\overline{\star}_\infty,\bullet_\infty,\overline{\bullet}_\infty \in 
\bF_2\times\Pp $$
where the subscript indicates the coordinate in $\Pp$.

Let $L_0 \subset \bF_2\times \Pp$ be the $\T$-invariant line
connecting $\star_0$ and $\overline{\star}_0$. 
Similarly, let $L_\infty \subset \bF_2\times \Pp$ be the $\T$-invariant line
connecting $\star_\infty$ and $\overline{\star}_\infty$. 
The lines $L_0$ and $L_\infty$
are $\Pp$-fibers of the Hirzebruch surfaces $\bD_0$ and $\bD_\infty$.
We have
$$H_2(\bF_2\times \Pp,\mathbb{Z}) = \mathbb{Z}[C] \oplus
\mathbb{Z} [L_0] \oplus
\mathbb{Z}  [P]$$
where $P$ is the fiber of the projection to $\bF_2$.

\subsection{Integration}
We will find relations which express
$\bC(\tau_{\alpha}(\mathsf{p})\ |\ \emptyset, \mu,\lambda)$ in terms
of inductively treated vertices for stable pairs and Gromov-Witten
theory. The inductive equations will respect the
correspondence claimed in Theorem \ref{ccc}.

Let $\mu'$ be a partition. The relations will
be obtained from
vanishing  invariants of the relative
geometry $\bF_2\times \Pp/\bD_\infty$
in curve class
$$\beta = |\mu|\cdot [C] + (|\lambda|+|\mu'|)\cdot [P] \ . $$
The virtual dimensions of the associated moduli spaces are
\begin{eqnarray*} 
\text{dim}^{vir}\ P_n(\bF_2\times \Pp,\beta) & = &
2|\lambda|+2|\mu'| \ ,\\
\text{dim}^{vir}\ \overline{M}_g'(\bF_2\times \Pp,\beta) & = &
2|\lambda|+2|\mu'| \ .
\end{eqnarray*}

Relative conditions in $\text{Hilb}(\bD_\infty, |\lambda|+|\mu'|)$
are best expressed in terms of the Nakajima basis given by a
$\T$-equivariant cohomology weighted partition of $|\lambda|+|\mu'|$.
We impose
the relative condition determined by the partition
$$\lambda \cup \mu'=\lambda_1+\ldots+\lambda_{\ell(\lambda)}
+\mu_1'+\ldots+\mu'_{\ell(\mu')}$$ 
weighted by
$[\star_\infty]\in H^*_\T(\bD_\infty,\mathbb{Q})$ for the parts of $\lambda$
and  
$[\bullet_\infty]\in H^*_\T(\bD_\infty,\mathbb{Q})$ for the parts
of $\mu'$.
We denote the relative condition by $\mathsf{r}(\lambda,\mu')$.
The same data expresses  relative conditions in Gromov-Witten theory.
After imposing $\mathsf{r}(\lambda,\mu')$,
the virtual dimension drops to
\begin{eqnarray*}
\text{dim}^{vir}\ P_n(\bF_2\times \Pp/\bD_\infty,{\mathsf{r}(\lambda,\mu')}  )_{\beta}
& = & |\lambda|-\ell(\lambda)+ |\mu'|-\ell(\mu')\ , \\
\text{dim}^{vir}\ \overline{M}_g'(\bF_2\times \Pp/\bD_\infty,
{\mathsf{r}(\lambda,\mu')})_\beta
& =& |\lambda|-\ell(\lambda)+ |\mu'|-\ell(\mu')\ .
\end{eqnarray*}

To define an equivariant integral, we specify the descendent
insertion by 
$$\tau_{\alpha}(\mathsf{[\star_0]}) =
\tau_{\alpha_1-1}(\mathsf{[\star_0]})\cdots
\tau_{\alpha_{\ell(\alpha)}-1}(\mathsf{[\star_0]})\ .$$
The descendent insertion imposes $|\alpha|+\ell(\alpha)$ conditions.
Therefore, the integrals
\begin{equation}\label{hhhooo}
\int_{ [P_n(\bF_2\times \Pp/\bD_\infty,{\mathsf{r}(\lambda,\mu')})_{\beta}]^{vir}}
\tau_{\alpha}(\mathsf{[\star_0]}), \  
\int_{ [\overline{M}'_{g,\ell(\alpha)}(\bF_2\times \Pp/\bD_\infty,
{\mathsf{r}(\lambda,\mu')})_{\beta}]^{vir}}
\tau_{\alpha}(\mathsf{[\star_0]})
\end{equation}
viewed as $\T$-equivariant push-forwards to a point,
both have dimension 
$$|\lambda|-\ell(\lambda)+ |\mu'|-\ell(\mu')-|\alpha|-\ell(\alpha)\ .$$
We conclude the following result.

\begin{Proposition} 
If $|\mu'|-\ell(\mu')>|\alpha|+\ell(\alpha)$, then
the $\T$-equivariant integrals 
\eqref{hhhooo} vanish for all Euler characteristics $n$ and 
genera $g$. \label{ggh2}
\end{Proposition}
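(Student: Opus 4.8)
The plan is to obtain the vanishing as a pure dimension count in $\T$-equivariant cohomology, in the same spirit as the proof of Lemma~\ref{efef}. The essential structural input is that both relative moduli spaces
\[
P_n(\bF_2\times \Pp/\bD_\infty,\mathsf{r}(\lambda,\mu'))_\beta,
\qquad
\overline{M}'_{g,\ell(\alpha)}(\bF_2\times \Pp/\bD_\infty,\mathsf{r}(\lambda,\mu'))_\beta
\]
are proper: $\bF_2\times \Pp$ is a nonsingular projective $3$-fold and $\bD_\infty$ is a nonsingular divisor, so the relative geometry is compact. Consequently the $\T$-equivariant push-forward of any class to a point is well-defined \emph{without} localization residues and lands honestly in $H^*_\T(\bullet)=\mathbb{Q}[s_1,s_2,s_3]$.

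First I would record that this push-forward is homogeneous of a definite degree. The integrand $\tau_{\alpha}([\star_0])$ has complex cohomological degree $|\alpha|+\ell(\alpha)$, while, as already computed just above the statement, the virtual dimension of either space after imposing the relative condition $\mathsf{r}(\lambda,\mu')$ equals $|\lambda|-\ell(\lambda)+|\mu'|-\ell(\mu')$. Capping the integrand against the virtual class and pushing forward to the point therefore produces an element of $\mathbb{Q}[s_1,s_2,s_3]$ of complex degree
\[
\big(|\alpha|+\ell(\alpha)\big)-\big(|\lambda|-\ell(\lambda)+|\mu'|-\ell(\mu')\big).
\]

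It then remains only to feed in the hypothesis. Since $|\lambda|-\ell(\lambda)\geq 0$ for every partition $\lambda$, the assumption $|\mu'|-\ell(\mu')>|\alpha|+\ell(\alpha)$ forces the displayed degree to be strictly negative. As $\mathbb{Q}[s_1,s_2,s_3]$ is concentrated in non-negative degrees, the push-forward vanishes identically, for every Euler characteristic $n$ and every genus $g$, which is precisely the assertion. The argument for stable pairs and for stable maps is identical, the only difference being the interpretation of $\mathsf{r}(\lambda,\mu')$ as a class on the Hilbert scheme of points of $\bD_\infty$ in the first case and as a cohomology-weighted tangency condition in the second.

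I do not expect any real obstacle: the content is entirely the homogeneity of the equivariant push-forward together with the vanishing of $H^*_\T(\bullet)$ in negative degrees. The one point worth flagging is that compactness is used in an essential way; on the open local surface $\bA_1$ the push-forward would be computed by residues and could be a nonzero rational function of the $s_i$ even in negative degree, so I would take care to argue on the projective model $\bF_2\times\Pp$ and never pass to the noncompact quotient.
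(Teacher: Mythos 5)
Your argument is correct and is precisely the one the paper uses: the integrals are $\T$-equivariant push-forwards from a compact moduli space, homogeneous of cohomological degree $(|\alpha|+\ell(\alpha))-(|\lambda|-\ell(\lambda)+|\mu'|-\ell(\mu'))$, which is negative under the hypothesis since $|\lambda|-\ell(\lambda)\geq 0$, and $H^*_\T(\bullet)=\mathbb{Q}[s_1,s_2,s_3]$ has nothing in negative degree. Your remark that compactness of $\bF_2\times\Pp/\bD_\infty$ is what makes the push-forward land in the polynomial ring rather than in localized equivariant cohomology is exactly the point the paper's choice of the projective model is designed to secure.
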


\subsection{Relations}\label{2legrel}
We consider first the stable pairs case. 
Define the $\T$-equivariant series 
\begin{equation*}
\bZ_{\mathsf{P}}\Big(\alpha,\lambda,\mu'\Big)_\beta
=
\sum_{n}  q^n
\int_{ [P_n(\bF_2\times \Pp/\bD_\infty,{\mathsf{r}(\lambda,\mu')})_{\beta}]^{vir}}
\tau_{\alpha}([\star_0]) 
\end{equation*}
obtained from the stable pairs integrals \eqref{hhhooo}.
By Proposition \ref{ggh2}, the series 
$\bZ_{\mathsf{P}}\Big(\alpha,\lambda,\mu'\Big)_\beta$
vanishes identically if $|\mu'|-\ell(\mu')>|\alpha|+\ell(\alpha)$.
We will calculate the left side of
\begin{equation}\label{ttx}
\bZ_{\mathsf{P}}\Big(\alpha,\lambda,\mu'\Big)_\beta=0
\end{equation}
by capped localization to obtain a relation constraining
the stable pairs capped descendent vertices.

The stable pairs theory of the relative geometry $\bF_2\times \Pp/\bD_\infty$ admits a
capped localization formula. 
Over $0\in \Pp$, capped descendent vertices occur as in
the cappled localization formula of Section \ref{exxx}.
Over $\infty \in \Pp$,
capped rubber terms for $\T$-equivariant localization in the relative
geometry arise.
Capped rubber
is  discussed in Section 3.4 of \cite{moop}.
Since all our descendent
insertions lie over $0\in \Pp$, 
our capped rubber has the same definition as
the capped rubber of \cite{moop}.

By the curve choice $\beta$ and the relative constraints
${\mathsf{r}(\lambda,\mu')}$, the only capped rubber
contributions of $\bF_2 \times \Pp/\bD_\infty$ which arise
over $\infty\in \Pp$ in the 
$\T$-equivariant localization formula for
$\bZ_{\mathsf{P}}\Big(\alpha,\lambda,\mu'\Big)_\beta$
lie in 
$$\bA_1 \times \Pp\subset \bF_2 \times \Pp\ .$$
The capped rubber
contributions of $\bA_1 \times \Pp/\bD_\infty$
are proven to satisfy the GW/DT correspondence in 
Lemma 6 of \cite{moop} relying on the results of \cite{mo1,mo2}.
See Section 5 of \cite{mpt} for GW/Pairs correspondence
for the $\bA_1$ capped rubber.

We now analyze the capped localization 
of $\bZ_{\mathsf{P}}\Big(\alpha,\lambda,\mu'\Big)_\beta$
over $0\in \Pp$. 
A term in the capped localization formula is said to be
{\em principal} if not all the capped descendent
vertices which arise are lower than $(\mu,\lambda)$ in  the
partial ordering $\triangleright$.
Our first task now is to
identify  the principal terms. 

First consider the descendent insertions.
The descendents 
$$\tau_{\alpha_1-1}([\star_0])\cdots\tau_{\alpha_{\ell(\alpha)}-1}([\star_0])$$
all lie on $\star_0$.
Hence, the only capped vertex with non-trivial descendents
is $\star_0$. 
The tangent weights at $\star_0$ are 
\begin{equation}\label{l445}
s_1-s_2,\ 2s_2,\ s_3
\end{equation}
where the first two lie along $\bD_0$. For the
capped vertices occuring at $\star_0$, the weights
\eqref{l445} are substituted
$$\bC_{\mathsf{P}}( \tau_{\alpha}([\star_0]) \ |\ \emptyset,\lambda^{(2)}, \lambda^{(3)})
=\bC_{\mathsf{P}}( \tau_{\alpha}(\mathsf{p}) \ |\ \emptyset,\lambda^{(2)}, \lambda^{(3)})
\Big|_{s_1=s_1-s_2, \ s_2=2s_2, \ s_3=s_3}$$
into the standard capped vertex defined in Section \ref{defcap}.
An equivariant vertex with no descendents occurs at
$\bullet_0$. 
By the choice of $\beta$ and ${\mathsf{r}(\lambda,\mu')}$,
no 
vertices can only occur at $\overline{\star}_0$ and $\overline{\bullet}_0$.

Next consider the edge degree $d$ of $C$ over $0\in \Pp$ in
the capped localization formula.
If $d<|\mu|$, then the capped descendent vertex at $\star_0$
is 
lower than $(\mu,\lambda)$
in the partial ordering $\triangleright$. 
We restrict ourselves to the principal terms where $d=|\mu|$.

Since all of $|\mu|\cdot [C]$ occurs over $0\in \Pp$, the
rubber over $\infty\in\Pp$ is all 1-leg.
The relative conditions are determined by 
$\lambda$ with weights $[\star_\infty]$ and
$\mu'$ with weights $[\bullet_\infty]$.
%$[\bullet_\infty]$, the principal terms all have: 
%\begin{enumerate}
%\item[(i)] a
%capped 2-leg descendent vertex  at $\star_0$ with
%an outgoing partition of size $|\mu|$ along $C$ and 
%the outgoing partition $\lambda$ along $P$, 
%\item[(ii)] a
%capped 2-leg vertex with no descendents at $\bullet_0$ with outgoing
%partitions of size $|\mu|$ along $C$ and 
%the outgoing partition
% and $\mu'$ along $P$.
%\end{enumerate}
In the principal terms of the
 capped localization of
 \eqref{ttx}, precisely the following set of
capped 2-leg descendent vertices occur at $\star_0$:
\begin{equation}\label{bxs}
\Big\{ \ \bC_{\mathsf{P}}(\tau_{\alpha}([\star_0])\ |\ 
\emptyset, \widehat{\mu},\lambda)\ \Big| \ 
|\widehat{\mu}|=|\mu| \ \Big\}.
\end{equation}
The principal terms arise as displayed in Figure \ref{A1cap}.
In addition to the vertex
$\bC(\alpha|{\lambda},\widehat{\mu},\emptyset)$ at $\star_0$,
there is a capped edge with partitions
$$|\widehat{\mu}|=|\widehat{\mu}'|$$
along the curve $C$ over $0\in \Pp$.
Finally, there is a
capped 2-leg vertex with no descendents at $\bullet_0$ with outgoing
partitions $\widehat{\mu}'$ and $\mu'.$

\vspace{10pt}
\psset{unit=0.3 cm}
\begin{figure}[!htbp]
  \centering
   \begin{pspicture}(0,0)(19,20)%\showgrid
   \psline(4,4)(16,4)
   \psline(4,16)(16,16)
   \psline(0,0)(4,4)(4,16)(0,20)
   \pszigzag[coilarm=0.1,coilwidth=0.5,linearc=0.1](12,20)(16,16)
   \pszigzag[coilarm=0.1,coilwidth=0.5,linearc=0.1](16,4)(16,16)
   \pszigzag[coilarm=0.1,coilwidth=0.5,linearc=0.1](12,0)(16,4)
 %  \psline[doubleline=true](10,5)(10,4)(9,3)
 %  \psline[doubleline=true](10,15)(10,16)(9,17)
   \psline[doubleline=true](3,7)(4,8)(5,8)
   \psline[doubleline=true](3,13)(4,12)(5,12)
%   \rput[tr](9.5,15){$\widehat{\lambda}$}
   \rput[l](2.3,16){$\star_0$}
   \rput[l](2.3,4){$\bullet_0$}
   \rput[l](17,18){$\lambda$}
   \rput[l](16.2,16){$\star_\infty$}
   \rput[l](16.2,4){$\bullet_\infty$}
   \rput[l](17,2){$\mu'$}
%   \rput[br](9.5,5){$\mu$}
   \rput[c](6,8){$\widehat{\mu}'$}
   \rput[c](6,12){$\widehat{\mu}$}
   \end{pspicture}
 \caption{Principal terms}
  \label{A1cap}
\end{figure}
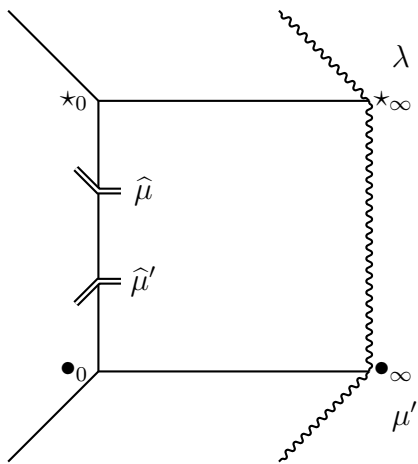

The system of equations \eqref{ttx} as the partition $\mu'$ varies has
 unknowns \eqref{bxs} parameterized
by partitions of $|\mu|$. However, the number of equations
is infinite. The induction step is established if
the set of equations as $\mu'$ varies subject to the condition
$$|\mu'|-\ell(\mu')>|\alpha|+\ell(\alpha)$$
has maximal 
rank
(equal to the number 
of partitions of size $|\mu|$) with respect to the unknowns \eqref{bxs}.

\subsection{Maximal rank} \label{maxer}
The capped edge matrix along $C$ has maximal rank
\cite{moop}. The main difficulty is to prove 
the matrix of capped 2-leg vertices
$$\bC_{\mathsf{P}}( 1\ |\emptyset, \widehat{\mu}',\mu')$$
has maximal rank when $\widehat{\mu}'$ varies
among partitions of size $|\mu|$ and  $\mu'$ varies among
the infinite set of partitions satisfying
$$|\mu'|-\ell(\mu')>|\alpha|+\ell(\alpha)\ .$$

\begin{Proposition} 
\label{icecream}
For any postive integers $d$ and $N$, the matrix
$$\bC_{\mathsf{P}}( 1\ |\emptyset, \delta,\nu)$$
determined as $\delta$ varies among partition of $d$ and
$\nu$ varies among partitions 
satisfying
$$|\nu|-\ell(\nu)\geq N-1 $$
is of maximal rank.
\end{Proposition}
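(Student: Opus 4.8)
The plan is to read the statement as a claim of full row rank: there are only finitely many rows (the $p(d)$ partitions $\delta$ of $d$) but infinitely many columns, so ``maximal rank'' means the rows are linearly independent, i.e. $\sum_{\delta} c_\delta\, \bC_{\mathsf{P}}(1\,|\,\emptyset,\delta,\nu)=0$ for all admissible $\nu$ forces every $c_\delta=0$. Equivalently, among the infinitely many columns I must exhibit $p(d)$ of them, all with $|\nu|-\ell(\nu)\ge N-1$, whose $p(d)\times p(d)$ submatrix is invertible. The natural route, paralleling the proof of Lemmas~\ref{lsp}--\ref{lgw} in Section~\ref{lll}, is a leading-order triangularity argument with respect to the partial orderings $\unrhd$, $\succcurlyeq$, and $\sim$ already introduced there. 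The conceptual point to keep in mind throughout is that the corner vertex genuinely couples the two legs across different sizes: a small $\delta$ (with $|\delta|=d$ fixed) must be detected by a large probe $\nu$, so the size-changing, off-diagonal nature of the vertex is exactly what makes the statement both true and nontrivial.

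First I would isolate the leading term in $q$ of $\bC_{\mathsf{P}}(1\,|\,\emptyset,\delta,\nu)$. In the capped-localization formalism of Section~\ref{cl}, the capped $2$-leg vertex differs from the bare corner vertex only by applying one size-preserving capping operator along each leg; these capping operators are invertible (this is the sense in which capped objects are ``equivalent building blocks'' in \cite{moop}), so the rank is unchanged by passing to the bare vertex, whose $q$-expansion is governed by the explicit equivariant box-counting at the corner. For each $\delta\vdash d$ I would then select a distinguished column $\nu_\delta$ of large excess, obtained from $\delta$ by elongating its parts so that $|\nu_\delta|-\ell(\nu_\delta)\ge N-1$ while $\nu_\delta$ still records the shape of $\delta$. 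The aim is to show the resulting square matrix $\big(\bC_{\mathsf{P}}(1\,|\,\emptyset,\delta,\nu_{\delta'})\big)_{\delta,\delta'}$ is block-triangular for the orderings above, with invertible diagonal blocks.

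For the diagonal blocks I would proceed exactly as in Section~\ref{lll}: after removing the harmless torus-weight prefactors, the dependence of the leading coefficient on the amount of elongation is polynomial, so the block reduces to a Vandermonde/moment matrix whose nonvanishing follows from explicit cap evaluations in the spirit of \eqref{ffgg} and Lemma~\ref{match1}, together with the linear independence of the relevant principal specializations. The vanishing needed off the diagonal is a dimension count of the same flavor as Lemma~\ref{efef}: when $\delta'$ is strictly larger than $\delta$ in the ordering, the integrand dimension forces the corresponding entry of the leading block to vanish. Combining the three orderings as in the final paragraph of the proof of Lemmas~\ref{lsp}--\ref{lgw} collapses the analysis to the square $\sim$-blocks, where the Vandermonde structure gives invertibility.

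The main obstacle is precisely the constraint $|\nu|-\ell(\nu)\ge N-1$: one is forbidden from probing $\delta\vdash d$ with columns $\nu\vdash d$ of the same small size, so all admissible $\nu$ have arbitrarily large parts, and the argument must show that this detection of a low-complexity $\delta$ through a high-complexity $\nu$ survives, with no cancellation collapsing the leading Vandermonde structure as the elongation parameter grows. Verifying the non-vanishing of the diagonal evaluations in the fully equivariant setting, and confirming the strict off-diagonal vanishing, is the crux; conceptually this persistence is consistent with the $\bA_1$ capped-rubber GW/Pairs correspondence of \cite{mo1,mo2,mpt}, but the cleanest route is the explicit leading-order computation just described. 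Finally, since every formal ingredient used above holds verbatim for Gromov--Witten theory, the identical argument yields the $\bC_{\mathsf{GW}}$ analogue required in parallel for the $2$-leg induction.
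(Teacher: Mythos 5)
There is a genuine gap. You correctly read the statement as full row rank, correctly observe that the (size-preserving, invertible) capping operators let you pass to the bare corner vertex, and correctly identify the crux — but the two mechanisms you propose for the crux do not apply here, and the actual argument requires two ideas you do not have. First, the paper works after the \emph{topological vertex} specialization $s_1+s_2+s_3=0$ (legitimate, since maximal rank at a specialization implies maximal rank), under which the bare $2$-leg vertex has the closed form
$$\sum_{\eta} s_{\delta^{t}/\eta}(q^\rho)\, s_{\nu/\eta}(q^{\rho}),\qquad q^\rho=(q^{-1/2},q^{-3/2},\dots)\,;$$
since $\big[s_{\delta^t/\eta}\big]$ has maximal rank as $\delta$ runs over partitions of $d$ and $\eta$ over partitions of size at most $d$, everything reduces to showing $\big[s_{\nu/\eta}(q^\rho)\big]$, with $|\nu|-\ell(\nu)\geq N-1$ and $|\eta|\leq d$, has maximal rank. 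Without this specialization your ``explicit equivariant box-counting at the corner'' is not tractable in the form you need. Second, the column selection is not an ``elongation of $\delta$'': one sets $\nu=\eta^+$, where $\eta^+$ adds $N$ to the largest part of $\eta$, and proves $\big[s_{\nu^+/\eta}(q^\rho)\big]$ invertible via the containment vanishing $s_{\nu^+/\eta}=0$ unless $\eta\subset\nu^+$ (a purely combinatorial triangularity with respect to the ordering $\eta_-\supset\widetilde{\eta}_-$ on the parts below the largest), with diagonal blocks that are Toeplitz matrices $\big(h_{N+i-j}(q^\rho)\big)$ whose determinant is, by Jacobi--Trudi, the rectangular Schur function $s_{(N,\ldots,N)}(q^\rho)\neq 0$.

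Your proposed substitutes fail for concrete reasons. The off-diagonal vanishing ``of the same flavor as Lemma~\ref{efef}'' has no analogue here: that lemma is a dimension count balancing the descendent codimension $|\alpha|-\ell(\alpha)$ against the virtual dimension $|\lambda|-\ell(\lambda)$ of the relative cap, but the vertex $\bC_{\mathsf{P}}(1\,|\,\emptyset,\delta,\nu)$ carries \emph{no} descendent insertions, the two legs have different sizes, and the geometry is non-compact and purely equivariant, so there is no dimension constraint forcing entries to vanish; the vanishing that actually drives the triangularity is the support condition of skew Schur functions. Likewise the diagonal blocks are not Vandermonde/moment matrices and are not controlled by the cap evaluations \eqref{ffgg} or Lemma~\ref{match1} (those are $1$-leg descendent computations); they are determinants of complete homogeneous symmetric functions in $q^\rho$. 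Since you flag exactly these points as ``the crux'' and leave them unverified, the proposal does not constitute a proof of Proposition~\ref{icecream}.
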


\begin{proof}
We may prove the maximal rank condition after the 
\emph{topological
vertex} specialization
$$
s_1 + s_2 + s_3 =0\, .
$$
The
capped vertex is related to the standard uncapped
vertex by invertible capped rubbers.
The uncapped vertex
may be evaluated directly, see
\cite{MNOP1,ORV}.
Up to further invertible factors, the matrix to consider
becomes
$$
\sum_{\eta} s_{\delta^{t}/\eta}(q^\rho)\, s_{\nu/\eta}(q^{\rho})
$$
where $s_{\delta^t/\eta}$ and $s_{\nu/\eta}$
 are skew Schur functions evaluated
at
$$
q^\rho = (q^{-1/2}, q^{-3/2}, \dots) \,.
$$

As $\delta$ and $\eta$ vary over all partitions{\footnote{$\eta$
is permitted here to be empty.}} of 
size at $d$ and at most $d$ respectively, the matrix of skew Schur functions
$s_{\delta^t/\eta}$ is of maximal rank.  
We are thus reduced to proving the matrix
\begin{equation}\label{p349}
\Big[s_{\nu/\eta}(q^\rho)\Big]\,,
\quad |\nu|-\ell(\nu)\geq N-1\,,\  |\eta| \leq d\,,
\end{equation}
has maximal rank.

We select a square minor from \eqref{p349} by the following
construction.
For every partition 
$$\eta=(e_1\geq e_2\geq \ldots \geq e_{\ell(\eta)})$$ of size at most $d$,
define
$$\eta^+=(e_1+N \geq e_2\geq \ldots \geq e_{\ell(\eta)})$$
In case $\eta=\emptyset$, then $\eta^+=(N)$ has length 1.
As $\eta$ varies among partitions of size at most $d$,
$\eta^+$ varies among partitions which satisfy
$$|\eta^+|-\ell(\eta^+)\geq N-1$$
with equality achieved for $\eta=\emptyset$.
We will prove the matrix
\begin{equation}\label{pp349}
\Big[s_{\nu^+/\eta}(q^\rho)\Big]\,,
\quad |\nu|,  |\eta| \leq d\,,
\end{equation}
is invertible.

We define a partial ordering $\geq$ on partitions $\eta$ of size at most $d$
by the following rule. Let $\eta_-$ be the partition obtained
by removing the largest part of $\eta$, 
$$\eta_-=(e_2 \geq \ldots \geq e_{\ell(\eta)})\ .$$
In case $\eta=\emptyset$, then $\eta_-=\emptyset$.
Define $\eta\geq \widetilde{\eta}$ if
$$\eta_- \supset \widetilde{\eta}_-,$$
or equivalently,
$$ e_2\geq \widetilde{e}_2, \ e_3\geq \widetilde{e}_3, 
\ e_4\geq \widetilde{e}_4, \ldots\ .$$

Unwinding the definitions, we immediately find the following
property:
$$s_{\nu^+/\eta}(q^\rho)=0, \ \ \ \text{unless}\ \ \nu\geq \eta\ $$
since the skew Schur function vanishes unless
$\eta \subset \nu^+$.
We conclude the matrix \eqref{pp349} is block triangular
with respect to the ordering $\geq$.

For any matrix with a block triangular structure with 
respect to a partial ordering, invertibility is equivalent
to the invertibility of the blocks. 
In the case at hand, a block is specified by a partition
$$\theta = (t_1\geq t_2\geq \cdots \geq t_{\ell(\theta)})$$
satisfying $d-|\theta|\geq t_1$.
The block corresponding to $\theta$ is indexed by
the
partitions
$$P_\theta= \{ \eta\ | \ \eta_- = \theta \ \}\ .$$
The cardinality of $P_\theta$ is $M+1$ where
$$M= d-|\theta|- t_1\ .$$
In fact, the elements of $P_\theta$ are  simply the
partitions
$$( t_1,t_1,t_2,\ldots,t_{\ell(\theta)}),\ 
( t_1+1,t_1,t_2,\ldots,t_{\ell(\theta)}), \ldots,
( t_1+M,t_1,t_2,\ldots,t_{\ell(\theta)})\ .$$
The associated block is 
$$B_\theta = \Big[s_{\nu^+/\eta}(q^\rho)\Big]\,,
\quad \nu,\eta\in P_\theta \ .$$

To proceed further, we recall the definition of 
the skew Schur functions,
\begin{equation}\label{ksks}
s_{\lambda/\mu} = \text{det}(h_{\lambda_i-\mu_j+j-i})\ , 
\end{equation}
where $h_k$ is the complete symmetric function of degree $k$. 
If $\mu=\emptyset$,
$$s_{\lambda/\emptyset} = \text{det}(h_{\lambda_i+j-i})\  $$
is the standard Schur function associated to $\lambda$.
The elements of $P_\theta$ are 
$$\{t_1+i\}\cup \theta$$ for
$0\leq i \leq M$. Expanding definition \eqref{ksks}, we see
$$s_{ \{t_1+i+N\}\cup \theta\  /\ \{t_1+j\}\cup \theta} = h_{N+i-j}$$
Therefore, we can write 
the determinant of the block as 
\begin{equation}\label{lucky9}
\text{det}\ B_\theta =  \text{det}\Big(h_{N+i-j}(q^\rho)\Big)
\end{equation}
where $0\leq i,j \leq M$ on the right.
Fortunately, we recognize
the determinant \eqref{lucky9} as the Schur function
$s_{(N,\ldots,N)}(q^\rho)$
associated to the partition $(N,\ldots, N)$ with
$M+1$ parts equal to $N$.
The evaluation $q^\rho$ does not vanish on the Schur
functions.

\end{proof}

\subsection{Proof of Theorem \ref{ccc} in the 2-leg case}
%The integration relations together with the maximal
%rank results proves Theorem \ref{ccc} in the 2-leg
%case.
%
We have already seen the integration relations
determine $\bC_{\mathsf{P}}(\tau_\alpha(\mathsf{p})\ |\ \emptyset,\mu,\lambda)$
by induction on the complexity of the legs.
We will now study the parallel integration relations in
Gromov-Witten theory. Our goal is to determine 
$
\bC_{\mathsf{GW}}(\widehat{\tau}_\alpha(\mathsf{p})\ |\ \emptyset,\mu,\lambda)
$
by the {\em same} induction and
 compatible with the 
correspondence of Theorem \ref{ccc} for capped 2-leg 
descendent vertices,

We will consider integration relations from Gromov-Witten theory
for
\begin{multline}\label{kqq2}
\bZ_{\mathsf{GW}}\Big(\alpha,\lambda,\mu'\Big)_\beta= \\
\sum_g u^{2g-2}
\sum_{\widehat{\alpha}\in {\mathcal{P}_{|\alpha|}}} \mathsf{K}_{\alpha,\widehat{\alpha}}(s_1-s_2,2s_2,s_3)
\int_{ [\overline{M}'_{g,\ell(\widehat{\alpha})}(\bF_2\times \Pp/\bD_\infty,{\mathsf{r}(\lambda,\mu')})_{\beta}]^{vir}}
\tau_{\widehat{\alpha}}(\mathsf{[\star_0]}).
\end{multline}
The first issue to confront is the broken symmetry in the
definition of $\mathsf{K}$ in Section \ref{conk}.
While $\mathsf{K}$ is symmetric in $s_1$ and $s_2$, the
variable $s_3$ is treated differently. We orient $\mathsf{K}$
in \eqref{kqq2} by setting the $s_3$ direction lie
along $\Pp$ (which, conveniently, is also $s_3$ in the conventions of 
Section \ref{angeo}).

The formal analysis of the Gromov-Witten relations is
identical to the above stable pairs analysis.
For fixed $\alpha$, there are only finitely many $\widehat{\alpha}$
which occur of the right side of 
$$\widehat{\tau}_\alpha= 
\sum_{\widehat{\alpha}\in {\mathcal{P}_{|\alpha|}}}
{\mathsf K}_{\alpha,\widehat{\alpha}}
\, \tau_{\widehat{\alpha}}\ .$$ 
In order to make the integration relations compatible
with Theorem \ref{ccc}, we must consider
the matrix of capped 2-leg vertices
\begin{equation}\label{rtt43}
\bC_{\mathsf{GW}}( 1\ |\emptyset, \widehat{\mu}',\mu')
\end{equation}
when $\widehat{\mu}'$ varies
among partitions of size $|\mu|$ and  $\mu'$ varies among
the infinite set of partitions satisfying
$$|\mu'|-\ell(\mu')>\text{Max} \Big\{ \ |\widehat{\alpha}|
+\ell(\widehat{\alpha})\ \Big| \ \mathsf{K}_{\alpha,\widehat{\alpha}}\neq 0
\ \Big\}\ .$$
As Proposition \ref{icecream} still applies{\footnote{The 
correspondence for capped stable pair and Gromov-Witten vertices
{\em without} descendents is used here.}}, the matrix \eqref{rtt43}
is of maximal rank.

The inductive determination of the 2-leg descendent
vertex via 
the integration relations therefore respects
the correspondence of Theorem \ref{ccc}. \qed

\subsection{Proof  of  Theorem \ref{ll33}} \label{4444}
Since the homogeneity assertion
was established in Section \ref{conk},
the inclusion \eqref{cl223} is the only part of Theorem \ref{ll33}
which remains to be proven .

The base of the induction in the proof of Theorem \ref{ccc}
 is the $\mu=\emptyset$ case of 1-leg established in Section \ref{ooolll}.
The orientation of $\mathsf{K}$ in \eqref{kqq2} is
perfect for the base case (as the 1-leg direction is then
along $P$).
Applying the induction, we prove Theorem 
\ref{ccc} for the case where $\mu$ is arbitrary and $\lambda=\emptyset$.
In other words, the 1-leg correspondence holds for the
{\em same} matrix 
$\mathsf{K}$ when $s_2$ and $s_3$ are interchanged!
By the uniqueness statement for the 1-leg descendent 
correspondence in Section \ref{conk},
we conclude
$\mathsf{K}$ is symmetric.

The $u^k$ coefficients
of $\mathsf{K}$ are polynomials in $s_3$ (having only poles
at along $s_1$ and $s_2$). 
The proof 
is obtained from two obsevations. First, we have
basic $\mathbf{T}$-equivariant
proper maps from the stable pairs and stable maps spaces
to the symmetric product of $\com^2$,
$$
 P_n(\mathsf{Cap}|\, \lambda)_d \rightarrow \text{Sym}^d(\com^2), $$
$$
 \overline{M}_{g,\ell}(\mathsf{Cap}|\, \lambda)_d \rightarrow 
\text{Sym}^d(\com^2).$$
see Lemma 1 of \cite{part1}.
The $\mathbf{T}$-action of the third torus factor
corresponding to $s_3$ on $\text{Sym}^d(\com^2)$ is trivial.
Pushing-forward the integrals in both cases shows the
$u^k$ coefficients of the 
capped descendent invariants are polynomial in $s_3$.
The matrix $\mathsf{K}$ is obtained in Section \ref{conk}
from the matrices of 1-leg capped descendents after inversion and
product. The second observation is that the determinants of the 1-leg capped 
descendent matrices (see the proofs of
 Lemmas \ref{lsp} and \ref{lgw}) have no
$s_3$ dependence. Hence, the $u^k$ coefficients of $\mathsf{K}$ 
are polynomials in $s_3$.

By the symmetry, the $u^k$ coefficients
of $\mathsf{K}$ are polynomial in all the variables
$s_1$, $s_2$ and $s_3$. 
\qed

\section{Descendent correspondence: 3-leg} \label{fulltt}

\subsection{Overview}
We now prove Theorem \ref{ccc} in the 3-leg case.
Consider the capped 3-leg descendent vertices
$$\bC_{\mathsf{P}}(\tau_\alpha(\mathsf{p})\ |\ \nu,\mu,\lambda), \ \ \
\bC_{\mathsf{GW}}(\tau_\alpha(\mathsf{p})\ |\ \nu,\mu,\lambda)\ .$$
Our proof of Theorem \ref{ccc}
 will again be by induction on the complexity of the legs.
The descendent insertion $\tau_\alpha(\mathsf{p})$ will be
fixed for the argument.
If $\nu=\emptyset$ or $\mu=\emptyset$, we are in the 2-leg case{\footnote{Since
the symmetry of $\mathsf{K}$ has been
proven, all the 2-leg cases are equivalent.}} where  
Theorem \ref{ccc} has been established in Section \ref{fullt}. 
The 2-leg case will be 
the base of the induction.

Define a partial ordering on pairs of partitions $(\nu,\mu,\lambda)$
satisfying the condition 
$(\nu,\mu,\lambda)\neq (\emptyset,\emptyset)$
 by the following rules. We say
\begin{equation*}
(\nu,\mu,\lambda) \ \triangleright \ (\nu',\mu',\lambda')
\end{equation*}
if we have $|\nu|+|\mu| > |\nu'|+|\mu'|$.
The proof of 
Theorem \ref{ccc} in the 3-leg case 
is by induction with respect to the partial ordering $\triangleright$.

The argument for descendent correspondence for the capped
3-leg vertex closely follows the 2-leg case. The main
difference is to replace $\bA_1$
geometry by $\bA_2$ geometry. 

\subsection{$\bA_2$ geometry} \label{angeo2}
Let $\bA_2\subset \bF$ 
be any nonsingular projective 
toric compactification.
We will only be interested in the 
two $(-2)$-curves of $\bA_2$,
$$C,\CC \subset \bA_2\ .$$
No other curves of $\bF$ will play a role in the
construction.

Let $\bbullet,\star,\bullet\in \bA_2$ be the $(\com^*)^2$-fixed points.
The curve $\CC$ connects $\bbullet$ to $\star$ and $C$ connects
$\star$ to $\bullet$.
The other $(\com^*)^2$-fixed points in $\bF\setminus \bA_2$
will not play an important role.

Consider the nonsingular projective toric variety  $\bF \times \Pp$.
The 3-torus 
$$\T=(\com^*)^3$$
 acts on $\bF$ via the first two factors and
acts on $\Pp$ via the third factor with tangent weights $s_3$ and $-s_3$
at the points $0,\infty\in \Pp$ respectively.
Let
$$\bD_0 = \bF \times \{0\}, \ \ \bD_\infty = \bF \times \{\infty\}$$
be $\T$-invariant divisors of $\bF_2 \times \Pp$.
The 3-fold $\bF\times \Pp$ has six important
 $\T$-fixed points which we denote by
$$\bbullet_0,\star_0,\bullet_0,
\bbullet_\infty,\star_\infty,\bullet_\infty
\in
\bF\times\Pp $$
where the subscript indicates the coordinate in $\Pp$.

Let $L_\infty \subset \bF\times \Pp$ be the $\T$-invariant line
connecting $\star_\infty$ to $(\bF\setminus \bA_2)_\infty$. 
We have
$$H_2(\bF\times \Pp,\mathbb{Z}) \supset \mathbb{Z}[C] \oplus
\mathbb{Z} [\CC] \oplus
\mathbb{Z}  [P]$$
where $P$ is the fiber of the projection to $\bF$.

\subsection{Integration}
We will find relations which express
$\bC(\tau_{\alpha}(\mathsf{p})\ |\ \nu, \mu,\lambda)$ in terms
of inductively treated vertices for stable pairs and Gromov-Witten
theory. The inductive equations will respect the
correspondence claimed in Theorem \ref{ccc}.

Let $\nu'$ and $\mu'$ be partitions. The relations will
be obtained from
vanishing  invariants of the relative
geometry $\bF \times \Pp/\bD_\infty$
in curve class
$$\beta = |\nu|\cdot [C] + |\mu|\cdot \CC + (|\lambda|+|\mu'|+|\nu'|)\cdot [P] \ . $$
The virtual dimensions of the associated moduli spaces are
\begin{eqnarray*} 
\text{dim}^{vir}\ P_n(\bF\times \Pp,\beta) & = &
2|\lambda|+2|\mu'|+2|\nu'| \ ,\\
\text{dim}^{vir}\ \overline{M}_g'(\bF\times \Pp,\beta) & = &
2|\lambda|+2|\mu'|+2|\nu'| \ .
\end{eqnarray*}

Relative conditions in $\text{Hilb}(\bD_\infty, |\lambda|+|\mu'|+|\nu'|)$
in the Nakajima basis are given by a
$\T$-equivariant cohomology weighted partition of $|\lambda|+|\mu'|+|\nu'|$.
We impose
the relative condition determined by the partition
$$\lambda \cup \mu'\cup \nu'=\lambda_1+\ldots+\lambda_{\ell(\lambda)}
+\mu_1'+\ldots+\mu'_{\ell(\mu')}
+\nu_1'+\ldots+\nu'_{\ell(\nu')}
$$ 
weighted by
$[\star_\infty]\in H^*_\T(\bD_\infty,\mathbb{Q})$ for the parts of $\lambda$, 
$[\bbullet_\infty]\in H^*_\T(\bD_\infty,\mathbb{Q})$ for the parts
of $\mu'$, and
$[\bullet_\infty]\in H^*_\T(\bD_\infty,\mathbb{Q})$ for the parts
of $\nu'$.
We denote the relative condition by $\mathsf{r}(\lambda,\mu',\nu')$.
After imposing $\mathsf{r}(\lambda,\mu',\nu')$,
the virtual dimension drops to
\begin{eqnarray*}
\text{dim}^{vir}\ P_n(\bF_2\times \Pp/\bD_\infty,{\mathsf{r}}  )_{\beta}
& = & |\lambda|-\ell(\lambda)+ |\mu'|-\ell(\mu')+|\nu'|-\ell(\nu')\ , \\
\text{dim}^{vir}\ \overline{M}_g'(\bF_2\times \Pp/\bD_\infty,
{\mathsf{r}})_\beta
& =& |\lambda|-\ell(\lambda)+ |\mu'|-\ell(\mu')+|\nu'|-\ell(\nu')\ .
\end{eqnarray*}

To define an equivariant integral, we specify the descendent
insertion by 
$$\tau_{\alpha}(\mathsf{[\star_0]}) =
\tau_{\alpha_1-1}(\mathsf{[\star_0]})\cdots
\tau_{\alpha_{\ell(\alpha)}-1}(\mathsf{[\star_0]})\ .$$
The descendent insertion imposes $|\alpha|+\ell(\alpha)$ conditions.
Therefore, the integrals
\begin{equation}\label{hhhooo5}
\int_{ [P_n(\bF\times \Pp/\bD_\infty,{\mathsf{r}})_{\beta}]^{vir}}
\tau_{\alpha}(\mathsf{[\star_0]}), \ \ \
\int_{ [\overline{M}'_{g,\ell(\alpha)}(\bF\times \Pp/\bD_\infty,
{\mathsf{r}})_{\beta}]^{vir}}
\tau_{\alpha}(\mathsf{[\star_0]})
\end{equation}
viewed as $\T$-equivariant push-forwards to a point,
both have dimension 
$$|\lambda|-\ell(\lambda)+ |\mu'|-\ell(\mu')+|\nu'|-\ell(\nu')-|\alpha|-\ell(\alpha)\ .$$
We conclude the following result.

\begin{Proposition} 
If either $|\mu'|-\ell(\mu')$ or $|\nu'|-\ell(\nu')$
exceeds $|\alpha|+\ell(\alpha)$, then
the $\T$-equivariant integrals 
\eqref{hhhooo5} vanish for all Euler characteristics $n$ and genera $g$. \label{ggh33}
\end{Proposition}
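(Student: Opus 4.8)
The plan is to establish the vanishing by a straightforward $\T$-equivariant dimension count, in exact parallel with the 2-leg Proposition~\ref{ggh2}. The one structural input that makes the count work is that $\bF$ is a \emph{projective} toric compactification of $\bA_2$ (Section~\ref{angeo2}), so that $\bF\times\Pp$ is projective and the relative divisor $\bD_\infty$ is smooth. Consequently the relative moduli spaces $P_n(\bF\times\Pp/\bD_\infty,\beta)_{\mathsf{r}(\lambda,\mu',\nu')}$ and $\overline{M}'_{g,\ell(\alpha)}(\bF\times\Pp/\bD_\infty,\beta)_{\mathsf{r}(\lambda,\mu',\nu')}$ are proper, and the $\T$-equivariant push-forward of a cohomology class to a point lands in the polynomial ring $H^*_\T(\bullet)=\mathbb{Q}[s_1,s_2,s_3]$, which is concentrated in non-negative degrees. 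From the virtual dimension computation preceding the statement, after imposing $\mathsf{r}(\lambda,\mu',\nu')$ each moduli space has virtual dimension
\[
v \;=\; |\lambda|-\ell(\lambda)+|\mu'|-\ell(\mu')+|\nu'|-\ell(\nu')\,,
\]
while the descendent insertion $\tau_{\alpha}([\star_0])$ imposes $|\alpha|+\ell(\alpha)$ conditions. Hence each integral in \eqref{hhhooo5} is the push-forward to a point, over a virtual class of dimension $v$, of a class of complex degree $|\alpha|+\ell(\alpha)$, and therefore lies in $H^{2(|\alpha|+\ell(\alpha)-v)}_\T(\bullet)$.

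The second step is the comparison of these degrees. Every partition $\rho$ satisfies $|\rho|-\ell(\rho)\ge 0$, so in particular $|\lambda|-\ell(\lambda)\ge 0$ and $|\nu'|-\ell(\nu')\ge 0$. Thus the hypothesis $|\mu'|-\ell(\mu')>|\alpha|+\ell(\alpha)$ gives
\[
v \;\ge\; |\mu'|-\ell(\mu') \;>\; |\alpha|+\ell(\alpha)\,,
\]
and the case $|\nu'|-\ell(\nu')>|\alpha|+\ell(\alpha)$ is identical after exchanging the roles of $\mu'$ and $\nu'$. In either case $|\alpha|+\ell(\alpha)-v<0$, so each integral lives in a strictly negative degree of $H^*_\T(\bullet)$ and hence vanishes, for every Euler characteristic $n$ and every genus $g$.

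I do not expect any genuine obstacle here: the content is purely the dimension count, and the only point deserving care is properness of the relative moduli spaces, since it is properness that forces the equivariant push-forward to be polynomial in the $s_i$ — without it, a negative-degree contribution could be masked by a pole. This is precisely why the construction in Section~\ref{angeo2} uses a projective compactification $\bF$ rather than $\bA_2$ itself. The argument is word-for-word the same in stable pairs theory and in Gromov-Witten theory, as it invokes only the virtual dimension, the degree of $\tau_\alpha([\star_0])$, and properness of the moduli spaces.
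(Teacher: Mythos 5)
Your argument is correct and is essentially the paper's own: the statement is deduced immediately from the virtual dimension count preceding it, with the vanishing forced by properness of the relative moduli spaces (so the equivariant push-forward lies in $\mathbb{Q}[s_1,s_2,s_3]$ and a strictly negative degree is impossible), using $|\lambda|-\ell(\lambda)\ge 0$ and $|\nu'|-\ell(\nu')\ge 0$ to isolate the hypothesized excess. Nothing is missing.
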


\subsection{Proof of Theorem \ref{ccc}}
Define the $\T$-equivariant series 
\begin{equation*}
\bZ_{\mathsf{P}}\Big(\alpha,\lambda,\mu',\nu'\Big)_\beta
=
\sum_{n}  q^n
\int_{ [P_n(\bF\times \Pp/\bD_\infty,{\mathsf{r}(\lambda,\mu',\nu')})_{\beta}]^{vir}}
\tau_{\alpha}([\star_0]) 
\end{equation*}
obtained from the stable pairs integrals \eqref{hhhooo5}.
On the Gromov-Witten side, we consider the integrals
\begin{multline*}
\bZ_{\mathsf{GW}}\Big(\alpha,\lambda,\mu',\nu'\Big)_\beta= \\
\sum_g u^{2g-2}
\sum_{\widehat{\alpha}\in {\mathcal{P}_{|\alpha|}}} \mathsf{K}_{\alpha,\widehat{\alpha}}
\int_{ [\overline{M}'_{g,\ell(\widehat{\alpha})}
(\bF\times \Pp/\bD_\infty,{\mathsf{r}(\lambda,\mu',\nu')})_{\beta}]^{vir}}
\tau_{\widehat{\alpha}}(\mathsf{[\star_0]}).
\end{multline*}
Since we have already established the symmetry of
$\mathsf{K}$ in the variable $s_i$,
we no longer need to worry about the orientation of $\mathsf{K}$.
When both 
$|\mu'|-\ell(\mu')$ and $|\nu'|-\ell(\nu)$
exceed
$$\text{Max} \Big\{ \ |\widehat{\alpha}|
+\ell(\widehat{\alpha})\ \Big| \ \mathsf{K}_{\alpha,\widehat{\alpha}}\neq 0
\ \Big\}\ ,$$
Proposition \ref{ggh33} implies 
\begin{equation}\label{ttxxxx}
\bZ_{\mathsf{P}}\Big(\alpha,\lambda,\mu',\nu'\Big)_\beta=0, \ \ \ 
\bZ_{\mathsf{GW}}\Big(\alpha,\lambda,\mu',\nu'\Big)_\beta=0\ .
\end{equation}

The inductive analysis of the capped localization of stable pairs
and Gromov-Witten relations \eqref{ttxxxx} exactly follows the
treatment given in Section \ref{2legrel} for the 2-leg case.
The outcome is an inductive determination of the 
capped descendent 3-leg vertices in terms of the capped
descendent 2-leg vertices which respects the correspondence
of Theorem \ref{ccc}.
The maximal rank result of Proposition \ref{icecream} is used twice.

\section{First consequences}\label{ffirr}

\subsection{Descendents over $0\in \Pp$}
Since Theorems \ref{ll33} and \ref{ccc} 
 together imply Theorem \ref{aaa}, we have proven 
the matrix $\mathsf{K}$ determines 
a Gromov-Witten/Pairs
descendent correspondence for all nonsingular quasi-projective 
toric 3-folds $X$.

Recall the surfaces $\bA_n$ defined in Section \ref{angeo}.
As before, let $$\bD_\infty \subset \bA_n \times \Pp$$
be the fiber over $\infty \in \Pp$.
The relative geometry
$\bA_n\times \Pp/ \bD_\infty$ is equivariant with respect
to the action of 3-dimensional torus 
$$\T= T \times \com^*$$
where the 2-dimensional torus $T$ acts on $\bA_n$ and
$\com^*$ acts on $\Pp$ with fixed points $0,\infty \in \Pp$.
The weights associated to $T$ are $s_1,s_2$, and the
weight associated to $\com^*$ is $s_3$.
Let 
$${p}_0, \ldots, {p}_{n} \in \bD_0$$
be the $\T$-fixed points lying over $0\in \Pp$.

Using the correspondence of Theorem \ref{ccc} for
capped descendent vertices over $0\in \Pp$ and the
correspondence for capped $\bA_n$-rubber (Lemma 6 of \cite{moop}
together with Section 5 of \cite{mpt}), we 
immediately conclude the following result.

\begin{Proposition} \label{hvv2}
For the $\T$-equivariant relative geometry $\bA_n\times\Pp/\bD_\infty$, 
after the variable change $-q=e^{iu}$, 
we have
\begin{multline*}
(-q)^{-d_\beta/2}\,
\bZ_{\mathsf{P}}
\Big( \bA_n\times\Pp/\bD_\infty   ;q\ \Big|\
\prod_{j=0}^n \tau_{\alpha^{(j)}}(\mathsf{p}_j)\, \Big| \mu \Big)^{\T}_\beta
=\\
(-iu)^{d_\beta+\ell(\mu)-|\mu|}\,  \bZ'_{\mathsf{GW}}
\Big( \bA_n\times\Pp/\bD_\infty;u\ \Big|\
\prod_{j=0}^n \widehat{\tau}_{\alpha^{(j)}}(\mathsf{p}_j) \, \Big| \mu  \Big )^\T_\beta
\,. 
\end{multline*}
where $\alpha^{(0)}, \ldots, \alpha^{(n)}$
are  partitions, $\mu$ is a $\T$-equivariant
relative condition along $\bD_\infty$, and $\beta \in H_2(\bA_n \times \Pp,
\mathbb{Z})$ is any curve class.
\end{Proposition}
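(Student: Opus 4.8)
The plan is to deduce the Proposition from Theorem~\ref{ccc} by the capped localization procedure of Section~\ref{cl}, in exactly the way that Theorem~\ref{aaa} is derived from Theorem~\ref{ccc}, with the single additional ingredient of a rubber contribution over $\infty\in\Pp$ coming from the relative divisor $\bD_\infty$. First I would apply the capped localization formula to both the stable pairs and the Gromov-Witten partition functions of $\bA_n\times\Pp/\bD_\infty$. This expresses each side as a sum over capped markings $\Gamma$ of a product of four kinds of building blocks: capped descendent vertices, capped vertices carrying no descendent, capped edges, and, because of the relative condition imposed along $\bD_\infty$, capped $\bA_n$-rubber over $\infty\in\Pp$, all assembled with the gluing factors $\mathsf{G}_{\mathsf{P}}$ and $\mathsf{G}_{\mathsf{GW}}$ of Section~\ref{exxx}. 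Since every descendent insertion $\tau_{\alpha^{(j)}}(\mathsf{p}_j)$ sits at a $\T$-fixed point $p_j$ lying over $0\in\Pp$, the descendents enter only through the capped descendent vertices at the points $(p_j,0)$; all remaining vertices carry no descendent.

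Next I would match the building blocks factor by factor. The capped descendent vertices over $0$ are handled by Theorem~\ref{ccc}: applied with the three tangent weights of $\bA_n\times\Pp$ at $(p_j,0)$, it replaces each stable pairs vertex carrying $\tau_{\alpha^{(j)}}(\mathsf{p}_j)$ by the Gromov-Witten vertex carrying $\widehat{\tau}_{\alpha^{(j)}}(\mathsf{p}_j)$, which is precisely how the $\widehat{\tau}$ appear on the right of the Proposition. The symmetry of $\mathsf{K}$ in the variables $s_i$ (Theorem~\ref{ll33}) guarantees that this substitution is well defined independently of the ordering of the tangent weights at each fixed point. The capped vertices without descendents and the capped edges obey the bare correspondence of \cite{moop} (with the stable pairs case in Section~5 of \cite{mpt}). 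The genuinely new feature is the rubber over $\infty$: by the choice of $\beta$ and the relative weighting, the only capped rubber that occurs is $\bA_n$-rubber, whose GW/Pairs correspondence, carrying the relative condition $\mu$, is exactly Lemma~6 of \cite{moop} together with Section~5 of \cite{mpt}.

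With every factor matched, I would assemble the matched blocks term by term: each capped marking $\Gamma$ contributes the identical combinatorial configuration on the two sides, so the two capped localization sums agree up to an overall monomial in $q$ and $u$. The remaining step is to trace these prefactors. The exponent $d_\beta=\int_\beta c_1(\bA_n\times\Pp)$ is distributed over the edges of the marking and produces the $(-q)^{-d_\beta/2}$ versus $(-iu)^{d_\beta}$ discrepancy already present in Theorem~\ref{aaa}, while the relative condition $\mu$ along $\bD_\infty$ contributes the additional shift $(-iu)^{\ell(\mu)-|\mu|}$ through the rubber normalization. Under $-q=e^{iu}$ this is the same accounting of powers of $q$ and $u$ that yields Theorem~\ref{aaa} from Theorem~\ref{ccc}, and it is routine once the explicit gluing formulas of Section~\ref{exxx} are in hand.

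I do not expect a serious obstacle here, since every nontrivial correspondence is already established; the step requiring the most care is the identification of the rubber arising over $\infty$ as $\bA_n$-rubber covered by the cited results, together with the consistent tracking of the $q$ and $u$ powers in the relative normalization. The former is immediate from the product structure of $\bA_n\times\Pp$ and the placement of the relative divisor, and the latter follows the pattern already used for Theorem~\ref{aaa}.
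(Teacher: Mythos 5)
Your proposal is correct and follows exactly the paper's argument: the paper deduces the Proposition in one step from capped localization, applying Theorem~\ref{ccc} to the capped descendent vertices over $0\in\Pp$ and the capped $\bA_n$-rubber correspondence of Lemma~6 of \cite{moop} together with Section~5 of \cite{mpt} over $\infty$, with the same bookkeeping of $q$ and $u$ prefactors. The extra detail you supply about matching bare vertices, edges, and gluing factors is exactly what the paper's ``immediately conclude'' is implicitly invoking.
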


Since the coefficients of the 
matrix $\mathsf{K}$ have no poles in the $s_i$ by Theorem \ref{ll33}, we
can restrict the correspondence of Proposition \ref{hvv2} to $s_3=0$
(so long as the relative conditions $\mu$ have no denominators in $s_3$). 
We will typically take $\mu$ to have no $s_3$ dependence at
all.
As an application of Proposition \ref{hvv2},
we will prove Theorem \ref{yaya38}.

\subsection{Proof of Theorem  \ref{yaya38}}\label{yayaya}
Let $\mathsf{L}\in H_2(\bA_n \times \Pp, \mathbb{Z})$ be the
class of the factor $\Pp$.
For the 3-fold $\bA_n \times \Pp$, 
we can uniquely write a
curve class $\beta$ as
$$\beta = d \mathsf{L} + \mathsf{F},$$
where $\mathsf{F}\in H_2(\bA_n,\mathbb{Z})$ is a fiber class and
$$ d_\beta= \int_\beta c_1(\bA_n\times \Pp) = 2d\ .$$
For fixed $d$, we define a matrix square matrix 
$$M_{\mathsf{P},d}(  {\ \alpha^{(0)}, \ldots, \alpha^{(n)} \ | \ \mu} )$$
with columns indexed by $(n+1)$-tuples of
partitions
$$\alpha^{(0)}, \ldots, \alpha^{(n)}, \ \ \ \sum_{j=0}^n |\alpha|^{(j)} =d,$$
rows indexed by 
relative conditions $\mu$ in the Nakajima basis{\footnote{The
Nakajima basis here is given by assigning
a partition to each $\mathsf{p_i}$.}} of the 
$T$-equivariant cohomology  
of $\text{Hilb}(\bA_n,d)$
with respect to the classes $\mathsf{p_0}, \ldots, \mathsf{p_n}$,
and 
with matrix coefficients
\begin{equation*}
 \sum_{F\in H_2(\bA_n,\mathbb{Z})} Q^{F}
(-q)^{-d}\,
\bZ_{\mathsf{P}}
\Big( \bA_n\times\Pp/\bD_\infty   ;q\ \Big|\
\prod_{j=0}^n \tau_{\alpha^{(j)}}(\mathsf{p}_j)\, \Big| \mu \Big)^{\T}
_{d\mathsf{L}+\mathsf{F}}\ .
\end{equation*}

\begin{Lemma} For all $d>0$, the matrix $M_{\mathsf{P},d}$ is invertible and
remains invertible after restriction $M_{\mathsf{P},d}|_{s_3=0}$.
\end{Lemma}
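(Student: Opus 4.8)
The plan is to prove invertibility by isolating the leading term in the Novikov variables and then analyzing it by the coordinatewise version of the filtration used for Lemma \ref{lsp}.

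First I would regard the matrix coefficients as elements of the power series ring $\mathbb{Q}(q,s_1,s_2,s_3)[[Q]]$ in the Kähler variables $Q^{F}$ attached to the fiber classes $F\in H_2(\bA_n,\mathbb{Z})$, using the rationality result for stable pairs to treat each $\bZ_{\mathsf{P}}(\cdots)_{d\mathsf{L}+\mathsf{F}}$ as a rational function of $q$. A square matrix over such a ring is invertible if and only if its constant term $M_{\mathsf{P},d}|_{F=0}$ is invertible, since the remainder has positive order in the $Q$'s. Moreover, the proper $\T$-equivariant map $P_n(\bA_n\times\Pp/\bD_\infty)\to \Sym^{d}(\bA_n)$ of Lemma 1 of \cite{part1}, on whose target the third factor of $\T$ acts trivially, shows every coefficient of $M_{\mathsf{P},d}$ is polynomial, hence regular, in $s_3$. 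Thus the specialization $s_3=0$ commutes with the $F=0$ reduction, and it suffices to show $M_{\mathsf{P},d}|_{F=0}$ is nonsingular and stays nonsingular at $s_3=0$.

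Next I would compute $M_{\mathsf{P},d}|_{F=0}$ by localization. Since $F=0$, the only $\T$-fixed contributions in class $d\mathsf{L}$ are supported over the $T$-fixed points $p_0,\dots,p_n$ of $\bA_n$, so the relative moduli space factors as a product over $p_0,\dots,p_n$ of cap geometries. Writing the Nakajima row index $\mu$ as a colored partition $(\eta^{(0)},\dots,\eta^{(n)})$ (the part of $\mu$ weighted by $\mathsf{p}_j$ being $\eta^{(j)}$) and the column index as $(\alpha^{(0)},\dots,\alpha^{(n)})$, this gives
\[
M_{\mathsf{P},d}\big|_{F=0}\big[(\alpha^{(\cdot)}),(\eta^{(\cdot)})\big]
=(-q)^{-d}\prod_{j=0}^{n}\bC_{\mathsf{P}}\big(\tau_{\alpha^{(j)}}(\mathsf{p})\ \big|\ \emptyset,\emptyset,\eta^{(j)}\big),
\]
where the $j$-th factor is the $1$-leg capped descendent vertex of the cap at $p_j$, of degree $|\eta^{(j)}|$, with the variables $s_1,s_2,s_3$ specialized to the three tangent weights of $\bA_n\times\Pp$ at $p_j$ (two weights in $s_1,s_2$ and the weight $s_3$ along $\Pp$). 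Thus every entry of the $F=0$ matrix is a product of entries of the $1$-leg matrices studied in Section \ref{lll}.

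I would then filter $M_{\mathsf{P},d}|_{F=0}$ coordinate by coordinate, exactly as $\bC^d_{\mathsf{P}}$ was filtered in the proof of Lemma \ref{lsp}. Lemma \ref{efef} applied in each factor shows the entry vanishes unless $|\alpha^{(j)}|-\ell(\alpha^{(j)})\ge |\eta^{(j)}|-\ell(\eta^{(j)})$ for every $j$, so the matrix is block triangular for the product order of the vectors $\big(|\alpha^{(j)}|-\ell(\alpha^{(j)})\big)_j$. Restricting to a block where all these differences agree, Lemma \ref{rwrw} forces triangularity in the vectors $\big(\ell_+(\alpha^{(j)})\big)_j$, and Lemma \ref{rwrwrw} then makes the matrix block diagonal with respect to the $\sim$-class in each coordinate. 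A single diagonal block fixes the non-unit parts $\gamma^{(j)}$ in every coordinate and is indexed by the numbers $c_j$ (resp. $c'_j$) of parts equal to $1$ in $\alpha^{(j)}$ (resp. $\eta^{(j)}$), subject to $\sum_j c_j=\sum_j c'_j=d-\sum_j|\gamma^{(j)}|$. Using the $\T$-equivariant divisor equation at each $p_j$, which replaces each unit part by a factor of the cap degree $|\eta^{(j)}|=|\gamma^{(j)}|+c'_j$, together with the nonvanishing evaluation \eqref{ffgg}, the block reduces up to nonzero scalars to the generalized Vandermonde with entries $\prod_j\big(|\gamma^{(j)}|+c'_j\big)^{c_j}$.

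The main obstacle is to establish that this multi-index Vandermonde block is nonsingular. For $n=0$ it is exactly the classical Vandermonde of Lemma \ref{lsp}; in general its nonsingularity amounts to the unisolvence of the lattice points of a simplex for polynomials of bounded total degree, which I would verify by reducing modulo the relation $\sum_j c'_j=\mathrm{const}$ to a single Vandermonde determinant as in Lemma \ref{lsp}. Since the bases $|\gamma^{(j)}|+c'_j$ and the scalar factors coming from \eqref{ffgg} and the edge weights involve only $s_1,s_2$ and never $s_3$, each diagonal block has a determinant that is a nonzero constant in $q$ with no $s_3$ dependence; combined with the block triangularity this shows $\det M_{\mathsf{P},d}|_{F=0}$ is nonzero and does not vanish at $s_3=0$, which together with the first paragraph completes the proof.
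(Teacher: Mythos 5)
Your proof is correct and follows the same skeleton as the paper's (reduce to the constant term in the Novikov variables, factor the $Q=0$ matrix into $1$-leg cap contributions at the fixed points $p_0,\dots,p_n$, then run the triangularity filtration), but your final reduction is genuinely different from, and more complete than, the paper's. The paper disposes of the cap matrices in one line by citing Lemma~\ref{7777}, i.e.\ the submatrix of $\bC^{d}_{\mathsf{P}}$ with $|\alpha|=|\lambda|=d$ fixed; this implicitly treats the $Q=0$ matrix as block triangular over degree distributions with diagonal blocks equal to tensor products of those fixed-degree submatrices. As you correctly observe, that is not what happens: within a $\sim$-class block the unit parts can migrate between the fixed points (the entry with column $\alpha^{(j)}=\gamma^{(j)}\cup(1^{c_j})$ and row $\eta^{(j)}=\gamma^{(j)}\cup(1^{c'_j})$ is nonzero for $c_j\neq c'_j$), so the true diagonal blocks are the multi-index matrices $\bigl(\prod_j(|\gamma^{(j)}|+c'_j)^{c_j}\bigr)$ with $\sum_j c_j=\sum_j c'_j=m$, and their nonsingularity is the real content of the lemma. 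What your route buys is an argument that actually covers these mixed-distribution entries; what it costs is the extra input of unisolvence of the principal lattice of a simplex for polynomials of total degree $\leq m$ (equivalently, linear independence of the homogeneous degree-$m$ monomials on the affine hyperplane $\sum_j x_j=d$, $d>0$, evaluated at the shifted lattice points). One caveat: your parenthetical plan to verify this ``by reducing modulo the relation $\sum_j c'_j=\mathrm{const}$ to a single Vandermonde determinant as in Lemma~\ref{lsp}'' only works literally for $n=1$; for general $n$ you should invoke (or prove, e.g.\ by induction on facets: a degree-$\leq m$ polynomial vanishing on the lattice vanishes on the facet $c'_0=0$, hence is divisible by $x_0$, and one descends to order $m-1$) the classical unisolvence of the principal lattice. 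With that point made precise, the remaining steps --- the $Q=0$ reduction, the regularity in $s_3$ via the proper map to the symmetric product, the coordinatewise application of Lemmas~\ref{efef}--\ref{rwrwrw}, the divisor equation, and the observation that the block determinants and the nonvanishing evaluations \eqref{ffgg} carry no $s_3$ dependence --- are all sound and give both assertions of the lemma.
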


\begin{proof}
We may prove invertibility after restriction to
$Q=0$. The issue then separates to the invertibility 
of matrices of caps 
determined at each $p_i$.
The capped geometry $\Pp/\infty$ associated to $p_i$ is the line
$$p_i \times \Pp \subset \bA_n \times \Pp\ .$$
The required invertibility is then obtained from Lemma \ref{7777}.
\end{proof}

By the correspondence of Proposition \ref{hvv2}, the Gromov-Witten
matrix
$M_{\mathsf{GW},d}$ with the same indices and coefficients
$$\sum_{F\in H_2(\bA_n,\mathbb{Z})} Q^{F}
(-iu)^{2d+\ell(\mu)-|\mu|}\,
\bZ_{\mathsf{GW}}
\Big( \bA_n\times\Pp/\bD_\infty   ;u\ \Big|\
\prod_{j=0}^n \widehat{\tau}_{\alpha^{(j)}}(\mathsf{p}_j)\, \Big| 
\mu \Big)^{\T}_{d\mathsf{L}+\mathsf{F}}
$$
is also invertible.

To prove Theorem \ref{yaya38}, we restrict to the fiberwise $T$-action.
Let $p^0_j,\ p^1_j,\ p^\infty_j$ be the 
$T$-fixed points of $\bA_n$ corresponding to $p_j$
in the fibers over $0,1,\infty\in \Pp$ 
respectively.
We consider the stable pairs series for $\bA_n\times \Pp$
\begin{equation*}
 \sum_{F\in H_2(\bA_n,\mathbb{Z})} Q^{F}
(-q)^{-d}\,
\bZ_{\mathsf{P}}
\Big(
\prod_{j=0}^n \tau_{\alpha^{(j)}}(\mathsf{p}^0_j)
\prod_{j=0}^n \tau_{\beta^{(j)}}(\mathsf{p}^1_j)
\prod_{j=0}^n \tau_{\gamma^{(j)}}(\mathsf{p}^\infty_j)
\, \Big)^{T}_{d\mathsf{L}+\mathsf{F}}\ , 
\end{equation*}
where we have
$$\sum_{j=0}^n |\alpha^{(j)}| = \sum_{j=0}^n |\beta^{(j)}|=
\sum_{j=0}^n |\gamma^{(j)}| =d\ .$$
By the correspondence of Theorem \ref{aaa}, the above series
equals
\begin{equation*}
 \sum_{F\in H_2(\bA_n,\mathbb{Z})} Q^{F}
(-iu)^{2d}\,
\bZ_{\mathsf{GW}}
\Big(
\prod_{j=0}^n \widehat{\tau}_{\alpha^{(j)}}(\mathsf{p}^0_j)
\prod_{j=0}^n \widehat{\tau}_{\beta^{(j)}}(\mathsf{p}^1_j)
\prod_{j=0}^n \widehat{\tau}_{\gamma^{(j)}}(\mathsf{p}^\infty_j)
\, \Big)^{T}_{d\mathsf{L}+\mathsf{F}}\ , 
\end{equation*}
We degenerate the descendents over $0,1,\infty$ to
three $\bA_n$-caps. Using the compatibility of the
above correspondences with the degeneration formula and the 
invertibility of $M_{\mathsf{P},d}|_{s_3=0}$, we obtain
the correspondence of Theorem \ref{yaya38}. \qed

\subsection{Descendents on $S\times \Pp/S_{\infty}$} \label{descsp}
Let $S$ be a nonsingular quasi-projective toric surface.
Let $S_\infty \subset S \times \Pp$
be the fiber over $\infty \in \Pp$.
The basic relative geometry
$S\times \Pp/ S_\infty$ is equivariant with respect
to the full 3-dimensional torus
$$\T = T \times \com^*$$
where the 2-dimensional torus $T$ acts on $S$ and
$\com^*$ acts on $\Pp$ with fixed points $0,\infty \in \Pp$.
The weights associated to $T$ are $s_1,s_2$, and the
weight associated to $\com^*$ is $s_3$.

Let $p_1\ldots,p_m$ be the $\T$-fixed points of $S\times \Pp$
lying over $0\in \Pp$. As before,
let $\mathsf{L}\in H_2(S\times \Pp,\mathbb{Z})$ be the curve
class of the factor $\Pp$. For the class $d\mathsf{L}$, we have
$$d_{d\mathsf{L}} = \int_{d\mathsf{L}} c_1(S \times \Pp) = 2d\ . $$

\begin{Proposition} \label{mmpp66}
For the $\T$-equivariant relative geometry $S\times\Pp/S_\infty$, 
after the variable change $-q=e^{iu}$, 
we have
\begin{multline*}
(-q)^{-d}\,
\bZ_{\mathsf{P}}
\Big( S\times\Pp/S_\infty   ;q\ \Big|\
\prod_{j=1}^m \tau_{\alpha^{(j)}}(\mathsf{p}_j)
\, \Big| \mu \Big)^{\T}_{d\mathsf{L}}
=\\
(-iu)^{d+\ell(\mu)}\,  \bZ'_{\mathsf{GW}}
\Big( S\times\Pp/S_\infty;u\ \Big|\
\prod_{j=1}^m \widehat{\tau}_{\alpha^{(j)}}(\mathsf{p}_j)
\, \Big| \mu  \Big )^\T_{d\mathsf{L}}
\,,
\end{multline*}
where $\alpha^{(0)}, \ldots, \alpha^{(n)}$
are  partitions, $\mu$ is a $\T$-equivariant
relative condition along $S_\infty$, and $d>0$.
\end{Proposition}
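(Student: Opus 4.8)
The plan is to deduce the statement from the capped descendent vertex correspondence (Theorem \ref{ccc}) by capped localization, following the same route used for the $\bA_n$ geometry in Proposition \ref{hvv2}. First I would record the degree bookkeeping. Since the $\Pp$-fiber $\mathsf{L}$ meets the relative divisor $S_\infty = S\times\{\infty\}$ in a single point, the relative condition $\mu$ is a partition of $d$, so $|\mu|=d$; moreover $d_\beta = \int_{d\mathsf{L}} c_1(S\times\Pp) = 2d$. Hence the normalizing factors $(-q)^{-d}$ and $(-iu)^{d+\ell(\mu)}$ appearing in the statement are exactly $(-q)^{-d_\beta/2}$ and $(-iu)^{d_\beta+\ell(\mu)-|\mu|}$, matching the relative correspondence.

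Next I would analyze the capped localization of both $\bZ_{\mathsf{P}}$ and $\bZ'_{\mathsf{GW}}$ for the relative geometry $S\times\Pp/S_\infty$ in class $d\mathsf{L}$. The essential simplification is that $d\mathsf{L}$ is purely fiberwise: it carries no component along $S$. Consequently every contributing capped marking places all of its curve degree on the vertical edges $p_j\times\Pp$ joining the fixed points $p_j$ over $0\in\Pp$ to their counterparts over $\infty$. The descendent insertions $\tau_{\alpha^{(j)}}(\mathsf{p}_j)$ all lie over $0\in\Pp$, so they are absorbed into the capped descendent vertices at the $p_j$, where Theorem \ref{ccc} applies and produces the transformation $\tau_{\alpha^{(j)}}\mapsto\widehat{\tau}_{\alpha^{(j)}}$ using the tangent weights of $S\times\Pp$ at $p_j$. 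Over $\infty\in\Pp$ there appear only capped rubber contributions, which carry the relative condition $\mu$ and involve no descendents.

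Finally I would invoke the GW/Pairs correspondence for the descendent-free capped rubber together with the capped edge and gluing correspondences of \cite{moop}. Because the curve class is purely fiberwise, localizing the $T$-action on $S$ supports the rubber over the toric fixed points of $S$, reducing it to the already-established $\bA_n$-type capped rubber of Lemma 6 of \cite{moop} and Section 5 of \cite{mpt}; no new rubber geometry arises. Assembling the vertex correspondence over $0$, the rubber correspondence over $\infty$, and the edge and gluing correspondences, while tracking the powers of $q$ and $u$ (which collect into the normalizations recorded above), yields the claimed identity. I expect the only delicate point to be this reduction of the surface rubber over $S_\infty$ to the $\bA_n$ rubber, but since the relevant curve degree lives entirely in the $\Pp$-direction, the reduction is controlled by the local toric structure of $S$ and introduces no correspondence statements beyond those of \cite{moop,mpt}.
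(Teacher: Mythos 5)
Your argument is correct and is in essence the paper's own proof: both hinge on the fact that the purely fiberwise class $d\mathsf{L}$ forces everything onto the fibers over the $\T$-fixed points of $S$, so the statement reduces to the 1-leg case of Theorem \ref{ccc}, and your normalization bookkeeping ($|\mu|=d$, $d_\beta=2d$) is right. The one place where your write-up is heavier than necessary, and where the citation is off, is the treatment of $\infty\in\Pp$. The paper simply observes that the preimage of a toric chart $U\cong\com^2$ of $S$ is $U\times\Pp/U_\infty$, which \emph{is} the cap, i.e.\ exactly the 1-leg capped descendent vertex with its relative condition already attached; there is no separate capped-rubber correspondence over $S_\infty$ to establish. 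Your appeal to the $\bA_n$ capped rubber of Lemma 6 of \cite{moop} and Section 5 of \cite{mpt} is not the relevant statement: that input is needed (in the 2-leg and 3-leg arguments) precisely to handle curve classes with components along the surface direction, which cannot occur here. For a fiber class the only rubber that arises is the 1-leg ($\com^2\times\Pp$) rubber, and it is already packaged inside the capped vertex, so the ``delicate point'' you flag at the end evaporates once the decomposition is taken chart by chart rather than vertex/edge/rubber by vertex/edge/rubber.
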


\begin{proof}
The result is immediate from the 1-leg correspondence of Theorem \ref{ccc}.
Each fixed point $p_i$ of $S$ is contained in a torus
invariant open 
$U \cong \com^2$.
The open set $$U \times \Pp/U_\infty \subset S  \times\Pp/S_\infty$$
is simply the cap.   
Since the curve class is $d\mathsf{L}$, we can directly reduce Proposition 
to the correspondence for 1-leg capped descendent vertices.
\end{proof}

We will apply 
Proposition \ref{mmpp66} in case
$S=\Pp \times \Pp$ 
to
study the non-equivariant limit of the
descendent correspondence in Section \ref{neql}.
We will require there also the following
technical divisibility result valid for all
projective $S$.

Let $S$ be a nonsingular projective toric surface, and let
$p\in S\times \Pp$ be a toric fixed point lying over $0\in \Pp$.
Let 
$$F(\tau(\mathsf{p})) = \sum_{\alpha} \mathsf{C}_\alpha \tau_\alpha(\mathsf{p})$$
be a finite sum over $\alpha$ of positive size and
$\mathsf{C}_\alpha \in \mathbb{Q}[i,s_1,s_2,s_3]((u))$.

\begin{Proposition}
\label{peew45}
If the divisibility 
$$ s_1^k \ | \ \bZ'_{\mathsf{GW}}
\Big( S\times\Pp/S_\infty;u\ \Big|\
F(\tau(\mathsf{p}))
\, \Big| \mu  \Big )^\T_{d\mathsf{L}} $$
holds for all $d>0$ and all relative conditions $\mu$ with
cohomology weights in $H^*_\T(S,\mathbb{Q})$, 
then $s_1^k$ divides $F$.
\end{Proposition}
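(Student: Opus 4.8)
The plan is to interpret the pairing of $F(\tau(\mathsf{p}))$ against the relative conditions $\mu$ as a linear system, and to show this system can be inverted without introducing poles along $s_1=0$, so that divisibility of the outputs forces divisibility of the coefficients $\mathsf{C}_\alpha$. Write $A_{\mu,\alpha}=\bZ'_{\mathsf{GW}}(S\times\Pp/S_\infty;u\,|\,\tau_\alpha(\mathsf{p})\,|\,\mu)^\T_{d\mathsf{L}}$, so that by linearity in the descendent insertion $\bZ'_{\mathsf{GW}}(\dots|F(\tau(\mathsf{p}))|\mu)=\sum_\alpha \mathsf{C}_\alpha A_{\mu,\alpha}$, a finite sum. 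Using the $s_1$-adic valuation $v_{s_1}$ on $\mathbb{Q}(s_1,s_2,s_3)((u))$, I would first reduce to $k=1$: if $m=\min_\alpha v_{s_1}(\mathsf{C}_\alpha)<k$, then $G=s_1^{-m}F$ is $s_1$-integral with $s_1\nmid G$, and the hypothesis gives $s_1\mid \bZ'_{\mathsf{GW}}(\dots|G|\mu)$ for all $\mu$; deriving $s_1\mid G$ would then contradict the minimality of $m$. Thus the statement is equivalent to the injectivity, modulo $s_1$, of the map $G\mapsto (\bZ'_{\mathsf{GW}}(\dots|G|\mu))_{\mu,d}$ on the span of the $\tau_\alpha(\mathsf{p})$ occurring in $F$.

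Second, I would record the $s_1$-integrality that makes reduction modulo $s_1$ meaningful. Since $S$ is projective, $S\times\Pp$ and the relative divisor $S_\infty$ are projective, so the relative moduli spaces $\overline{M}'_{g,\ell}(S\times\Pp/S_\infty,d\mathsf{L})_\mu$ are proper. With $\mu$ weighted by classes in $H^*_\T(S,\mathbb{Q})$, hence polynomial in the $s_i$, each $u$-coefficient of $A_{\mu,\alpha}$ is an honest equivariant integral over a compact space and therefore lies in $\mathbb{Q}[s_1,s_2,s_3]$. In particular $A_{\mu,\alpha}$ has no pole at $s_1=0$, and its restriction $\bar A_{\mu,\alpha}=A_{\mu,\alpha}|_{s_1=0}$ is exactly the $\T_1$-equivariant invariant for the codimension-one subtorus $\T_1\subset\T$ cut out by $s_1=0$.

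The heart of the argument is to show $\bar A$ has full column rank on the $\tau_\alpha(\mathsf{p})$ appearing in $F$, equivalently that the rank of the pairing does not drop upon setting $s_1=0$. My plan is to produce a square submatrix of $A$ whose determinant is an $s_1$-unit. Concentrating the relative conditions over the fixed point $\mathsf{p}_\infty$ above $p$ and taking curve class $d\mathsf{L}$ localizes the computation to the cap $N\to\Pp$ transverse to the fiber through $p$, reducing $A$ to the $1$-leg capped descendent matrices of Section \ref{lll}. By Lemmas \ref{efef}, \ref{rwrw} and \ref{rwrwrw} these are block-triangular, and since those vanishings are exact the shape is preserved modulo $s_1$. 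Within each diagonal block the rows and columns are indexed by the same $\sim$-class, so the factors $(s_1s_2)^{\ell(\alpha)-\ell(\lambda)}$ arising from $\mathsf{p}=s_1s_2\mathsf{N}_0$ and from the $\mathsf{p}_\infty$-weights cancel in the block determinant, leaving the Vandermonde determinant built from the \emph{weight-free} evaluation \eqref{ffggg}, a nonzero constant. Hence the full determinant is, up to a nonzero scalar and a power of $u$, independent of the $s_i$ and nonzero at $s_1=0$.

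I expect this last non-degeneracy to be the main obstacle, because the reduction to a single cap degenerates precisely when $s_1$ is one of the tangent weights $w_1,w_2$ of $S$ at $p$: then $(w_1w_2)^{\ell(\alpha)}$ vanishes at $s_1=0$ and the naive cap matrix collapses. The fix is to run the invertibility $\T_1$-equivariantly rather than after a full localization: at $s_1=0$ the point $p$ lies on the $T_1$-invariant curve $C\subset S$ in the $s_1$-weight direction, and one uses instead the cap normal to $C$, whose surviving normal weight $w_2|_{s_1=0}\neq0$ together with the fiber weight $s_3$ controls the geometry; because the separation \eqref{ffggg} is weight-free it still distinguishes the descendents. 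Granting the resulting $s_1$-unit determinant, Cramer's rule yields a left inverse $B$ with entries in $\mathbb{Q}[i,s_1,s_2,s_3]_{(s_1)}((u))$, so each $\mathsf{C}_\alpha=\sum_\mu B_{\alpha,\mu}\,\bZ'_{\mathsf{GW}}(\dots|F|\mu)$ is divisible by $s_1^k$ in the localization; since $s_1$ is prime in $\mathbb{Q}[i,s_2,s_3]((u))[s_1]$, it follows that $s_1^k\mid\mathsf{C}_\alpha$ in $\mathbb{Q}[i,s_1,s_2,s_3]((u))$, completing the induction on $k$.
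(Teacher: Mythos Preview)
Your overall strategy—invert the pairing between descendents and relative conditions, and show the inverse has no pole at $s_1=0$—is reasonable, and your reduction to $k=1$ is fine. But the argument has a genuine gap exactly where you flag it. With relative weights at the point class $[p]_\infty$, the reduction to a single cap is clean, but then your matrix is $(w_1w_2)^{\ell(\lambda)}\,\bC^d_{\mathsf{GW}}(\alpha,\lambda)|_{w_1,w_2,s_3}$, and its determinant carries $(w_1w_2)^{\sum_\lambda \ell(\lambda)}$, which is \emph{not} an $s_1$-unit when $w_1=s_1$. Your paragraph-three cancellation (of the $(s_1s_2)^{\ell(\alpha)-\ell(\lambda)}$ factors) applies to $\bC^d$ itself, not to the point-weighted matrix you actually use. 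Your proposed fix—localize instead with respect to the subtorus $\T_1=\{s_1=0\}$ and use a ``cap normal to $C$''—is not worked out: you do not specify the moduli problem, the compactness, or why the analogue of \eqref{ffggg} survives in that degenerate cap. As written, the inversion step modulo $s_1$ is not justified.

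The paper sidesteps this completely by a dimension argument that you should adopt. Grade $F=\sum_m F_m$ by $m=|\alpha|+\ell(\alpha)$, and take $M$ maximal with $s_1^k\nmid F_M$. For relative conditions $\gamma$ weighted by $1\in H^*_\T(S,\mathbb{Q})$ with $|\gamma|+\ell(\gamma)=M$, the virtual dimension of the relative moduli space equals $M$, so the pairing
\[
\bZ'_{\mathsf{GW}}\Big(S\times\Pp/S_\infty;u\;\Big|\;\tau_\alpha(\mathsf{p})\;\Big|\;\gamma\Big)^\T_{|\gamma|\mathsf{L}}
\]
with $|\alpha|+\ell(\alpha)=M$ has insertion codimension equal to virtual dimension: it is a genuine non-equivariant number, with \emph{no} $s_i$-dependence at all. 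Lower $F_m$ vanish against such $\gamma$ by the same count. Because the pairing is non-equivariant you may replace $\tau_\alpha(\mathsf{p})$ by $\prod_i\tau_{\alpha_i-1}(\mathsf{p}'_i)$ with the $\mathsf{p}'_i$ distinct points of $S$; then the matrix in $(\alpha,\gamma)$ is triangular under $\ell(\alpha)\leq\ell(\gamma)$ (not enough relative parts otherwise), and on the diagonal $\alpha=\gamma$ by further dimension counting, with diagonal entries given by \eqref{ffggg}. Thus the matrix is invertible over $\mathbb{Q}((u))$, and the hypothesis forces $s_1^k\mid \mathsf{C}_\alpha$ for every $\alpha$ with $|\alpha|+\ell(\alpha)=M$, contradicting the choice of $M$. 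The point is that choosing identity-weighted relative conditions of the correct codimension makes the relevant square matrix literally constant in the $s_i$, so invertibility modulo $s_1$ is automatic and no delicate localization at $s_1=0$ is needed.
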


Since $S$ is projective, the series $\bZ'_{\mathsf{GW}}
\Big( S\times\Pp/S_\infty;u\ \Big|\
F(\tau(\mathsf{p}))
\, \Big| \mu  \Big )^\T_{d\mathsf{L}}$ has no poles in the  $s_i$,
so the divisibility hypothesis in Proposition 
\ref{peew45} 
is sensible. 

\begin{proof}
We can write $F$ in the following form{\footnote{Note the
minimum value of $|\alpha|+\ell(\alpha)$ for $\alpha$
of positive size is 2.}}:
$$F= \sum_{m>0} F_m \ , \ \ \ F_m = \sum_{|\alpha|+\ell(\alpha)=m} \mathsf{C}_\alpha \tau_\alpha(\mathsf{p})\ .$$
We argue by contradiction.
Let $F_M$ be the largest $M$ for which $F_M$ is not divisible by $s^k_1$.
Since the higher $F_{m>M}$ are divisible by $s_1^k$, the hypothesis implies
$$ s_1^k \ | \ \bZ'_{\mathsf{GW}}
\Big( S\times\Pp/S_\infty;u\ \Big|\
\sum_{m=2}^M F_m
\, \Big| \mu  \Big )^\T_{d\mathsf{L}} \ $$
for all $d>0$ and all relative conditions $\mu$.

The next step is a simple dimension analysis.
The codimension of the class $\tau_\alpha(\mathsf{p})$
is $|\alpha|+\ell(\alpha)$.
To each partition $\gamma$, we associate the
relative condition $\gamma$ along $S_\infty$
with all cohomology weights equal to
$1\in H^*_\T(S,\mathbb{Q})$.
The virtual dimension of the relative moduli space of maps
to  $S\times\Pp/S_\infty$ of class $|\gamma|\mathsf{L}$ satisfying
the relative
condition $\gamma$ is $|\gamma|+\ell(\gamma)$.
By compactness and dimension constraints,
we obtain the vanishing
$$
\bZ'_{\mathsf{GW}}
\Big( S\times\Pp/S_\infty;u\ \Big|\
\tau_\alpha(\mathsf{p})
\, \Big| \gamma  \Big )^\T_{|\gamma|\mathsf{L}}=0 \ $$
when $|\alpha|+\ell(\alpha) < |\gamma|+\ell(\gamma)$.
Hence, we see
\begin{equation}\label{yjjk4}
s_1^k \ | \ \bZ'_{\mathsf{GW}}
\Big( S\times\Pp/S_\infty;u\ \Big|\
F_M
\, \Big| \gamma  \Big )^\T_{|\gamma|\mathsf{L}} \ 
\end{equation}
for all partitions $\gamma$ satisfying 
$|\gamma|+\ell(\gamma)=M$.

Next, we consider the matrix indexed
by partitions $\alpha$ and $\gamma$ satisfying
\begin{equation}\label{gtt99}
|\alpha|+\ell(\alpha)= M, \ \ \ |\gamma|+\ell(\gamma)= M \
\end{equation}
with coefficients
\begin{equation}\label{tegg6}
\bZ'_{\mathsf{GW}}
\Big( S\times\Pp/S_\infty;u\ \Big|\
\tau_\alpha(\mathsf{p})
\, \Big| \gamma  \Big )^\T_{|\gamma|\mathsf{L}}\ . 
\end{equation}
By the dimension constraints,
the coefficient \eqref{tegg6} is independent of the
equivariant parameters --- we can treat the
coefficient as a non-equivariant integral. Hence, we can 
calculate \eqref{tegg6}
by separating the points in the descendent insertion. We
replace $\tau_{\alpha}(\mathsf{p})$ by
\begin{equation} \label{66yy}
\tau_{a_1-1}(\mathsf{p}'_1) \ldots \tau_{a_{\ell(\alpha)}-1}(\mathsf{p}'_{\ell(\alpha)})
\end{equation}
for distinct points $p_1',\ldots,p_{\ell({\alpha})}'\in S$.
If $\ell(\alpha)>\ell(\gamma)$, then the corresponding
coefficient
\eqref{tegg6} certainly vanishes as the relative condition
does not have enough parts{\footnote{Remember
the class of the curve is degree 0 when projected to $S$.}} to satisfy the incidences
with $p_1', \ldots, p_{\ell(\alpha)}'$.

The matrix \eqref{tegg6} is block triangular with blocks given by the
equal length condition
$$\ell(\alpha) = \ell(\gamma)\ .$$
The equal length condition implies $|\alpha|=|\gamma|$ by \eqref{gtt99}.
Using the separated insertion \eqref{66yy} and 
further dimension counting, we conclude the coefficient \eqref{tegg6}
vanishes in the block unless $\alpha=\gamma$.

We have proven the matrix \eqref{tegg6} is triangular.
The diagonal elements $\bigtriangleup_{\alpha}$ are determined by \eqref{ffggg} and
are non-zero
(with no $s_1$ dependence). The divisibility of the coefficients
of $F_M$ by $s^k_1$ then immediately follows from \eqref{yjjk4}.
\end{proof}

The proof of Proposition \ref{peew45} provides the
first step of the proof of Proposition \ref{t789789}.
Recall the partial ordering $\uncrazeright$ on partitions:
$$\alpha \uncrazeright \widetilde{\alpha} \ \ \ \ 
\Longleftrightarrow \ \ \ \ |\alpha|+\ell(\alpha) \geq |\widetilde{\alpha}|
+\ell(\widetilde{\alpha}) \ \ .$$

\vspace{10pt} 
\noindent{\bf{Proposition 12.}}
{\em We have
$$\widehat{\tau}_{\alpha}(\mathsf{p}) = (iu)^{\ell(\alpha)-|\alpha|} 
\tau_{\alpha}(\mathsf{p})
+ \ldots$$
where the dots stand for terms $\tau_{\widehat{\alpha}}$
with
$\alpha \crazeright \widehat{\alpha}$.}
\vspace{10pt}

\begin{proof} Let $S$ be any nonsingular projective toric
surface, and let $p\in S$ be a toric fixed point.
By Proposition \ref{mmpp66},
\begin{multline*}
(-q)^{-d}\,
\bZ_{\mathsf{P}}
\Big( S\times\Pp/S_\infty   ;q\ \Big|\
\tau_{\alpha}(\mathsf{p})
\, \Big| \mu \Big)^{\T}_{d\mathsf{L}}
=\\
(-iu)^{d+\ell(\mu)}\,  \bZ'_{\mathsf{GW}}
\Big( S\times\Pp/S_\infty;u\ \Big|\
\sum_{\widehat{\alpha}} \mathsf{K}_{\alpha,\widehat{\alpha}}
\tau_{\widehat{\alpha}}(\mathsf{p})
\, \Big| \mu  \Big )^\T_{d\mathsf{L}}
\,
\end{multline*}
for all $d$.

Consider the set of $\mathcal{P}$ of partitions  
$\widehat{\alpha}$ with $|\alpha|\geq |\widehat{\alpha}|$
which maximize $|\widehat{\alpha}|+\ell(\widehat{\alpha})$
subject to the condition $\mathsf{K}_{\alpha,\widehat{\alpha}}\neq 0$.
Let $\gamma \in \mathcal{P}$ minimize $\ell(\gamma)$.
We view $\gamma$ as a relative condition along $S_\infty$
with all cohomology weights equal to $1\in H^*_\T(S,\mathbb{Q})$.
If $\gamma \crazeright \alpha$, then
$$(-q)^{-d}\,
\bZ_{\mathsf{P}}
\Big( S\times\Pp/S_\infty   ;q\ \Big|\
\tau_{\alpha}(\mathsf{p})
\, \Big| \gamma \Big)^{\T}_{|\gamma|\mathsf{L}} =0, \ \ $$
$$(-iu)^{d+\ell(\mu)}\,  \bZ'_{\mathsf{GW}}
\Big( S\times\Pp/S_\infty;u\ \Big|\
\sum_{\widehat{\alpha}} \mathsf{K}_{\alpha,\widehat{\alpha}}
\tau_{\widehat{\alpha}}(\mathsf{p})
\, \Big| \gamma  \Big )^\T_{|\gamma|\mathsf{L}} = \mathsf{K}_{\alpha,\gamma}\cdot 
\bigtriangleup_\gamma
 $$
where $\bigtriangleup_\gamma$ is non-zero. Here, we have used the
geometric vanishing arguments of the proof of Proposition \ref{peew45} 
for both stable pairs and stable maps. 
Hence, $\mathsf{K}_{\alpha,\gamma}=0$
which is a contradiction.
We conclude
$$\widehat{\alpha} \crazeright \alpha \ \ \ \Longrightarrow
\ \ \ \mathsf{K}_{\alpha,\widehat{\alpha}}=0\ .$$

To prove the Proposition, we need now only consider $\widehat{\alpha}$
for which $$|\alpha|+\ell(\alpha) = |\widehat{\alpha}|+ 
\ell(\widehat{\alpha}).$$
Specifically, we need to exactly match
\begin{equation}\label{mat111}
(-q)^{-|\widehat{\alpha}|}\bZ_{\mathsf{P}}\Big(S\times \Pp/S_\infty;q\ \Big|
\ \tau_{\alpha}(\mathsf{p})\ \Big| \ 
\widehat{\alpha} \Big)^\T_{|\widehat{\alpha}|\mathsf{L}}  
\end{equation}
with the series
\begin{multline} \label{mat222}
(-iu)^{|\widehat{\alpha}|+\ell(\widehat{\alpha})} 
\bZ'_{\mathsf{GW}}
\Big(S \times \Pp/S_\infty;u\ \Big|
\ 
(iu)^{\ell(\alpha)-|{\alpha}|}
\tau_{\alpha}(\mathsf{p})\ \Big| \ 
\widehat{\alpha} \Big)^\T_{|\widehat{\alpha}|\mathsf{L}}\\
% =
%(-iu)^{|{\alpha}|+\ell({\alpha})} (iu)
%^{-|{\alpha}|+\ell({\alpha})}
%\bZ'_{\mathsf{GW}}
%\Big(S \times \Pp/S_\infty;u\ \Big|
%\ \tau_{\alpha}(\mathsf{p})\ \Big| \ 
%\gamma \Big)_{|\widehat{\alpha}|} \\ 
=
(-1)
^{|{\alpha}|}
u^{2\ell({\alpha})} 
\bZ'_{\mathsf{GW}}
\Big(S \times \Pp/S_\infty;u\ \Big|
\ \tau_{\alpha}(\mathsf{p})\ \Big| \ 
\widehat{\alpha} \Big)^\T_{|\widehat{\alpha}|\mathsf{L}}
.  
\end{multline}
The exact matching is proven in Section \ref{gg99gg}.
%\begin{equation*}
%(-q)^{-d}\bZ_{\mathsf{P}}\Big(\mathsf{Cap};q\ \Big|
%\ \tau_{|\gamma|+\ell(\gamma)-2}(\mathsf{p}_0)\ \Big| \ 
%\gamma \Big)_{|\gamma|}  
%\end{equation*}
%and
%\begin{equation*}
%\bZ'_{\mathsf{GW}}\Big(\mathsf{Cap};u\ \Big|
%\ \tau_{|\gamma|+\ell(\gamma)-2}(\mathsf{p}_0)\ \Big| \ 
%\gamma \Big)_{|\gamma|}  
%\end{equation*}
%up to $q$ and $u$ factors. 
%Since both is enough to match
%after $s_1+s_2=0$. If $\gamma$ has no parts equal to 1,
%the match has to be perfect. If $\gamma$ has parts equal
%to 1, there is match of a certain sum involving the
%same integrand with the 1 parts removed. Let us first
%try the case where $\gamma$ has no parts equal to 1.
%This calculation
%has to be made ...
\end{proof}

\subsection{Matching}\label{gg99gg}
Let $Y$ be a nonsingular surface, and let $E\subset Y$ be
a nonsingular curve.
Let $\mathsf{L}\in H_2(Y\times \Pp,\mathbb{Z})$ be the curve class 
of the factor $\Pp$.
The divisor $$E\times \Pp \subset Y \times \Pp$$
intersects the divisor $Y_\infty \subset Y \times \Pp$ 
lying over $\infty\in\Pp$. 
We will consider here
the relative geometry
\begin{equation}\label{pss39}
Y\times \Pp \  / \ E\times \Pp \ \cup\  Y_\infty\ .
\end{equation}
for the curve classes $d\mathsf{L}$ in both
stable pairs and Gromov-Witten theory.

For normal crossings boundary, the most promising approach
to the relative theories is via log geometry \cite{grosss}.
However, our case is very simple since  we are only
considering the curve classes $d\mathsf{L}$.
Since $$\mathsf{L}\cdot [E\times \Pp]=0,$$
 our curves never meet
$E\times \Pp$ and the delicate choices required
for curves
passing through the singularities of 
$E\times \Pp \ \cup\  Y_\infty$ can be completely avoided.
The moduli spaces
$$P_{n}(Y\times \Pp \  / \ E\times \Pp \ \cup\  Y_\infty\ ,\  
{d\mathsf{L}})_\mu\ ,$$
$$\overline{M}'_{g,r}(Y\times \Pp \  / \ E\times \Pp \ \cup\  Y_\infty\ ,\  
{d\mathsf{L}})_\mu$$
are easily defined. In both cases, the projections of
the curves to $Y$ are never allowed to meet $E$ --- bubbling
occurs along $E$ to keep the projections away.
The relative boundary conditions over $\infty\in \Pp$ are then 
imposed as usual. The points of the resulting
moduli spaces corresponds
to stable pairs or stable maps which meet (the degeneration of)
$Y_\infty$ away from the intersection with $E\times\Pp$. Hence,
the deformation theories, virtual classes, and degeneration
formulas are all standard.

Let $\alpha$ and $\widehat{\alpha}$ be two partitions of positive
size satisfying
$$|\alpha|\geq |\widehat{\alpha}|, \ \ \ \ \ \
|\alpha|+\ell(\alpha) = |\widehat{\alpha}|+ 
\ell(\widehat{\alpha})$$
as in the proof of Proposition \ref{t789789}.
The required matching of \eqref{mat111} and \eqref{mat222}
concerns the relative geometry $S\times \Pp/S_\infty$.
By a dimension analysis, the series \eqref{mat111} and \eqref{mat222}
have no dependence on the equivariant parameters $s_i$.
Therefore, we can replace $\tau_\alpha(\mathsf{p})$
by 
\begin{equation*}
\tau_{a_1-1}(\mathsf{p}'_1) \ldots 
\tau_{a_{\ell(\alpha)}-1}(\mathsf{p}'_{\ell(\alpha)})\ 
\end{equation*}
where the points $p_i'\in S$ are
distinct.
Furthermore, we can degenerate to the normal cone of $p_i'\subset S$
for each $p_i'$. The limit of $p_i'$ then lies on 
surface $\mathbf{P}^2/E$ where $E\subset \mathbf{P}^2$
is a line.
We immediately conclude
\begin{multline*}
\bZ_{\mathsf{P}}\Big(S\times \Pp/S_\infty;q\ \Big|
\ \tau_{\alpha}(\mathsf{p})\ \Big| \ 
\widehat{\alpha} \Big)^\T_{|\widehat{\alpha}|\mathsf{L}}=\\
\sum_{\widehat{\alpha} = \cup_i \gamma^{(i)}}
\prod_{i=1}^{\ell(\alpha)} 
\bZ_{\mathsf{P}}\Big(\mathbf{P}^2\times \Pp/\ 
E\times \Pp \cup
\mathbf{P}^2_\infty ;q\ \Big|
\ \tau_{\alpha_i-1}(\mathsf{p}_i')\ \Big| \ 
\gamma^{(i)} \Big)^\T_{|\gamma^{(i)}|\mathsf{L}}\ 
\end{multline*}
where the sum on the right is over all ways of 
writing $\widehat{\alpha}$ as a union of $\ell(\alpha)$
disjoint subpartitions $\gamma^{(i)}$ satifying
$$\alpha_i +1 = |\gamma^{(i)}| + \ell(\gamma^{(i)})\ .$$  
Another degeneration argument implies
\begin{multline*}
\bZ_{\mathsf{P}}\Big(\mathbf{P}^2\times \Pp/\ 
E\times \Pp \cup
\mathbf{P}^2_\infty ;q\ \Big|
\ \tau_{\alpha_i-1}(\mathsf{p}_i')\ \Big| \ 
\gamma^{(i)} \Big)^\T_{|\gamma^{(i)}|\mathsf{L}}\\ =
\bZ_{\mathsf{P}}\Big(\mathbf{P}^2\times \Pp/\ 
\mathbf{P}^2_\infty ;q\ \Big|
\ \tau_{\alpha_i-1}(\mathsf{p}_i')\ \Big| \ 
\gamma^{(i)} \Big)^\T_{|\gamma^{(i)}|\mathsf{L}}
\end{multline*}
If $\gamma^{(i)}$ equals $\mu^{(i)}\cup (1^{m_i})$ where $\mu^{(i)}$ has no parts equal to 1, then
localization with respect to the torus action 
on $\mathbf{P}^2$ yields
\begin{multline*}
\bZ_{\mathsf{P}}\Big(\mathbf{P}^2\times \Pp/\ 
\mathbf{P}^2_\infty ;q\ \Big|
\ \tau_{a_i-1}(\mathsf{p}_i')\ \Big| \ 
\gamma^{(i)} \Big)^\T_{|\gamma^{(i)}|\mathsf{L}} = \\
\sum_{\substack{e_0+e_1+\cdots+e_j = m_i \\ e_0\ge 0, e_k > 0\text{ for }k > 0}}
\Big((-1)^j\bZ_{\mathsf{P}}\Big(\mathsf{Cap};q\ \Big|
\ \tau_{\alpha_i-1}(\mathsf{p})\ \Big| \ 
\mu^{(i)}\cup (1^{e_0}) \Big)^\T_{(|\mu^{(i)}| + e_0)\mathsf{L}}\\
\cdot\prod_{k = 1}^j\bZ_{\mathsf{P}}\Big(\mathsf{Cap};q\ \Big|\ 1\ \Big|\ (1^{e_k}) \Big)^\T_{e_k\mathsf{L}}\Big).
\end{multline*}
The parallel formulas hold in Gromov-Witten theory.

The matching of \eqref{mat111} and \eqref{mat222}
is then a consequence of the following calculation.

\begin{Lemma} \label{lee5}
Let $\gamma = \mu\cup (1^m)$ be a partition of positive size where
 $\mu$ has no parts equal to 1. Let
$a+1= |\gamma|+\ell(\gamma)$. Then,
\begin{multline*}
\sum_{\substack{e_0+e_1+\cdots+e_j = m \\ e_0\ge 0, e_k > 0\text{ for }k > 0}}
\Big((-1)^j\bZ_{\mathsf{P}}\Big(\mathsf{Cap};q\ \Big|
\ \tau_{a-1}(\mathsf{p})\ \Big| \ 
\mu\cup (1^{e_0}) \Big)^\T_{(|\mu| + e_0)\mathsf{L}}\\
\cdot\prod_{k = 1}^j\bZ_{\mathsf{P}}\Big(\mathsf{Cap};q\ \Big|\ 1\ \Big|\ (1^{e_k}) \Big)^\T_{e_k\mathsf{L}}\Big)\quad
=\quad q^{|\gamma|}  \ \frac{(-1)^{\ell(\gamma)-1}}{|\gamma|! \ 
|\text{\em Aut}(\gamma)|} \ 
,
\end{multline*}
\begin{multline*}
\sum_{\substack{e_0+e_1+\cdots+e_j = m \\ e_0\ge 0, e_k > 0\text{ for }k > 0}}
\Big((-1)^j\bZ'_{\mathsf{GW}}\Big(\mathsf{Cap};q\ \Big|
\ \tau_{a-1}(\mathsf{p})\ \Big| \ 
\mu\cup (1^{e_0}) \Big)^\T_{(|\mu| + e_0)\mathsf{L}}\\
\cdot\prod_{k = 1}^j\bZ'_{\mathsf{GW}}\Big(\mathsf{Cap};q\ \Big|\ 1\ \Big|\ (1^{e_k}) \Big)^\T_{e_k\mathsf{L}}\Big)
\quad=\quad u^{-2} \ \frac{1}{|\gamma|! \ 
|\text{\em Aut}(\gamma)|}\ .
\end{multline*}
\end{Lemma}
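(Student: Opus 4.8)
The plan is to reorganize each alternating sum as a single coefficient-extraction from a ratio of generating series, and then to collapse that ratio using the explicit cap evaluations already available in this paper and in \cite{PPstat,unknot}. First I would record the two families of cap invariants that occur. Setting
$$A_{\mathsf{P}}(z)=\sum_{e_0\ge 0}\bZ_{\mathsf{P}}\Big(\mathsf{Cap};q\ \Big|\ \tau_{a-1}(\mathsf{p})\ \Big|\ \mu\cup(1^{e_0})\Big)^\T_{(|\mu|+e_0)\mathsf{L}}\, z^{e_0},\qquad B_{\mathsf{P}}(z)=\sum_{e\ge 1}\bZ_{\mathsf{P}}\Big(\mathsf{Cap};q\ \Big|\ 1\ \Big|\ (1^{e})\Big)^\T_{e\mathsf{L}}\, z^{e},$$
and analogously $A_{\mathsf{GW}},B_{\mathsf{GW}}$, the condition $B(0)=0$ together with the composition structure $e_0+e_1+\cdots+e_j=m$ and the sign $(-1)^j$ identifies the left-hand side of each identity with the single coefficient
$$[z^m]\ \frac{A(z)}{1+B(z)}\,,$$
since $\sum_{j\ge0}(-1)^jB(z)^j=(1+B(z))^{-1}$. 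Geometrically this is exactly the $\mathbf{P}^2\times\Pp/\mathbf{P}^2_\infty$ descendent invariant in class $|\gamma|\mathsf{L}$ with insertion $\tau_{a-1}(\mathsf{p})$ and relative boundary $\gamma$, which is the content of the localization formula displayed just before the Lemma; I would keep the generating-series form for the actual computation.

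Next I would evaluate $A$ and $B$ explicitly. The constituent caps are computed by precisely the techniques used for Lemma \ref{match1} and formulas \eqref{ffgg}--\eqref{ffggg}: on the stable pairs side by the localization method of \cite{PPstat}, and on the Gromov--Witten side by the Hodge integral evaluations of \cite{unknot} together with the dilaton/divisor reductions illustrated in the Example of Section \ref{bacpro}. Passing from the trivially weighted relative insertions to the $\mathsf{p}_\infty$-weighted ones via $\mathsf{1}=\mathsf{p}_\infty/(s_1s_2)$ brings $A$ and $B$ into the range of those formulas. One should keep in mind that the individual cap invariants are genuine rational functions of $q$ (respectively Laurent series in $u$) carrying $s_i$-dependence --- as the Example already makes plain --- so neither $A$ nor $B$ is a monomial.

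The heart of the argument, and the step I expect to be the main obstacle, is therefore to show that the ratio collapses: $[z^m]\,A/(1+B)$ must reduce to the stated $q^{|\gamma|}(-1)^{\ell(\gamma)-1}/(|\gamma|!\,|\mathrm{Aut}(\gamma)|)$ and $u^{-2}/(|\gamma|!\,|\mathrm{Aut}(\gamma)|)$, so that all the $q$- (respectively $u$-) series and all the $s_i$-dependence of the constituents cancel against one another. I would prove this by recognizing $1/(1+B(z))$ as exactly the inverse series that re-expresses the trivially weighted multiplicity-one markings in the $\mathsf{p}_\infty$-weighted basis; after this change of relative basis the profile $\mu\cup(1^m)$ is assembled into a single clean point-weighted evaluation of the shape \eqref{ffgg}/\eqref{ffggg}, and the leftover finite sum becomes a Vandermonde/finite-difference identity of the same type that closes the proofs of Lemmas \ref{lsp}--\ref{lgw}. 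The factor $1/|\gamma|!$ emerges from combining the $m$ multiplicity-one parts with $\mu$, the symmetry factor $1/|\mathrm{Aut}(\gamma)|$ is the standard Nakajima normalization, and the sign $(-1)^{\ell(\gamma)-1}$ (absent in Gromov--Witten) is produced by the $(-1)^j$ bookkeeping.

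Finally, the stable pairs and Gromov--Witten identities are established in parallel. The two sets of constituent series $A,B$ differ only in the tracking of $q$ versus $u$ and in the signs introduced by the gluing conventions, and these are precisely the discrepancies dictated by the Gromov--Witten/Pairs correspondence for the basic (descendent-free) caps. Consequently the resulting monomials acquire their matching normalizations $q^{|\gamma|}(-1)^{\ell(\gamma)-1}$ and $u^{-2}$ under $-q=e^{iu}$, which is exactly the correspondence of \eqref{mat111}--\eqref{mat222} needed to complete the proof of Proposition \ref{t789789}.
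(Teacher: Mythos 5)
Your opening reduction is fine: packaging the alternating sum over compositions as $[z^m]\,A(z)/(1+B(z))$ is equivalent to the paper's setup, and the identification with the $\mathbf{P}^2\times\Pp/\mathbf{P}^2_\infty$ localization is exactly where the sum comes from. The gap is in the step you yourself flag as "the main obstacle": you never actually produce the constituent series $A$. The invariants $\bZ_{\mathsf{P}}\big(\mathsf{Cap};q\,\big|\,\tau_{a-1}(\mathsf{p})\,\big|\,\mu\cup(1^{e_0})\big)$ involve a \emph{single} descendent against a \emph{multi-part} relative condition, and neither \eqref{ffgg} (which requires the relative partition to have no parts equal to $1$ and as many descendents as parts) nor Lemma \ref{match1} (which treats a one-part relative condition $d[\mathsf{p}_\infty]$) applies. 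The paper's proof performs a fresh localization computation (mod $s_1+s_2$) whose answer is a character sum $\sum_{\sigma\vdash|\mu|+e_0}\chi_\sigma((1)^{|\mu|+e_0})\chi_\sigma(\mu\cup(1^{e_0}))\sum_{\square\in\sigma}f(c(\square))$, and the entire lemma then hinges on the identity \eqref{charsum}, proved via Jucys--Murphy elements and the Lascoux--Thibon formula following \cite{FKMO}. Only after substituting \eqref{charsum} does the composition sum collapse, and it does so because $\sum_{e_0+\cdots+e_j=r}(-1)^j/\prod_k e_k!$ vanishes for $r>0$ (pairing terms with $e_0=0$ against $e_0>0$) --- not by a Vandermonde argument of the type in Lemmas \ref{lsp}--\ref{lgw}. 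The final extraction is a finite-difference evaluation (differentiate $a-1$ times in $X$ and set $X=1$). Your proposal defers all of this to "recognizing $1/(1+B)$ as a change of relative basis," which is a heuristic for why the alternating sum appears but does not evaluate anything.

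A second, independent problem is your treatment of the Gromov--Witten identity. You propose to get it "in parallel" by observing that the two families of caps differ only by the normalizations dictated by the GW/Pairs correspondence for descendent-free caps; but the caps here carry a descendent $\tau_{a-1}(\mathsf{p})$, and this lemma is itself an ingredient in establishing the descendent correspondence (it completes Proposition \ref{t789789}), so invoking a descendent correspondence to match the two sides is circular. The paper avoids this entirely: the Gromov--Witten sum is recognized as a connected genus $0$ invariant computed independently as a special case of Theorem 2 of \cite{opp1}, and the stable pairs sum is computed from scratch as above; the agreement of the two closed forms is the content of the lemma, not an input.
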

\begin{proof}
The Gromov-Witten calculation is well-known. In fact,
the result is just a genus 0 connected Gromov-Witten invariant
determined as a special case of Theorem 2 of \cite{opp1}.

We require the stable pairs evaluation. 
We know that the expression has no dependence on $s_1$ and $s_2$, %should actually state why somewhere
so we can work mod $s_1+s_2$ as in \cite{PPstat} and
Lemma~\ref{match1}. 
Localization yields the formula (true mod $s_1+s_2$)
\begin{multline*}
\bZ_{\mathsf{P}}\Big(\mathsf{Cap};q\ \Big|
\ \tau_{a-1}(\mathsf{p})\ \Big| \ 
\mu\cup (1^{e_0}) \Big)^\T_{(|\mu| + e_0)\mathsf{L}} \equiv \\ \\
\frac{(-1)^{\ell(\gamma)-1}(s_1s_2)^{e_1+\cdots+e_j}q^{|\mu|+e_0}}{(a+1)!(|\mu|+e_0)!e_0!\mathfrak{z}(\mu)}\ \cdot\\ \\
\sum_{\sigma \vdash |\mu| + e_o} \chi_\sigma((1)^{|\mu|+e_0})\chi_\sigma(\mu\cup (1^{e_0}))\sum_{\substack{\square\in\sigma \\ c = c(\square)}}((c-1)^{a+1}-2c^{a+1}+(c+1)^{a+1}),
\end{multline*}
where $\chi_\sigma$ is the character of the irreducible representation of the symmetric group corresponding to $\sigma$ and $c(\square)$ is the content of a square in a partition.

The crucial step in the evaluation is the application of the
following  character sum identity:
\begin{multline}\label{charsum}
\sum_{\sigma \vdash |\mu| + e} \chi_\sigma((1)^{|\mu|+e})\chi_\sigma(\mu\cup (1^{e}))\sum_{\substack{\square\in\sigma \\ c = c(\square)}}X^c = \\
\left(\sum_{i = 0}^e i!\binom{e}{i}\binom{|\mu|+e}{i}(X^{\frac{1}{2}}-X^{-\frac{1}{2}})^{|\mu|+2e-2i-2}\right)\prod_{k=1}^{\ell(\mu)}(X^{\frac{\mu_k}{2}}-X^{-\frac{\mu_k}{2}}),
\end{multline}
valid for $\mu$ with no part of size $1$. Before proving this identity, we will complete the proof of Lemma \ref{lee5}. Using 
\eqref{charsum} along with the basic evaluation
\begin{equation*}
\bZ_{\mathsf{P}}\Big(\mathsf{Cap};q\ \Big|\ 1\ \Big|\ (1^{e}) \Big)^\T_{e\mathsf{L}} = \frac{1}{e!(s_1s_2)^e},
\end{equation*}
we see the stable pair expression we need to compute is given by replacing $X^c$ with $(c-1)^{a+1}-2c^{a+1}+(c+1)^{a+1}$ in the Laurent polynomial
\begin{multline*}
(-1)^{\ell(\gamma)-1}q^{|\gamma|}\sum_{\substack{e_0+e_1+\cdots+e_j = m \\ e_0\ge 0, e_k > 0\text{ for }k > 0}}
\frac{(-1)^j}{\prod_{k=1}^j e_k!}\frac{1}{(a+1)!(|\mu|+e_0)!e_0!\mathfrak{z}(\mu)}\cdot \\
\left(\sum_{i = 0}^{e_0} i!\binom{e_0}{i}\binom{|\mu|+e_0}{i}(X^{\frac{1}{2}}-X^{-\frac{1}{2}})^{|\mu|+2e_0-2i-2}\right)\prod_{k=1}^{\ell(\mu)}(X^{\frac{\mu_k}{2}}-X^{-\frac{\mu_k}{2}})
\end{multline*}
Fortunately, most of the terms in this double sum cancel. If $0 \le r \le m$, then the coefficient of 
$$q^{|\gamma|}(X^{\frac{1}{2}}-X^{-\frac{1}{2}})^{|\mu|+2m-2r-2}\prod_{k=1}^{\ell(\mu)}(X^{\frac{\mu_k}{2}}-X^{-\frac{\mu_k}{2}})$$ appearing above is
\begin{equation*}
\frac{(-1)^{\ell(\gamma)-1}}{(a+1)!(m-r)!(|\mu|+m-r)!\mathfrak{z}(\mu)}\sum_{\substack{e_0+e_1+\cdots+e_j = r \\ e_0\ge 0, e_k > 0\text{ for }k > 0}}\frac{(-1)^{j}}{\prod_{k=0}^j e_k!}.
\end{equation*}
The latter is equal to $\frac{(-1)^{\ell(\gamma)-1}}{(a+1)!m!(|\mu|+m)!\mathfrak{z}(\mu)}$ if $r = 0$ and otherwise vanishes because terms in the sum with $e_0 = 0$ can be paired off with those with $e_0 > 0$.

Thus, we just need to compute the result of replacing $X^c$ with 
$$(c-1)^{a+1}-2c^{a+1}+(c+1)^{a+1}$$
 in the Laurent polynomial
\begin{equation*}
\frac{(-1)^{\ell(\gamma)-1}}{(a+1)!m!(|\mu|+m)!\mathfrak{z}(\mu)}q^{|\gamma|}(X^{\frac{1}{2}}-X^{-\frac{1}{2}})^{|\mu|+2m-2}\prod_{k=1}^{\ell(\mu)}(X^{\frac{\mu_k}{2}}-X^{-\frac{\mu_k}{2}}).
\end{equation*}
Since the above polynomial is divisible by $(X-1)^{a-1}$, 
this is simply a matter of differentiating $a-1$ times with respect to $X$, setting $X = 1$, and then multiplying by $(a+1)!$ . We find
\begin{equation*}
\frac{(-1)^{\ell(\gamma)-1}}{m!(|\mu|+m)!\mathfrak{z}(\mu)}q^{|\gamma|}\prod_{k=1}^{\ell(\mu)}\mu_k = q^{|\gamma|}  \ \frac{(-1)^{\ell(\gamma)-1}}{|\gamma|! \ 
|\text{\em Aut}(\gamma)|},
\end{equation*}
as desired.

We now return to \eqref{charsum}. The proof of this identity requires 
only a slight modification of the arguments used in the proof of Theorem A.1 in \cite{FKMO}, so we will only give a sketch here.

Let $n = |\mu|+e$ and define the Jucys-Murphy elements $L_i$ $(1\le i \le n)$ as sums of transpositions
\begin{equation*}
L_i = (1,i)+(2,i)+\cdots+(i-1,i)
\end{equation*}
in the group algebra $\mathbb{C}[S_n]$ of the symmetric group $S_n$.
The elements $L_i$ have the following property: 
for any polynomial $f$, the element $f(L_1)+\cdots+f(L_n)$ acts as the scalar $\sum_{\square\in\lambda}f(c(\square))$ on the irreducible representation $V^\lambda$ of $S_n$ corresponding to a partition $\lambda$ of $n$.

If we let $\tau \in S_n$ be any permutation with cycle type $\mu\cup (1^{e})$, then the trace of the element $\tau^{-1}(f(L_1)+\cdots+f(L_n))$ acting on $\mathbb{C}[S_n]$ will be equal to
\begin{equation*}
\sum_{\sigma \vdash |\mu| + e} \chi_\sigma((1)^{|\mu|+e})\chi_\sigma(\mu\cup (1^{e}))\sum_{\substack{\square\in\sigma \\ c = c(\square)}}f(c).
\end{equation*}
But the trace is also equal to $n!$ times the coefficient of $\tau$ in 
the element $f(L_1)+\cdots+f(L_n)$, which can be computed easily using the Lascoux-Thibon formula to expand the power sum $L_1^r+\cdots+L_n^r$; see Theorem A.4 in \cite{FKMO}. The substitution $q=e^t$ appearing there corresponds precisely to replacing the $X^c$ in \eqref{charsum} with $c^r$.
\end{proof}

\subsection{Proof of Theorem \ref{mmpp22}}\label{mmppr}

Let $X$ be a nonsingular projective toric 3-fold with
$\T$-fixed points 
$$p_1, \ldots, p_m\in X\ .$$
Let $\gamma_1,\ldots, \gamma_r\in H^*(X,\mathbb{Q})$ 
be classes of positive degree.
Since $H^*(X,\mathbb{Q})$ is generated by divisors, we may regard
$\gamma_i$ as a polynomial of positive degree
in the divisor classes. 

\begin{Lemma}
Every  divisor class in $H^2(X,\mathbb{Q})$ can be lifted to 
$H^2_\T(X,\mathbb{Q})$ with trivial restriction to $p_m$.
\end{Lemma}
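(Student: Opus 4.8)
The plan is to use the standard splitting of equivariant cohomology for toric varieties together with a single correction coming from the structure map. \emph{First} I would produce any equivariant lift at all. Since $X$ is a nonsingular projective toric $3$-fold, it is equivariantly formal, so the forgetful map $H^*_\T(X,\mathbb{Q})\to H^*(X,\mathbb{Q})$ is surjective in every degree. Concretely, every divisor class is a rational combination of the classes $[D_\rho]$ of the $\T$-invariant prime divisors $D_\rho$, and each such $D_\rho$ carries a canonical equivariant lift $[D_\rho]^\T\in H^2_\T(X,\mathbb{Q})$ whose image under forgetting is $[D_\rho]$. Hence any divisor class $D\in H^2(X,\mathbb{Q})$ admits some lift $\widetilde D_0\in H^2_\T(X,\mathbb{Q})$.

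\emph{Next} I would correct the restriction at $p_m$. Let $\pi\colon X\to\bullet$ be the structure map, so that $\pi^*\colon H^2_\T(\bullet,\mathbb{Q})=\mathbb{Q} s_1\oplus\mathbb{Q} s_2\oplus\mathbb{Q} s_3\to H^2_\T(X,\mathbb{Q})$, and recall $\iota_m\colon p_m\hookrightarrow X$. The restriction $w:=\iota_m^*(\widetilde D_0)$ lies in $H^2_\T(p_m,\mathbb{Q})=H^2_\T(\bullet,\mathbb{Q})$. I then set
\[
\widetilde D=\widetilde D_0-\pi^*(w)\,.
\]
Since $\pi\circ\iota_m$ is the identity on the point, one has $\iota_m^*\pi^*(w)=w$, so $\iota_m^*(\widetilde D)=w-w=0$, which is the desired vanishing. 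Moreover $\pi^*(w)$ lies in the kernel of the forgetful map, because positive-degree classes pulled back from $\bullet$ map to $H^{>0}(\bullet,\mathbb{Q})=0$; therefore $\widetilde D$ still forgets to $D$. Thus $\widetilde D$ is a lift of $D$ with trivial restriction to $p_m$.

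The only genuine input is the existence of the equivariant lifts $[D_\rho]^\T$ (equivalently, equivariant formality of $X$), which is standard for smooth projective toric varieties; the correction step is purely formal. I expect no real obstacle, the subtle point being merely that the $3$-dimensional space $\pi^*H^2_\T(\bullet,\mathbb{Q})$ is exactly enough freedom to annihilate the restriction at one prescribed fixed point, which is all that is needed here.
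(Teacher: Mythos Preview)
Your proof is correct and follows essentially the same approach as the paper: first lift via toric divisors, then correct the restriction at $p_m$. The paper phrases the correction step as ``tensoring with a 1-dimensional representation of $\T$,'' which is exactly your subtraction of $\pi^*(w)$ at the level of first Chern classes; your version is simply more explicit.
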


\begin{proof}
Since the divisor classes of $X$ are
spanned by toric divisors, all divisor classes can be lifted
to $\T$-equivariant cohomology. After further tensoring with
a 1-dimension representation of $\T$, the restriction to
$H^2_\T(p_m,\mathbb{Q})$ can be set to 0.
\end{proof}

After lifting each $\gamma_i$ to a polynomial $\widetilde{\gamma}_i$ in 
$\T$-equivariant divisor classes vanishing at $p_m$,
we can lift the $0$-descendent insertions as a finite sum
$$\prod_{i=1}^r \tau_0(\gamma_i) =
\sum_{k=1}^{m-1} \sum_{l\geq 0} f_{k,l}(s_1,s_2,s_3)\  \tau_0(\mathsf{p}_k)^l\ $$
where $f_{k,l}\in \mathbb{Q}(s_1,s_2,s_3)$.
Hence,
\begin{multline*}
(-q)^{-d_\beta/2}\
\bZ_{\mathsf{P}}\left(X;q \
\Bigg| \ \prod_{i=1}^r {\tau}_0(\gamma_{i})
 \prod_{j=1}^s {\tau}_{k_j}(\mathsf{p}) \right)_{\beta} =\\
(-q)^{-d_\beta/2}\
\bZ_{\mathsf{P}}\left(X;q \
\Bigg| \
\left(\sum_{k=1}^{m-1} \sum_{l\geq 0} f_{k,l}(s_1,s_2,s_3)\  \tau_0(\mathsf{p}_k)^l
\right)
 \prod_{j=1}^s {\tau}_{k_j}(\mathsf{p}_m) \right)^\T_{\beta}
\end{multline*}
We now apply Theorem \ref{aaa} to the right hand side
using the relation
$$\widehat{\tau}_{1^\ell}(\mathsf{p}_i) = \tau_{1^\ell}(\mathsf{p}_i)$$
proven in Section \ref{bacpro}. 
After the variable change $-q=e^{iu}$, we obtain
\begin{multline*} 
(-iu)^{d_\beta} 
 \bZ'_{\mathsf{GW}}\left(X;u \ \Bigg| \ \left(\sum_{k=1}^{m-1} \sum_{l\geq 0} f_{k,l}(s_1,s_2,s_3)\  \tau_0(\mathsf{p}_k)^l
\right) \widehat{\tau}_{\kappa}(\mathsf{p}_m)\right)^\T_{\beta}  =\\
(-iu)^{d_\beta} 
 \bZ'_{\mathsf{GW}}\left(X;u \ \Bigg| \ \prod_{i=1}^r {\tau}_0
(\widetilde{\gamma}_{i})
 \cdot \widehat{\tau}_{\kappa}(\mathsf{p}_m)\right)^\T_{\beta}
\end{multline*}
where $\kappa=(k_1+1, \ldots k_s+1)$.
By Proposition \ref{t789789},
we have
$$\widehat{\tau}_{\kappa}(\mathsf{p}) = (iu)^{\ell(\kappa)-|\kappa|} 
\tau_{\kappa}(\mathsf{p})
+ \ldots$$
where the dots stand for terms $\tau_{\widehat{\alpha}}$
with
$\alpha \crazeright \widehat{\alpha}$.
Finally, using the dimension constraint for 
non-equivariant integrals, we obtain
\begin{equation*}
(-iu)^{d_\beta}(iu)^{-\sum_j k_j} 
 \bZ'_{\mathsf{GW}}\left(X;u \ \Bigg| \ \prod_{i=1}^r {\tau}_0
({\gamma}_{i})
\prod_{j=1}^s
 {\tau}_{k_j}(\mathsf{p})\right)_{\beta}\ ,
\end{equation*}
which is the claimed correspondence. \qed

\section{Non-equivariant limit}
\label{neql}

\subsection{Overview}
Our goal here is to prove the correspondence
of Theorem \ref{aaa} can be written 
completely in non-equivariant terms. The outcome
is a descendent correspondence which makes sense
for any (not necessarily toric) nonsingular projective
3-fold. In the toric case, the non-equivariant
correspondence is a consequence of Theorem \ref{aaa}.
For general 3-folds, the correspondence is
conjectural.

\subsection{Notation}
 We denote the set of descendent
symbols by
$$\tau= \{ \tau_0, \tau_1, \tau_2, \ldots\ \}\ .$$
Let $\sigma$ be a partition with positive parts
$\sigma_1,\ldots,\sigma_\ell$.
We associate a polynomial $\tau_\sigma$ in the symbols
$\tau$
 to $\sigma$ by
\begin{equation*}
\tau_\sigma = \tau_{\sigma_1-1}\tau_{\sigma_2-1}\cdots\tau_{\sigma_l-1}\ 
\end{equation*}
following the conventions
of Section \ref{corr45}.  Using the
correspondence matrix, we define
$$\widehat{\tau}_\sigma = \sum_{\widehat{\sigma}} \mathsf{K}_{\sigma,\widehat{\sigma}}\ \tau_{\widehat{\sigma}}$$
for non-empty partitions $\sigma$.

For subsets
$S\subset \{1,\ldots,\ell\}$, we let $\sigma_S$ be the subpartition consisting of the parts $\sigma_i$ for $i \in S$. 
The following definition is 
crucial to our study:
\begin{equation} \label{kll4}
\widetilde{\tau}_\sigma = \sum_{P\text{ set partition of }\{1,\ldots,\ell\}}(-1)^{|P|-1}(|P|-1)!\prod_{S\in P}\widehat{\tau}_{\sigma_S}\ .
\end{equation}
Here,
 $\widetilde{\tau}_\sigma$ lies in the polynomial ring in the
symbols $\tau$ with coefficients in 
${Q}[i,s_1,s_2,s_3]((u))$.
The polynomials $\widehat{\tau}_{\sigma_S}$ on the right
carry the 
complexity of the correspondence matrix $\mathsf{K}$.

\subsection{Divisibility}\label{div555}

In order to obtain a non-equivariant formulation of
Theorem \ref{aaa}, our first step is to prove a
divisibility result constraining 
 the coefficients 
$${\mathsf{K}}_{\sigma,\widehat{\sigma}} \in \Q[i,s_1,s_2,s_3]((u))$$ 
of the correspondence matrix.

\begin{Proposition}\label{K divisibility}
Let $\sigma$ be a partition of positive length $\ell$. Then,
\begin{equation}\label{fmmk}
\widetilde{\tau}_{\sigma} \equiv 0 \mod{(s_1s_2s_3)^{\ell-1}},
\end{equation}
as a polynomial in the descendent symbols $\tau$ with 
coefficients in the ring $\Q[i,s_1,s_2,s_3]((u))$.
\end{Proposition}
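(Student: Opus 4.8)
The plan is to read \eqref{kll4} as a moment--cumulant (connected) transform and to prove the required order of vanishing geometrically, using the point--spreading technique of Section \ref{gg99gg} together with the divisibility detector of Proposition \ref{peew45}. First I would record the inverse of \eqref{kll4}, namely $\widehat{\tau}_\sigma = \sum_{P}\prod_{S\in P}\widetilde{\tau}_{\sigma_S}$, which exhibits $\widehat\tau$ as the ``full'' correlator and $\widetilde{\tau}_\sigma$ as its connected part. Since the coefficients of $\mathsf{K}$ --- and hence of $\widetilde{\tau}_\sigma$, which is assembled from the entries $\mathsf{K}_{\sigma_S,\widehat{\alpha}}$ by products and $\mathbb{Q}$-linear combination --- are symmetric in $s_1,s_2,s_3$ by Theorem \ref{ll33}, and since $s_1,s_2,s_3$ are pairwise coprime primes in the unique factorization domain $\mathbb{Q}[i,s_1,s_2,s_3]$, it suffices to prove the single divisibility $s_1^{\ell-1}\mid\widetilde{\tau}_\sigma$. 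The symmetric statements $s_2^{\ell-1}\mid\widetilde{\tau}_\sigma$ and $s_3^{\ell-1}\mid\widetilde{\tau}_\sigma$ then follow, and pairwise coprimality forces $(s_1s_2s_3)^{\ell-1}\mid\widetilde{\tau}_\sigma$ coefficient-by-coefficient in $u$, which is exactly \eqref{fmmk}.

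Next I would reduce $s_1^{\ell-1}\mid\widetilde{\tau}_\sigma$, a statement about the descendent polynomial, to a statement about honest integrals. By Proposition \ref{peew45} it is enough to show
$$ s_1^{\ell-1}\ \Big|\ \bZ'_{\mathsf{GW}}\Big(S\times\Pp/S_\infty;u\ \Big|\ \widetilde{\tau}_\sigma(\mathsf{p})\ \Big|\ \mu\Big)^\T_{d\mathsf{L}} $$
for every nonsingular projective toric surface $S$, every $d>0$, and every relative condition $\mu$; here the series are genuinely polynomial in the $s_i$ because $S$ is projective, so the divisibility is meaningful. Expanding by \eqref{kll4} writes this GW series as the cumulant combination $\sum_P (-1)^{|P|-1}(|P|-1)!\,\bZ'_{\mathsf{GW}}(\,\cdots\,|\,\prod_{S\in P}\widehat{\tau}_{\sigma_S}(\mathsf{p})\,|\,\mu\,)^\T_{d\mathsf{L}}$ of the grouped correlators, all insertions sitting at the single fixed point $\mathsf{p}$ over $0\in\Pp$.

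The geometric heart is to evaluate this cumulant combination by spreading the point. Following Section \ref{gg99gg}, I would separate the $\ell$ insertions to distinct movable points $p_1',\dots,p_\ell'\in S$ lying over $0\in\Pp$. Because the curve class $d\mathsf{L}$ projects to $0$ in $H_2(S,\mathbb{Z})$, any connected curve is contained in a single fiber $\{\mathrm{pt}\}\times\Pp$ and cannot meet two distinct points of $S$; hence the correlators factorize over the separated points. Through the correspondence of Proposition \ref{mmpp66} and the degeneration/coproduct structure of Section \ref{sdeg}, the grouped correlators $\prod_{S\in P}\widehat{\tau}_{\sigma_S}$ assemble into the full (disconnected) generating series, and the connected part is extracted precisely by the cumulant coefficients $(-1)^{|P|-1}(|P|-1)!$. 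This connected part is supported where all $\ell$ points collide, i.e. on the small diagonal $\Delta\hookrightarrow S^\ell$ pulled back to the fiber product of universal curves. The normal bundle of $\Delta$ is $T_S^{\oplus(\ell-1)}$, so its self-intersection contributes the Euler factor $c_2(T_S)^{\ell-1}$, whose restriction at $\mathsf{p}$ equals $(s_1s_2)^{\ell-1}$ and is in particular divisible by $s_1^{\ell-1}$. This yields the required divisibility uniformly in $d$ and $\mu$.

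It is convenient to organize the collision count by induction on $\ell$, the case $\ell=1$ being vacuous: the inverse relation $\widehat{\tau}_\sigma=\sum_P\prod_S\widetilde{\tau}_{\sigma_S}$ lets one strip off one colliding point at a time, each collision accounting for a single factor of the tangent Euler class, so that $\ell-1$ collisions yield the full power. The main obstacle is exactly this excess-intersection bookkeeping: matching the algebraic cumulant coefficients $(-1)^{|P|-1}(|P|-1)!$ with the contributions of the small diagonal as the points collide, and checking that exactly $\ell-1$ --- neither more nor fewer --- factors of $e(T_{\mathsf{p}}S)$ are produced, uniformly across the genus (respectively Euler-characteristic) sums and across the relative boundary condition $\mu$. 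Pushing the degeneration-to-the-normal-cone analysis of Section \ref{gg99gg} so as to track the full diagonal normal bundle, rather than only its leading term, is where the real work lies; once this is in place, Proposition \ref{peew45} and the symmetry of $\mathsf{K}$ complete the argument.
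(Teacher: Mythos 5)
Your outer reduction is exactly the paper's: reduce $(s_1s_2s_3)^{\ell-1}\mid\widetilde{\tau}_\sigma$ to $s_1^{\ell-1}\mid\widetilde{\tau}_\sigma$ via the symmetry of $\mathsf{K}$ in the $s_i$, and detect the latter through the divisibility of the Gromov--Witten series $\bZ'_{\mathsf{GW}}(S\times\Pp/S_\infty\,|\,\widetilde{\tau}_\sigma(\p)\,|\,\mu)^\T_{d\mathsf{L}}$ by $s_1^{\ell-1}$ using Proposition \ref{peew45}. The inverse relation $\widehat{\tau}_\sigma=\sum_P\prod_{S\in P}\widetilde{\tau}_{\sigma_S}$ is also correct and appears in the paper (as the specialization $\p_\star=0$ of \eqref{basid}).

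The gap is in the geometric heart. The cumulant sum \eqref{kll4} runs over set partitions of the \emph{parts of $\sigma$}, with every insertion sitting at the single point $\p$; it is not, as written, the connected part of any correlator indexed by points of $S$, so ``supported on the small diagonal, hence divisible by $e(N_\Delta)=c_2(T_S)^{\ell-1}$'' is an analogy rather than an argument. To make it one you would have to (a) realize the grouped correlators $\prod_{S\in P}\widehat{\tau}_{\sigma_S}$ at \emph{separated} points and relate that back to the one-point quantity $\widetilde{\tau}_\sigma(\p)$ --- which is delicate because $\widehat{\tau}_{\sigma_S}$ is defined via $\mathsf{K}$ evaluated at the tangent weights of a fixed point, and these differ from point to point --- and (b) show the connected part vanishes to first order transverse to the diagonal so that exactly $\ell-1$ Euler factors appear. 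You defer both, and they constitute the entire content of the proposition. The paper sidesteps the excess-intersection bookkeeping altogether: on $\Pp\times\Pp\times\Pp/(\Pp\times\Pp)_\infty$ it takes the two fixed points $p_\bullet=(0,0,0)$ and $p_\star=(\infty,0,0)$, whose tangent weights differ only by the sign of $s_1$ and which satisfy $\p_\bullet-\p_\star=s_1\mathsf{P}_{\bullet\star}$. The stable pairs series with insertion $\tau_{\sigma_1}(\p_\bullet)\prod_{j\ge2}\tau_{\sigma_j}(\p_\bullet-\p_\star)$ is then \emph{manifestly} divisible by $s_1^{\ell-1}$, the correspondence of Proposition \ref{mmpp66} transfers this to the Gromov--Witten side, and the combinatorial identity \eqref{basid} isolates $\widetilde{\tau}_\sigma(\p_\bullet)$ as the $Q_1=\{1,\dots,\ell\}$ term, the remaining terms being handled by induction on $\ell$ (each extra block contributes $s_1^{|Q_j|-1}$ from the inductive hypothesis plus one more $s_1$ from $\p_\bullet-\p_\star$). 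If you want to salvage your route, you would essentially have to prove an identity playing the role of \eqref{basid}; without it the proposal is not a proof.
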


\begin{proof}
We will use the relative correspondence 
established in Proposition \ref{mmpp66} for the geometry 
 $$S\times \Pp /S_\infty = \Pp\times\Pp\times\Pp \ / \ (\Pp\times \Pp)_\infty
\ .$$
The surface
$S=\Pp\times \Pp$  is viewed as the first two factors of the 3-fold 
$\Pp\times \Pp\times \Pp$.
We will consider $\T$-equivariant stationary
descendents at
$$p_\bullet = (0,0,0), \ \ \ p_\star = (\infty,0,0)\ .$$
 Let the tangent weights at $p_\bullet$ be $s_1,s_2,s_3$ respectively 
along the three $\Pp$ factors. Then,
 $$\p_\bullet-\p_\star = s_1\mathsf{P}_{\bullet\star}\ $$
where $\mathsf{P}_{\bullet\star}$ is the
class of the line $P_{\bullet\star}$ connecting the two points.

We will prove the Proposition together with the following divisibility
claim:
\begin{equation}\label{divisibility}
\bZ'_{\mathsf GW}\left(S\times \Pp/S_\infty; u \;\middle\vert\; 
\widetilde{\tau}_{\sigma}(\p_\bullet) \;\middle\vert\; \mu\right)^\T_{d\mathsf{L}} \equiv 0 \ \mod{s_1^{\ell-1}}
\end{equation}
for every curve class $d\mathsf{L}$ and relative condition $\mu$.
We prove the Proposition and the divisibility
\eqref{divisibility} simultaneously by induction on 
the length $\ell$ of $\sigma$. 
For $l = 1$, both statements are trivial. We assume $l > 1$.

If divisibility
\eqref{divisibility} holds for 
partitions $\sigma$ of length $l$, then $\widetilde{\tau}_\sigma$
must be divisible by $s_1^{\ell-1}$ by Proposition \ref{peew45}.
By the symmetry of the coefficients of $\mathsf{K}$ in the
variables $s_i$, we conclude $\widetilde{\tau}_\sigma$
is divisible by $(s_1s_2s_3)^{\ell-1}$. We have proven
claim \eqref{divisibility} for length $\ell$
implies claim \eqref{fmmk}
for length $\ell$.

Finally, we show if \eqref{fmmk} holds for
partitions of length $1,2,\ldots,l-1$,
 then divisibility \eqref{divisibility} holds for length $l$.
For any set partition 
$$Q = \{Q_1,\ldots,Q_k\}\ \  \text{of} \ \ \{1,\ldots,l\}$$
 with $1\in Q_1$ and $k > 1$, consider the Gromov-Witten
series for $S\times \Pp/ S_\infty$
\begin{equation}\label{divisible}
\bZ'_{\mathsf{GW}}\left( 
\widetilde{\tau}_{\sigma_{Q_1}}(\p_\bullet)\prod_{j=2}^k
\left(\widetilde{\tau}_{\sigma_{Q_j}}(\p_\bullet) 
+ (-1)^{|Q_j|}\widetilde{\tau}_{\sigma_{Q_j}}(\p_\star)\right) \;\middle\vert\; 
\mu\right)^\T_{d\mathsf{L}} 
%\in \Q[s_1,s_2,s_3]((u))
\end{equation}
lying in $\Q[i,s_1,s_2,s_3]((u))$.
By the inductive hypothesis, we pick up a factor of $s_1^{|Q_j|-1}$ 
for each part in the set partition. 
Also, each part in the set partition 
after the first  contributes an additional factor of $s_1$ because
\begin{enumerate}
\item [(i)] the correspondence
matrices $\mathsf{K}$ at the points $p_\bullet$ and $p_\star$
are equal after changing the sign of $s_1$,
\item[(ii)]
 $s_1$ divides $\p_\bullet - \p_\star$ \ .
\end{enumerate}
Thus, we see \eqref{divisible} is divisible by $s_1^{l-1}$.

Using again the divisibility of $\p_\bullet - \p_\star$
by $s_1$ in $\T$-equivariant cohomology, we see 
$$\bZ_{\mathsf{P}}\left( {\tau_{\sigma_1}(\p_\bullet)\prod_{j=2}^l
\tau_{\sigma_j}(\p_\bullet - \p_\star)} \;\middle\vert\; \mu\right)^\T_{d\mathsf{L}}
\equiv 0 \ \ \mod s_1^{\ell-1}\ .$$
From our the descendent correspondence of Proposition
\ref{mmpp66}, we immediately conclude
\begin{equation}\label{jtt999}
\bZ'_{\mathsf{GW}}\left( \widehat{\tau_{\sigma_1}(\p_\bullet)
\prod_{j=2}^l\tau_{\sigma_j}(\p_\bullet - \p_\star)} \;\middle\vert\; \mu\right)^\T_{d\mathsf{L}} \equiv 0 \mod{s_1^{l-1}},
\end{equation}
where $\widehat{\tau_\delta(\p_\bullet)\tau_\eta(\p_{\star})} =
 \widehat{\tau}_\delta(\p_\bullet)\widehat{\tau}_\eta(\p_{\star})$.

The desired divisibility \eqref{divisibility} now follows from the basic
identity
\begin{multline}\label{basid}
\sum_{\substack{Q \text{ set partition of }\{1,\ldots,\ell\}\\ 1\in Q_1}}
\widetilde{\tau}_{\sigma_{Q_1}}(\p_\bullet)\prod_{j=2}^{|Q|}
\left(\widetilde{\tau}_{\sigma_{Q_j}}(\p_\bullet) 
+ (-1)^{|Q_j|}\widetilde{\tau}_{\sigma_{Q_j}}(\p_\star)\right) \\
 =\widehat{\tau_{\sigma_1}(\p_\bullet)\prod_{j=2}^\ell\tau_{\sigma_j}(\p_\bullet
 - \p_\star)}.
\end{multline}
Since $s_1^{\ell-1}$ divides both \eqref{divisible} and \eqref{jtt999},
we conclude the claim \eqref{divisibility} holds for length $\ell$ 
from the
identity \eqref{basid}. The term $\tau_\sigma(\p_\bullet)$ occurs
on the left side of \eqref{basid} when $Q_1=\{1,\ldots, \ell\}$.

We now check identity \eqref{basid} by computing the coefficient of
\begin{equation}\label{jrpp9}
\prod_{S\in A}\widehat{\tau_{\sigma_S}(\p_\bullet)}\prod_{T\in B}\widehat{\tau_{\sigma_T}(\p_\star)}
\end{equation}
appearing when the $\widetilde{\tau}$ on the left side are expanded in terms of $\widehat{\tau}$. Here,
$A\cup B = C$
 is a partition of a set partition $C$ of $\{1,\ldots,\ell\}$ into two nonempty subsets, with $1$ belonging to one of the parts in $A$. Let 
$$a = |\bigcup_{S\in A}S| \text{\ \ and\ \ } b = |\bigcup_{T\in B}T|,$$
 so the coefficient of this term on the right side 
of \eqref{basid}
is equal to $(-1)^b$ if $|A| = 1$ and $|B|\le 1$, and $0$ otherwise.

The term on the left side of \eqref{basid} given by a set partition $Q$ of $\{1,\ldots,\ell\}$ will contribute to the coefficient 
of \eqref{jrpp9}
if and only if $Q = Q_A\cup Q_B$ with $A$ a refinement of $Q_A$ and $B$ a refinement of $Q_B$. Such $Q$ are parametrized by choosing one set partition of $A$ and one of $B$, so we compute the coefficient to be
\[
\sum_{ \substack{P_A \text{ set partition of } A \\ P_B \text{ set partition of }B}}
\left(\prod_{S\in P_A}(-1)^{|S|-1}(|S|-1)!\right)\cdot(-1)^b\cdot\left(\prod_{T\in P_B}(-1)^{|T|-1}(|T|-1)!\right).
\]

Finally, we need only prove the fundamental identity
\begin{equation*}
\sum_{P \text{ set partition of }\{1,\ldots,k\}}\prod_{S\in P}(-1)^{|S|-1}(|S|-1)! = \begin{cases}
1 &\text{if }k = 0,1 \\
0 &\text{if }k > 1, \end{cases}
\end{equation*}
which follows immediately from the observation that each term is counting the permutations of $\{1,\ldots,k\}$ that yield a given orbit partition $P$, with sign equal to the sign of the permutations of this type.
\end{proof}

We define the correspondence matrix $\widetilde{\mathsf{K}}$ 
which we will use for the non-equivariant limit by
\begin{equation}\label{frg3}
\widetilde{\mathsf{K}}_{\sigma,\widehat{\sigma}}= 
\frac{1}{(s_1s_2s_3)^{\ell(\sigma)-1}}
\text{Coeff}_{\tau_{\widehat{\sigma}}}\Big(
\widetilde{\tau}_\sigma\Big)\ .
\end{equation}
By the vanishing $\mathsf{K}_{\sigma,\widehat{\sigma}}=0$ unless
$|\sigma|\geq |\widehat{\sigma}|$, we deduce the vanishing
 $$\widetilde{\mathsf{K}}_{\sigma,\widehat{\sigma}}=0\ \  \text{unless} \ 
|\sigma|\geq |\widehat{\sigma}| \ .$$
By the divisibility of Proposition \ref{K divisibility},
$$\widetilde{\mathsf{K}}_{\sigma,\widehat{\sigma}}\in\Q[i,s_1,s_2,s_3]((u))\ .$$ 
In fact, since $\mathsf{K}$ is symmetric in the $s_i$, we may
view
$$\widetilde{\mathsf{K}}_{\sigma,\widehat{\sigma}}\in\Q[i,c_1,c_2,c_3]((u))\ .$$
where the $c_i$ are elementary symmetric functions in the $s_i$.

\begin{Proposition}\label{ll33t}
The $u$ coefficients of  $\widetilde{\mathsf{K}}_{\sigma,\widehat{\sigma}}\in
\mathbb{Q}[i,s_1,s_2,s_3]((u))$
are symmetric and homogeneous in the variables $s_i$
of degree $$|\sigma|+\ell(\sigma) - |\widehat{\sigma}| 
- \ell(\widehat{\sigma})-3(\ell(\sigma)-1).$$ 
\end{Proposition}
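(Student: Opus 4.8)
The plan is to deduce both assertions about $\widetilde{\mathsf{K}}$ directly from the corresponding properties of $\mathsf{K}$ in Theorem \ref{ll33}, by tracking a single additive statistic through the definitions \eqref{kll4} and \eqref{frg3}. For a partition $\rho$ write $\|\rho\| = |\rho| + \ell(\rho)$; the key elementary fact is that $\|\cdot\|$ is additive under the union (concatenation) of partitions, $\|\rho \sqcup \rho'\| = \|\rho\| + \|\rho'\|$, since both $|\cdot|$ and $\ell(\cdot)$ are additive. In this notation, Theorem \ref{ll33} says precisely that every $u$-coefficient of $\mathsf{K}_{\rho,\widehat\rho}$ is symmetric and homogeneous in the $s_i$ of degree $\|\rho\|-\|\widehat\rho\|$.

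The symmetry of $\widetilde{\mathsf{K}}$ is then immediate. Each $\widehat{\tau}_{\sigma_S}$ has coefficients in the symmetric subring $\Lambda_3((u))$, so by \eqref{kll4} the coefficient $\text{Coeff}_{\tau_{\widehat\sigma}}(\widetilde\tau_\sigma)$ is symmetric in the $s_i$. Dividing this symmetric polynomial by the symmetric polynomial $(s_1s_2s_3)^{\ell(\sigma)-1}$ produces a symmetric polynomial: the quotient is a genuine polynomial by the divisibility of Proposition \ref{K divisibility}, and for any permutation $w$ of $\{s_1,s_2,s_3\}$ we have $w(\widetilde{\mathsf{K}}_{\sigma,\widehat\sigma}) = w(\text{Coeff})/w((s_1s_2s_3)^{\ell(\sigma)-1}) = \widetilde{\mathsf{K}}_{\sigma,\widehat\sigma}$ in the fraction field, forcing invariance.

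For homogeneity I would first fix a set partition $P$ of $\{1,\dots,\ell(\sigma)\}$ and extract the coefficient of $\tau_{\widehat\sigma}$ from the single product $\prod_{S\in P}\widehat{\tau}_{\sigma_S}$. Expanding each factor as $\widehat{\tau}_{\sigma_S} = \sum_{\rho^{(S)}} \mathsf{K}_{\sigma_S,\rho^{(S)}}\,\tau_{\rho^{(S)}}$ and using $\prod_{S}\tau_{\rho^{(S)}} = \tau_{\sqcup_S \rho^{(S)}}$, this coefficient is a sum over tuples $(\rho^{(S)})_{S\in P}$ with $\sqcup_S \rho^{(S)} = \widehat\sigma$ of the products $\prod_S \mathsf{K}_{\sigma_S,\rho^{(S)}}$. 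Each such product has $u$-coefficients homogeneous of degree $\sum_S(\|\sigma_S\| - \|\rho^{(S)}\|) = \|\sigma\| - \|\widehat\sigma\|$, where I use $\sqcup_{S\in P}\sigma_S = \sigma$ together with $\sqcup_S\rho^{(S)} = \widehat\sigma$ and the additivity of $\|\cdot\|$. The crucial observation is that this degree is the same for every set partition $P$ and every admissible tuple, so the full sum \eqref{kll4} defining $\widetilde\tau_\sigma$ has $\tau_{\widehat\sigma}$-coefficient homogeneous of degree $\|\sigma\|-\|\widehat\sigma\| = |\sigma|+\ell(\sigma)-|\widehat\sigma|-\ell(\widehat\sigma)$.

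Finally, dividing by $(s_1s_2s_3)^{\ell(\sigma)-1}$ in \eqref{frg3} lowers the homogeneous degree by $3(\ell(\sigma)-1)$, giving exactly the asserted degree $|\sigma|+\ell(\sigma)-|\widehat\sigma|-\ell(\widehat\sigma)-3(\ell(\sigma)-1)$. I do not expect a serious obstacle: the real content was already delivered by Proposition \ref{K divisibility} (which makes the quotient a polynomial) and Theorem \ref{ll33} (symmetry and homogeneity of $\mathsf{K}$). The only point demanding care is the bookkeeping above, namely confirming that the homogeneity degree telescopes to $\|\sigma\|-\|\widehat\sigma\|$ \emph{uniformly} across all set partitions $P$ and all distributions of $\widehat\sigma$, so that combining the terms of \eqref{kll4} cannot destroy homogeneity.
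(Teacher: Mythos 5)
Your argument is correct and is exactly the route the paper takes: its proof of this proposition is the one-line assertion that the result follows from Theorem \ref{ll33} together with definitions \eqref{kll4} and \eqref{frg3}, and your write-up simply supplies the bookkeeping (additivity of $|\rho|+\ell(\rho)$ under concatenation of partitions, uniformity of the degree over all set partitions $P$, and the degree shift of $3(\ell(\sigma)-1)$ from the division) that the paper leaves implicit.
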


\begin{proof}
The result follows from Theorem \ref{ll33} and definitions
\eqref{kll4} and \eqref{frg3}.
\end{proof}

\subsection{Proof of Theorem \ref{zzz}}
Let $X$ be a nonsingular quasi-projective toric 3-fold.
Let $\alpha$ be a partition of length $\ell$ and
positive parts.
Let 
$$\gamma_1,\ldots,\gamma_\ell\in H^*_\T(X,\mathbb{Q})$$ be
$\T$-equivariant classes. We can express
\begin{equation}\label{gnnl}
\bZ_{\mathsf{P}}\left(X; q \;\middle\vert\; \tau_{a_1-1}(\gamma_1)
\cdots\tau_{a_\ell-1}(\gamma_\ell)\right)_\beta 
\end{equation}
in terms of Gromov-Witten theory by writing each class $\gamma_l$ 
as a combination of 
the $\T$-fixed points via \eqref{ff34}  and 
then applying the descendent correspondence of Theorem \ref{aaa}.

%We will write a formula for the
%Gromov-Witten descendent corresponding to \eqref{gnnl}.

Let $\mathcal{P}^{\mathrm{set}}_\ell$ be the set of set
partitions of $\{1,\ldots, \ell \}$. For a partition
$P\in \mathcal{P}^{\mathrm{set}}_\ell$, each $S\in P$
is a subset of $\{1,\ldots, \ell\}$.
Let
$$\gamma_S = \prod_{i\in S}\gamma_i \text{\ \ \ and \ \ \ } \tau_{\alpha_S} = 
\prod_{i\in S}\tau_{a_i-1}\ .$$

A first formula for the
Gromov-Witten descendent corresponding to 
the stable pairs integral \eqref{gnnl} is
given by
\begin{equation}\label{kk334}
\sum_{P\in \mathcal{P}^{\mathrm{set}}_\ell}\ \
\sum_{\text{Injective }\phi: P\to\{1,\ldots,m\}}
\ \
\prod_{S\in P}\frac{\iota_{\phi(S)}^*(\gamma_S)}{\iota_{\phi(S)}^*(c_3(X)^{|S|})}\widehat{\tau}_{\alpha_S}(\p_{\phi(S)})\ .
\end{equation}
Here, the $\T$-fixed points of $X$ are $p_1,\ldots, p_m$,
and we follow the notation of the localization identity
\eqref{ff34}.

We may 
extend the second sum in \eqref{kk334} to run
over {all}  functions 
$$\phi:P\to\{1,\ldots,m\}$$
 (rather than just the 
injective ones) by rewriting the formula as
\begin{equation}\label{pqq2}
\sum_{P\in \mathcal{P}^{\mathrm{set}}_\ell}
\ \sum_{\phi: P\to\{1,\ldots,m\}}\ 
\prod_{S\in P}\frac{\iota_{\phi(S)}^*(\gamma_S)}{\iota_{\phi(S)}^*(c_3(X)^{|S|})}\widetilde{\tau}_{\alpha_S}(\p_{\phi(S)})\ 
\end{equation}
using definition \eqref{kll4}.
Finally, 
the cohomological identity
\[
\sum_{j=1}^m\frac{\iota_{j}^*(\gamma)}{\iota_{j}^*(c_3(X))}
\ \p_j
\otimes\cdots\otimes \p_j = \gamma\cdot\Delta
\ \in H^*_\T(X \times \cdots\times X, \mathbb{Q})
\]
allows us to rewrite \eqref{pqq2} efficiently as
\begin{equation} \label{qqq777}
\sum_
{P\in \mathcal{P}^{\mathrm{set}}_\ell}\ 
\prod_{S\in P}\frac{1}{c_3(X)^{|S|-1}}\
\widetilde{\tau}_{\alpha_S}(\gamma_S)\
\end{equation}
following convention \eqref{j77833}.
Theorem \ref{zzz} then follows from Theorem \ref{aaa},
formula \eqref{mqq23}, and
our definition of $\widetilde{\mathsf{K}}$. \qed
\vspace{10pt}

For a nonsingular quasi-projective toric 3-fold $X$
with $\T$-fixed points $p_1,\ldots, p_m$, we
have two descendent correspondences for the 
stable pairs series
\begin{equation}
\label{p3p3}
\ZZ_{\mathsf{P}}\Big(X;q\ \Big|   \prod_{j=1}^m \tau_{\alpha^{(j)}}(\mathsf{p}_j)
\Big)^\T_\beta\ 
\end{equation}
of Section \ref{corr45}. We can apply Theorem \ref{aaa} or
Theorem \ref{zzz}. In fact, the result is the same.

\begin{Lemma} Theorem \ref{zzz} applied to \eqref{p3p3}
specializes exactly to Theorem \ref{aaa}.
\end{Lemma}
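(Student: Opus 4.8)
The plan is to apply the correspondence rule \eqref{mqq23} of Theorem \ref{zzz} directly to the insertion of \eqref{p3p3} and to simplify using the fact that every cohomology label is a $\T$-fixed point class. First I would flatten the product $\prod_{j=1}^m\tau_{\alpha^{(j)}}(\mathsf{p}_j)$ into a single string $\tau_{a_1-1}(\gamma_1)\cdots\tau_{a_L-1}(\gamma_L)$ of length $L=\sum_j\ell(\alpha^{(j)})$, where each label $\gamma_i$ equals one of the fixed-point classes $\mathsf{p}_j$. The rule \eqref{mqq23} then produces a sum over set partitions $P$ of $\{1,\dots,L\}$, with $\gamma_S=\prod_{i\in S}\gamma_i$ entering each block $S\in P$ through the term $\sum_{\widehat{\alpha}}\tau_{\widehat{\alpha}}(\widetilde{\mathsf{K}}_{\alpha_S,\widehat{\alpha}}\cdot\gamma_S)$.

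The first key observation is that $\mathsf{p}_j\cdot\mathsf{p}_{j'}=0$ for $j\neq j'$ in the localized $\T$-equivariant cohomology, since their restrictions to every fixed point vanish. Hence $\gamma_S=0$ unless all indices of $S$ carry the same point $p_j$, so only the monochromatic set partitions survive, and the sum factors as a product over $j=1,\dots,m$, the $j$-th factor being a sum over set partitions of the parts of $\alpha^{(j)}$. For a monochromatic block $S$ supported at $p_j$ I would use the self-intersection relation $\mathsf{p}_j^{|S|}=c_3(T_X)^{|S|-1}\,\mathsf{p}_j$ (with $c_i(T_X)|_{p_j}$ the elementary symmetric functions of the tangent weights $w_1^j,w_2^j,w_3^j$), together with the linearity of $\tau_{\widehat{\alpha}}(-)$ and the evaluation $\tau_{\widehat{\alpha}}(\mathsf{p}_j)=\tau_{\widehat{\alpha}_1-1}(\mathsf{p}_j)\cdots\tau_{\widehat{\alpha}_{\hat\ell}-1}(\mathsf{p}_j)$ from \eqref{j77833}. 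This rewrites the block contribution as $c_3(w^j)^{|S|-1}\sum_{\widehat{\alpha}}\widetilde{\mathsf{K}}_{\alpha_S,\widehat{\alpha}}(w^j)\,\tau_{\widehat{\alpha}}(\mathsf{p}_j)$. Since $\ell(\alpha_S)=|S|$, the power $c_3(w^j)^{|S|-1}$ exactly cancels the normalization $(s_1s_2s_3)^{\ell(\alpha_S)-1}$ in the definition \eqref{frg3} of $\widetilde{\mathsf{K}}$, so by \eqref{kll4} the block contribution is precisely $\widetilde{\tau}_{\alpha_S}(\mathsf{p}_j)$.

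At this stage the $j$-th factor is the monochromatic set-partition sum $\sum_{P_j}\prod_{S\in P_j}\widetilde{\tau}_{\alpha_S}(\mathsf{p}_j)$, and I would finish by invoking Möbius inversion on the partition lattice. The relation \eqref{kll4}, expressing $\widetilde{\tau}_\sigma$ in terms of the $\widehat{\tau}_{\sigma_S}$ with coefficients $(-1)^{|P|-1}(|P|-1)!$, is exactly the cumulant-from-moments transform on $\Pi_{\ell(\sigma)}$, whose inverse is the moment-from-cumulant formula $\widehat{\tau}_\sigma=\sum_{P}\prod_{S\in P}\widetilde{\tau}_{\sigma_S}$. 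Applying this with $\sigma=\alpha^{(j)}$ collapses the sum to $\widehat{\tau}_{\alpha^{(j)}}(\mathsf{p}_j)$, and the product over $j$ reproduces $\prod_{j=1}^m\widehat{\tau}_{\alpha^{(j)}}(\mathsf{p}_j)$, which is the Gromov-Witten insertion of Theorem \ref{aaa}. As the scalar prefactors $(-q)^{-d_\beta/2}$ and $(-iu)^{d_\beta}$ are identical in the two statements, the two correspondences agree on \eqref{p3p3}.

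The main obstacle is the bookkeeping of the middle steps rather than any deep input: one must carefully confirm the vanishing of the cross-terms $\mathsf{p}_j\mathsf{p}_{j'}$, the consequent factorization over fixed points, and — the delicate point — that the powers of $c_3$ generated by the self-intersections $\mathsf{p}_j^{|S|}$ match the normalizing powers of $s_1s_2s_3$ built into $\widetilde{\mathsf{K}}$ in \eqref{frg3}. Once this alignment is verified the statement reduces to the standard mutual inversion of the two set-partition transforms, which is purely combinatorial.
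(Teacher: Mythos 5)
Your proposal is correct and follows essentially the same route as the paper: the cross-term vanishing $\mathsf{p}_j\mathsf{p}_{j'}=0$, the cancellation of $\mathsf{p}_j^{|S|}=c_3|_{p_j}^{|S|-1}\mathsf{p}_j$ against the $(s_1s_2s_3)^{\ell-1}$ normalization in $\widetilde{\mathsf{K}}$, and the final collapse via the moment--cumulant (set-partition) inversion are exactly the content of the paper's reduction to its inversion formula, which it obtains by specializing $\mathsf{p}_\star=0$ in the identity used for the divisibility argument. You simply spell out the intermediate bookkeeping that the paper leaves implicit.
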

\begin{proof}
The claim reduces to the inversion formula
\begin{equation*}
\sum_{Q\text{ set partition of }\{1,\ldots,\ell(\alpha^{(j)})\}}\prod_{S\in Q}\widetilde{\tau}_{\alpha^{(j)}_S}(\mathsf{p}_j) = \widehat{\tau}_{\alpha^{(j)}}(\mathsf{p}_j)
\end{equation*}
obtained from the specialization $\mathsf{p}_\star = 0$ of \eqref{basid}.
\end{proof}

Proposition \ref{ll33t} implies the descendent correspondence
of Theorem \ref{zzz} respects the dimensions of the insertions.

\subsection{Relative descendent correspondence}
Let $X$ be a nonsingular projective 3-fold, and let
$D\subset X$ be a nonsingular divisor.
Let $\Omega_X[D]$ denote the locally free sheaf of 
differentials with logarithmic poles along $D$.
Let $$T_{X}[-D] = \Omega_{X}[D]^{\ \vee}$$
denote the dual sheaf of tangent fields with logarithmic
zeros.

For the relative geometry $X/D$, we let the coefficients of
$\widetilde{\mathsf{K}}$ act on the cohomology of $X$ via the
substitution
$$c_i= c_i(T_{X}[-D])$$
instead of the substitution $c_i=T_X$ used in the absolute case.
Then, we would like to define 
\begin{equation*}
\overline{\tau_{\alpha_1-1}(\gamma_1)\cdots
\tau_{\alpha_{\ell}-1}(\gamma_{\ell})}
=
\sum_{P \text{ set partition of }\{1,\ldots,l\}}\ \prod_{S\in P}\ \sum_{\widehat{\alpha}}\tau_{\widehat{\alpha}}(\widetilde{\mathsf{K}}_{\alpha_S,\widehat{\alpha}}\cdot\gamma_S) \ .
\end{equation*}
as before.
The correct definition is subtle for arbitrary classes $\gamma_i$.
A full discussion of the descendent correspondence for
relative geometries will be given in \cite{PPcy3}.
However, a restricted case in which the above definition is appropriate
will be relevant for Section \ref{8888}.

\begin{Conjecture}
\label{ttt444} 
Let $\gamma_1, \ldots, \gamma_l\in H^*(X,\mathbb{Q})$ be classes
which restrict to 0 on $D$, then 
we have 
\begin{multline*}
(-q)^{-d_\beta/2}\ZZ_{\mathsf{P}}\Big(X/D;q\ \Big|  
{\tau_{\alpha_1-1}(\gamma_1)\cdots
\tau_{\alpha_{\ell}-1}(\gamma_{\ell})} \ \Big| \ \mu
\Big)_\beta \\ =
(-iu)^{d_\beta+\ell(\mu)-|\mu|}\ZZ'_{\mathsf{GW}}\Big(X/D;u\ \Big|   
\ \overline{\tau_{a_1-1}(\gamma_1)\cdots
\tau_{\alpha_{\ell}-1}(\gamma_{\ell})}
\ \Big| \ \mu\Big)_\beta 
\end{multline*}
under the variable change $-q=e^{iu}$.
\end{Conjecture}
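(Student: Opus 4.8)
The plan is to deduce the relative statement from the absolute correspondence of Theorem \ref{zzz} by exploiting the hypothesis $\iota_D^*\gamma_i=0$ twice, so that the logarithmic twist becomes invisible. First I would record the single algebraic fact this rests on. From the sequence $0\to T_X[-D]\to T_X\to \iota_{D*}N_{D/X}\to 0$ and the identity $\iota_{D*}N_{D/X}=\mathcal{O}_X(D)|_D$ one gets
$$c(T_X)=(1+[D])\,c(T_X[-D]),$$
so that $c_i(T_X)-c_i(T_X[-D])=[D]\cdot c_{i-1}(T_X[-D])\in [D]\cdot H^*(X,\Q)$. On the other hand $\iota_D^*\gamma_i=0$ forces $\gamma_i\cdot[D]=\iota_{D*}\iota_D^*\gamma_i=0$ by the projection formula, hence $\gamma_S\cdot[D]=0$ for every subset $S$. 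Since the substitutions $c_i=c_i(T_X[-D])$ and $c_i=c_i(T_X)$ into a polynomial differ by an element of the ideal $([D])$, and $[D]\cdot\gamma_S=0$, we obtain
$$\widetilde{\mathsf{K}}_{\alpha_S,\widehat{\alpha}}\big(c(T_X[-D])\big)\cdot\gamma_S=\widetilde{\mathsf{K}}_{\alpha_S,\widehat{\alpha}}\big(c(T_X)\big)\cdot\gamma_S.$$
Therefore the relative integrand $\overline{\tau_{\alpha_1-1}(\gamma_1)\cdots\tau_{\alpha_\ell-1}(\gamma_\ell)}$, built with the log substitution, coincides termwise with the absolute integrand \eqref{mqq23}. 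The Conjecture is thus \emph{equivalent} to the relative analogue of Conjecture \ref{ttt222}, with the absolute matrix $\widetilde{\mathsf{K}}$ and the boundary factor $(-iu)^{\ell(\mu)-|\mu|}$ of Conjecture \ref{htht}.

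Second, I would prove this reduced relative correspondence in the toric case by the localization route used for Theorem \ref{aaa}. The hypothesis $\iota_D^*\gamma_i=0$ implies that in the localization lift \eqref{ff34} the coefficient $\iota_j^*(\widetilde{\gamma}_i)$ vanishes at every $\T$-fixed point $p_j$ lying on $D$, so only fixed points of $X\setminus D$ contribute. At such a point the divisor $D$ does not pass through $p_j$, the local relative geometry agrees with the absolute one, and $c_i(T_X[-D])$ restricts to $c_i(T_X)$; hence the capped descendent vertices there transform by the matrix $\mathsf{K}$ of Theorem \ref{ccc}. The relative divisor enters only through the capped rubber and edge data over $D$, whose Gromov-Witten/Pairs correspondence — and the bookkeeping producing $(-iu)^{\ell(\mu)-|\mu|}$ — is precisely the input of \cite{moop,mpt}. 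Reassembling the fixed-point contributions through \eqref{ff34} and collapsing the injective sum into a sum over set partitions, exactly as in the proof of Theorem \ref{zzz}, converts the answer into \eqref{mqq23}; the vanishing $\gamma_S\cdot[D]=0$ removes the discrepancy between the two substitutions. This establishes the Conjecture whenever $X$ is toric.

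The main obstacle is the general, non-toric case, where capped localization is unavailable. Here the natural approach is degeneration to the normal cone of $D$: write the relative series of $X/D$ via the degeneration formula, and match both sides using the absolute correspondence of Conjecture \ref{ttt222} on the components together with invertibility of the gluing matrix along $D$. The difficulty is that an arbitrary $X/D$ need not degenerate into pieces for which the absolute correspondence is already proven, so — as signalled by the deferral of the full relative treatment to \cite{PPcy3} — the statement remains genuinely conjectural outside the toric setting. I would accordingly present the toric case as a theorem, with the reduction of the first paragraph as its structural heart, and leave the general case as the remaining conjectural content.
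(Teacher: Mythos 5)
The statement you are proving is one the paper explicitly leaves open: after stating it, the authors write ``Conjecture \ref{ttt444} is open,'' and they defer the relative descendent correspondence to \cite{PPcy3}. So there is no proof in the paper to compare against, and any complete argument you give would be new. Your first paragraph is correct and is essentially the observation the paper itself makes in the proof of Proposition \ref{qq99}: from $0\to T_X[-D]\to T_X\to \iota_{D*}N_{D/X}\to 0$ and the resolution of $\iota_{D*}\mathcal{O}_D(D)$ one gets $c(T_X)=(1+[D])\,c(T_X[-D])$, and $\iota_D^*\gamma_i=0$ gives $\gamma_i\cdot[D]=\iota_{D*}\iota_D^*\gamma_i=0$, so the two substitutions into $\widetilde{\mathsf{K}}$ agree after multiplication by $\gamma_S$. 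This is a genuine and useful reduction, but it only rewrites the integrand; it does not prove the identity of partition functions.

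The gap is in your second paragraph. Conjecture \ref{ttt444} concerns an arbitrary nonsingular divisor $D\subset X$, and in the motivating application (Theorem \ref{mmpp44}, $X/K3$) the divisor is \emph{not} torus-invariant. The relative moduli spaces $P_n(X/D,\beta)$ and $\overline{M}'_{g,r}(X/D,\beta)_\mu$ then carry no $\T$-action, so there is no localization lift \eqref{ff34}, no capped rubber ``over $D$,'' and no capped localization formula to reassemble --- the entire mechanism you invoke is unavailable. Even if you restrict to $\T$-invariant $D$, the paper does not establish a relative analogue of Theorem \ref{aaa} with boundary conditions $\mu$ for general toric $X/D$ (only for the specific product geometries of Propositions \ref{hvv2} and \ref{mmpp66}), so you are assuming an input that is not among the proven results. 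A further unaddressed point: your claim that only fixed points off $D$ contribute requires an equivariant lift $\widetilde{\gamma}_i$ with $\iota_D^*\widetilde{\gamma}_i=0$ \emph{equivariantly}, which does not follow from the non-equivariant hypothesis without an argument. Note that the paper's actual route to Theorem \ref{mmpp44} deliberately bypasses this conjecture: it degenerates to the normal cone of $S$ and exploits the holomorphic-symplectic vanishings of the $K3$ fibre geometry $\mathbf{P}_S/S_\infty$ (Propositions \ref{qq99} and \ref{dgbb5}), rather than proving any general relative correspondence. Your ``toric case as a theorem'' should therefore be withdrawn or restricted to $\T$-invariant $D$ with the missing relative correspondence supplied.
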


In addition, the stable pairs descendent series on the left is
conjectured to be a rational function in $q$, so the change
of variables is well-defined.
%There is no difficulty in proving the compatibility of 
%Conjectures \ref{ttt222} and \ref{ttt444} under the
%degeneration formula on both sides. In fact, the replacement of
%$T_X$ by $T_X[-D]$ is required for compatibility with
%degeneration formula.
Conjecture \ref{ttt444} is open.

\section{Log Calabi Yau 3-folds}
\label{8888}
\subsection{Overview}
Let $X$ be a nonsingular projective toric Fano 3-fold with a
nonsingular irreducible anti-canonical divisor $S$ (necessarily
isomorphic to a $K3$ surface). The relative geometry $X/S$ is
{\em log Calabi-Yau} since the sheaf of differentials of $X$
with logarithmic poles along $S$ has trivial determinant.  
In order to prove the Gromov-Witten/Pair correspondence of Theorem \ref{mmpp44}
for $X/S$, we will require
 results about projective bundles over $S$.

Let $L_0$ and $L_\infty$ be two line bundles on $S$. The projective bundle
$$\mathbf{P}_S= \mathbf{P}(L_0 \oplus L_\infty) \rightarrow S$$
admits sections 
$$ S_i= \mathbf{P}(L_i) \subset \mathbf{P}_S\ .$$
Before proving Theorem \ref{mmpp44}, we will establish 
the 
relative descendent correspondence 
of Conjecture \ref{ttt444}
for $\mathbf{P}_S/S_\infty$
for descendent insertions supported on $S_0$.
While the result goes beyond toric varieties,
the vanishings which hold for $K3$ geometries 
make $\mathbf{P}_S/S_\infty$
accessible. The descendent correspondences
for projective bundles over surfaces will be studied in
more detail in \cite{PPcy3}.

Theorem \ref{mmpp44} and Corollary \ref{yaya34} will follow easily 
from Theorem \ref{zzz}, degeneration, and the descendent
correspondence for $\mathbf{P}_S/S_\infty$.

\subsection{Descendent correspondence for $\mathbf{P}_S/S_\infty$}
Let $\phi_1, \ldots, \phi_\ell$ be cohomology classes on
$\mathbf{P}_S$ supported{\footnote{Each $\phi_i$ is
push-forward of a class on $S$.
Since $K3$ surfaces have only even cohomology, the $\phi_i$
have even degrees.}} on the section $S_0$.
Let $\mu$ be a relative condition along $S_\infty$. 
Our first step is to prove the non-equivariant descendent correspondence of
Conjecture \ref{ttt444} for the classes $\phi_i$.

\begin{Proposition}\label{qq99}
We have
\begin{multline}\label{plgt}
(-q)^{-d_\beta/2}\ZZ_{\mathsf{P}}\Big(\mathbf{P}_S/S_\infty;q\ \Big|  
{\tau_{\alpha_1-1}(\phi_1)\cdots
\tau_{\alpha_{\ell}-1}(\phi_{\ell})} \Big| \ \mu
\Big)_\beta \\ =
(-iu)^{d_\beta+\ell(\mu)-|\mu|}\ZZ'_{\mathsf{GW}}\Big(\mathbf{P}_S/S_\infty;u\ \Big|   
\ \overline{\tau_{\alpha_1-1}(\phi_1)\cdots
\tau_{\alpha_{\ell}-1}(\phi_{\ell})}\ 
\ \Big| \ \mu \Big)_\beta 
\end{multline}
after  the variable change $-q=e^{iu}$.
\end{Proposition}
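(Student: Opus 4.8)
The plan is to reduce the correspondence for $\mathbf{P}_S/S_\infty$ to the $1$-leg capped vertex correspondence already established in Theorem \ref{ccc}, using the special vanishing properties of the $K3$ surface $S$ to kill all but the simplest curve classes. The first step is to determine which classes $\beta$ contribute. Writing $H_2(\mathbf{P}_S,\mathbb{Z}) = H_2(S,\mathbb{Z}) \oplus \mathbb{Z}[F]$, where $F$ is the class of a $\Pp$-fiber of $\mathbf{P}_S \to S$, I claim both sides of \eqref{plgt} vanish unless the projection of $\beta$ to $H_2(S,\mathbb{Z})$ is trivial. Indeed, a stable map or stable pair of class $\beta$ with nonzero image in $H_2(S,\mathbb{Z})$ projects under $\mathbf{P}_S \to S$ to a nonconstant curve in $S$; the holomorphic two-form on the $K3$ surface then furnishes a cosection of the obstruction theory, forcing the virtual class to vanish in both the Gromov-Witten and stable pairs theories. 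This is the $K3$ vanishing referred to in the overview, and it reduces the entire statement to the fiber classes $\beta = dF$.

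For a fiber class $\beta = dF$, the essential observation is that the normal bundle of any fiber $\Pp_s \subset \mathbf{P}_S$ over $s\in S$ is trivial of rank $2$ (the horizontal directions being $T_sS$), so a neighborhood of each fiber is modelled on the cap $N = \cO_\Pp \oplus \cO_\Pp \to \Pp$ relative to $N_\infty$. Since the classes $\phi_i$ are pulled back from $S$ and supported on $S_0$, I would separate the descendent points exactly as in Section \ref{gg99gg}: by the dimension analysis there the relevant invariants carry no dependence on the equivariant parameters beyond the incidence conditions, so I can replace $\tau_{\alpha_1-1}(\phi_1)\cdots\tau_{\alpha_\ell-1}(\phi_\ell)$ by insertions at distinct points $p_1',\ldots,p_\ell' \in S$ and degenerate to the normal cone of each $p_i'$, whose limit is $\mathbf{P}^2\times\Pp/\mathbf{P}^2_\infty$. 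Because fiber-class curves never meet $S_0$ away from the chosen points and are confined to fibers, the resulting degeneration expresses the invariants of $\mathbf{P}_S/S_\infty$ as products of cap invariants $\bZ(\mathsf{Cap} \mid \tau_{\alpha_i-1}(\mathsf{p}) \mid \gamma^{(i)})$, precisely the building blocks of Lemma \ref{lee5}.

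The correspondence then follows by combining three ingredients. The $1$-leg capped vertex correspondence of Theorem \ref{ccc} governs each cap factor; the matching computation of Lemma \ref{lee5} converts the cap series into the combinatorial form dictated by the bar operation; and the $K3$ surface contributes only through cohomological integrals over $S$ recording how the incidence points collide. The set-partition sum in definition \eqref{mqq23} of $\overline{\tau_{\alpha_1-1}(\phi_1)\cdots\tau_{\alpha_\ell-1}(\phi_\ell)}$ is exactly the bookkeeping for the ways several descendents may lie in a common fiber: a block $S$ contributes the combined class $\gamma_S=\prod_{i\in S}\phi_i$ restricted to the shared point of $S$, while the normal-bundle weights of the $\ell$ colliding fibers are absorbed, via the $(s_1s_2s_3)^{\ell-1}$ divisibility of Proposition \ref{K divisibility}, into the substitution $c_i = c_i(T_{\mathbf{P}_S}[-S_\infty])$ defining $\widetilde{\mathsf{K}}$ on $S_0$. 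The scalar powers of $q$ and $-iu$ are tracked as in Theorem \ref{zzz} and Proposition \ref{t789789}.

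The main obstacle I anticipate is this last point: verifying that the Chern-class substitution defining $\widetilde{\mathsf{K}}$ on the genuinely non-toric geometry $\mathbf{P}_S/S_\infty$ matches the weights produced by the local cap analysis. On a toric base this matching was secured equivariantly through Theorem \ref{zzz} and Proposition \ref{mmpp66}, but here $S$ is not toric, so the fiberwise collision combinatorics (governed by the small diagonal of $S^{\ell}$) must be shown to agree with the product structure of Lemma \ref{lee5} purely cohomologically. I expect this to follow from the fact that the $K3$ vanishing restricts every contribution to fibers, together with the compatible splitting of $T_{\mathbf{P}_S}[-S_\infty]|_{S_0}$ into horizontal $T_sS$ and vertical cap directions; but checking the exact numerical agreement of the set-partition weights is the crux of the argument.
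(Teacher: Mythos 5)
Your reduction to fiber classes via the holomorphic symplectic form is the same first step the paper takes, but from that point on your argument diverges and, as written, has a genuine gap. For curve classes $d\mathsf{L}$ the moduli spaces of stable pairs and stable maps on $\mathbf{P}_S/S_\infty$ are not localized near the fibers through your chosen descendent points: the relative condition $\mu$ lives in $\Hilb(S_\infty,d)$ and permits components supported on fibers over arbitrary points of $S$, and these ``free'' fiber contributions are weighted by global integrals $\int_S \Theta\big(c(T_S),c(L_0),c(L_\infty)\big)\cup\prod_{i\in I}\phi_i$ that a local cap analysis at the descendent points cannot see. On a toric base one controls all of this by $\T$-equivariant localization (every fixed fiber is a cap), but $S$ is a $K3$ and carries no torus action, so the separation-of-points and normal-cone degeneration you import from Section \ref{gg99gg} does not produce a product of cap series here. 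This is exactly the ``crux'' you flag at the end, and your proposal does not resolve it.

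The paper's route is different: it shows that each $u$-coefficient of the Gromov--Witten series (and each $q$-coefficient of the stable pairs series) for \emph{any} surface $X$ with line bundles $L_0,L_\infty$ is a universal polynomial in the classical pairings above, then determines those polynomials on the Zariski dense set of pairings realized by toric $(X,L_0,L_\infty)$, where the correspondence does reduce to caps by localization. Two further inputs are needed that your proposal omits entirely: the $q^0$-coefficient of the stable pairs series for general $X$ is computed independently by Hilbert-scheme-of-points techniques so that the two universal polynomials can be compared, and the vanishing of all \emph{higher} $q$-coefficients of the Gromov--Witten side after $-q=e^{iu}$ --- essential because the $K3$ stable pairs series has only its leading term --- is deduced not from toric surfaces (which never have $c_1(T_X)=0$) but from the $\com^*$-equivariant $\bA_n$ geometries, which supply a dense family of pairings subject to the constraint $c_1(T_X)=0$. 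Without these ingredients, or a substitute for them, the fiberwise collision combinatorics you describe cannot be matched to the set-partition formula \eqref{mqq23}.
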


The log tangent bundle of $\mathbf{P}_S/S_\infty$ restricts
to the standard tangent bundle of $\mathbf{P}_S$
on the section $S_0$.
Since the classes $\phi_i$ are supported on $S_0$,
the descendent correspondence matrix $\widetilde{\mathsf{K}}$
for $\mathbf{P}_S/S_\infty$ is the same as the matrix
 for $\mathbf{P}_S$.

\begin{proof}
By the vanishings in stable pair and Gromov-Witten theory
obtained from the holomorphic symplectic structure of $K3$
surfaces, only invariants of $\mathbf{P}_S$ in multiples
of the 
fiber class $$\mathsf{L} \in H_2(\mathbf{P}_S,\mathbb{Z})$$
contracted over $S$ are non-zero.
Moreover, for the stable pairs series, only the
initial $q$-coefficient is non-zero.

Let $X$ be any nonsingular projective surface equipped with
line bundles $L_0$ and $L_\infty$.
Let $\phi_1,\ldots, \phi_\ell$
be cohomology classes on
$$\mathbf{P}_S= \mathbf{P}(L_0 \oplus L_\infty) \rightarrow S$$
supported on the section $X_0$.
Consider the Gromov-Witten series
\begin{equation} \label{hwhwhw}
(-iu)^{|\mu|+\ell(\mu)}\ZZ'_{\mathsf{GW}}\Big(\mathbf{P}_X/X_\infty;u\ \Big|   
\ \overline{\tau_{\alpha_1-1}(\phi_1)\cdots
\tau_{\alpha_{\ell}-1}(\phi_{\ell})}\ 
\ \Big| \ \mu \Big)_{|\mu|\mathsf{L}}\ .
\end{equation}
Each $u$-coefficient of \eqref{hwhwhw} can be expressed by
an explicit study of the moduli space of stable maps to
the fiber classes of $\mathbf{P}_X \rightarrow X$.
By a standard analysis (see Section 1.2 of \cite{mptop}),
each $u$-coefficient is a universal 
polynomial over $\mathbb{Q}$ in the 
all classical pairings 
\begin{equation}\label{cll333}
\int_X \Theta\Big(c(T_X),c(L_0),
c(L_\infty)\Big) \cup \prod_{i \in I} \phi_i
\end{equation}
where $\Theta$ is a monomial in the Chern classes of the
bundles 
$$T_X,\ L_0,\ L_\infty \rightarrow X\,$$
of bounded degree (determined by
the descendent partition $\alpha=(\alpha_1,\ldots, \alpha_\ell)$ and the degrees of 
$\phi_i$).
In the product on the right side of \eqref{cll333}, 
$$I\subset \{1, \ldots, \ell\}$$ is a subset.
 
Let $X$ be a nonsingular projective toric surface
with toric line bundles $L_0$ and $L_\infty$.
For fixed $\alpha$ and
$\phi_i$, there are only finitely many
classical pairings \eqref{cll333}. Moreover,
as we vary the toric surface $X$ and the toric line
bundle $L_i$, we easily see
a Zariski dense set of possible
classical pairings is achieved.{\footnote{Since
the classes $\phi_i$ we consider are of even degrees,
the degrees can be matched in toric geometry.}}
Hence, the $u$-coefficient polynomials of the Gromov-Witten
series are fully determined by the toric examples.

If $X$ is a nonsingular projective toric surface
with toric line bundles $L_i$, 
then the relation
\begin{multline*}
(-q)^{-|\mu|}\ZZ_{\mathsf{P}}\Big(\mathbf{P}_X/X_\infty;q\ \Big|  
{\tau_{\alpha_1-1}(\phi_1)\cdots
\tau_{\alpha_{\ell}-1}(\phi_{\ell})} \Big| \ \mu
\Big)_{|\mu|\mathsf{L}} \\ =
(-iu)^{|\mu|+\ell(\mu)}\ZZ'_{\mathsf{GW}}\Big(\mathbf{P}_X/X_\infty;u\ \Big|   
\ \overline{\tau_{\alpha_1-1}(\phi_1)\cdots
\tau_{\alpha_{\ell}-1}(\phi_{\ell})}\ 
\ \Big| \ \mu \Big)_{|\mu| \mathsf{L}} 
\end{multline*}
is a direct consequence, by localization, of the descendent 
correspondence for the cap. When we localize with respect
to the 2-dimension torus $T$ acting on $X$ and the $L_i$, the result
is cap for each $T$-fixed point of $X$.
The stable pairs series
\begin{equation}\label{gttl9}
(-q)^{-|\mu|}\ZZ_{\mathsf{P}}\Big(\mathbf{P}_X/X_\infty;q\ \Big|  
{\tau_{\alpha_1-1}(\phi_1)\cdots
\tau_{\alpha_{\ell}-1}(\phi_{\ell})} \Big| \ \mu
\Big)_{|\mu|\mathsf{L}}\ 
\end{equation}
is thus also determined by the
classical pairings \eqref{cll333} in the toric case.
In fact, using the denominator results proven in Theorem 5 of \cite{part1},
the $q$-coefficients of \eqref{gttl9} are
 polynomials in the pairings \eqref{cll333}.

Next, let the nonsingular projective
surface $X$ with line bundles $L_0$ and $L_\infty$
be arbitrary.
The $q^0$-coefficient of the stable pairs series
\eqref{gttl9} is special. The associated moduli
space of stable pairs is simply the Hilbert scheme
of $|\mu|$ point of $X$.
The stable pairs invariant then can be calculated
by Hilbert scheme techniques \cite{lehn}. The result
is also a polynomial in classical pairings \eqref{cll333}.
Hence, we have two polynomials in the classical pairing
\eqref{cll333}:
\begin{enumerate}
\item [(i)] the $q^0$-coefficient of the
Gromov-Witten series \eqref{hwhwhw} for $X$ after the variable
change $-q=e^{iu}$,
\item [(ii)] the polynomial obtained
from the Hilbert scheme of points calculation
of the $q^0$-coefficient of the stable pairs series \eqref{gttl9} for $X$.
\end{enumerate}
The two polynomials are equal when
evaluated in the toric geometry and thus must be
identical (by Zariski denseness).

The polynomials (i) and (ii) are therefore equal for the
$K3$ geometry $\mathbf{P}_S/S_\infty$.
To complete the proof of the correspondence \eqref{plgt},
we must only prove  the higher $q$-coefficients, 
obtained 
 after the variable
change $-q=e^{iu}$
for 
Gromov-Witten series \eqref{hwhwhw} for 
$\mathbf{P}_S/S_\infty$,
all vanish.

Let $X$ be a nonsingular quasi-projective toric surface with
toric line bundles $L_0$ and $L_\infty$.
Consider the $T$-equivariant Gromov-Witten series
\begin{equation} \label{hwhwhwhw}
(-iu)^{|\mu|+\ell(\mu)}\ZZ'_{\mathsf{GW}}\Big(\mathbf{P}_X/X_\infty;u\ \Big|   
\ \overline{\tau_{\alpha_1-1}(\phi_1)\cdots
\tau_{\alpha_{\ell}-1}(\phi_{\ell})}\ 
\ \Big| \ \mu \Big)^T_{|\mu|\mathsf{L}}\ 
\end{equation}
where $T$ is the 2-dimensional torus acting on $X$ and the
$L_i$.
As before, each $u$-coefficient 
is a universal 
polynomial over $\mathbb{Q}$ in the 
all classical $T$-equivariant pairings 
\begin{equation}\label{cll3333}
\int_X \Theta\Big(c(T_X),c(L_0),
c(L_\infty)\Big) \cup \prod_{i \in I} \phi_i
\end{equation}
where $\Theta$ is a monomial in the Chern classes of the
bundles 
$$T_X,\ L_0,\ L_\infty \rightarrow X,$$
of bounded degree (determined by
the descendent partition $\alpha$ and the degrees of $\phi_i$).
In $T$-equivariant geometry,  more
pairings may be non-zero.
Otherwise, the situation is exactly the same as
in  
the non-equivariant case. 
The universal polynomials in the $T$-equivariant geometry restrict to the
universal polynomials in the non-equivariant geomery.

We finally specialize $X$ to the quasi-projective surfaces $\bA_n$.
If we  restrict to the sub-torus $\com^* \subset T$
which preserves the holomorphic form, then 
$$c_1(T_X)=0\ .$$
The $\bA_n$ geometries, as $n$ varies, provide 
a rich supply of $\com^*$-equivariant
pairings \eqref{cll3333} 
subject to the vanishing of $c_1(T_X)$, 
The $T$-equivariant correspondence 
\begin{multline*}
(-q)^{-|\mu|}\ZZ_{\mathsf{P}}\Big(\mathbf{P}_X/X_\infty;q\ \Big|  
{\tau_{a_1-1}(\phi_1)\cdots
\tau_{a_{\ell}-1}(\phi_{\ell})} \Big| \ \mu
\Big)^T_{|\mu|\mathsf{L}} \\ =
(-iu)^{|\mu|+\ell(\mu)}\ZZ'_{\mathsf{GW}}\Big(\mathbf{P}_X/X_\infty;u\ \Big|   
\ \overline{\tau_{a_1-1}(\phi_1)\cdots
\tau_{a_{\ell}-1}(\phi_{\ell})}\ 
\ \Big| \ \mu \Big)^T_{|\mu| \mathsf{L}} 
\end{multline*}
has already been proven for $X=\bA_n$. 
The higher $q$-coefficients of the stable pairs side above
vanish (since $\bA_n$ has a holomorphic symplectic form
invariant under $\com^*$).
Hence,
the higher $q$-coefficients 
obtained 
 after the change of variables
 $-q=e^{iu}$
for 
the Gromov-Witten series \eqref{hwhwhwhw} for 
$\bA_n$
all vanish.
By the universality of the polynomials and the sufficient
Zariski density of the $\bA_n$ geometries (subject
to the vanishing of the first Chern class of the
tangent bundle), we conclude the necessary 
vanishing of 
the higher $q$-coefficients 
for 
Gromov-Witten series \eqref{hwhwhw} for 
$\mathbf{P}_S/S_\infty$.
\end{proof}

\subsection{Proof of Theorem \ref{mmpp44}} \label{mmpp44pr}
Let $X$ be a nonsingular projective Fano toric 3-fold, and let
$S\subset X$ be a nonsingular anti-canonical $K3$ surface.
Let 
$N$
be the normal bundle of $S$ in $X$.
Let
$$S_0,S_\infty \subset \mathbf{P}(\cO_S \oplus N)$$
be the sections determined by the summand $\cO_S$ and
$N$ 
respectively. Let
$$\iota_0: S \hookrightarrow \mathbf{P}(\cO_S \oplus N)$$
be the inclusion of $S_0$.

Let $\mathcal{B}$ be a fixed self-dual basis of the cohomology of $S$.
Recall a Nakajima basis element in the Hilbert scheme 
$\text{Hilb}(S,n)$ is a cohomology weighted partition $\mu$ of $n$,
$$( (\mu_1,\phi_1), \ldots, (\mu_\ell,\phi_\ell))\ ,  \ \ \
n=\sum_{i=1}^\ell \mu_i, \ \ \ \phi_i \in \mathcal{B}\ . $$
Such a weighted partition determines a descendent
insertion
$$\tau[\phi]=\prod_{i=1}^\ell \tau_{\mu_i-1}(\iota_{\infty*}(\phi_i)) \ .$$

By standard $K3$ vanishing arguments \cite{mpt},
the stable pairs invariants of the relative 3-fold geometry
$\mathbf{P}(\cO_S \oplus N)/S_\infty$ are nontrivial only for
curves classes in the fibers of
\begin{equation*}
\mathbf{P}(\cO_S \oplus N)/S_\infty\rightarrow S \ .
\end{equation*}
Define the partition function for the relative geometry by
\begin{equation} \label{fq22}
\bZ_{\mathsf{P}}
\Big( \mathbf{P}(\cO_S \oplus N)/S_\infty
;q\ \Big|
\tau[\phi] \Big|\ \mu \Big)_{d\mathsf{L}}
\end{equation}
where $\phi$ and $\mu$ are both partitions of $d$ weighted
by $\mathcal{B}$. 
By further vanishing, only the leading $q^d$ terms of \eqref{fq22}
are possibly nonzero. The following result is proven in
Section 4.1 of \cite{PP2}.

\begin{Proposition}\label{dgbb5}
Let $d>0$ be an integer.
The square matrix indexed by $\mathcal{B}$-weighted partitions
of $d$ with coefficients
\begin{equation}\label{kkww3}
\bZ_{\mathsf{P}}
\Big( \mathbf{P}(\cO_S \oplus N)/S_\infty
;q\ \Big|
\tau[\phi] \Big|\ \mu \Big)_{d\mathsf{L}} 
\end{equation}
has maximal rank.
\end{Proposition}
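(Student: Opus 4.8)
The plan is to compute the leading $q^d$ coefficient of \eqref{kkww3} precisely enough to exhibit the square matrix as block triangular with invertible diagonal blocks. Since the $K3$ vanishing restricts all contributions to the fiber classes $d\mathsf{L}$ and leaves only the $q^d$ term, maximal rank of \eqref{kkww3} is equivalent to maximal rank of its $q^d$-coefficient matrix, and I would first reduce to the integral over the moduli space of minimal Euler characteristic. Using the boundary morphism of Section \ref{relth},
\[
P_n\big(\mathbf{P}(\cO_S\oplus N)/S_\infty,\, d\mathsf{L}\big) \longrightarrow \Hilb(S,d),
\]
the relative condition $\mu$ is the pull-back of the Nakajima basis element $|\mu\rangle$ of $H^*(\Hilb(S,d))$, whose elements are indexed exactly by the $\mathcal{B}$-weighted partitions of $d$. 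The $q^d$ coefficient then becomes the Poincar\'e pairing on $\Hilb(S,d)$ between $|\mu\rangle$ and the push-forward class determined by the descendent insertion $\tau[\phi]$.

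The central step is to express this push-forward class in the Nakajima basis. I would do so by $T$-equivariant localization carried out fiberwise over $S$ --- equivalently, by degenerating $\mathbf{P}_S/S_\infty$ along the toric strata of $S$ into a configuration of caps --- so that each descendent factor $\tau_{\phi_i-1}$ is evaluated by the cap computations of Lemma \ref{match1} and the basic evaluation \eqref{ffgg}. The expected outcome is block triangularity with respect to the partial ordering of weighted partitions by their underlying (unweighted) partitions: the cap evaluation $\tfrac{q^{|\gamma|}}{|\Aut(\gamma)|}\prod_i \tfrac{1}{\gamma_i!}$ forces the pairing to vanish unless the descendent and relative partitions agree in shape, and it supplies a nonzero scalar on each diagonal block. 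A dimension count tracking that a descendent of order $\phi_i-1$ can pair nontrivially only against a relative part of controlled size is what pins down the ordering and the vanishing of the off-diagonal entries.

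Within a diagonal block the underlying partition shape is fixed and only the cohomology weights vary over the self-dual basis $\mathcal{B}$. There the pairing factors as a tensor product of Poincar\'e pairings of $S$ distributed among the parts; since $\mathcal{B}$ is self-dual this factor is the non-degenerate intersection form of $S$, and the block is invertible up to the nonzero cap scalars and the harmless centralizer factors $\mathfrak{z}(\mu)$. Block triangularity together with invertibility of the diagonal blocks then yields maximal rank of the full matrix.

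The main obstacle is the central step: rigorously controlling the translation of the descendent insertions $\tau[\phi]$ into Nakajima classes on $\Hilb(S,d)$ and confirming that every shape-changing contribution is strictly lower in the chosen partial ordering. The $K3$ vanishing to fiber classes is precisely what makes this finite and reduces it to the fiberwise cap building blocks, but verifying the exact triangular form --- rather than merely a dominant diagonal --- requires the careful dimension analysis together with the explicit nonvanishing of the cap evaluations.
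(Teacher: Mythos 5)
First, note that the paper does not actually prove Proposition \ref{dgbb5} here: it is quoted from Section 4.1 of \cite{PP2}, so your proposal is being measured against that argument rather than anything in the present text.

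Your overall shape (restrict to $q^d$ via $K3$ vanishing, identify the $q^d$-coefficient with a pairing on $\Hilb(S,d)$ between the class determined by $\tau[\phi]$ and the Nakajima basis, then exhibit triangularity with nondegenerate diagonal blocks coming from the self-dual basis $\mathcal{B}$) is the right skeleton. But the central step as you state it fails: you propose to compute the push-forward class by ``$T$-equivariant localization carried out fiberwise over $S$'' or by ``degenerating $\mathbf{P}_S/S_\infty$ along the toric strata of $S$ into a configuration of caps.'' Here $S$ is a $K3$ surface; it carries no nontrivial torus action and has no toric strata, so neither localization nor a toric degeneration is available, and the cap evaluations \eqref{ffgg} and Lemma \ref{match1} — which concern point-supported descendents $\tau_{\alpha_i-1}(\mathsf{p})$ at torus-fixed points — cannot be invoked to evaluate descendents weighted by arbitrary classes $\phi_i\in H^*(S,\mathbb{Q})$, including the $22$-dimensional $H^2(S)$. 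This is exactly the obstruction that forces the paper, in the neighboring Proposition \ref{qq99}, to argue by universality (universal polynomials in classical pairings, determined on toric $(X,L_0,L_\infty)$ and then specialized to $K3$) rather than by localizing on $S$ itself.

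The step you are missing is the identification of the minimal-Euler-characteristic moduli space $P_d(\mathbf{P}(\cO_S\oplus N)/S_\infty, d\mathsf{L})$ with $\Hilb(S,d)$, under which the descendents $\tau_{\mu_i-1}(\iota_{\infty*}(\phi_i))$ become Chern characters of tautological sheaves $\phi_i^{[d]}$ on the Hilbert scheme. The maximal rank statement then reduces to the nondegeneracy of the pairing between products of such tautological Chern characters and the Nakajima basis of $H^*(\Hilb(S,d),\mathbb{Q})$; the triangularity you want comes from the known leading term of $\mathrm{ch}_{k+1}(\phi^{[d]})$ in terms of Nakajima creation operators (Hilbert-scheme techniques in the spirit of \cite{lehn}), not from cap localization. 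Only after that reduction does your final observation — that the equal-shape blocks are built from the Poincar\'e pairing of $S$ and hence invertible because $\mathcal{B}$ is self-dual — carry the argument home. Be aware also that even in genuinely toric situations the triangular structure is the layered one of Section \ref{lll} (block triangular for several orderings with Vandermonde-type blocks), not the strict ``vanishes unless the shapes agree'' form you assert.
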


We can also consider the Gromov-Witten analogue of 
Proposition \ref{dgbb5}.
By Proposition \ref{qq99}, we have a descendent correspondence
\begin{multline}\label{plgtt}
(-q)^{-d}\ZZ_{\mathsf{P}}\Big(\mathbf{P}(\cO_S \oplus N ) /S_\infty;q\ \Big|  
{\tau[\phi]} \Big| \ \mu
\Big)_{d\mathsf{L}} \\ =
(-iu)^{|\nu|+\ell(\nu)}\ZZ'_{\mathsf{GW}}\Big(\mathbf{P}(\cO_S \oplus N)/S_\infty;u
\ \Big|   
 \overline{\tau[\phi]}\ 
 \Big| \ \mu \Big)_{d\mathsf{L}} 
\end{multline}
after  the variable change $-q=e^{iu}$.
In particular, the Gromov-Witten matrix corresponding to \eqref{kkww3}
is also invertible.

Let $\beta \in H_2(X,\mathbb{Z})$ be a curve class, and let
$$d_\beta = \int_\beta c_1(X)=\int_\beta [S] \ .$$
Consider the descendent correspondence of Theorem \ref{zzz},
\begin{equation} \label{bll3}
(-q)^{-d_\beta/2}\,
\bZ_{\mathsf{P}}
\Big( X;q\ \Big|\ \tau[\phi] \ \Big)_\beta
=(-iu)^{d_\beta}\,  \bZ'_{\mathsf{GW}}
\Big( X;u\ \Big|\
\overline{\tau[\phi]} \  \Big )_\beta\, ,
\end{equation}
where $\phi$ is a partition of $d_\beta$ weighted
by $\mathcal{B}$. 
Since all the cohomology classes of the descendent $\tau[\phi]$
lie on $S$, we can degenerate to the normal cone.
The resulting degeneration formula{\footnote{We follow
here the notation of Section \ref{sdeg}.}} in stable pairs theory for
$
\bZ_{\mathsf{P}}
\left(  X   \Big|
\tau[\phi]\, \right)_\beta$ 
is
$$
\sum \bZ_{\mathsf{P}}
\left( \mathbf{P}(\cO_S \oplus N)/S_\infty    \Big|
\tau[\phi] \, \Big| \mu  \right)_{d_\beta\mathsf{L}}  \, 
(-1)^{|\mu|-\ell(\mu)} \,
\zz(\mu) \, q^{-|\mu|} \,
\bZ_{\mathsf{P}}
\left( X/S \Big|
\ \  \Big| \mu^\vee  \right)_{\beta} \,,
$$
where the sum is over all elements $\mu$ of
the Nakajima basis of cohomology of $\text{Hilb}(S,d_\beta)$.
The parallel degeneration 
formula for Gromov-Witten theory together
with Propositions \ref{qq99} and \ref{dgbb5}
imply Theorem \ref{mmpp44} in case there are {\em no
descendent insertions}.

Consider now the correspondence of Theorem
\ref{mmpp44} with the full descendent insertion
\begin{equation}\label{kwwk3}
\tau_0(\gamma_1) \ldots \tau_0(\gamma_r)\ .
\end{equation}
Since $X$ is a toric variety, the cohomological degree
of each $\gamma_i$ must be even. Degrees 0 and 2 can be
removed from both stable pairs and Gromov-Witten theory
by the fundamental class and divisor equations.
We need only consider insertions $\gamma_i$ of degree 4 or 6.
The divisor 
$$\iota_S: S\subset X$$ 
is ample since $X$ is Fano.
Hence, classes $\gamma_i\in H^*(X,\mathbb{Q})$ 
of degrees 4 and 6 can be written
as 
\begin{equation}\label{pdd4}
\iota_{S*}(\phi_i)= \gamma_i
\end{equation}
for $\phi_i \in H^*(S,\mathbb{Q})$ by Hard Lefschetz.
We can write the insertion \eqref{kwwk3} as
$$\tau_0(\iota_{S*}(\phi_1)) \ldots \tau_0(\iota_{S*}(\phi_r))\ .$$

We now reduce correspondence of Theorem \ref{mmpp44} with
the full insertion \eqref{kwwk3} to Theorem \ref{mmpp44} with no 
insertions. 
Via degeneration to the normal cone of $S$, we can write
$$
\bZ_{\mathsf{P}}
\left(  X   \Big| \tau_0(\iota_{S*}(\phi_1)) \ldots \tau_0(\iota_{S*}(\phi_r))
\, \right)_\beta$$
in terms of  the relative geometries as
\begin{multline*}
\sum \bZ_{\mathsf{P}}
\left( \mathbf{P}(\cO_S \oplus N)/S_\infty    \Big|
\tau_0(\iota_{S_0*}(\phi_1)) \ldots \tau_0(\iota_{S_0*}(\phi_r))
\, \Big| \mu  \right)_{d_\beta\mathsf{L}}  \, \\
(-1)^{|\mu|-\ell(\mu)} \,
\zz(\mu) \, q^{-|\mu|} \,
\bZ_{\mathsf{P}}
\left( X/S \Big|
\ \  \Big| \mu^\vee  \right)_{\beta} \,,
\end{multline*}
where the sum is as before. 
The parallel degeneration 
formula for Gromov-Witten theory together
with Proposition \ref{qq99}  achieves the desired
reduction.
\qed

\subsection{Proof of Corollary \ref{yaya34}}
Let $S\subset \mathbf{P}^3$ be a nonsingular 
quartic surface (anti-canonical and $K3$).
Let $\beta \in H_2(\mathbf{P}^3,\mathbb{Z})$. Since $c_1(T_{\mathbf{P}^3})$
is even, $d_\beta$ is even.
Then, by Theorem \ref{mmpp44}, we have
\begin{equation}\label{pcc4}
\bZ'_{\mathsf{GW}}
\left( \mathbf{P}^3/S\ \Big|
\ \  \Big| \mu^\vee  \right)_{\beta} \ \in 
\mathbb{Q}(-q=e^{iu},i)[u,\frac{1}{u}]\ 
\end{equation}
by the rationality in $q$ of the corresponding
stable pairs series \cite{PP2}.

Since the classes $\gamma_j \in H^*(\mathbf{P}^3,\mathbb{Q})$
are assumed to be of positive degree, we can write
$$\iota_{S*}(\phi_j) = \gamma_j$$
for classes $\phi_j \in H^*(S,\mathbb{Z})$.
After replacing the descendent insertion with
$$\tau_{k_1}(\iota_{S*}(\phi_1)) \ldots
 \tau_{k_s}(\iota_{S*}(\phi_s)),$$
we can degenerate to the normal cone of $S$. We find
$$\bZ'_{\mathsf{GW}}
\left( \mathbf{P}^3\ \Big|
\tau_{k_1}(\iota_{S*}(\phi_1)) \ldots
 \tau_{k_s}(\iota_{S*}(\phi_s))
\   \right)_{\beta}$$
is equal to 
\begin{multline}\label{k399}
\sum \bZ'_{\mathsf{GW}}
\left( \mathbf{P}(\cO_S \oplus N)/S_\infty    \Big|
\tau_{k_1}(\iota_{S_0*}(\phi_1)) \ldots \tau_{k_s}(\iota_{S_0*}(\phi_s))
\, \Big| \mu  \right)_{d_\beta\mathsf{L}}  \, \\
 \,
\zz(\mu) \, u^{2\ell(\mu)} \,
\bZ'_{\mathsf{GW}}
\left( \mathbf{P}^3/S \Big|
\ \  \Big| \mu^\vee  \right)_{\beta} \, .
\end{multline}
The terms of \eqref{k399} which are invariants of
$\mathbf{P}(\cO_S \oplus N)/S_\infty$ are Laurent polynomials
in $u$ and $\frac{1}{u}$ by $K3$ vanishings (the only connected
contributions are of genus 0 and
1). The terms with are invariants of $\mathbf{P}^3/S$
are constrained by \eqref{pcc4}. The claim of the
Corollary then follows immediately. \qed

\vspace{+16 pt}
\noindent Departement Mathematik \hfill Department of Mathematics \\
\noindent ETH Z\"urich \hfill  Princeton University \\
\noindent rahul@math.ethz.ch  \hfill rahulp@math.princeton.edu \\

\vspace{+8 pt}
\noindent
Department of Mathematics\\
Princeton University\\
apixton@math.princeton.edu

\end{document}